\def\fakeht{\vphantom{E^{E_E}_{E_E}}}
\def\bs{\bigskip}
\def\AA{\frak A}
\def \transpose{^{\rm T}}
\def \ms{\medskip}
\def\maxm{\mathfrak m}
\def\ev{\operatorname{ev}}
\def\Z{\mathbb Z}
\def\ts{\textstyle}
\def\blop{q}
\def\Br{\operatorname{Br}}
\def\la{\langle}
\def\ra{\rangle}
\def\B{\mathbb B}
\def\g{\gamma}
\def\bs{\bigskip}
\def\mA{\mathcal A}
\def\mB{\mathcal B}
\def\mC{\mathcal C}
\def\mD{\mathcal D}
\def\mf{\mathcal f}
\def\CA{\operatorname{CA}}
\def\Sym{\operatorname{Sym}}
\def\Ext{\operatorname{Ext}}
\def\kk{{\pmb k}}
\def\im{\operatorname{im}}
\def\w{\wedge}
\def\SM{\operatorname{SM}}
\def\ann{\operatorname{ann}}
\def\p{\oplus}
\def\mL{\mathfrak L}
\def\Hom{\operatorname{Hom}}
\def\t{\otimes}
\def\w{\wedge}
\def\HH{\operatorname{H}}
\def\C{\mathfrak C}
\def\pd{\operatorname{pd}}
\def\grade{\operatorname{grade}}
\def\a{\alpha}
\def\Pbar{\overline{P}}
\def\bw{\bigwedge}
\def\blip{\theta}
\def\map{\operatorname{map}}
\def\rank{\operatorname{rank}}
\def\proj{\operatorname{proj}}
\def\Ass{\operatorname{Ass}}
\newcounter{nameOfYourChoice}
\newtheorem{theorem}{Theorem}[section]
\newtheorem{lemma}[theorem]{Lemma}
\newtheorem{observation}[theorem]{Observation}
\newtheorem{proposition}[theorem]{Proposition}
\newtheorem{corollary}[theorem]{Corollary}
\newtheorem{claim-no-advance}[equation]{Claim}
\newtheorem{lemma-no-advance}[equation]{Lemma}
\theoremstyle{definition}
\newtheorem{data}[theorem]{Data}
\newtheorem{background}[theorem]{Background}
\newtheorem{definition}[theorem]{Definition}
\newtheorem{convention}[theorem]{Convention}
\newtheorem{conventions}[theorem]{Conventions}
\newtheorem{definition-no-advance}[equation]{Definition}
\newtheorem{chunk}[theorem]{}
\newtheorem{chunk-no-advance}[equation]{}
\newtheorem{example}[theorem]{Example}
\newtheorem{remark}[theorem]{Remark}
\newtheorem{remarks}[theorem]{Remarks}
\newtheorem*{Remark}{Remark}
\newtheorem{Earlier Notes}[theorem]{Earlier Notes}
\numberwithin{equation}{theorem}
\begin{document}

\baselineskip=16pt

\title[Artinian Gorenstein algebras]{Artinian Gorenstein algebras of embedding dimension four and socle degree three
over an arbitrary field.}

\date{\today}

\author[Sabine El Khoury and Andrew R. Kustin]
{Sabine El Khoury and Andrew R. Kustin}

\address{Mathematics Department,
American University of Beirut,
Riad el Solh 11-0236,
Beirut,
Lebanon}
\email{se24\@aub.edu.lb}

\address{Department of Mathematics\\ University of South Carolina\\\newline
Columbia SC 29208\\ U.S.A.} \email{kustin\@math.sc.edu}

\subjclass[2020]{13C40, 13D02, 13H10, 13E10, 13A02}

\keywords{Artinian algebra,  
Gorenstein algebra, Hypersurface section.  Macaulay inverse system,    weak Lefschetz property.}

\thanks{We made extensive use of   Macaulay2 \cite{M2}. We are very appreciative of this computer algebra system.}

\begin{abstract} Let $\kk$ be an arbitrary field,   $\AA$ be  a standard graded
Artinian Gorenstein  
$\kk$-algebra of embedding dimension four and  socle degree three, and $\pi:P\to \AA$ be a surjective graded homomorphism from a polynomial ring with four variables over $\kk$ onto $\AA$. We give the minimal generators of the kernel of $\pi$ and the minimal homogeneous resolution of $\AA$ by free $P$-modules.
We give formulas for the entries in the matrices in the resolution in terms of the coefficients of the Macaulay inverse system for $\AA$.  We have implemented these formulas in Macaulay2 scripts.
  
The ideal $\ker \pi$ has either $6$, $7$, or $9$ minimal generators. The number of minimal generators and the precise form of the minimal resolution are determined by the rank of a $3\times 3$ symmetric matrix of constants that we call $\SM$. 

If  
$\ker \pi$ requires more than six generators, then we prove that $\ker\pi$ is the sum of two linked perfect ideals of grade three.  
If the $\ker \pi$ is six-generated, then we prove that $\AA$ is a hypersurface section of a codimension three Gorenstein algebra.

Our approach is based on the structure of Gorenstein-linear resolutions and the theorem that, except for exactly one exception, $\AA$ has the weak Lefschetz property.
 \end{abstract}

\maketitle

\tableofcontents

\section{Introduction.}

Let $\kk$ be an arbitrary field, $U$ be a vector space over $\kk$ of dimension four, $U^*$ be $\Hom_{\kk}(U,\kk)$, $P$ be the polynomial ring $P=\Sym_{\bullet} U$, and $D_{\bullet}U^*$ be the corresponding divided power algebra.

Marques, Veliche, and Weyman \cite{MVW} describe all Artinian Gorenstein algebras of embedding dimension four and socle degree three over an algebraically closed field of characteristic zero. Their description uses a classification of homogeneous forms of degree three over an algebraically closed field of characteristic zero. (In characteristic zero,  the Macaulay inverse system of an Artinian Gorenstein $\kk$-algebra of socle degree three  may as well be a homogeneous cubic form because $D_\bullet U^* $ is isomorphic to $\Sym_\bullet U$.)

The paper \cite{MVW} expresses the defining ideal of each algebra under consideration as the generalized sum of two linked perfect ideals of grade three. There are many cases, many ad hoc arguments, and there  is no uniform answer. 
The paper \cite{MVW} includes an example that indicates that the conclusion might fail 
in characteristic two.

The characteristic two example \cite[Ex.~5.6]{MVW} is very interesting! After changing the variables, one sees that it is the unique  Artinian Gorenstein $\kk$-algebra of embedding dimension four and socle degree three   which does not have the weak Lefschetz property. This algebra has a very pretty resolution and the defining ideal is indeed the sum of two linked perfect grade three ideals; see Section~\ref{Frob}.
 
We proved in \cite{K23}
that, with one exception,  every standard graded Artinian Gorenstein $\kk$-algebra of embedding dimension four and socle degree three has a weak Lefschetz element. The proof in \cite{K23} is fairly complicated: it involves many cases and many ad hoc arguments. 
However, once one has the result of \cite{K23}, then one is able to prove the results of \cite{MVW} over {\bf an arbitrary field} in a uniform manner. 

In addition to the result of \cite{K23}, the other main ingredient in the present paper  is the 
structure of Gorenstein-linear resolutions which is given in \cite{EKK3}.
In a sequence of three papers, we gave explicit formulas for the differentials in the minimal homogeneous resolution of $P/I$ by free $R$-modules, where $P=\kk[x_1,\dots,x_d]$ is a standard-graded polynomial ring over the field $\kk$ and $I$ is a grade $d$ homogeneous Gorenstein ideal, provided the resolution is Gorenstein-linear in the sense that the resolution has the form
$$0\to F_d\xrightarrow{f_d}F_{d-1}\xrightarrow{f_{d-1}}F_{d-2}\xrightarrow{f_{d-2}}\dots\xrightarrow{f_3}F_{2}\xrightarrow{f_2}F_{1}\xrightarrow{f_1}P$$
and all the entries of all of the matrices $f_i$, with $2\le i\le d-1$, are linear. The formulas are given in terms of the coefficients of a Macaulay inverse system for $I$. 
The  paper \cite{EKK1} proves that the project can be done;  \cite{EKK2} carries out the project when $d=3$, and \cite{EKK3} carries out the project for all $d$.

In the present paper,   
 $\AA$ 
is an 
  Artinian Gorenstein
$\kk$-algebra of embedding dimension four and socle degree three. 
Let $\Phi_3\in D_3U^*$ be a Macaulay inverse system for $\AA$. In other words,
$\AA$ is isomorphic to $P/\ann \Phi_3$. Fix an element $x$ in $U$ which is a weak Lefschetz element on $\AA$. Then $P/\ann x\Phi_3$ has a Gorenstein-linear resolution; consequently, the resolution of $P/\ann x\Phi_3$ is given by the formulas of \cite{EKK3}.  One quickly obtains a resolution of $\AA$. 
Furthermore, there is a symmetric $3\times 3$ matrix of constants $\SM$ which controls the precise form of the minimal resolution  of $\AA$. If $\SM$ has rank zero, then the defining ideal of $\AA$ has nine minimal generators and the minimal resolution of $\AA$ is given in Theorem \ref{9.1}.
If $\SM$ has rank one, then the defining ideal of $\AA$ has seven minimal generators. One may change variables to put $\SM$ in the form
\begin{equation}
\label{form}
\bmatrix \a&0&0\\0&0&0\\0&0&0\endbmatrix,\end{equation}for some nonzero $\a\in\kk$. Once this is done, then
 the minimal resolution of $\AA$ is given in Theorems \ref{7.1} and \ref{7.2}. If $\SM$ has rank at least two, then the defining ideal of $\AA$ has six minimal generators, $\AA$ is a hypersurface section of a codimension three Gorenstein ring, and the minimal resolution of $\AA$ is given in Theorems \ref{6.1} and \ref{6.2}.

One consequence of the above results is
\begin{corollary}
\label{1.1} 
If $\kk$ is an arbitrary field,  $\AA$ is  a standard graded
Artinian Gorenstein  
$\kk$-algebra of embedding dimension four and  socle degree three, and $\pi:P\to \AA$ is a surjective graded homomorphism from a polynomial ring with four variables over $\kk$ onto $\AA$, then the defining ideal of $\AA$ has $6$, $7$, or $9$ minimal generators.
\end{corollary}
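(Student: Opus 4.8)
The plan is to read this off from the case analysis and the main resolution theorems described above. First I would note that the number of minimal generators of $\ker\pi$ is an invariant of $\AA$: since $\AA$ has embedding dimension four, every graded surjection $\pi$ from a polynomial ring in four variables onto $\AA$ induces an isomorphism on degree-one components, so any two such surjections differ by a graded automorphism of $P$; equivalently, the minimal number of generators of $\ker\pi$ equals $\dim_\kk\operatorname{Tor}^P_1(\AA,\kk)$, which does not depend on the choice of $\pi$. Thus it suffices, for each $\AA$, to produce one graded free resolution of $\AA$ over $P$ and count the rank of $F_1$.

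Next I would fix a Macaulay inverse system $\Phi_3\in D_3U^*$ with $\AA\cong P/\ann\Phi_3$ and split into two cases. If $\AA$ has the weak Lefschetz property, choose a weak Lefschetz element $x\in U$; then $P/\ann x\Phi_3$ has a Gorenstein-linear resolution, the formulas of \cite{EKK3} apply, and one obtains both an explicit resolution of $\AA$ and the symmetric $3\times 3$ matrix $\SM$. Now run through the possible values of $r=\rank\SM\in\{0,1,2,3\}$: if $r=0$, then Theorem~\ref{9.1} gives the minimal resolution with $\rank F_1=9$; if $r=1$, then, after the change of variables \eqref{form}, Theorems~\ref{7.1} and \ref{7.2} give the minimal resolution with $\rank F_1=7$; and if $r\ge 2$, then Theorems~\ref{6.1} and \ref{6.2} give the minimal resolution with $\rank F_1=6$. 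Since these cases exhaust the possibilities for $r$, the ideal $\ker\pi$ has $6$, $7$, or $9$ minimal generators in this case. Finally, by \cite{K23} there is exactly one $\AA$ without the weak Lefschetz property; for it the resolution is written down directly in Section~\ref{Frob}, and one checks there that the defining ideal again has one of these three numbers of minimal generators.

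There is essentially no obstacle in the corollary itself — the substantive content lies in Theorems~\ref{9.1}, \ref{7.1}, \ref{7.2}, \ref{6.1}, and \ref{6.2}, which I am allowed to assume, together with the theorem of \cite{K23}. The only points that require a little attention are that $\SM$ is genuinely well defined, that passing from the resolution of $P/\ann x\Phi_3$ to a resolution of $\AA$ does not alter the first Betti number in an uncontrolled way, and that the normalization \eqref{form} can always be achieved by an honest linear change of variables in the rank-one case; all of these are settled in the course of proving the cited theorems.
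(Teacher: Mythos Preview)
Your proof is correct and follows essentially the same route the paper lays out in the introduction: split off the unique non-WLP exception via \cite{K23} and Section~\ref{Frob} (where the defining ideal visibly has seven generators), and in the WLP case run through $\rank\SM\in\{0,1,\ge 2\}$ citing the appropriate resolution theorem. The paper also packages the WLP case more compactly in Theorem~\ref{6.3}, which reads the generator count directly as $9-\rank B$ from Theorem~\ref{a res} and shows $\rank B\in\{0,2,3\}$; this avoids invoking the full minimal-resolution Theorems~\ref{9.1}, \ref{7.1}, \ref{6.1}, but the content is the same. (Minor note: Theorems~\ref{7.2} and \ref{6.2} concern the sum-of-linked-ideals structure rather than the minimal resolution itself; the generator counts you need come from Theorems~\ref{7.1} and \ref{6.1}.)
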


Corollary~\ref{1.1} is already known 
in \cite{AS22} and  \cite{MVW} when $\kk$ has characteristic zero. Indeed, these two papers predict the exact graded Betti numbers of $\AA$. We  
obtain the exact same graded Betti numbers in the general case as are obtained in the characteristic zero case.

\section{Notation, conventions, and elementary results.}

\begin{convention}
In this paper, $\kk$ is an arbitrary, but fixed, field. Every unadorned operation is a functor in the category of $\kk$-modules. In particular, we write  $\otimes$, $\w$, $\bigwedge^i$, $\operatorname{Sym}_j$, $D_j$, $L^{i}_{j}$, $K^{i}_{j}$, to mean $\otimes_{\kk}$, $\w_{\kk}$, $\bigwedge^i_{\kk}$, $\operatorname{Sym}_j^{\kk}$, $D_j^{\kk}$, ${L^i_j}{}^{\kk}$ and ${K_i^j}{}^{\kk}$, respectively. 
We 
write $U^*$ to mean $\operatorname{Hom}_{{\kk}}(U,{\kk})$, $DU^*$ to mean $D(U^*)$, and $x^{*(n)}$ to mean $(x^*)^{(n)}$. 
\end{convention}

\begin{chunk} 
If $M$ is a matrix with entries in the ring $P$ and $r$ is a positive integer, then $I_r(M)$ is the ideal of $P$ generated by the $r\times r$ minors of $M$.\end{chunk} 

\begin{chunk}
Let $I$ be a proper ideal in a commutative Noetherian ring $R$.
The {\it grade} of   
$I$   
is the length of a maximal  $R$-regular sequence in $I$. 
\end{chunk}

\begin{chunk}Let $M$ be a nonzero finitely generated module over a Noetherian ring $R$, 
$\ann(M)$ be the annihilator of $M$, and $\pd_RM$ be the projective dimension of $M$. 
It is always true that
 $$\operatorname{grade}\ann (M)\le \operatorname{pd}_R M.$$
When equality holds, then $M$ is called a {\it perfect $R$-module}.
Let $I$ be a proper ideal in a commutative Noetherian ring $R$.
The ideal $I$ is called a {\it perfect ideal} if $R/I$ is a perfect $R$-module.
\end{chunk}

\begin{convention}
 \label{2.5}
For any set of variables $\{x_1,\dots,x_r\}$ and any degree $s$, we write
$$\binom{x_1,\dots,x_r} s$$ for   the set of monomials of degree $s$ in the variables $x_1,\dots,x_r$.\end{convention}

\begin{conventions}
\begin{enumerate}[\rm(a)] 
 \item The graded algebra $\AA=\bigoplus_{0\le i}\AA_i$ is a {\it standard graded $\AA_0$-algebra} if $\AA_1$ is finitely generated as an $\AA_0$-module  and 
$\AA$ is generated as an $\AA_0$-algebra by $\AA_1$.

\item Let $\kk$ be a field and $\AA=\bigoplus \AA_i$ be a standard graded $\kk$-algebra. Then $\AA$ has the {\it weak Lefschetz property}  if there exists a linear form $\ell$ of $\AA_1$ such that the $\kk$-module homomorphism
$$\mu_{\ell}:\AA_i\to \AA_{i+1}$$ has maximal rank for each index $i$, where $\mu_{\ell}$ is multiplication by $\ell$. 
(A homomorphism $\xi:V\to W$ of finitely generated $\kk$-modules has {\it maximal rank} if $\rank \xi$ is equal to $\min\{\dim V,\dim W\}$.)

\item If $\AA=\bigoplus_{i=0}^\sigma \AA_i$ 
 is an Artinian standard-graded $\kk$-algebra, then $\AA$ is {\it Gorenstein with socle degree $\sigma$} if $\AA_\sigma$ is a one dimensional vector space and every nonzero ideal of $\AA$ contains $\AA_\sigma$.
\end{enumerate}
\end{conventions}
\begin{chunk}
 Let  $\AA=\bigoplus_{i=0}^\sigma \AA_i$ 
 be an  Artinian standard-graded $\kk$-algebra.
Let $U$ be the vector space $\AA_1$, $P$ be the polynomial ring $P=\Sym_\bullet U$, $I$ be a homogeneous ideal of $P$ with $\AA$ isomorphic to $P/I$, $U^*$ be the dual space $\Hom_{\kk}(U,\kk)$ of $U$,  and $D_\bullet U^*$ be the divided power $\kk$-algebra $\bigoplus_{0\le i} D_iU^*$, with $$D_iU^*=\Hom_{\kk}(\Sym_i U,\kk).$$
The rules for a divided power algebra are recorded in \cite[Section~7]{GL} or \cite[Appendix 2]{Eis}. (In practice these rules say that $\nu^{(n)}$ behaves like $\nu^n/(n!)$ would behave if $n!$ were a unit in $\kk$, for each element $\nu\in U^*$.)

The algebras $D_\bullet U^*$ and  $\Sym_\bullet U$ are modules over one another.
If $u_a$ is in $\Sym_aU$ and $\nu_b\in D_b U^*$, then $u_a\nu_b$ is the element of $$D_{b-a}U^*=\Hom_\kk(\Sym_{b-a}U,\kk)$$ with $$u_a\nu_b(u_{b-a})=\nu_b(u_au_{b-a}).
$$

Macaulay duality guarantees that 
$$\ann_{D_\bullet U^*}I$$ is a cyclic $P$-submodule of $D_\bullet U^*$ generated by an element 
in $D_\sigma U^*$. (Any generator 
$\ann_{D_\bullet U^*}I$
in $D_\sigma U^*$ 
is called 
a Macaulay inverse system for $\AA$.) Furthermore, if $\Phi_\sigma$ is a Macaulay inverse system for $\AA$, then
$$I=\ann_P \Phi_\sigma.$$
\end{chunk}

\ms

All of the results which are established in this paper use the following data and notation.
\begin{data}
\label{2.1} 
\begin{enumerate}
[\rm(a)] \item\label{2.1.a} Let $\kk$ be an arbitrary field, $U$ be a vector space over $\kk$ of dimension four, $U^*$ be $\Hom_{\kk}(U,\kk)$, $P$ be the polynomial ring $P=\Sym_{\bullet} U$, and $D_{\bullet}U^*$ be the corresponding divided power algebra. Let $\Phi_3$ be an element of $D_3U^*$ and $\AA$ be the algebra $P/\ann \Phi_3$. Assume that $x$ in $U$ is a weak Lefschetz element in $\AA$ and $\ann \Phi_3$ does not contain any nonzero linear elements.

\item Let $p:U\to U^*$ be the homomorphism $p(u)=u(x\Phi_3)$.

\item Fix an element  
$x^*\in U^*$ with $x^*(x)=1$. 
 Let $U_0=\ker(x^*:U\to \kk)$. Observe that  $U_0$ is a $\kk$-vector space 
of dimension 
 $d-1$ and  $U=\kk x\oplus U_0$.  
Let $$U_0^*=\{f\in U^*\mid f(x)=0\}.$$ Observe that $U_0^*=\kk x^* \p U_0^*$. 
\item\label{2.1.d} Define the symmetric bilinear form $\la -,-\ra:\Sym_2U_0\times \Sym_2U_0\to \kk$ by
$$\la u_{2,0},u_{2,0}'\ra= [u_{2,0}p^{-1}(u_{2,0}'\Phi_3)](\Phi_3),$$for elements $u_{2,0}$ and $u_{2,0}'$ in $\Sym_2U_0$.
\item Fix bases $\omega_{U_0}$ and $\omega_{U_0^*}$ for $\bw^3U_0$ and $\bw^3U_0^*$ with 
$$\omega_{U_0}(\omega_{U_0^*})=1.$$
Let $$\omega_U=x\w \omega_{U_0}\quad \text{and}\quad \omega_{U^*}=\omega_{U_0^*}\w x^*$$ be bases for $\bw^4 U$ and $\bw^4 U^*$.

\item If a basis $y,z,w$ is chosen for $U_0$, then $y^*,z^*,w^*$ is automatically chosen to be the basis for $U_0^*$ which is dual to the basis $y,z,w$ for $U_0$. Furthermore, $\omega_{U_0}$ and $\omega_{U_0^*}$ are automatically chosen to be
 $$\omega_{U_0}=y\w z\w w\quad\text{and}\quad \omega_{U_0^*}=w^*\w z^*\w y^*.$$

\item Let $\operatorname{proj}:\operatorname{Sym}_{\bullet}U\to\operatorname{Sym}_{\bullet}U_0$ the projection map induced  by $U=\kk x_1\oplus U_0$.

\end{enumerate}
\end{data}

\begin{Remark}Please notice that the final hypothesis in Data~\ref{2.1}.(\ref{2.1.a}), that ``$\ann \Phi_3$ does not contain any nonzero linear elements'' is equivalent to assuming that $\AA$ has embedding dimension four. \end{Remark} 

\begin{chunk}
\label{ev*} Let $U$ be a finite dimensional vector space over the field $\kk$. The evaluation map $D_NU_0^*\otimes \operatorname{Sym}_NU_0\xrightarrow{\ \operatorname{ev}\ }\kk$ is a coordinate-free map; consequently, the dual
$$\kk\xrightarrow{\ \operatorname{ev}^*\ } \operatorname{Sym}_NU_0 \otimes D_NU_0^* $$is also a coordinate-free map. Thus, $$\operatorname{ev}^*(1) =\sum\limits_{m\in \{m\}}m\otimes m^*$$ is an element of $  \operatorname{Sym}_NU_0 \t D_NU_0^*$
 which is completely coordinate-free, where $\{m\}$ is a basis for $U$ and $\{m^*\}$ is the corresponding dual basis for $U^*$.
\end{chunk}

\begin{observation} 
\label{GLR} Adopt Data~{\rm\ref{2.1}}.
The $P$-algebra $P/\ann x\Phi_3$ has a Gorenstein-linear resolution and the homomorphism $p:U\to U^*$ is an isomorphism.\end{observation}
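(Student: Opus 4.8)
The plan is to transfer both assertions to the algebra $B:=P/\ann(x\Phi_3)$. Since $x\Phi_3\in D_2U^*$, this $B$ is an Artinian Gorenstein $\kk$-algebra of socle degree at most two, and the starting observation is that $B\cong\AA/(0:_\AA x)$: for $f\in P$ one has $f(x\Phi_3)=x(f\Phi_3)$, so $f\in\ann(x\Phi_3)$ precisely when $xf\in\ann\Phi_3$, that is, precisely when the image of $xf$ in $\AA$ vanishes; hence $\ann(x\Phi_3)$ is the preimage in $P$ of $(0:_\AA x)$. Since $(0:_\AA x)_i=\ker(\mu_x\colon\AA_i\to\AA_{i+1})$, this gives $\dim_\kk B_i=\rank(\mu_x\colon\AA_i\to\AA_{i+1})$ for every $i$.

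First I would determine the Hilbert functions. Since $\AA$ is Gorenstein of socle degree three with embedding dimension four, its Hilbert function is $(1,4,4,1)$, the middle value coming from the perfect pairing $\AA_1\times\AA_2\to\AA_3$. Because $x$ is a weak Lefschetz element, every $\mu_x$ has maximal rank, so in particular $\mu_x\colon\AA_1\to\AA_2$ is an isomorphism and $\mu_x\colon\AA_2\to\AA_3$ is onto; feeding the ranks $1,4,1,0$ into the formula of the previous paragraph shows that $B$ has Hilbert function $(1,4,1)$. Consequently $\dim_\kk B_1=4=\dim_\kk U$, so $\ann(x\Phi_3)$ contains no nonzero linear form, i.e.\ $\ker p=0$; since $\dim_\kk U=\dim_\kk U^*$, the map $p$ is an isomorphism. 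This proves the second assertion, and it realizes $B$ as a grade four Artinian Gorenstein algebra of socle degree exactly $2$ and embedding dimension $4$.

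For the Gorenstein-linear resolution I would first prove that $I:=\ann(x\Phi_3)$ is minimally generated by its nine quadrics. For degree reasons $\maxm^3\subseteq I$, while $I_1=0$ and $\dim_\kk I_2=\dim_\kk\Sym_2U-\dim_\kk B_2=9$, so it suffices to show $I_2\cdot U=\Sym_3U$. Dually, let $\theta\in D_3U^*$ be annihilated by $I_2\cdot U$. The contraction $\Sym_2U\to D_1U^*$, $u_2\mapsto u_2\theta$, then vanishes on the hyperplane $I_2$, hence has rank at most one; if it is the zero map then $\theta=0$. Otherwise, since $I_2$ is the kernel of the nonzero functional $u_2\mapsto(x\Phi_3)(u_2)$, we obtain $u_2\theta=(x\Phi_3)(u_2)\,\nu$ for all $u_2\in\Sym_2U$ and some nonzero $\nu\in U^*$; contracting this identity once more against an arbitrary $t\in U$ and using that $\theta(tvw)$ depends only on the symmetric product $tvw$ gives
$$(x\Phi_3)(vw)\,\nu(t)=(x\Phi_3)(tw)\,\nu(v)=(x\Phi_3)(tv)\,\nu(w)\qquad\text{for all }t,v,w\in U.$$
Taking $v,w$ in the three-dimensional subspace $\ker\nu$ and $t$ with $\nu(t)\ne0$ forces $(x\Phi_3)(vw)=0$, so $\ker\nu$ would be totally isotropic for the symmetric bilinear form $(v,w)\mapsto(x\Phi_3)(vw)$ whose associated linear map $U\to U^*$ is $p$; since $p$ is an isomorphism this form is nondegenerate, and any totally isotropic subspace $H$ satisfies $H\subseteq H^\perp$ and hence $\dim_\kk H\le 2$, a contradiction. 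Therefore $\theta=0$, so $I_2\cdot U=\Sym_3U$ and $I=(I_2)$.

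Finally I would read off the shape of the resolution. Now $B$ is a grade four Artinian Gorenstein algebra whose defining ideal is generated by nine quadrics and whose $h$-vector is $(1,4,1)$, so its minimal free resolution is self-dual with top term $P(-6)$ (the regularity of $B$ is $2$). Thus $F_1=P(-2)^9$ forces $F_3=P(-4)^9$, and since every twist occurring in $F_2$ is at least $3$ by minimality while the multiset of twists of $F_2$ is symmetric about $3$, we get $F_2=P(-3)^m$; comparing with the identity $(1+4t+t^2)(1-t)^4=1-9t^2+16t^3-9t^4+t^6$ yields $m=16$. Therefore the minimal free resolution of $B=P/\ann(x\Phi_3)$ is
$$0\to P(-6)\to P(-4)^9\to P(-3)^{16}\to P(-2)^9\to P,$$
and its two middle differentials have linear entries; that is, the resolution is Gorenstein-linear. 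The step I expect to be the main obstacle is the quadric-generation of $\ann(x\Phi_3)$ — equivalently the surjectivity $I_2\cdot U=\Sym_3U$ — which has to be argued uniformly over an arbitrary field; the isotropic-subspace estimate above handles this characteristic-free, but one could instead base change to $\overline{\kk}$, where neither the Betti numbers of $B$ nor the bijectivity of $p$ change, and invoke the known structure of the apolar algebra of a nondegenerate quadric in four variables.
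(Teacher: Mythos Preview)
Your proof is correct, and it takes a genuinely different route from the paper's. The paper's argument is essentially a one-line citation: it observes that $\ann(x\Phi_3)$ contains no nonzero linear form (because $x$ is a weak Lefschetz element on $\AA$ and $\AA$ has embedding dimension four), notes that $P/\ann(x\Phi_3)$ has socle degree two, and then invokes \cite[Prop.~1.8]{EKK1} to conclude both that the resolution is Gorenstein-linear and that $p$ is an isomorphism.

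You instead give a fully self-contained argument. You recover the same ``no linear forms'' statement by computing the Hilbert function $(1,4,1)$ of $B$ via $B\cong\AA/(0:_\AA x)$ and the weak Lefschetz hypothesis, which immediately gives the bijectivity of $p$. The substantive addition is your direct proof that $\ann(x\Phi_3)$ is generated in degree two: the duality reduction to a putative $\theta\in D_3U^*$, followed by the observation that a nonzero such $\theta$ would force a three-dimensional totally isotropic subspace for the nondegenerate form attached to $p$, is a clean characteristic-free replacement for whatever machinery sits behind \cite[Prop.~1.8]{EKK1}. Once quadric generation is in hand, your Betti-number computation from self-duality and the $h$-vector is routine. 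The trade-off is clear: the paper's proof is short but opaque without access to \cite{EKK1}, while yours is longer but stands on its own and makes explicit exactly where the weak Lefschetz hypothesis and the nondegeneracy of $p$ enter.
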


\begin{proof}The fact that $\AA$ has embedding dimension four guarantees that zero is the only linear element in $\ann \Phi$.  
The fact that $x$ is a weak Lefschetz element on $P/\ann \Phi_3$ guarantees that zero is the only linear element of  $\ann x\Phi_3$. The socle degree of $P/x\ann \Phi_3$ is two. Apply \cite[Prop.~1.8]{EKK1} to see that $P/\ann x\Phi_3$ has a Gorenstein-linear resolution.
\end{proof}

\begin{chunk}
\label{L-K} Let $U_0$ be a finite dimensional vector space over the field $\kk$. The vector spaces  $L^a_bU_0$ and $K^a_bU_0$ represent the Schur and Weyl functors 
for a hook
as described, for example,  in \cite[Data~2.1]{EKK1}. (The translation between our notation and the notation of Buchsbaum and Eisenbud and the notation of Weyman is contained in \cite[2.1 and 2.5]{EKK1}.)
In particular, 
$$\textstyle \begin{array}{lll}
L^a_bU_0&=&\ker \left(\bigwedge^aU_0\otimes{\operatorname{Sym}}_bU_0\xrightarrow{\kappa}
\bigwedge^{a-1}U_0\otimes{\operatorname{Sym}}_{b+1}U_0\right)\quad\text{and}\vspace{5pt}\\
K^a_bU_0&=&\ker \left(\bigwedge^aU_0\otimes D_bU_0^*\xrightarrow{\eta}
\bigwedge^{a-1}U_0\otimes D_{b-1}U_0^*\right),\end{array}$$where $\kappa$ is a Koszul complex map and $\eta$ is an Eagon-Northcott complex map.
If $x_1,\dots, x_d$ and $x_1^*,\dots,x_d^*$ are a pair of dual bases for $U$ and $U^*$, then  
\begin{equation}\label{kappa-eta}\kappa(\theta\otimes u)=\sum\limits_{i=2}^dx_i^*(\theta)\otimes x_iu\quad \text{and}\quad \eta(\theta\otimes \nu)=\sum\limits_{i=2}^dx_i^*(\theta)\otimes x_i(\nu)\end{equation} 
for $\theta\in \bigwedge^aU_0$, $u\in \operatorname{Sym}_bU_0$, and $\nu\in D_{b}U_0^*$. In light of (\ref{ev*}), the description of $\kappa$ and $\eta$ given in (\ref{kappa-eta}) is coordinate-free.
\end{chunk}

\begin{chunk} 
We often write ``after a change of variables'' some expression becomes some other expression. Indeed, the paper \cite{K23} guarantees that (with one exception) every Gorenstein algebra $\AA$ under consideration (as described in Data~\ref{2.1}) contains a weak Lefschetz element. This paper always assumes   (except in Section~\ref{Frob}) that $x$ is a weak  Lefschetz element. Often it is necessary to change the variables in order to make $x$ be a weak Lefschetz element. Also, when we find the precise form of the minimal resolution when $\SM$ has rank one we assume that one has made a change of variables to put ``$\SM$'' in the form (\ref{form}). It is not difficult to change variables. We show the procedure here.

Suppose that $x,y,z,w$ and $x^*,y^*,z^*,w^*$ is a pair of dual bases for $U$ and $U^*$, respectively and that  $X,Y,Z,W$ and  $X^*,Y^*,Z^*,W^*$ is another  pair of dual bases for $U$ and $U^*$.
If $M$ is an invertible matrix with entries in $\kk$ and  $$\bmatrix X&Y&Z&W\endbmatrix 
 =\bmatrix x&y&z&w\endbmatrix M,$$ then
$$\bmatrix X^*&Y^*&Z^*&W^*\endbmatrix M\transpose 
 =\bmatrix x^*&y^*&z^*&w^*\endbmatrix.$$
\end{chunk}

\section{The structure of Gorenstein-linear  resolutions, general case.}
We record the main result  
of \cite{EKK3}. We employ  
a different point of view than the one used in \cite{EKK3}. We work over a field $\kk$ rather than over the ring of integers $\Z$. The coefficients of our Macaulay inverse system $\Phi_{2n-2}$ are elements of $\kk$ rather than variables. We deal with the inverse of $p$ rather than ``$q$'', which is essentially the classical adjoint of $p$. We do not have reason to write $\det p$. In \cite{EKK3}, $\pmb \delta$ represents the determinant of $p$; in other words, $\pmb \delta$ is a polynomial of high degree in the coefficients of $\Phi_{2n-2}$. The results of  \cite{EKK3} hold after the coefficients of $\Phi_{2n-2}$ have been specialized to elements of $\kk$, provided the image of  $\pmb \delta$ is a unit.

There is one other difference between the language of \cite{EKK3} and the language we use. In both situations, it is necessary to consider an ``integral'' of $\Phi_{2n-2}$; in other words, we must consider an element $\Phi_{2n-1}$ with \begin{equation}\label{integral}x(\Phi_{2n-1})=\Phi_{2n-2}\end{equation} The plan of \cite{EKK3} was to specify everything and leave no choices. In \cite{EKK3} we chose $\Phi_{2n-1}$ to have a special property in addition to
(\ref{integral}). We insisted that when $\Phi_{2n-1}$ was written as an element of $$D_{2n-1} U_0^*\p D_{2n-2}U_0^*x^*\p D_{2n-3} U_0^*x^{*(2)}\p\dots\p 
D_{0}U_0^* x^{*(2n-1)},$$(where the decomposition $U=\kk x\p U_0$ had been previously made), then the ``constant term'' in $D_{2n-1} U_0^*$ is zero.  (See, for example, \cite[Rem.~3.4.(a)]{EKK-qp} or \cite[Data 2.5.(d)]{EKK3}.) Unfortunately, we later realized that this choice limited the potential applications of \cite{EKK3}. If one uses an arbitrary $\Phi_{2n-1}$ rather than the special $\widetilde{\Phi}_{2n-1}$, with ``constant term'' zero, one gets a different resolution; however, the new resolution is isomorphic to the old resolution. When $d=3$, the argument for this isomorphism is given explicitly as \cite[Thm.~3.6]{EKK-qp}. The exact same argument works for all $d$. 

\begin{data}
\label{Opening-Data} Let $\kk$ be a field,  $U$ be a vector space over $\kk$ of dimension $d$, 
and
 $n\ge 2$ be an integer.  Let $\Phi_{2n-2}$ be an element of $D_{2n-2}U^*$. Assume that the homomorphism $p:\Sym_{n-1}U\to D_{n-1}U^*$, given by $p(u_{n-1})=u_{n-1}\Phi_{2n-2}$, is an isomorphism.

\begin{enumerate}[(a)]

\item Fix elements $x_1\in U$ and $x_1^*\in U^*$ with $x_1^*(x_1)=1$. 
\item Let $U_0=\ker(x_1^*:U\to \kk)$. Observe that  $U_0$ is a $\kk$-vector space 
of dimension 
 $d-1$ and  $U=\kk x_1\oplus U_0$. 
\item View $U_0^*$ as the submodule of $U^*$ with $U_0^*(x_1)=0$.
\item Let $\Phi_{2n-1}$ be an element of $P\otimes D_{2n-1}U^*$ with $x_1(\Phi_{2n-1})=\Phi_{2n-2}$ in $D_{2n-2}U^*$. 
\item Let $x_2,\dots,x_d$ be a basis for $U_0$ and $x_2^*,\dots,x_d^*$ be the corresponding dual basis for $U_0^*$.

\item Let $\operatorname{proj}:\operatorname{Sym}_{n-1}U\to\operatorname{Sym}_{n-1}U_0$ the projection map induced  by $U=\kk x_1\oplus U_0$.

\end{enumerate}
\end{data}

The following result is the main theorem of \cite{EKK3}. The complex $(F,f)$ is defined in \cite[Def.~3.1]{EKK3}. The complex $(F,f)$ is shown to be a resolution in \cite[Cor.~6.17.g]{EKK3}. The universal property of $(F,f)$ is established in \cite[Cor.~6.18]{EKK3}. The vector spaces $K^a_bU_0$ and $L^a_bU_0$ are defined in \ref{L-K}. 

\begin{theorem}
\label{MTEKK3} 
Adopt Data~{\rm\ref{Opening-Data}}. Then the minimal homogeneous resolution of 
$$P/\ann \Phi_{2n-2}$$ by free $P$-modules is given by
 $$(F,f): \quad 0\to F_d\xrightarrow{f_d} F_{d-1}\xrightarrow{f_{d-1}}\dots \xrightarrow{f_2} F_{1}\xrightarrow{f_{1}}F_0,$$ as described below. 
\begin{enumerate}[\rm (a)]

\item\label{def1-a} Define the free $P$-module $F_r$ by
$$F_r=\begin{cases}
P,&\text{if $r=0$},\\P\otimes (K^{r-1}_{n-1}U_0\oplus L^{r-1}_{n}U_0),&\text{if $1\le r\le d-1$},\\
P\otimes \bigwedge^{d-1}U_0,&\text{if $r=d$}.\end{cases}$$
\item\label{diff-gen}  The $P$-module homomorphism  $f_1:F_1\to F_0=P$ is defined by  $$\begin{array}{ll} f_1(\nu_{n-1,0})=x_1p^{-1}(\nu_{n-1,0}),&\text{for $\nu_{n-1,0}\in K^0_{n-1}U_0$,}\vspace{5pt}\\f_1(u_{n,0})=u_{n,0}-x_1p^{-1}(u_{n,0}(\Phi_{2n-1})),&\text{for $u_{n,0}\in  L^{0}_{n}U_0$.}\end{array}$$

\item\label{diff-a}  For each  integer $r$, with $2\le r\le d-1$, the $P$-module homomorphism $$f_r:\underbrace{P\otimes K^{r-1}_{n-1}U_0}_{\subseteq F_r}\to F_{r-1}=\left\{\begin{array}{c} P\otimes K^{r-2}_{n-1}U_0\\\oplus\\P\otimes L^{r-2}_{n}U_0\end{array}\right.$$ 
is defined by $f_r({\textstyle\sum_i}\theta_{r-1,0,i}\otimes \nu_{n-1,0,i})$ is equal to
\begin{equation}\label{9-27-15-z}
\begin{cases}-x_1\otimes \sum_i\eta\left(\theta_{r-1,0,i}\otimes [p^{-1}(\nu_{n-1,0,i})](\Phi_{2n-1})\right)
\vspace{5pt}\\
+ \sum_{j=2}^dx_j\t x_j^*(\theta_{r-1,0,i})\otimes \nu_{n-1,0,i}
\\\hline
-x_1\otimes \sum_i\kappa (\theta_{r-1,0,i}\otimes \operatorname{proj}(p^{-1}(\nu_{n-1,0,i}))),
\end{cases}\end{equation}
for $\theta_{r-1,0,i}\in \bigwedge^{r-1}U_0$, $\nu_{n-1,0,i}\in D_{n-1}U_0^*$,  and $\sum_i\theta_{r-1,0,i}\otimes \nu_{n-1,0,i}\in K^{r-1}_{n-1}U_0$.
\item\label{diff-b}  For each  integer $r$, with $2\le r\le d-1$, the $P$-module homomorphism  $$f_r:\underbrace{P\otimes L^{r-1}_{n}U_0}_{\subseteq F_r}\to F_{r-1}=\left\{\begin{array}{c} P\otimes K^{r-2}_{n-1}U_0\\\oplus\\P\otimes L^{r-2}_{n}U_0\end{array}\right.$$  is defined by $f_r({\textstyle\sum_i}\theta_{r-1,0,i}\otimes u_{n,0,i})$ is equal to
\begin{equation}\label{9-27-15-a}
\begin{cases}\phantom{+}x_1\otimes \sum_i\eta \left(\theta_{r-1,0,i}\otimes [p^{-1}(u_{n,0,i}(\Phi_{2n-1}))](\Phi_{2n-1})\right)
\vspace{5pt}\\\hline
+x_1\otimes \sum_i\kappa \left(\theta_{r-1,0,i}\otimes \operatorname{proj}(p^{-1}(u_{n,0,i}(\Phi_{2n-1})))\right)\vspace{5pt}\\
+ \sum_{j=1}^d\sum_i x_j\t x_j^*(\theta_{r-1,0,i})\otimes u_{n,0,i}, 
\end{cases}\end{equation} for $\theta_{r-1,0,i}\in \bigwedge^{r-1}U_0$, $u_{n,0,i}\in \operatorname{Sym}_{n}U_0$, and $\sum_i\theta_{r-1,0,i}\otimes u_{n,0,i}\in L^{r-1}_{n}U_0$.

\item The $P$-module homomorphism  $f_d:F_d\to F_{d-1}$  is defined by 
$$f_d: P\otimes {\textstyle\bigwedge^{d-1}}U_0=F_d\longrightarrow F_{d-1}=\left\{\begin{array}{c} P\otimes K^{d-2}_{n-1}U_0\\\oplus\\ P\otimes L^{d-2}_{n}U_0,\end{array}\right.$$
with
$$F_d(\omega_{d-1,0})=\begin{cases} 
\phantom{+}\sum\limits_{m\in\binom{x_2,\dots,x_d}{n}} [ m-x_1p^{-1}(m(\Phi_{2n-1}))]\otimes \eta(\omega_{d-1,0}\otimes m^*)\vspace{5pt}\\\hline -\sum\limits_{m\in\binom{x_2,\dots,x_d}{n-1}} x_1p^{-1}(m^*)\otimes \kappa(\omega_{d-1,0}\otimes m),
\end{cases}$$ for $\omega_{d-1,0}\in \bigwedge^{d-1} U_0$.
\end{enumerate}
\end{theorem}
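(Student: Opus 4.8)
This is the main theorem of \cite{EKK3} (see \cite[Def.~3.1]{EKK3}, \cite[Cor.~6.17.g]{EKK3}, \cite[Cor.~6.18]{EKK3}); what follows is how I would reconstruct that argument. The statement allows $\Phi_{2n-1}$ to be \emph{any} integral of $\Phi_{2n-2}$, whereas \cite{EKK3} uses the distinguished integral with vanishing constant term in $D_{2n-1}U_0^*$; so I would first note, by the argument of \cite[Thm.~3.6]{EKK-qp} (which does not use $d=3$), that changing the integral replaces $(F,f)$ by an isomorphic complex, thereby reducing the theorem to the distinguished case. Combined with the coordinate-freeness of $\kappa$ and $\eta$ from \ref{ev*} and \ref{L-K}, this also shows the construction is independent of the auxiliary choices of $x_1$, $x_1^*$, and the basis of $U_0$. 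It then suffices to prove the three standard ingredients of a Buchsbaum--Eisenbud-type structure theorem: that $(F,f)$ is a complex, that its differentials have the expected ranks, and that it is acyclic. Minimality is automatic afterwards, since the entries of $f_1$ and $f_d$ are homogeneous of degree $n\ge 2$ and those of $f_2,\dots,f_{d-1}$ are linear; so $(F,f)$ is then a minimal free resolution of $P/\ann\Phi_{2n-2}$.

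The expected ranks needed for the acyclicity criterion are the alternating sums $e_r=\sum_{j\ge r}(-1)^{j-r}\rank F_j$, which are purely combinatorial: $\rank F_r=\dim_\kk(K^{r-1}_{n-1}U_0\oplus L^{r-1}_nU_0)$ for $1\le r\le d-1$, and $\rank F_d=\dim_\kk\bigwedge^{d-1}U_0=1$, computed by the standard hook dimension formulas. (As a consistency check, these agree with the graded Betti numbers predicted for a Gorenstein-linear resolution of an Artinian Gorenstein algebra with the Hilbert function that ``$p$ an isomorphism'' forces, in the sense of \cite{EKK1}.) Once acyclicity is known, $H_0(F)=\operatorname{coker} f_1$ is a perfect $P$-module of codimension $d$, hence Artinian Gorenstein (as $\rank F_d=1$), with socle degree $2n-2$ read off the grading. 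Since the hypothesis that $p$ is an isomorphism forces $(\ann\Phi_{2n-2})_i=0$ for $i\le n-1$ ($P$ being a domain) and hence forces the Hilbert function of $P/\ann\Phi_{2n-2}$, and since the generators of $\operatorname{im} f_1$ visibly annihilate $\Phi_{2n-2}$, the surjection $\operatorname{coker} f_1\twoheadrightarrow P/\ann\Phi_{2n-2}$ is an isomorphism — a surjection of Artinian Gorenstein algebras of equal socle degree being an isomorphism.

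That $(F,f)$ is a complex is the most computational and least conceptual step. I would split each composite $f_rf_{r+1}$ along the decomposition of the middle free modules into their $K^{\bullet}_{n-1}U_0$ and $L^{\bullet}_nU_0$ summands and reduce the vanishing to three families of identities: that $\kappa$ and $\eta$ are the differentials of the Koszul and Eagon--Northcott complexes, so the ``constant'' contributions telescope; that $p^{-1}$ is a two-sided inverse of $p\colon u\mapsto u\Phi_{2n-2}$, which cancels the cross terms pairing a variable $x_j$ against $p^{-1}$ of something contracted into $\Phi_{2n-1}$; and the integral relation $x_1(\Phi_{2n-1})=\Phi_{2n-2}$, which reconciles the two ways a single factor of $x_1$ can arise. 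The endpoint identities $f_1f_2=0$ and $f_{d-1}f_d=0$ require separate computations, since $F_0=P$ and $F_d=P\otimes\bigwedge^{d-1}U_0$ are not of the generic middle shape.

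Acyclicity is the genuine obstacle. With the expected ranks in hand I would invoke the acyclicity criterion (Buchsbaum--Eisenbud, Peskine--Szpiro): what remains is to show $\grade I_{e_r}(f_r)\ge r$ for all $r$, with $\grade I_{e_1}(f_1)=d$. Every entry of $f_r$ is a polynomial in the coefficients of $\Phi_{2n-2}$ and in the entries of the classical adjoint of $p$, divided by $\pmb\delta=\det p$; hence it suffices to prove these grade inequalities over the ``universal'' base in which the coefficients of $\Phi_{2n-2}$ are indeterminates and $\pmb\delta$ is inverted — where the relevant ideals of minors are as large as one can hope — and then to specialize to an arbitrary field $\kk$ and an arbitrary $\Phi_{2n-2}$ with $\pmb\delta$ a unit, since such a specialization is flat at the pertinent localizations and preserves acyclicity. (Concretely, one may instead localize the specialized complex at a prime $\mathfrak p$ of $P$ of grade $<r$ and check that it becomes split exact in the $r$-th spot, using that off the vanishing locus of the relevant minors $f_r$ has a unit maximal minor.) I expect the real work to be the grade bounds over the universal base — most plausibly an induction on $d$ that relates the dimension-$d$ complex to the dimension-$(d-1)$ complex via a generic hyperplane section.
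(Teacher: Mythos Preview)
Your proposal is correct and matches the paper's treatment: the paper does not reproduce a proof but simply records this as the main theorem of \cite{EKK3}, citing \cite[Def.~3.1, Cor.~6.17.g, Cor.~6.18]{EKK3}, and handles the freedom in the choice of $\Phi_{2n-1}$ exactly as you do, by invoking the argument of \cite[Thm.~3.6]{EKK-qp} (noting that the $d=3$ argument there works for all $d$). Your additional sketch of the \cite{EKK3} argument --- verify $(F,f)$ is a complex, compute expected ranks, and establish acyclicity via the Buchsbaum--Eisenbud criterion over the universal base with $\pmb\delta$ inverted --- is a faithful outline of that paper's strategy, though the present paper makes no attempt to reconstruct it.
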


\begin{remark} 
\label{7.3}
One key feature of the differential given in Theorem~\ref{MTEKK3} is that the expressions
\begin{align*}
\sum_i\eta\left(\theta_{r-1,0,i}\otimes [p^{-1}(\nu_{n-1,0,i})](\Phi_{2n-1})\right)\quad\text{and}\\ 
\sum_i\eta \left(\theta_{r-1,0,i}\otimes [p^{-1}(u_{n,0,i}(\Phi_{2n-1}))](\Phi_{2n-1})\right)\end{align*}
from
(\ref{9-27-15-z}) and (\ref{9-27-15-a}) are automatically in $K^{r-1}_{n-1} U_0$, no projection is necessary. Of course, these expressions are automatically in $K^{r-1}_{n-1} U$. One could apply a projection map to look at the component in  $K^{r-1}_{n-1} U_0$. However this projection map would be ``in the way'' as one verifies that $(F,f)$ is a complex. It is fortunate that no projection map is needed. 
To show that 
$$\ts\sum_i\eta\left(\theta_{r-1,0,i}\otimes [p^{-1}(\nu_{n-1,0,i})](\Phi_{2n-1})\right)$$
is in $K^{r-1}_{n-1} U_0$ it suffices to show that
\begin{equation}\ts(1\t x_1)\left(\sum_i\eta\left(\theta_{r-1,0,i}\otimes [p^{-1}(\nu_{n-1,0,i})](\Phi_{2n-1})\right)\right)=0.\label{7.3.1}\end{equation} 
 We prove (\ref{7.3.1}) 
 mainly to give the flavor of how the differential of $(F,f)$ works.
Observe that the left side of (\ref{7.3.1}) is equal to

\begin{align*}
&\ts\sum_i\eta\left(\theta_{r-1,0,i}\otimes x_1[p^{-1}(\nu_{n-1,0,i})](\Phi_{2n-1})\right)\\
{}={}&\ts\sum_i\eta\left(\theta_{r-1,0,i}\otimes [p^{-1}(\nu_{n-1,0,i})](x_1\Phi_{2n-1})\right)\\
{}={}&\ts\sum_i\eta(\theta_{r-1,0,i}\otimes p[p^{-1}(\nu_{n-1,0,i})])
=\sum_i\eta(\theta_{r-1,0,i}\t \nu_{n-1,0,i})=0.
\end{align*}
The first equality holds because the action of $1\t x_1$ on $\ts\bw^\bullet U\t D_\bullet U^*$ commutes with the Eagon-Northcott map $\eta$. The second equality holds because $D_\bullet U^*$ is a module over $\Sym_\bullet U$. The third equality uses the definition of $p$, which is given in Data~\ref{Opening-Data}.
The conclusion follows from the hypothesis that $\sum_i \theta_{r-1,0,i}\t \nu_{n-1,0,i}$ is in $K^{r-1}_{n-1}U_0$.
\end{remark}

\begin{remark}
 We also demonstrate that the composition $$P\t L_n^1U_0\xrightarrow{f_2}F_1
\xrightarrow{f_1}P$$ from the resolution $(F,f)$ of Theorem~{\rm \ref{MTEKK3}}
is zero. Our motivation for recording this argument here is the same as our motivation for recording Remark~{\rm\ref{7.3}}. We want to convey some of the tricks involved in studying the resolution $(F,f)$. 

Take  $\theta_{1,0,i}\in \bigwedge^{1}U_0$ and $u_{n,0,i}\in \Sym_{n}U_0$, with $\sum_i\theta_{1,0,i}\otimes u_{n,0,i}\in L^{1}_{n}U_0$.
Observe that $(f_1\circ f_2)(\sum_i\theta_{1,0,i}\otimes u_{n,0,i})$ is $S_1+S_2+S_3+S_4+S_5$ with
\begin{align*} S_1={}&\ts x_1^2\cdot  p^{-1}\Big(\sum_i\theta_{1,0,i}\Big[ [p^{-1}(u_{n,0,i}(\Phi_{2n-1})](\Phi_{2n-1})\Big]\Big),
\\
S_2={}&\ts x_1\cdot \Big[\sum_i \Big(\theta_{1,0,i}\cdot \proj\big(p^{-1}[u_{n,0,i}(\Phi_{2n-1})]\big)\Big)\Big],\\
S_3={}&\ts -x_1^2\cdot 
p^{-1}\Big(\Big[\sum_i \Big(\theta_{1,0,i}\cdot \proj\big(p^{-1}[u_{n,0,i}(\Phi_{2n-1})]\big)\Big)\Big](\Phi_{2n-1})\Big),
\\
S_4={}&\ts-\sum_i x_1\theta_{1,0,i}\cdot 
p^{-1}[(u_{n,0,i})(\Phi_{2n-1})],\text{ and}\\
S_5={}&\ts\sum_i \theta_{1,0,i}\cdot (u_{n,0,i})=0.
\\
\end{align*}Observe further that
\begin{align*}
S_1+S_3={}&\ts x_1^2\cdot 
p^{-1}\Big(\Big[\sum_i \Big(\theta_{1,0,i}\cdot (1-\proj)\big(p^{-1}[u_{n,0,i}(\Phi_{2n-1})]\big)\Big)\Big](\Phi_{2n-1})\Big)
\\
S_2+S_4={}&\ts -x_1\cdot \Big[\sum_i \Big(\theta_{1,0,i}\cdot (1-\proj)\big(p^{-1}[u_{n,0,i}(\Phi_{2n-1})]\big)\Big)\Big].
\end{align*}
Apply Observation~\ref{8K} to $$\ts u_n=\Big[\sum_i \Big(\theta_{1,0,i}\cdot (1-\proj)\big(p^{-1}[u_{n,0,i}(\Phi_{2n-1})]\big)\Big)\Big]$$
to see that 
$$S_1+S_3=x_1\cdot \ts 
\Big[\sum_i \Big(\theta_{1,0,i}\cdot (1-\proj)\big(p^{-1}[u_{n,0,i}(\Phi_{2n-1})]\big)\Big)\Big]=-(S_2+S_4).
$$ Thus, the restriction of $f_1\circ f_2$ to $L_n^1U_0$ is identically zero. 
\end{remark}

\begin{observation}
\label{8K}
Adopt Data~{\rm\ref{Opening-Data}}. If $u_n\in x_1\Sym_{n-1}U$, then $x_1p^{-1}(u_n(\Phi_{2n-1}))$ is equal to $u_n$. \end{observation}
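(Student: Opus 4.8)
The statement is that if $u_n \in x_1\Sym_{n-1}U$, then $x_1p^{-1}(u_n(\Phi_{2n-1})) = u_n$. The plan is to reduce everything to the isomorphism property of $p:\Sym_{n-1}U \to D_{n-1}U^*$ and the defining relation $x_1(\Phi_{2n-1}) = \Phi_{2n-2}$. First I would write $u_n = x_1 v_{n-1}$ for some $v_{n-1} \in \Sym_{n-1}U$, which is possible exactly because $u_n$ lies in the ideal generated by $x_1$ in symmetric degree $n$. Since $p$ is an isomorphism, it suffices to show that $p$ applied to both sides agrees; that is, it suffices to prove
\begin{equation*}
p\bigl(x_1 p^{-1}(u_n(\Phi_{2n-1}))\bigr) = p(u_n) = u_n\Phi_{2n-2}.
\end{equation*}

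Now the left-hand side. Because $D_\bullet U^*$ is a module over $\Sym_\bullet U$, the map $p$ commutes with multiplication by elements of $\Sym_\bullet U$; more precisely, for $w \in \Sym_{n-1}U$ and any scalar-degree-raising element, $p(x_1 w)$ should be computed via $p(w) = w\Phi_{2n-2}$ — but here one must be slightly careful, since $x_1 w$ has degree $n$, not $n-1$, so $p$ does not literally apply to it. This is the point of the computation: the expression $p^{-1}(u_n(\Phi_{2n-1}))$ already lives in $\Sym_{n-1}U$ (note $u_n(\Phi_{2n-1}) \in D_{n-1}U^*$ since $u_n$ has degree $n$ and $\Phi_{2n-1}$ has degree $2n-1$), and then multiplying by $x_1$ and applying $p$ means $x_1 \cdot \bigl(p^{-1}(u_n(\Phi_{2n-1}))\bigr) \cdot \Phi_{2n-2}$. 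So I would compute
\begin{equation*}
p\bigl(x_1 p^{-1}(u_n(\Phi_{2n-1}))\bigr) \;=\; x_1 \cdot \bigl(p^{-1}(u_n(\Phi_{2n-1}))\bigr)\Phi_{2n-2} \;=\; p^{-1}(u_n(\Phi_{2n-1}))\,(x_1\Phi_{2n-2}).
\end{equation*}
Wait — $\Phi_{2n-2}$ has degree $2n-2$, so $x_1\Phi_{2n-2}$ has degree $2n-3$, which is wrong. The correct reading is that we must instead use $x_1(\Phi_{2n-1}) = \Phi_{2n-2}$ to ``integrate'': the key identity is $p^{-1}(\xi)\cdot \Phi_{2n-2} = \xi$ by definition of $p$, combined with $u_n(\Phi_{2n-1}) = (x_1 v_{n-1})(\Phi_{2n-1}) = v_{n-1}(x_1\Phi_{2n-1}) = v_{n-1}(\Phi_{2n-2}) = v_{n-1}\Phi_{2n-2} = p(v_{n-1})$. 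Hence $p^{-1}(u_n(\Phi_{2n-1})) = v_{n-1}$, and therefore $x_1 p^{-1}(u_n(\Phi_{2n-1})) = x_1 v_{n-1} = u_n$, which is exactly the claim.

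So the clean argument is: write $u_n = x_1 v_{n-1}$; use that $D_\bullet U^*$ is a $\Sym_\bullet U$-module together with $x_1(\Phi_{2n-1}) = \Phi_{2n-2}$ to get $u_n(\Phi_{2n-1}) = v_{n-1}(\Phi_{2n-2}) = p(v_{n-1})$; apply $p^{-1}$; multiply by $x_1$. The main (and really only) subtlety to get right is the bookkeeping of divided-power degrees — making sure that each expression lands in the graded piece where it is claimed to live, so that $p$ and $p^{-1}$ are being applied to genuine degree-$(n-1)$ elements and the module action $u_a \nu_b = \nu_{b-a}$ is being used with the correct indices. Once the degrees are tracked, no genuine computation remains; it is three applications of the module-action definition. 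I do not anticipate any real obstacle here — this is a warm-up lemma whose role is to be invoked in the preceding remark's verification that $f_1 \circ f_2 = 0$ on $L_n^1 U_0$.
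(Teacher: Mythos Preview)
Your final clean argument is exactly the paper's proof: write $u_n = x_1 v_{n-1}$, compute $u_n(\Phi_{2n-1}) = v_{n-1}(x_1\Phi_{2n-1}) = v_{n-1}\Phi_{2n-2} = p(v_{n-1})$, then apply $p^{-1}$ and multiply by $x_1$. The initial detour (applying $p$ to both sides and the degree confusion with $x_1\Phi_{2n-2}$) is unnecessary and should be pruned, but once you self-correct you match the paper line for line.
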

\begin{proof}
Let $u_n=x_1u_{n-1}$. Observe that
$$x_1p^{-1}(u_n(\Phi_{2n-1}))=x_1p^{-1}(x_1u_{n-1}(\Phi_{2n-1}))
=x_1p^{-1}p(u_{n-1})=x_1u_{n-1}=u_n
.$$
\end{proof}

Proposition~7.10 of \cite{EKK3} describes the self-duality of $(F,f)$. We will not describe this self-duality in the situation of Theorem~\ref{MTEKK3}; however, we exploit the duality in Section~\ref{d=4,n=2} where we record $(F,f)$ when $d=4$ and $n=2$. The duality means that we need only to record $f_1$ and $f_2$ because $f_3$ and $f_4$ are obtained from $f_1$ and $f_2$ by transposing and rearranging.

\section{The structure of Gorenstein-linear  resolutions, when $d=4$ and $n=2$.}\label{d=4,n=2}
We record the resolution $(F,f)$ of Theorem~\ref{MTEKK3} when $d=4$ and $n=2$. We take advantage of the duality of \cite[Prop.~7.10]{EKK3}. We use names for the modules of $F$ which are less cumbersome than the Schur and Weyl modules $L^a_bU_0$ and $K^a_bU_0$. The resolution is written using maps in Theorem~\ref{resol} and in terms of matrices in Example~\ref{record}.
\begin{observation} 
\label{cleaner bases}
Adopt Data~{\rm\ref{2.1}}. Then 
\begin{align*}K^{0}_{1}U_0={}&U_0^*,&L^0_{2}U_0={}&\Sym_2 U_0,&
K^{1}_1U_0={}&\ker (U_0\t U_0^*\xrightarrow{\ev}\kk),\\
L^{1}_{2}U_0\cong {}& \frac {U_0\t U_0^*}{\ev^*(1)},&
K^2_1U_0\cong {}&D_2U_0^*,\text{ and}&
L_2^2U_0\cong {}&U_0.\end{align*}
\end{observation}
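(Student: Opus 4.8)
The plan is to evaluate each of the six modules straight from the definition in~\ref{L-K}, using throughout that $\dim U_0=3$. The three identifications stated as equalities are immediate. Since $\bw^{-1}U_0=0$, both $\kappa$ on $\bw^0U_0\t\Sym_2U_0$ and $\eta$ on $\bw^0U_0\t D_1U_0^*$ land in the zero module, so $L^0_2U_0=\kk\t\Sym_2U_0=\Sym_2U_0$ and $K^0_1U_0=\kk\t D_1U_0^*=U_0^*$. For $K^1_1U_0=\ker\big(\bw^1U_0\t D_1U_0^*\xrightarrow{\eta}\bw^0U_0\t D_0U_0^*\big)$ I would unwind~(\ref{kappa-eta}): for $u\in U_0=\bw^1U_0$ and $\nu\in U_0^*=D_1U_0^*$ one has $x_j(\nu)=\nu(x_j)\in\kk=D_0U_0^*$ and $x_j^*(u)\in\kk=\bw^0U_0$, so $\eta(u\t\nu)=\sum_jx_j^*(u)\,\nu(x_j)=\nu\big(\sum_jx_j^*(u)\,x_j\big)=\nu(u)$; thus $\eta$ is the evaluation pairing and $K^1_1U_0=\ker(\ev)$.

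For the three isomorphisms I would exploit the exactness, in each positive degree $b$, of the degree-$b$ strand of the Koszul complex of $\Sym_\bullet U_0$,
$$0\to\bw^3U_0\t\Sym_{b-3}U_0\xrightarrow{\kappa}\bw^2U_0\t\Sym_{b-2}U_0\xrightarrow{\kappa}\bw^1U_0\t\Sym_{b-1}U_0\xrightarrow{\kappa}\Sym_bU_0\to0,$$
together with the canonical isomorphism $\bw^2U_0\cong U_0^*\t\bw^3U_0$ (contraction $\theta\mapsto(u\mapsto\theta\w u)$), which the chosen generator $\omega_{U_0}$ of $\bw^3U_0$ turns into $\bw^2U_0\cong U_0^*$. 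Taking $b=3$ shows $L^1_2U_0=\ker\big(U_0\t\Sym_2U_0\xrightarrow{\ \mathrm{mult}\ }\Sym_3U_0\big)$ equals the image of $\bw^2U_0\t U_0\xrightarrow{\kappa}U_0\t\Sym_2U_0$, hence equals $(\bw^2U_0\t U_0)/\kappa(\bw^3U_0)$; a one-line computation in a basis $y,z,w$ of $U_0$ (as normalized in Data~\ref{2.1}) shows that $\kappa(\omega_{U_0})$ corresponds under $\bw^2U_0\cong U_0^*$ to $\sum_jx_j^*\t x_j$, i.e.\ to $\ev^*(1)$, giving $L^1_2U_0\cong(U_0\t U_0^*)/\ev^*(1)$. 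Taking $b=4$ shows $L^2_2U_0=\ker\big(\bw^2U_0\t\Sym_2U_0\xrightarrow{\kappa}\bw^1U_0\t\Sym_3U_0\big)$ equals the image of the injection $\bw^3U_0\t U_0\hookrightarrow\bw^2U_0\t\Sym_2U_0$, so $L^2_2U_0\cong\bw^3U_0\t U_0\cong U_0$ via $\omega_{U_0}$. Finally, for $K^2_1U_0=\ker\big(\bw^2U_0\t U_0^*\xrightarrow{\eta}U_0\big)$ I would use $\bw^2U_0\cong U_0^*\t\bw^3U_0$ and $U_0\cong\bw^2U_0^*\t\bw^3U_0$ to rewrite $\eta$, up to the common twist by $\bw^3U_0$, as the wedge map $U_0^*\t U_0^*\to\bw^2U_0^*$; since $\ker\big(V\t V\to\bw^2V\big)$ is the submodule $D_2V$ of symmetric tensors in every characteristic (both have dimension $\binom{\dim V+1}{2}$, and $D_2V$ visibly lies in the kernel), this yields $K^2_1U_0\cong D_2U_0^*\t\bw^3U_0\cong D_2U_0^*$ via $\omega_{U_0}$.

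There is no serious obstacle; the only care needed is bookkeeping. One must keep track of the one-dimensional twists by $\bw^3U_0$ appearing in the last three cases and note that they are canonically trivialized by the generators $\omega_{U_0},\omega_{U_0^*}$ fixed in Data~\ref{2.1}, so that the displayed isomorphisms make sense once those bases are in place; and one must be mildly careful in small characteristic to use that it is the divided-power module $D_2V$, not $\Sym_2V$, that embeds in $V\t V$ as the symmetric tensors and equals $\ker(V\t V\to\bw^2V)$. I expect the fussiest point to be pinning down the sign and normalization in the identification $\kappa(\omega_{U_0})=\ev^*(1)$, but this is a finite computation with the basis of Data~\ref{2.1} (alternatively, all six identifications can be read off from the dictionary between our hook Schur/Weyl modules and the Buchsbaum--Eisenbud/Weyman notation recorded in \cite[2.1, 2.5]{EKK1}).
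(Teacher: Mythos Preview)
Your proof is correct and follows essentially the same route as the paper: the first three identifications are read off directly from the definitions, and the last three come from split exactness of the degree-$3$ and degree-$4$ Koszul strands (for $L^1_2U_0$ and $L^2_2U_0$) and the analogous Eagon--Northcott strand (for $K^2_1U_0$), together with the trivialization $\bw^3U_0\cong\kk$ via $\omega_{U_0}$. The one small variation is your handling of $K^2_1U_0$: the paper steps back one place in the Eagon--Northcott complex to obtain $K^2_1U_0=\im\big(\bw^3U_0\t D_2U_0^*\xrightarrow{\eta}\bw^2U_0\t U_0^*\big)$ and then trivializes the line $\bw^3U_0$, whereas you instead rewrite $\eta:\bw^2U_0\t U_0^*\to U_0$ as the wedge map $U_0^*\t U_0^*\to\bw^2U_0^*$ and identify its kernel with the symmetric tensors $D_2U_0^*$; both yield the same isomorphism. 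One point worth noting: the paper's proof does slightly more than establish abstract isomorphisms---it records explicit maps $\map_1,\map_2,\map_3$ (see (\ref{map1}), (\ref{map2}), (\ref{map3})) that are reused verbatim when computing $\mB$ and $\mD$ in Theorem~\ref{resol} and Remark~\ref{5.4}, so if you carry your argument forward you should name your isomorphisms compatibly.
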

\begin{proof} Observe that 
\begin{align*}
K^{0}_{1}U_0={}&\ts\ker (\bw^0 U_0\t D_1U_0^*\to 0)=U_0^*,\\
L^0_2U_0={}&\ts\ker(\bw^0\t \Sym_2U_0\to 0)=\Sym_2U_0,\text{ and}\\
K^{1}_1U_0={}&\ts\ker (\bw^1U_0\t D_1U_0^*\xrightarrow{\ev}\kk).\\\end{align*}
The Koszul complex 
$$\ts 0\to \bw^3U_0\to \bw^2U_0\t U_0\to \bw^1U_0\t \Sym_2U_0\to \bw^0U_0\t \Sym_3U_0\to 0$$ is split exact;
so,
$$\ts L_2^1U_0=\ker(\bw^1U_0\t \Sym_2U_0\to \bw^0U_0\t \Sym_3U_0)\cong \frac{\bw^2U_0\t U_0}{\im \bw^3U_0}.$$ It follows that
\begin{equation}\label{map1}\map_1:\frac{U_0\t U_0^*}{\ev^*(1)}\to L_2^1U_0,\end{equation} which is induced by
$$\map_1(u_{1,0}\t \nu_{1,0})=-\kappa (\nu_{1,0}(\omega_{U_0})\t u_{1,0}),$$
for $u_{1,0}\in U_0$ and $\nu_{1,0}\in U_0^*$, is an isomorphism.
The Eagon-Northcott complex 
$$\ts 0\to \bw^3U_0\t D_2U_0^*\xrightarrow{\eta} 
\bw^2U_0\t D_1U_0^*\xrightarrow{\eta}
\bw^1U_0\t D_0U_0^*\xrightarrow{\eta} 0$$ is also split exact.
It follows that 
 \begin{equation}\label{map2}\map_2:D_2U_0^*\to K^2_1U_0,\end{equation} which is given by
$$\map_2(\nu_{2,0})=\eta (\omega_{U_0}\t \nu_{2,0}),$$
for $\nu_{2,0}\in D_2U_0^*$, is an isomorphism. In a similar manner, 
  \begin{equation}\label{map3}\map_3:U_0\to L^2_2U_0,\end{equation} which is given by
$$\map_3(u_{1,0})=\kappa (\omega_{U_0}\t u_{1,0}),$$
for $u_{1,0}\in U_0$, is an isomorphism.
\end{proof}

We use the isomorphisms of Observation~\ref{cleaner bases} and the duality of \cite[Prop.~7.3]{EKK3} to record the resolution $(F,f)$ of Theorem~\ref{MTEKK3} in the following form when $d=4$ and $n=2$.
\begin{theorem} 
\label{resol}
Adopt Data~{\rm\ref{2.1}}. The minimal homogeneous resolution of $P/\ann x\Phi_3$ by free $P$ modules is given by
$$(\frak F, \frak f):\quad 0\to \frak F_4\xrightarrow{\frak f_4}
\frak F_3
\xrightarrow{\frak f_3}\frak F_2
\xrightarrow{\frak f_2}\frak F_1
\xrightarrow{\frak f_1}\frak F_0,$$
where 
\begin{align*}\frak F_4={}& P,&\frak F_3={}&(P\t U_0)\p (P\t D_2U_0^*),\\
\frak F_2={}&\begin{cases}P\t \ker(\ev:U_0\t U_0^*\to \kk)\\ \hskip.5in\p\\P\t \frac{U_0\t U_0^*}{\ev^*(1)},\end{cases}&
\frak F_1={}&(P\t U_0^*)\p (P\t \Sym_2 U_0),\\
\frak F_0={}&P,\\
\frak f_4={}&\bmatrix \mf_{1,1}^*\\\mf_{1,2}^*\endbmatrix,&
\frak f_3={}&\bmatrix x\t\mB^*&\mD ^*\\\mA ^*&x\t\mC ^*\endbmatrix,\\
\frak f_2={}&\bmatrix \mA &x\t\mB \\x\t\mC &\mD \endbmatrix,\text{ and}&
\frak f_1={}&\bmatrix \mf_{1,1}&\mf_{1,2}\endbmatrix.\end{align*}
In the following discussion, $u_{i,0}$ is an element of $\Sym_iU_0$ and $\nu_{i,0}$ is an element of $D_iU_0^*$.
\begin{itemize}
\item The map $\mf_{1,1}:U_0^* \to P$ sends $\nu_{1,0}
$ to $xp^{-1}(\nu_{1,0})
$.
\item The map $\mf_{1,2}:\Sym_2U_0\to P$ sends $u_{2,0}$ 
 to 
$u_{2,0}-xp^{-1}(u_{2,0}\Phi_3)$. 
\item The map $\mA : \ker(U_0\t U_0^*\to \kk)\to P\t U_0^*$ is induced by the map
$$U_0\t U_0^*\to P\t U_0^*$$
 which sends $$u_{1,0}\t \nu_{1,0}\mapsto -x\t [u_{1,0}p^{-1}(v_{1,0})](\Phi_3)+u_{1,0}\t \nu_{1,0}.$$
\item   
The map $\mB:\frac{U_0\t U_0^*}{\ev^*(1)}\to U_0^*$ sends the class of $u_{1,0}\t \nu_{1,0}$ to 
$$\begin{cases}
- \big[y p^{-1}\big((\nu_{1,0}(z\w w)u_{1,0})(\Phi_3)\big)\big](\Phi_3)
+\big[z p^{-1}\big((\nu_{1,0}(y\w w)u_{1,0})(\Phi_3)\big)\big](\Phi_3)\\
-\big[w p^{-1}\big((\nu_{1,0}(y\w z)u_{1,0})(\Phi_3)\big)\big](\Phi_3).\\
\end{cases}$$

\item The map $\mC : \ker(U_0\t U_0^*\to \kk)\to \Sym_2U_0$ is induced by the map
$$U_0\t U_0^*\to \Sym_2U_0$$
 which sends $u_{1,0}\t \nu_{1,0}$ to $-u_{1,0}\cdot (\proj\circ p^{-1})(\nu_{1,0})$.

\item The map
$\mD:\frac{U_0\t U_0^*}{\ev^*(1)}\to P\t \Sym_2U_0$
 sends the class of $u_{1,0}\t \nu_{1,0}$ 
 to
\begin{equation}
\label{mD}
\begin{cases}
-x\t w\cdot (\proj\circ p^{-1})\Big([\nu_{1,0}(y\w z)u_{1,0}](\Phi_3)\Big)\\
+x\t z\cdot (\proj\circ p^{-1})\Big([\nu_{1,0}(y\w w)u_{1,0}](\Phi_3)\Big)\\
-x\t y\cdot (\proj\circ p^{-1})\Big([\nu_{1,0}(z\w w)u_{1,0}](\Phi_3)\Big)\\
-w\t \nu_{1,0}(y\w z)u_{1,0}+z\t \nu_{1,0}(y\w w)u_{1,0}
-y\t \nu_{1,0}(z\w w)u_{1,0}.
\end{cases}
\end{equation}
\end{itemize}
\end{theorem}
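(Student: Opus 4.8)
The plan is to produce $(\frak F,\frak f)$ by specializing the general Theorem~\ref{MTEKK3} to $d=4$ and $n=2$ and then rewriting the result through the module identifications collected in Observation~\ref{cleaner bases}. First I would apply Theorem~\ref{MTEKK3} with $\Phi_{2n-2}=x\Phi_3$ and $\Phi_{2n-1}=\Phi_3$. The hypotheses of Data~\ref{Opening-Data} are met: for $n=2$ the homomorphism $p:\Sym_1U\to D_1U^*$, $p(u)=u(x\Phi_3)$, is exactly the map $p$ of Data~\ref{2.1}.(b), which is an isomorphism by Observation~\ref{GLR}, and $x(\Phi_3)=x\Phi_3$, so $\Phi_3$ is a legitimate ``integral'' of $x\Phi_3$ in the sense of (\ref{integral}). (Since $\Phi_3$ is not assumed to have vanishing ``constant term,'' this uses the general form of the main theorem of \cite{EKK3}, which, as recalled before Data~\ref{Opening-Data}, gives a resolution isomorphic to the one built from the special integral.) Theorem~\ref{MTEKK3} then hands us the minimal resolution $(F,f)$ with $F_0=P$, $F_r=P\t(K^{r-1}_1U_0\oplus L^{r-1}_2U_0)$ for $1\le r\le 3$, and $F_4=P\t\bw^3U_0$.

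Next I would substitute the isomorphisms of Observation~\ref{cleaner bases} --- $K^0_1U_0=U_0^*$, $L^0_2U_0=\Sym_2U_0$, $K^1_1U_0=\ker(\ev)$, $\map_1:\frac{U_0\t U_0^*}{\ev^*(1)}\to L^1_2U_0$, $\map_2:D_2U_0^*\to K^2_1U_0$, $\map_3:U_0\to L^2_2U_0$ --- together with $\bw^3U_0\cong\kk$ via $\omega_{U_0}$. This identifies $F_1,\dots,F_4$ with $\frak F_1,\dots,\frak F_4$ as displayed, the two summands of $\frak F_3$ being written in the order that makes the duality below transparent. The differentials $\frak f_1$ and $\frak f_2$ are then read straight off Theorem~\ref{MTEKK3}: part (\ref{diff-gen}) gives $\mf_{1,1}$ and $\mf_{1,2}$ verbatim; in the formula (\ref{9-27-15-z}) for $f_2$ on $P\t K^1_1U_0$ the Eagon--Northcott and Koszul maps degenerate (for $r=2$, $\eta(\theta\t\mu)=\theta\mu$ and $\kappa(\theta\t u)=\theta u$ for $\theta\in U_0$), and its two components become precisely $\mA$ (into $U_0^*$) and $x\t\mC$ (into $\Sym_2U_0$); likewise the formula (\ref{9-27-15-a}) for $f_2$ on $P\t L^1_2U_0$, precomposed with $\map_1$, yields $x\t\mB$ and $\mD$.

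Finally, for $\frak f_3$ and $\frak f_4$ I would not recompute from the $r=3$ case of Theorem~\ref{MTEKK3} and its formula for $f_d$; instead I would invoke the self-duality of $(F,f)$ from \cite[Prop.~7.3]{EKK3}, which expresses $f_3,f_4$ as the transposes of $f_2,f_1$ with the summands rearranged. Under the identifications of Observation~\ref{cleaner bases} the relevant pairings are $U_0^*\leftrightarrow U_0$, $\Sym_2U_0\leftrightarrow D_2U_0^*$, $K^1_1U_0\leftrightarrow\frac{U_0\t U_0^*}{\ev^*(1)}$, and $\bw^3U_0\leftrightarrow\kk$, and tracking them through the transpose produces exactly $\frak f_3=\bmatrix x\t\mB^*&\mD^*\\\mA^*&x\t\mC^*\endbmatrix$ and $\frak f_4=\bmatrix\mf_{1,1}^*\\\mf_{1,2}^*\endbmatrix$. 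I expect the main obstacle to be in the middle step: converting the coordinate-free operations $\kappa$, $\eta$, $\map_1$ and the term $\sum_j x_j\t x_j^*(\theta)$ in (\ref{9-27-15-z})--(\ref{9-27-15-a}) into the explicit three-term expansions recorded for $\mB$ and for $\mD$ in (\ref{mD}), which forces one to expand against a basis $y,z,w$ of $U_0$ and to keep scrupulous track of signs --- especially of the orientations $\omega_{U_0}=y\w z\w w$ and $\omega_{U_0^*}=w^*\w z^*\w y^*$ fixed in Data~\ref{2.1} --- so that these formulas, and the duality used in the last step, match the stated matrices on the nose.
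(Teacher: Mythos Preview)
Your proposal is correct and follows essentially the same route as the paper: specialize Theorem~\ref{MTEKK3} to $d=4$, $n=2$ with $\Phi_{2n-1}=\Phi_3$, transport the differentials through the identifications of Observation~\ref{cleaner bases} (in particular computing $\mD$ and $\mB$ as $f_2$ precomposed with $\map_1$), and then obtain $\frak f_3,\frak f_4$ from the self-duality of \cite{EKK3}. The paper likewise declares $\mf_{1,1},\mf_{1,2},\mA,\mC$ ``obviously correct'' and spends its effort on exactly the basis expansion you flag as the main obstacle, so your anticipated difficulty is the right one.
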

\begin{remarks} 
\begin{enumerate}[\rm(a)]
\item The entries in the matrices for $\mf_{1,1}$ and $\mf_{1,2}$ are homogeneous of degree two, for $\mA $ and $\mD$ the entries are homogeneous of degree one, and for $\mB $ and $\mC $ the entries are homogeneous of degree zero.
\item Theorem~\ref{resol} is implemented in the Macaulay2 script ``BasicSocDeg3''; see Section~\ref{14.A.1}. \end{enumerate}\end{remarks} 

\begin{proof}
The maps $\mf_{1,1}$, $\mf_{1,2}$, $\mA $, and $\mC $ are obviously correct.
The map $\mD$ is the composition
$$P\t \frac{U_0\t U_0^*}{\ev^*(1)}\xrightarrow{\map_1} P\t L^1_2U_0\xrightarrow{\map_4} P\t\Sym_2U_0,$$
where $\map_1$ is given in (\ref{map1}) and $\map_4$ 
 is
the composition
$$P\t L^1_2U_0\xrightarrow{\text{inclusion}}F_2\xrightarrow{f_2}F_1\xrightarrow{\text{projection}} P\t L_2^0U_0=P\t \Sym_2U_0.$$
If $u_{1,0}\in U_0$ and $\nu_{1,0}\in U_0^*$, then $\map_1$ of the class of $u_{1,0}\t \nu_{1,0}$ in $\frac{U_0\t U_0^*}{\ev^*(1)}$ is equal to
\begingroup\allowdisplaybreaks
\begin{align}
\notag{}={}& 
-\kappa
(\nu_{1,0}(y\w z\w w)\t u_{1,0})\\
\notag{}={}& 
\kappa
\big(
 -\nu_{1,0}(y)\cdot z\w w\t u_{1,0}
+\nu_{1,0}(z)\cdot y\w w\t u_{1,0}
-\nu_{1,0}(w)\cdot y\w z\t u_{1,0}
\big)\\
\notag{}={}&
\begin{cases}
- \nu_{1,0}(y)\cdot w\t zu_{1,0}+\nu_{1,0}(z)\cdot w\t yu_{1,0}\\
+\nu_{1,0}(y)\cdot z\t wu_{1,0}-\nu_{1,0}(w)\cdot z\t yu_{1,0}\\
-\nu_{1,0}(z)\cdot y\t wu_{1,0}+\nu_{1,0}(w)\cdot y\t zu_{1,0}\\\end{cases}\\
\label{8.3.1}{}={}&
- w\t \nu_{1,0}(y\w z)u_{1,0} 
+z\t \nu_{1,0}(y\w w)u_{1,0} 
-y\t \nu_{1,0}(z\w w)u_{1,0}.
\end{align}
Hence, $\mD$ of   the class of $u_{1,0}\t \nu_{1,0}$ is
\begin{align*}
{}={}&\begin{cases}
-x\t w\cdot (\proj\circ p^{-1})\Big([\nu_{1,0}(y\w z)u_{1,0}](\Phi_3)\Big)\\
+x\t z\cdot (\proj\circ p^{-1})\Big([\nu_{1,0}(y\w w)u_{1,0}](\Phi_3)\Big)\\
-x\t y\cdot (\proj\circ p^{-1})\Big([\nu_{1,0}(z\w w)u_{1,0}](\Phi_3)\Big)\\
-w\t \nu_{1,0}(y\w z)u_{1,0}
+z\t \nu_{1,0}(y\w w)u_{1,0}
-y\t \nu_{1,0}(z\w w)u_{1,0}.
\end{cases}
\end{align*} 
\endgroup

The map $\mB$ is the composition
$$\frac{U_0\t U_0^*}{\ev^*(1)}\xrightarrow{\map_1}  L^1_2U_0\xrightarrow{\map_5} U_0^*,$$
where  
 \begin{equation}\label{map5}
\map_5({\textstyle\sum_i}u_{1,0,i}\otimes u_{2,0,i})
=
\sum_i \big[u_{1,0,i} [p^{-1}(u_{2,0,i}(\Phi_{3}))]\big](\Phi_{3})\end{equation}
for $u_{1,0,i}\in \bigwedge^{1}U_0$ and  $u_{2,0,i}\in \operatorname{Sym}_{2}U_0$ with $\sum_iu_{1,0,i}\otimes u_{2,0,i}\in L^{1}_{2}U_0$.

If $u_{1,0}\in U_0$ and $\nu_{1,0}\in U_0^*$, then apply (\ref{8.3.1}) to see that $\mB$ sends the class of $u_{1,0}\t \nu_{1,0}$ in $\frac{U_0\t U_0^*}{\ev^*(1)}$  to
\begingroup\allowdisplaybreaks
\begin{align*}
&\map_5\begin{cases}
-\nu_{1,0}(y)\cdot w\t zu_{1,0}+\nu_{1,0}(z)\cdot w\t yu_{1,0}
+\nu_{1,0}(y)\cdot z\t wu_{1,0}\\-\nu_{1,0}(w)\cdot z\t yu_{1,0}
-\nu_{1,0}(z)\cdot y\t wu_{1,0}+\nu_{1,0}(w)\cdot y\t zu_{1,0}\\\end{cases}\\
{}={}&\begin{cases}
-\nu_{1,0}(y)\cdot \big[w[p^{-1}(zu_{1,0})](\Phi_3)\big](\Phi_3)+\nu_{1,0}(z)\cdot  \big[w[p^{-1}(yu_{1,0})](\Phi_3)\big](\Phi_3)\\
+\nu_{1,0}(y)\cdot  \big[z[p^{-1}(wu_{1,0})](\Phi_3)\big](\Phi_3)-\nu_{1,0}(w)\cdot  \big[z[p^{-1}(yu_{1,0})](\Phi_3)\big](\Phi_3)\\
-\nu_{1,0}(z)\cdot  \big[y[p^{-1}(wu_{1,0})](\Phi_3)\big](\Phi_3)+\nu_{1,0}(w)\cdot \big[y[p^{-1}(zu_{1,0})](\Phi_3)\big](\Phi_3)\\\end{cases}\\
{}={}&\begin{cases}
 -\big[y p^{-1}\big((\nu_{1,0}(z\w w)u_{1,0})(\Phi_3)\big)\big](\Phi_3)\\
+\big[z p^{-1}\big((\nu_{1,0}(y\w w)u_{1,0})(\Phi_3)\big)\big](\Phi_3)\\
-\big[w p^{-1}\big((\nu_{1,0}(y\w z)u_{1,0})(\Phi_3)\big)\big](\Phi_3).\\
\end{cases}
\end{align*} 
\endgroup
When $d=4$ and $n=2$, then the self-duality of $(F,f)$ as described in \cite[Prop. 7.10]{EKK3} is a consequence of the natural perfect pairings
\begin{align*}
&K_1^0U_0\t L_2^2U_0\to \kk,\\
&K_1^1U_0\t L_2^1U_0\to \kk,\text{ and}\\
&K_1^2U_0\t L_2^0U_0\to \kk.\end{align*}
These perfect pairings are compatible with the isomorphisms of Observation~\ref{cleaner bases} and become the natural perfect pairings
\begin{align*}
&U_0^*\t U_0\to \kk,\\
&\ker(U_0\t U_0^*\to \kk) \t \frac{U_0\t U_0^*}{\ev^*(1)}\to \kk,\text{ and}\\
&D_2U_0^*\t \Sym_2U_0\to \kk.\end{align*}
When we record matrices for the resolution $(\frak F,\frak f)$ in Example~\ref{record}, we use the bases \begin{equation}\begin{array}{|l|l|}\hline
\ker (U_0\t U_0^*\to \kk) & \frac{U_0\t U_0^*}{\ev^*(1)}\\\hline\hline
y\t y^*-w\t w^*& y\t y^*\\\hline
y\t z^*& z\t y^*\\\hline
y\t w^*& w\t y^*\\\hline
z\t y^*& y\t z^*\\\hline
z\t z^*-w\t w^*& z\t z^* \\\hline
z\t w^*&w\t z^* \\\hline
w\t y^*&y\t w^* \\\hline
w\t z^*&z\t w^* \\\hline
\end{array}
\hskip.5in \text{and}\hskip.5in \begin{array}{|l|l|}\hline
\frak F_1&\frak F_3\\\hline\hline
y^*&y\\\hline
z^*&z\\\hline
w^*&w\\\hline
y^2&y^{*(2)}\\\hline
yz&y^*z^*\\\hline
yw&y^*w^*\\\hline
z^2&z^{*(2)}\\\hline
zw&z^*w^*\\\hline
w^2&w^{*(2)}\\\hline
\end{array}\, .\label{bases}\end{equation}Each basis vector is listed next to its dual. The matrix for $\mf_{1,1}$ is denoted $f_{1,1}$; for $\mf_{1,2}$ is denoted $f_{1,2}$; for $\mA$ is denoted $A$; for $\mB$ is denoted $B$; for $\mC$ is denoted $C$; and for $\mD$ is denoted $D$. 
The matrices for $\mA ^*$, $\mB ^*$, $\mC ^*$, $\mD ^*$, $f_{1,1}^*$, and $f_{1,2}^*$ automatically become $A\transpose$, $B\transpose$, $C\transpose$, $D\transpose$, $f_{1,1}\transpose$, and $f_{1,2}\transpose$, respectively.
\end{proof}

\begin{remark}
\label{5.4}
 An alternate description of the map $\mB :\frac{U_0\t U_0^*}{\ev^*(1)}\to U_0^*$ is available. If 
$u_{1,0}'$,  $u_{1,0}''$ and $u_{1,0}'''$
are elements of $U_0$, then 
$\mB $ of the class of $u_{1,0}'\t (u_{1,0}''\w u_{1,0}''')(\omega_{U_0^*})$
in $\frac{U_0\t U_0^*}{\ev^*(1)}$ is the element of $U_0^*$ with
$$\big[\mB \big(u_{1,0}'\t (u_{1,0}''\w u_{1,0}''')(\omega_{U_0^*})\big)
\big](u_{1,0})=
\la u_{1,0}u_{1,0}'',u_{1,0}'u_{1,0}'''\ra-\la u_{1,0}u_{1,0}''',u_{1,0}'u_{1,0}''\ra,$$for all $u_{1,0}\in U_0$. The map $\la-,-\ra$ is defined in Data~\ref{2.1}.
\begin{proof} We compute 
\begin{align*}
&\big[\mB \big(u_{1,0}'\t (u_{1,0}''\w u_{1,0}''')(\omega_{U_0^*})\big)
\big](u_{1,0})\\
{}={}&\phantom{{}-{}}\big[(\map_5\circ \map_1)\big(u_{1,0}'\t (u_{1,0}''\w u_{1,0}''')(\omega_{U_0^*})\big)\big](u_{1,0})\\
{}={}&-\big[(\map_5\circ \kappa)\big((u_{1,0}''\w u_{1,0}''')\t u_{1,0}' \big)\big](u_{1,0})&\text{by (\ref{map1})}\\
{}={}&-\big[\map_5
\big( u_{1,0}'''\t u_{1,0}''u_{1,0}' 
-u_{1,0}''\t u_{1,0}'''u_{1,0}' \big)
\big](u_{1,0})\\
{}={}&
-\big( u_{1,0}'''[p^{-1}(u_{1,0}''u_{1,0}'\Phi_3)](\Phi_3) 
-u_{1,0}''[p^{-1}(u_{1,0}'''u_{1,0}'\Phi_3)](\Phi_3) \big)
(u_{1,0})&\text{by (\ref{map5})}\\
{}={}&
- u_{1,0}u_{1,0}'''[p^{-1}(u_{1,0}''u_{1,0}'\Phi_3)](\Phi_3) 
+u_{1,0}u_{1,0}''[p^{-1}(u_{1,0}'''u_{1,0}'\Phi_3)](\Phi_3) \\
{}={}&\phantom{{}+{}}\la u_{1,0}u_{1,0}'',u_{1,0}'''u_{1,0}'\ra- \la u_{1,0}u_{1,0}''',u_{1,0}''u_{1,0}'\ra.\end{align*}
\end{proof}
\end{remark}

\begin{remark}
\label{4.4+} The map $\mB ^*:U_0\to \ker(U_0\t U_0^*\to \kk)$ is given by
$$\mB ^*(u_{1,0}')=\begin{cases}
\phantom{+}w\t [zp^{-1}(yu_{1,0}')(\Phi_3)](\Phi_3)\\
-z\t [wp^{-1}(yu_{1,0}')(\Phi_3)](\Phi_3)\\
-w\t [yp^{-1}(zu_{1,0}')(\Phi_3)](\Phi_3)\\
+y\t [wp^{-1}(zu_{1,0}')(\Phi_3)](\Phi_3)\\
+z\t [yp^{-1}(wu_{1,0}')(\Phi_3)](\Phi_3)\\
-y\t [zp^{-1}(wu_{1,0}')(\Phi_3)](\Phi_3),\\
\end{cases}
$$for all $u_{1,0}'$ in $U_0$.\end{remark}
\begin{proof}
A routine calculation shows that the formula given above satisfies
$$[\mB (u_{1,0}\t\nu_{1,0})](u_{1,0}')=(\nu_{1,0}\t u_{1,0})(\mB^* (u_{1,0}'))$$
for all $u_{1,0}$ and  $u_{1,0}'$ in $U_0$ and all $\nu_{1,0}\in U_0^*$.
\end{proof}

\begin{example}
\label{record}We record the matrices of the resolution $(\frak F,\frak f)$ from Theorem~\ref{resol} explicitly. For each matrix, we list the basis of the domain as the top row and the the target as the left most column. 
The bases are listed in \ref{bases}. The matrix for $X^*$ is the transpose of $X$ for each matrix $X$. 
 
For example, the first column of $B$ should be read to say:
$$\mB (y\t y^*)=0 y^*+(\la yw,z^2\ra -\la yz,zw\ra) z^*+(\la yw,zw\ra-\la yz,w^2\ra)w^*.$$
\begin{landscape}
{\scriptsize
$$f_{1,1}=\begin{array}{|l||l|l|l|l|}\hline
&y^*&z^*&w^*\\\hline\hline
1&xp^{-1}(y^*)&xp^{-1}(z^*)&xp^{-1}(w^*)\\\hline\end{array}\, ,$$

$$f_{1,2}=\begin{array}{|l||l|l|l|l|l|l|}\hline
&y^2&yz&yw&z^2&zw&w^2
\\\hline\hline
1&y^2-xp^{-1}(y^2\Phi_3)&yz-xp^{-1}(yz\Phi_3)
&yw-xp^{-1}(yw\Phi_3)&z^2-xp^{-1}(z^2\Phi_3)&zw-xp^{-1}(zw\Phi_3)&w^2-xp^{-1}(w^2\Phi_3)\\\hline\end{array}\, ,$$}

{\tiny 
$$\hskip-70pt A=\begin{array}{|l||l|l|l|l|l|l|l|l|}
\hline
&y\t y^*&y\t z^*&y\t w^*&z\t y^*& z\t z^*&z\t w^*&w\t y^*& w\t z^*
\\
&-w\t w^*&&&&-w\t w^*&&&
\\\hline
\hline
y^*&-[y^2p^{-1}(y^*)](\Phi_3)\cdot x&-[y^2p^{-1}(z^*)](\Phi_3)\cdot x&-[y^2p^{-1}(w^*)](\Phi_3)\cdot x&-[yzp^{-1}(y^*)](\Phi_3)\cdot x&
-[yzp^{-1}(z^*)](\Phi_3)\cdot x&
-[yzp^{-1}(w^*)](\Phi_3)\cdot x&
-[ywp^{-1}(y^*)](\Phi_3)\cdot x& 
-[ywp^{-1}(z^*)](\Phi_3)\cdot x
 \\
&+[ywp^{-1}(w^*)](\Phi_3)\cdot x&&&+z&+[ywp^{-1}(w^*)](\Phi_3)\cdot x
&&+w&\\
&+y&&&&&&&\\
\hline
z^*&-[yzp^{-1}(y^*)](\Phi_3)\cdot x&-[yzp^{-1}(z^*)](\Phi_3)\cdot x&-[yzp^{-1}(w^*)](\Phi_3)\cdot x&-[z^2p^{-1}(y^*)](\Phi_3)\cdot x&-[z^2p^{-1}(z^*)](\Phi_3)\cdot x&-[z^2p^{-1}(w^*)](\Phi_3)\cdot x&-[zwp^{-1}(y^*)](\Phi_3)\cdot x&-[zwp^{-1}(z^*)](\Phi_3)\cdot x
\\
&+[wzp^{-1}(w^*)](\Phi_3)\cdot x&+y&&&+[wzp^{-1}(w^*)](\Phi_3)\cdot x&&&+w
\\
&&&&&+z&&&
\\\hline

w^*&-[ywp^{-1}(y^*)](\Phi_3)\cdot x&-[ywp^{-1}(z^*)](\Phi_3)\cdot x&-[ywp^{-1}(w^*)](\Phi_3)\cdot x&-[zwp^{-1}(y^*)](\Phi_3)\cdot x&-[zwp^{-1}(z^*)](\Phi_3)\cdot x&-[zwp^{-1}(w^*)](\Phi_3)\cdot x&-[w^2p^{-1}(y^*)](\Phi_3)\cdot x&-[w^2p^{-1}(z^*)](\Phi_3)\cdot x
\\
&+[w^2p^{-1}(w^*)](\Phi_3)\cdot x&&+y&&+[w^2p^{-1}(w^*)](\Phi_3)\cdot x&+z&&
\\
&-w&&&&-w&&&
\\
\hline
\end{array}\,,$$
}

$$B=\begin{array}{|l||l|l|l|l|l|l|l|l|}\hline
&y\t y^*&z\t y^*&w\t y^*&y\t z^*&z\t z^*&w\t z^*&y\t w^*&z\t w^*\\\hline\hline
y^*&0&  \la yz,zw\ra&\la yz,w^2\ra&0&\la yw,yz\ra&\la yw,yw\ra&0&\la y^2,z^2\ra\\
   & &-\la yw,z^2\ra&-\la yw,zw\ra& &-\la y^2,zw\ra&-\la y^2,w^2\ra&&-\la yz,yz\ra\\\hline
z^*&\la yw,z^2\ra&0&\la z^2,w^2\ra&\la y^2,zw\ra&0&\la zw, yw\ra&\la yz,yz\ra&0\\
   &-\la yz,zw\ra&&-\la zw,zw\ra&-\la yw,yz\ra&&-\la w^2,yz\ra&-\la y^2,z^2\ra&\\\hline
w^*&\la yw,zw\ra&\la zw,zw\ra&0&\la y^2,w^2\ra&\la yz,w^2\ra&0&\la yz,yw\ra&
\la z^2,yw\ra
\\
 &-\la yz,w^2\ra&-\la z^2,w^2\ra&&-\la yw,yw\ra&-\la zw,yw\ra&&-\la y^2,wz\ra& -\la yz,zw\ra  \\\hline \end{array}\, ,$$

{\scriptsize
$$
C=\begin{array}{|l||l|l|l|l|l|l|l|l|}
\hline
&y\t y^*&y\t z^*&y\t w^*&z\t y^*& z\t z^*&z\t w^*&w\t y^*& w\t z^*
\\
&-w\t w^*&&&&-w\t w^*&&&
\\\hline
\hline
y^2&-[y^*p^{-1}(y^*)]&-[y^*p^{-1}(z^*)]&-[y^*p^{-1}(w^*)]&0&0&0&0&0
\\\hline
yz&-[z^*p^{-1}(y^*)]&-[z^*p^{-1}(z^*)]&-[z^*p^{-1}(w^*)]&-[y^*p^{-1}(y^*)]&-[y^*p^{-1}(z^*)]&-[y^*p^{-1}(w^*)]&0&0
\\\hline
yw&0 
&-[w^*p^{-1}(z^*)]&-[w^*p^{-1}(w^*)]&0&+[y^*p^{-1}(w^*)]&0&-[y^*p^{-1}(y^*)]&-[y^*p^{-1}(z^*)]
\\
\hline
z^2&0&0&0&-[z^*p^{-1}(y^*)]&-[z^*p^{-1}(z^*)]&-[z^*p^{-1}(w^*)]&0&0
\\
\hline
zw&[z^*p^{-1}(w^*)]&0&0&-[w^*p^{-1}(y^*)]&
0&-[w^*p^{-1}(w^*)]&-[z^*p^{-1}(y^*)]&-[z^*p^{-1}(z^*)]
\\
\hline
w^2&+[w^*p^{-1}(w^*)]&0&0&0&+[w^*p^{-1}(w^*)]&0&-[w^*p^{-1}(y^*)]&-[w^*p^{-1}(z^*)]
\\
\hline
\end{array}\,,$$}
and

{\tiny
$$\hskip-70pt
D=
\begin{array}{|l|l|l|l|l|l|l|l|l|}
\hline
&y\t y^*&z\t y^*&w\t y^*&y\t z^*&z\t z^*&w\t z^*&y\t w^*&z\t w^*
\\
\hline\hline
y^2
&0&0&0&-[y^*p^{-1}(wy\Phi_3)]\cdot x&-[y^*p^{-1}(wz\Phi_3)]\cdot x&-[y^*p^{-1}(w^2\Phi_3)]\cdot x&[y^*p^{-1}(yz\Phi_3)]\cdot x&+[y^*p^{-1}(z^2\Phi_3)]\cdot x
\\
&&&&+w&&&-z&\\
\hline
yz
&[y^*p^{-1}(yw\Phi_3)]\cdot x&[y^*p^{-1}(zw\Phi_3)]\cdot x&[y^*p^{-1}(w^2\Phi_3)]\cdot x&-[z^*p^{-1}(yw\Phi_3)]\cdot x&-[z^*p^{-1}(zw\Phi_3)]\cdot x&
-[z^*p^{-1}(w^2\Phi_3)]\cdot x&-[y^*p^{-1}(y^2\Phi_3)]\cdot x&-[y^*p^{-1}(yz\Phi_3)]\cdot x
\\
&-w&&&&+w&&+[z^*p^{-1}(zy\Phi_3)]\cdot x
&+[z^*p^{-1}(z^2\Phi_3)]\cdot x\\
&&&&&&&+y&-z\\\hline
yw
&-[y^*p^{-1}(yz\Phi_3)]\cdot x&-[y^*p^{-1}(z^2\Phi_3)]\cdot x&-[y^*p^{-1}(zw\Phi_3)]\cdot x&+[y^*p^{-1}(y^2\Phi_3)]\cdot x&[y^*p^{-1}(yz\Phi_3)]\cdot x&[y^*p^{-1}(yw\Phi_3)]\cdot x&+[w^*p^{-1}(yz\Phi_3)]\cdot x&
+[w^*p^{-1}(z^2\Phi_3)]\cdot x\\
&+z&&&-[w^*p^{-1}(wy\Phi_3)]\cdot x&-[w^*p^{-1}(wz\Phi_3)]\cdot x
&-[w^*p^{-1}(w^2\Phi_3)]\cdot x&&\\
&&&&-y&&+w&&
\\\hline
z^2
&[z^*p^{-1}(yw\Phi_3)]\cdot x&[z^*p^{-1}(zw\Phi_3)]\cdot x&[z^*p^{-1}(w^2\Phi_3)]\cdot x&0&0&0&-[z^*p^{-1}(y^2\Phi_3)]\cdot x&-[z^*p^{-1}(yz\Phi_3)]\cdot x
\\
&&-w&&&&&&+y
\\\hline
zw
&-[z^*p^{-1}(yz\Phi_3)]\cdot x&-[z^*p^{-1}(z^2\Phi_3)]\cdot x&-[z^*p^{-1}(zw\Phi_3)]\cdot x&+[z^*p^{-1}(y^2\Phi_3)]\cdot x&+[z^*p^{-1}(yz\Phi_3)]\cdot x&+[z^*p^{-1}(yw\Phi_3)]\cdot x&-[w^*p^{-1}(y^2\Phi_3)]\cdot x&-[w^*p^{-1}(yz\Phi_3)]\cdot x
\\
&+[w^*p^{-1}(yw\Phi_3)]\cdot x&+[w^*p^{-1}(zw\Phi_3)]\cdot x&+[w^*p^{-1}(w^2\Phi_3)]\cdot x&&-y&&&
\\
&&+z&-w&&&&&
\\\hline
w^2
&-[w^*p^{-1}(yz\Phi_3)]\cdot x&-[w^*p^{-1}(z^2\Phi_3)]\cdot x
&-[w^*p^{-1}(zw\Phi_3)]\cdot x
&+[w^*p^{-1}(y^2\Phi_3)]\cdot x&[w^*p^{-1}(yz\Phi_3)]\cdot x&[w^*p^{-1}(yw\Phi_3)]\cdot x&0&0
\\
&&&+z&&&-y&&
\\
\hline
\end{array}\,.$$
}
\end{landscape}
\end{example}

\begin{observation}
\label{x=0} Let $(\frak F,\frak f)$ be the resolution of Theorem~{\rm \ref{resol}} and $\Pbar$ be the quotient ring $\frac P{(x)}$. Then   $$\HH_j\left((\frak F,\frak f)\t_P \Pbar\right)=0, \quad \text{for $2\le j$}.$$ Indeed, $(\frak F,\frak f)\t_P \Pbar$  is isomorphic to the mapping cone of the zero map $$\Hom_{\Pbar}(E,\Pbar)\to E,$$ where $E$ is the Eagon-Northcott resolution of $\Pbar/I_2(M)$, and $M$ is the matrix $$M=\bmatrix y&z&w&0\\0&y&z&w\endbmatrix.  $$
\end{observation}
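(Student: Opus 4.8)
The statement $\HH_j\big((\frak F,\frak f)\t_P\Pbar\big)=0$ for $j\ge 2$ is immediate from the fact that $(\frak F,\frak f)$ resolves $P/\ann x\Phi_3$ over $P$: one has $\HH_j\big((\frak F,\frak f)\t_P\Pbar\big)=\operatorname{Tor}^P_j\big(P/\ann x\Phi_3,\,P/xP\big)$, and resolving $P/xP$ over $P$ by $0\to P\xrightarrow{x}P\to P/xP\to 0$ identifies this with the $j$-th homology of the two-term complex $P/\ann x\Phi_3\xrightarrow{x}P/\ann x\Phi_3$, which vanishes for $j\ge 2$. The plan for the ``Indeed'' clause is to reduce $(\frak F,\frak f)$ modulo $x$, to note that it splits into two subcomplexes interchanged by the self-duality of $(\frak F,\frak f)$, and to recognize one summand as the Eagon--Northcott complex of $M$.

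For the reduction, inspect the differentials of Theorem~\ref{resol} (equivalently, the matrices of Example~\ref{record}): every entry of $\mf_{1,1}$ has the form $xp^{-1}(\nu_{1,0})$, so $\mf_{1,1}\t\Pbar=0$; we have $\mf_{1,2}(u_{2,0})=u_{2,0}-xp^{-1}(u_{2,0}\Phi_3)$, so $\mf_{1,2}\t\Pbar$ is the multiplication map $\Pbar\t\Sym_2U_0\to\Pbar$, $u_{2,0}\mapsto u_{2,0}$, whose image is $\Pbar\cdot\Sym_2U_0=\maxm^2$ with $\maxm=(y,z,w)$; and the blocks $x\t\mB$, $x\t\mC$ of $\frak f_2$ and $x\t\mB^{*}$, $x\t\mC^{*}$ of $\frak f_3$ become zero. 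Writing bars for reduction modulo $x$, it follows at once that $(\frak F,\frak f)\t_P\Pbar$ is the direct sum of
$$\mathbb A:\quad 0\longrightarrow\Pbar\t U_0\xrightarrow{\ \bar\mA^{*}\ }\Pbar\t\tfrac{U_0\t U_0^*}{\ev^{*}(1)}\xrightarrow{\ \bar\mD\ }\Pbar\t\Sym_2U_0\xrightarrow{\ \bar\mf_{1,2}\ }\Pbar$$
(homological degrees $3,2,1,0$) and
$$\mathbb B:\quad 0\longrightarrow\Pbar\xrightarrow{\ \bar\mf_{1,2}^{*}\ }\Pbar\t D_2U_0^*\xrightarrow{\ \bar\mD^{*}\ }\Pbar\t\ker(\ev)\xrightarrow{\ \bar\mA\ }\Pbar\t U_0^*\longrightarrow 0$$
(homological degrees $4,3,2,1$). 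The self-duality of $(\frak F,\frak f)$ from \cite[Prop.~7.10]{EKK3}, which is built into Theorem~\ref{resol} and is realized by the $\kk$-bilinear perfect pairings $U_0^*\times U_0\to\kk$, $\ker(\ev)\times\tfrac{U_0\t U_0^*}{\ev^{*}(1)}\to\kk$ and $D_2U_0^*\times\Sym_2U_0\to\kk$, survives $\t_P\Pbar$ (all modules involved being free of finite rank) and exhibits $\mathbb B$ as $\Hom_\Pbar(\mathbb A,\Pbar)$, suitably reindexed. Hence $(\frak F,\frak f)\t_P\Pbar=\mathbb A\oplus\mathbb B$ is, by definition, the mapping cone of the zero map $\Hom_\Pbar(\mathbb A,\Pbar)\to\mathbb A$.

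It remains to identify $\mathbb A$ with the Eagon--Northcott complex $E$ of $M$. The $2\times2$ minors of $M$ are $y^2,\,yz,\,yw,\,z^2-yw,\,zw,\,w^2$, so $I_2(M)=\maxm^2$; as $\maxm^2$ is $\maxm$-primary it has grade $3$, the generic value $4-2+1$, so $E$ is a minimal free resolution of $\Pbar/I_2(M)=\Pbar/\maxm^2$, of ranks $1,6,8,3$. Now $\bar\mf_{1,2}$ and the first Eagon--Northcott differential $\bigwedge^2\Pbar^{4}\to\Pbar$ have the common image $\maxm^2$, and the $\kk$-linear isomorphism $\Pbar\t\Sym_2U_0\xrightarrow{\ \sim\ }\bigwedge^2\Pbar^{4}$ sending $y^2,yz,yw,z^2,zw,w^2$ to $e_1\w e_2,\ e_1\w e_3,\ e_1\w e_4,\ e_2\w e_3+e_1\w e_4,\ e_2\w e_4,\ e_3\w e_4$ intertwines them; extending the comparison over the remaining free modules and verifying, with $x$ set to $0$ in the matrices of Example~\ref{record}, that $\bar\mD$ and $\bar\mA^{*}$ become the higher Eagon--Northcott differentials of $M$, gives $\mathbb A\cong E$. (Alternatively, bypass the explicit comparison: $\mathbb A$ is a minimal complex of free $\Pbar$-modules with $\HH_j(\mathbb A)=0$ for $j\ge 2$ by the first paragraph; proving in addition that $\HH_1(\mathbb A)=0$---e.g.\ by the Buchsbaum--Eisenbud acyclicity criterion applied to $\bar\mD$ and $\bar\mA^{*}$---makes $\mathbb A$ the minimal free resolution of $\Pbar/\maxm^2$, hence isomorphic to $E$.) Since $E$ resolves $\Pbar/\maxm^2$ and that module has depth $0$ over the three-dimensional regular ring $\Pbar$, we have $\HH_j(E)=0$ for $j\ge1$ and $\Ext^j_\Pbar(\Pbar/\maxm^2,\Pbar)=0$ for $j\le2$, so $\Hom_\Pbar(E,\Pbar)$ (reindexed) has homology in a single degree and the mapping cone has homology only in degrees $0$ and $1$---in particular $\HH_j=0$ for $j\ge2$, as already shown.

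The main obstacle is this last identification of $\mathbb A$ with the Eagon--Northcott complex of the specific matrix $M$ (or, on the alternative route, the direct verification that $\HH_1(\mathbb A)=0$); everything before it is bookkeeping, driven by the observation that the $x$-coefficient blocks of the differentials are precisely those that vanish modulo $x$.
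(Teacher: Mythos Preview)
Your proposal is correct and follows the same approach as the paper: reduce modulo $x$, observe that the off-diagonal blocks $x\t\mB$, $x\t\mC$ vanish so the complex splits as $\mathbb A\oplus\mathbb B$, use the built-in duality to recognize $\mathbb B\cong\Hom_{\Pbar}(\mathbb A,\Pbar)$, and identify $\mathbb A$ with the Eagon--Northcott complex of $M$ by inspecting the matrices of Example~\ref{record} with $x=0$.  The paper simply writes out $\bar A^{\rm T}$, $\bar D$, $\bar f_{1,2}$ explicitly and declares the identification clear; your change-of-basis on $\bigwedge^2\Pbar^4$ (sending $z^2$ to $e_2\wedge e_3+e_1\wedge e_4$) is exactly what reconciles $\bar f_{1,2}=[y^2,yz,yw,z^2,zw,w^2]$ with the minors $y^2,yz,yw,z^2-yw,zw,w^2$ of $M$, and the remaining verification for $\bar\mD$, $\bar\mA^{*}$ is the same routine check in both treatments.

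One genuine addition in your write-up is the opening Tor argument: the paper derives $\HH_j=0$ for $j\ge 2$ only as a consequence of the mapping-cone description, whereas you obtain it immediately from $\operatorname{Tor}^P_j(P/\ann x\Phi_3,\Pbar)$ computed via the Koszul resolution $0\to P\xrightarrow{x}P\to\Pbar\to 0$.  This is a useful shortcut if one only wants the vanishing and not the full structural statement.
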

\begin{proof}Let 
$\bar{\phantom{x}}$ be the functor $-\t_P\Pbar$.
One can read Example~\ref{record} (or Theorem~\ref{resol}) to see that $(\frak F,\frak f)\t_P \Pbar$ is the mapping cone of 
{\small$$\xymatrix{\Hom_{\Pbar}(E,\Pbar):\ar[d]^{0}&\quad
0\ar[r]&\Pbar(-6)^1\ar[r]^{\bar f_{12}\transpose}\ar[d]^{0}&\Pbar(-4)^6\ar[r]^{\bar D\transpose}\ar[d]^{0}&\Pbar(-3)^8\ar[r]^{\bar A}\ar[d]^{0}&\Pbar(-2)^3\ar[d]^{0}\\
E&0\ar[r]&\Pbar(-4)^3\ar[r]^{\bar A\transpose}&\Pbar(-3)^8\ar[r]^{\bar D}&\Pbar(-2)^6\ar[r]^{\bar f_{1,2}}&\Pbar,}$$}where
$$\bar A\transpose=\bmatrix
y&0&-w\\0&y&0\\0&0&y\\z&0&0\\0&z&-w\\0&0&z\\w&0&0\\0&w&0\endbmatrix,\quad
\bar D=\bmatrix 0&0&0&w&0&0&-z&0\\-w&0&0&0&w&0&y&-z\\z&0&0&-y&0&w&0&0\\0&-w&0&0&0&0&0&y\\0&z&-w&0&-y&0&0&0\\0&0&z&0&0&-y&0&0\endbmatrix,$$
and 
$$\bar f_{1,2}=\bmatrix y^2&yz&yw&z^2&zw&w^2\endbmatrix.$$
(Actually, the complex $\frak F\t_P\Pbar$ is studied already in \cite{EKK3}, where it is called the ``skeleton'' of $\frak F$.)
It is clear that $E$ is the Eagon-Northcott resolution of the quotient $\Pbar/I_2(M)$ and that $\Hom_{\Pbar}( E,\Pbar)$ is the Eagon-Northcott resolution of 
$$\Ext^3_{\Pbar}(\Pbar/I_2(M),\Pbar),$$ which is the canonical module of $\Pbar/I_2(M)$. It follows that 
$(\frak F,\frak f)\t_P \Pbar$ has nonzero homology in positions $0$ and $1$; but otherwise, the homology is zero.
\end{proof}

\section{A resolution of $P/\ann \Phi_3$ by free $P$-modules.}
A few slight modifications turn the minimal homogeneous resolution of Theorem~\ref{resol} of $P/\ann x\Phi_3$ by free $P$-modules into a resolution of $P/\ann \Phi_3$. 
\begin{theorem} 
\label{a res}
Adopt Data~{\rm\ref{2.1}}. Then the ideal $\ann \Phi_3$ is generated by
$$\{x^2 p^{-1}(\nu_{1,0})\mid \nu_{1,0}\in U_0^*\}\cup\{u_{2,0}-xp^{-1}(u_{2,0}\Phi_3)|u_{2,0}\in \Sym_2U_0\}.$$Furthermore,
$$\left(\frak F, \widetilde {\frak f}\,\,\right):\quad 0\to \frak F_4\xrightarrow{\widetilde{\frak f_4}}\frak F_3
\xrightarrow{\widetilde{\frak f_3}}\frak F_2
\xrightarrow{\widetilde{\frak f_2}}\frak F_1
\xrightarrow{\widetilde{\frak f_1}}\frak F_0$$
is a homogeneous resolution of $P/\ann \Phi_3$ by free $P$-modules where
\begin{align*}
\widetilde{\frak f_4}={}&\bmatrix x\mf_{1,1}^*\\\mf_{1,2}^*\endbmatrix,&
\widetilde{\frak f_3}={}&\bmatrix \mB ^*&\mD ^*\\\mA ^*&x^2\t\mC ^*\endbmatrix,\\
\widetilde{\frak f_2}={}&\bmatrix \mA &\mB \\x^2\t\mC &\mD \endbmatrix,\text{ and}&
\widetilde{\frak f_1}={}&\bmatrix x\mf_{1,1}&\mf_{1,2}\endbmatrix.\end{align*}
The resolution $\left(\frak F, {\widetilde {\frak f}}\,\,\right)$ is minimal if and only if $B=0$. Indeed, the minimal number of generators of $\ann\Phi_3$ is equal to $9-\rank B$.
\end{theorem}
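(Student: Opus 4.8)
The plan is to build $(\frak F,\widetilde{\frak f})$ from the known minimal resolution $(\frak F,\frak f)$ of $P/\ann x\Phi_3$ (Theorem~\ref{resol}) by means of an injective chain map assembled from powers of $x$, and then to extract everything --- acyclicity, the identification of $\HH_0$, minimality, and the generator count --- from the resulting long exact homology sequence, with Observation~\ref{x=0} as the one genuine external input. First I would dispose of the easy half of the generation statement: using $p(u)=u(x\Phi_3)$, one checks that $x^{2}p^{-1}(\nu_{1,0})\,\Phi_3=x\bigl(p(p^{-1}(\nu_{1,0}))\bigr)=\nu_{1,0}(x)=0$ for every $\nu_{1,0}\in U_0^{*}$, and that $\bigl(u_{2,0}-xp^{-1}(u_{2,0}\Phi_3)\bigr)\Phi_3=u_{2,0}\Phi_3-p(p^{-1}(u_{2,0}\Phi_3))=0$ for every $u_{2,0}\in\Sym_2U_0$. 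Thus both displayed families lie in $\ann\Phi_3$; since these families are precisely the columns of $\widetilde{\frak f_1}$, this gives $\im\widetilde{\frak f_1}\subseteq\ann\Phi_3$ and a graded surjection $P/\im\widetilde{\frak f_1}\twoheadrightarrow P/\ann\Phi_3$, and the reverse inclusion will follow once $(\frak F,\widetilde{\frak f})$ is shown to be a resolution.

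Next I would introduce the block-diagonal maps $\phi_i\colon\frak F_i\to\frak F_i$ which, with respect to the decomposition of $\frak F_i$ in Theorem~\ref{resol}, are multiplication by $\id$, $\operatorname{diag}(x,1)$, $\operatorname{diag}(x,1)$, $\operatorname{diag}(1,x)$, and $x$ for $i=0,1,2,3,4$. A block-matrix comparison of the formulas of Theorem~\ref{resol} with those of the present theorem shows $\frak f_i\phi_i=\phi_{i-1}\widetilde{\frak f_i}$ for every $i$ (for example, $\frak f_2\phi_2$ and $\phi_1\widetilde{\frak f_2}$ both equal the block matrix with entries $x\mA$, $x\mB$, $x^{2}\mC$, $\mD$), so $\phi$ is a chain map $(\frak F,\widetilde{\frak f})\to(\frak F,\frak f)$; and each $\phi_i$, being multiplication by a nonzerodivisor of $P$ on a free module, is injective. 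Hence $\phi_{i-1}\widetilde{\frak f_i}\widetilde{\frak f_{i+1}}=\frak f_i\frak f_{i+1}\phi_{i+1}=0$, and injectivity of $\phi_{i-1}$ forces $\widetilde{\frak f_i}\widetilde{\frak f_{i+1}}=0$; so $(\frak F,\widetilde{\frak f})$ is a complex and $0\to(\frak F,\widetilde{\frak f})\xrightarrow{\phi}(\frak F,\frak f)\to\operatorname{coker}\phi\to0$ is a short exact sequence of complexes.

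The cokernel is a complex of $\Pbar$-modules, $\Pbar=P/(x)$: in homological degrees $4,3,2,1$ it consists precisely of the summands of $\frak F_i$ on which $\phi_i$ acts as multiplication by $x$, namely $\Pbar$, $\Pbar\t D_2U_0^{*}$, $\Pbar\t\ker(\ev\colon U_0\t U_0^{*}\to\kk)$, and $\Pbar\t U_0^{*}$, with differentials induced by $\frak f$. Matching this term by term and map by map against the description in Observation~\ref{x=0} identifies $\operatorname{coker}\phi$, up to a shift by one in homological degree, with $\Hom_{\Pbar}(E,\Pbar)$, which Observation~\ref{x=0} recognizes as the Eagon--Northcott resolution of the canonical module of $\Pbar/I_2(M)$; in particular $\HH_j(\operatorname{coker}\phi)=0$ for $j\ge2$. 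Feeding the short exact sequence of complexes into the long exact homology sequence and using $\HH_i(\frak F,\frak f)=0$ for $i\ge1$ gives $\HH_i(\frak F,\widetilde{\frak f})\cong\HH_{i+1}(\operatorname{coker}\phi)=0$ for every $i\ge1$. Therefore $(\frak F,\widetilde{\frak f})$ is a free resolution of $P/\im\widetilde{\frak f_1}$.

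It remains to identify $P/\im\widetilde{\frak f_1}$ with $P/\ann\Phi_3$ and to do the bookkeeping. Since $\widetilde{\frak f_4}$ is injective (because $\HH_4=0$) and its entries generate exactly the ideal $\im\widetilde{\frak f_1}$, the Buchsbaum--Eisenbud acyclicity criterion applied to the resolution $(\frak F,\widetilde{\frak f})$ yields $\grade\im\widetilde{\frak f_1}\ge4$, hence $=4$; then $\Ext^{4}_{P}(P/\im\widetilde{\frak f_1},P)=\operatorname{coker}(\widetilde{\frak f_4}\transpose)=P/\im\widetilde{\frak f_1}$ is cyclic, so $P/\im\widetilde{\frak f_1}$ is Artinian Gorenstein, and propagating the graded twists through $(\frak F,\widetilde{\frak f})$ from $\frak F_0=P$ (each twist being forced by the degrees of the matrix entries) yields $\frak F_4=P(-7)$, so its socle degree is $3$. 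A graded surjection of Artinian Gorenstein algebras of equal socle degree is an isomorphism --- a nonzero kernel would contain the one-dimensional socle and so annihilate the top degree, contradicting that $\AA$ has socle degree $3$ --- so the surjection of the first paragraph is an isomorphism and $\ann\Phi_3=\im\widetilde{\frak f_1}$, which is the generation claim. (Alternatively, one compares Hilbert series: the twists above give numerator $1-6t^{2}+5t^{3}+5t^{4}-6t^{5}+t^{7}$, matching an algebra with Hilbert function $(1,4,4,1)$.) Finally, every entry of every $\widetilde{\frak f_i}$ lies in $\maxm$ except those in the blocks $\mB$ of $\widetilde{\frak f_2}$ and $\mB\transpose$ of $\widetilde{\frak f_3}$, which are the constants of Example~\ref{record}; so $(\frak F,\widetilde{\frak f})$ is minimal if and only if $B=0$, and since $\widetilde{\frak f_1}\t_P\kk=0$ while $\widetilde{\frak f_2}\t_P\kk$ has rank $\rank B$, the minimal number of generators of $\ann\Phi_3$ equals $\beta_1(P/\ann\Phi_3)=\dim_{\kk}\frak F_1-\rank B=9-\rank B$. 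I expect the one step requiring care to be the third paragraph --- pinning down $\operatorname{coker}\phi$ precisely, with the correct homological shift, as $\Hom_{\Pbar}(E,\Pbar)$ of Observation~\ref{x=0}; the rest is block-matrix arithmetic together with standard homological algebra.
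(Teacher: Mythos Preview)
Your proof is correct and, for the acyclicity of $(\frak F,\widetilde{\frak f})$, follows the paper exactly: the map $\phi$ is precisely the paper's $\gamma$, and the identification of the cokernel with $\Hom_{\Pbar}(E,\Pbar)[-1]$ together with the long exact sequence is just what the paper does.

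Where you diverge is in establishing the generation claim $\im\widetilde{\frak f_1}=\ann\Phi_3$. The paper proves this \emph{first}, before touching the resolution, by an explicit hands-on argument: it exhibits a $\kk$-basis $\{1,x,y,z,w,xp^{-1}(x^*),\ldots,x^2p^{-1}(x^*)\}$ for $P/\ann\Phi_3$ (using the weak Lefschetz property to see the linear independence) and then checks case by case that every monomial of degree at most $3$ lies in the span of that basis together with the proposed generators. You instead prove acyclicity first and then extract the equality $\im\widetilde{\frak f_1}=\ann\Phi_3$ from structural information: Buchsbaum--Eisenbud gives $\grade\im\widetilde{\frak f_1}=4$, the shape of $\widetilde{\frak f_4}$ gives type one hence Gorenstein, the twist $P(-7)$ gives socle degree $3$, and a surjection of graded Artinian Gorenstein algebras of equal socle degree must be an isomorphism. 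Your route is cleaner and avoids the degree-by-degree bookkeeping, at the cost of invoking Buchsbaum--Eisenbud and the socle argument; the paper's route is more elementary and self-contained but longer. Both are perfectly valid.
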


\begin{Remark}
Theorem~\ref{a res} is implemented in the Macaulay2 script ``BasicSocDeg3''; see Section~\ref{14.A.1}. \end{Remark}

\begin{proof} We first show that 
\begin{equation}\label{gens}\la x^2 p^{-1}U_0^*\ra+\la u_{2,0} -x p^{-1}(u_{2,0}\Phi_3)\mid u_{2,0}\in \Sym_2U_0\ra = \ann \Phi_3.\end{equation}
The inclusion $\subseteq$ is obvious. Indeed,
$$xp^{-1}(U_0^*)\subseteq \ann x\Phi_3 \implies x^2p^{-1}(U_0^*)\subseteq \ann \Phi_3$$
and 
 \begin{align*}&[u_{2,0}-xp^{-1}(u_{2,0} \Phi_3)](\Phi_3)=u_{2,0}(\Phi_3)-[p^{-1}(u_{2,0}\Phi_3)](x\Phi_3)\\
{}={}&u_{2,0}(\Phi_3)-p[p^{-1}(u_{2,0}\Phi_3)]=0.\end{align*}
The inclusion $\supseteq$ requires more work.
\begin{claim-no-advance}
\label{13.1}The elements 
$$xp^{-1}(x^*), \quad
xp^{-1}(y^*),\quad
xp^{-1}(z^*),\quad  \text{and}\quad xp^{-1}(w^*)$$
of $(P/\ann \Phi_3)_2$ are linearly independent. The element  $x^2p^{-1}(x^*)$ of $(P/\ann \Phi_3)_3$ is nonzero.\end{claim-no-advance} 
\ms\noindent{\it Proof of Claim~{\rm \ref{13.1}}.} Suppose that  $a,b,c,d$ are constants with  $$axp^{-1}(x^*)+ 
bxp^{-1}(y^*)+cxp^{-1}(z^*)+dxp^{-1}(w^*)=0$$ in $P/\ann \Phi_3$. In this case,
$$
\ell=ap^{-1}(x^*)+ 
bp^{-1}(y^*)+cp^{-1}(z^*)+dp^{-1}(w^*)$$ is a linear form in $P/\ann \Phi_3$ with $x\ell=0$.  
However, $x$ is a weak Lefschetz element on  $P/\ann \Phi_3$; hence, $\ell =0$ in 
$P/\ann \Phi_3$. The $\kk$-algebra $P/\ann \Phi_3$ has embedding dimension four; hence, $\ell$ is zero in $P$. 
The elements $x^*$, $y^*$, $z^*$, $w^*$ of $U^*$ are linearly independent and  $p^{-1}$ is an isomorphism. It follows that  $a$, $b$, $c$, and $d$ are all zero.

For the other assertion, observe that 
$[x^2p^{-1}(x^*)](\Phi_3)=x(x^*)=1$, which is   not equal to $0$; hence, $x^2p^{-1}(x^*)\neq 0$ in $(P/\ann \Phi_3)_3$. This completes the proof of Claim \ref{13.1}. 

We finish the proof of (\ref{gens}) by showing that 

\begin{equation}\label{13.1.1}\ann \Phi_3\subseteq \Big(\la x^2 p^{-1}U_0^*\ra+\la u_{2,0} -x p^{-1}(u_{2,0}\Phi_3)\mid u_{2,0}\in \Sym_2U_0\ra\Big).\end{equation}
The Hilbert series of $\AA=P/\ann \Phi_3$ is $(1,4,4,1)$. Indeed the Hilbert series exhibits symmetry, has $\dim \AA_1=4$ and has a socle of dimension 1 in degree $3$.
Thus, we see from Claim~\ref{13.1}, that\begin{equation}\label{13.1.2}1,\ x,\ y,\ z,\ w, \  xp^{-1}(x^*),\  xp^{-1}(y^*),\  xp^{-1}(z^*),\  xp^{-1}(w^*),\  x^2p^{-1}(x^*)\end{equation} is a basis for $P/\ann \Phi_3$. We  show (\ref{13.1.1}) by showing
that $P_0\p P_1\p P_2\p P_3$ is contained in the vector space spanned by (\ref{13.1.2}) 
and
\begin{equation}\label{13.1.3} \Big(\la x^2 p^{-1}U_0^*\ra+\la u_{2,0} -x p^{-1}(u_{2,0}\Phi_3)\mid u_{2,0}\in \Sym_2U_0\ra\Big).\end{equation}

We see that $P_0$ and $P_1$ are both in the span of (\ref{13.1.2}). Observe that
$P_2$ is spanned by 
\begin{equation}\label{gb}xp^{-1}(x^*),\ 
xp^{-1}(y^*),\ xp^{-1}(z^*), xp^{-1}(w^*)\end{equation}
and 
\begin{equation}\label{gb2}\begin{array}{l}y^2-xp^{-1}(y^2\Phi_3),\ 
yz-xp^{-1}(yz\Phi_3),\ 
yw-xp^{-1}(yw\Phi_3),\\ 
z^2-xp^{-1}(z^2\Phi_3),\ 
zw-xp^{-1}(zw\Phi_3),\ \text{and}\ 
w^2-xp^{-1}(w^2\Phi_3).\end{array}\end{equation}
 The first four are a basis for $xP_1$ (by Claim~\ref{13.1}) and the last $6$ are a basis for $P_2/xP_1$. 
The first four are in (\ref{13.1.2}) and the last six are in (\ref{13.1.3}).

Use the  basis for $P_2$ which is the union (\ref{gb}) and (\ref{gb2})  
 to see  that $P_3$ is spanned by
\begin{enumerate}[\rm(a)]
\item\label{13.1.a} $(x,y,z,w) xp^{-1}(x^*)$,
\item\label{13.1.b} $(x,y,z,w)(xp^{-1}(y^*),\ xp^{-1}(z^*), xp^{-1}(w^*))$,
\item \label{13.1.c} $(x,y,z,w) \la u_{2,0}-xp^{-1}(u_{2,0} \Phi_3)\mid u_{2,0}\in \Sym_2U\ra$.
\end{enumerate}
Observe that the basis elements of (\ref{13.1.c}) are in (\ref{13.1.3}) and the basis elements $$x(xp^{-1}(y^*),\ xp^{-1}(z^*), xp^{-1}(w^*))$$ from (\ref{13.1.b}) are in $x^2p^{-1}U_0^*$; hence these are in (\ref{13.1.3}).
It is clear that $x^2p^{-1}(x^*)$ from (\ref{13.1.a}) is in \ref{13.1.2}. 
Let $\ell$ be an element of $U_0$. Consider $\ell\cdot xp^{-1}(x^*)$ from (\ref{13.1.a}). Observe that
$$[\ell p^{-1}(x^*)](x\Phi_3)=\ell(x^*)=0;$$therefore,
$$\ell p^{-1}(x^*)\in \ann(x\Phi_3)=x p^{-1}U_0^*+\la u_{2,0}-xp^{-1}(u_{2,0} \Phi_3)\mid u_{2,0} \in \Sym_2U_0\ra.$$ It follows that
$$x\ell p^{-1}(x^*)\in x^2 p^{-1}U_0^*+\la u_{2,0}-xp^{-1}(u_{2,0} \Phi_3)\mid u_{2,0} \in \Sym_2U_0\ra, $$which is (\ref{13.1.3}). 

Let $\ell_1$ and $\ell_2$ be different elements selected from $y$, $z$, and $w$.
Consider $\ell_1 x p^{-1}(\ell_2^*)$ from (\ref{13.1.b}).
Observe that 
$$[\ell_1 p^{-1}(\ell_2^*)](x\Phi_3)=\ell_1(\ell_2^*)=0.$$
Therefore, $$\ell_1 p^{-1}(\ell_2^*)\in \ann x \Phi_3= xp^{-1}U_0^*+\la u_{2,0}-xp^{-1}(u_{2,0} \Phi_3)\mid u_{2,0} \in \Sym_2U_0\ra $$ and
$$x\ell_1 p^{-1}(\ell_2^*)\subseteq x^2p^{-1}U_0^*+\la u_{2,0}-xp^{-1}(u_{2,0} \Phi_3)\mid u_{2,0} \in \Sym_2U_0\ra, $$ which is (\ref{13.1.3}). 

Let $\ell$ equal $y$, $z$, or $w$. Consider $\ell x p^{-1}\ell^*$ from (\ref{13.1.b}). Observe that
$$\Big(\ell p^{-1}(\ell^*)-xp^{-1} (x^*)\Big)(x\Phi_3)=\ell(\ell^*)-x(x^*)=1-1=0.$$ Thus, $$\ell p^{-1}(\ell^*)-xp^{-1} (x^*)\in \ann x\Phi_3=xp^{-1}U_0^*+\la u_{2,0}-xp^{-1}(u_{2,0} \Phi_3)\mid u_{2,0} \in \Sym_2U_0\ra. $$ In other words,
$\ell p^{-1}(\ell^*)$ is equal to
$$xp^{-1} (x^*)+ \text{ an element of }xp^{-1}U_0^*+\la u_{2,0}-xp^{-1}(u_{2,0} \Phi_3)\mid u_{2,0} \in \Sym_2U_0\ra $$ and
$x\ell p^{-1}(\ell^*)$ is equal to $$x^2p^{-1} (x^*)+ \text{ an element of }x^2p^{-1}U_0^*+\la u_{2,0}-xp^{-1}(u_{2,0} \Phi_3)\mid u_{2,0} \in \Sym_2U_0\ra. $$
The element $x^2p^{-1} (x^*)$ is in (\ref{13.1.2}) and an element of $$x^2p^{-1}U_0^*+\la u_{2,0}-xp^{-1}(u_{2,0} \Phi_3)\mid u_{2,0} \in \Sym_2U_0\ra$$ is in (\ref{13.1.3}). This completes the proof of (\ref{gens}).

We prove that the complex $(\frak F, \widetilde{\frak f})$ is acyclic by studying the short exact sequence of complexes
$$0\to (\frak F, \widetilde{\frak f})\xrightarrow{\g} (\frak F, \frak f) \to \Hom_P(E,\bar P)[-1] \to 0.$$ (The  ``$[-1]$'' refers to a shift of homological position.) 
The complex $(\frak F, \frak f)$ is introduced in Theorem~\ref{resol} and the complex $\Hom_P(E,\bar P)$
may be found  in Observation~\ref{x=0}. The map $\g_i:(\frak F, \widetilde{\frak f})_i\to (\frak F, \frak f)_i$ is
$$\g_0=1, \quad \g_1=\g_2=\bmatrix x&0\\0&1\endbmatrix,\quad \g_3=\bmatrix 1&0\\0&x\endbmatrix,\quad \g_4=x.$$ 
The complexes  $(\frak F, \frak f)$ and  $\Hom_P(E,\bar P)$ are acyclic; thus,  $(\frak F, \widetilde{\frak f})$ is also acyclic.

The resolution $(\frak F, \widetilde{\frak f})$ is homogeneous with graded Betti numbers 
$$0\to P(-7)^1\to \begin{matrix} P(-4)^3\\\p\\ P(-5)^6\end{matrix}
\to \begin{matrix}P(-4)^8\\\p\\ P(-3)^8\end{matrix}\to \begin{matrix}P(-3)^3\\\p\\ P(-2)^6\end{matrix}\to P.$$
All of the nonzero maps have positive degree, except possibly $$B:P(-3)^8\to P(-3)^3\quad\text{and}\quad B\transpose: P(-4)^3\to P(-4)^8.$$ It follows that 
$(\frak F, \widetilde{\frak f})$ is a minimal resolution if and only if $B$ is zero. Indeed, the minimal number of generators of $\ann\Phi_3$ is equal to $9-\rank B$. \end{proof}

\section{The matrix $\SM$.}\label{section SM}

Retain Data~\ref{2.1}.
In this section, we introduce the $3\times 3$ symmetric matrix $\SM$ which controls the rank of the matrix $B$ from Theorems \ref{resol} and \ref{a res} and therefore controls the precise form of the minimal resolution of $P/\ann \Phi_3$. Furthermore, we introduce the complex $\C$ which relates $\SM$ and $B$.

\subsection{Complexes associated to a symmetric $3\times 3$ matrix.}\label{2.9}

$ $

Contrary to the usual convention \ref{2.1}.(\ref{2.1.a}) in this paper, throughout Subsection~\ref{2.9},
$(-)^*$ represents $\Hom_R(-,R)$.
\begin{background}
\label{2.9.1} Let $R$ be a commutative Noetherian ring, $U_0$ be a finitely generated free $R$-module, and $(-)^*$ represent $\Hom_R(-,R)$.
\begin{enumerate}[\rm(a)]
\item If  $\Gamma: U_0^*\to U_0$ is an $R$-module homomorphism, then $\Gamma$ is {\it symmetric} if $$[\Gamma(w_1)](w_1')=[\Gamma(w_1')](w_1)$$ for  $w_1,w_1'\in U_0^*$.
\item\label{2.9.1.b} If $\omega_{U_0^*}$ is a fixed basis for $\bigwedge^{\rank U_0^*}U_0^*$ and $\Gamma: U_0^*\to U_0$ is an $R$-module homomorphism, then the {\it classical adjoint} of $\Gamma$ is the $R$-module homomorphism  $\Gamma^\vee:U_0\to U_0^*$ with 
$$\Gamma^\vee(u_1)=\left[\ts(\bigwedge^{\rank U_0^*-1}\Gamma)[u_1(\omega_{U_0^*})]\right](\omega_{U_0^*}),$$ for $u_1\in U_0$. (Of course, if $\Gamma$ is invertible, then $\Gamma^\vee$ is equal to a unit times $\Gamma^{-1}$.)  
\item If $\Gamma: U_0^*\to U_0$ is a symmetric homomorphism, then the classical adjoint $$\Gamma^\vee:U_0\to U_0^*$$ of $\Gamma$ is also a symmetric homomorphism.
\item If $\beta: U_0\to U_0^*$ is a symmetric homomorphism, then there is an element 
$\delta $ in $D_2U_0^*$ with $\beta (u_1)= u_1(\delta )$ for all $u_1\in U_0$.
\end{enumerate}
\end{background}

\begin{definition}
\label{24.2}Let $R$ be a commutative Noetherian ring, 
 $U_0$ be a free $R$-module of rank $3$, $(-)^*$ represent $\Hom_R(-,R)$,
   ${\Gamma: U_0^*\to  U_0}$ be a symmetric homomorphism of $R$-modules, $\Gamma^\vee:U_0\to U_0^*$ be the classical adjoint of $\Gamma$,
and $\delta $ be  the element of $D_2U_0^*$ with $\Gamma^\vee (u_1)= u_1(\delta )$ for  $u_1\in U_0$.
Let $\mathfrak C_{\Gamma}$ be the following sequence of $R$-module homomorphisms:
\begin{equation}\notag 
\mathfrak C_{\Gamma}:\quad 0\to R\xrightarrow{\ \ \mathfrak C_3\ \ }D_2U_0^*\xrightarrow{\ \ \mathfrak C_2\ \ }\frac{U_0\t_{R} U_0^*}{(\ev^*(1))}\xrightarrow{\ \ \mathfrak C_1\ \ }\ts \bigwedge^2 U_0\to 0,\end{equation}
\begin{align*}\mathfrak C_3(1)&=\delta \in D_2U_0^*,&&\text{for $1\in R$},\\
\mathfrak C_2(\nu_1^{(2)})&=\Gamma (\nu_1)\t \nu_1,&&\text{for $\nu_1\in U_0^*$, and}\\
\mathfrak C_1(u_1\t \nu_1)&=u_1\w \Gamma(\nu_1),&&\text{for $u_1\in U_0$ and $\nu_1\in U_0^*$.}
\end{align*}
\end{definition}

\begin{lemma}
\label{24.4}Adopt the notation of Definition~{\rm\ref{24.2}}. Then the following statements hold.
\begin{enumerate}[\rm(a)]
\item\label{24.4.a} The maps of $\mathfrak C$ form a complex of free $R$-modules.
\item\label{24.4.b} If $I_2(\Gamma)=R$, then $\mathfrak C$ is split exact.
\item\label{24.4.c} If $I_2(\Gamma)$ is a proper ideal of $R$ of grade at least $3$, then $\mathfrak C$ and its dual $\mathfrak C^*$ are acyclic.
\item\label{24.4.d}When the hypotheses of {\rm (\ref{24.4.c})} are in effect, then $\mathfrak C^*$ is a resolution of $R/I_2(\Gamma)$ by free $R$-modules and $\mathfrak C$ is a resolution of $\Ext^3_R(R/I_2(\Gamma),R)$ by free $R$-modules. 
\item\label{24.4.e} The ideals $I_2(\Gamma)$, $I_3(\C_1)$, $I_5(\C_2)$, and $I_1(\C_3)$ all have the same radical.
\end{enumerate}\end{lemma}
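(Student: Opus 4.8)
The plan is to prove (a) by a direct computation, to treat (e) as the crux of the lemma, and then to deduce (b), (c), (d) from (a) and (e) together with the Buchsbaum--Eisenbud acyclicity criterion and standard facts about complexes of free modules and $\Ext$.

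\emph{Part (a).} The only non-formal points are that the displayed formulas really define $R$-module maps and that consecutive compositions vanish. For $\mathfrak C_2$ one checks that the degree-two assignment $\nu_1\mapsto \overline{\Gamma(\nu_1)\t\nu_1}$ polarizes --- its associated bilinear form $(\nu_1,\mu_1)\mapsto\overline{\Gamma(\nu_1)\t\mu_1+\Gamma(\mu_1)\t\nu_1}$ is symmetric --- so it factors through $D_2U_0^*$. For $\mathfrak C_1$ one must see that $u_1\t\nu_1\mapsto u_1\w\Gamma(\nu_1)$ annihilates $\ev^*(1)=\sum_k m_k\t m_k^*$: expanding each $\Gamma(m_k^*)$ in the basis $\{m_j\}$ and using that the matrix of $\Gamma$ is symmetric gives $\sum_k m_k\w\Gamma(m_k^*)=0$, so $\mathfrak C_1$ descends to the quotient by $\ev^*(1)$. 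Next, $\mathfrak C_1\mathfrak C_2(\nu_1^{(2)})=\Gamma(\nu_1)\w\Gamma(\nu_1)=0$, and divided squares span $D_2U_0^*$, so $\mathfrak C_1\mathfrak C_2=0$. Finally, writing $\delta$ in a basis shows its coordinates are the entries of the symmetric matrix of $\Gamma^\vee$; since $\Gamma\circ\Gamma^\vee$ is a scalar multiple of $\id_{U_0}$ (the defining property of the classical adjoint), $\mathfrak C_2(\mathfrak C_3(1))=\mathfrak C_2(\delta)$ is a scalar multiple of $\overline{\ev^*(1)}$, which is $0$.

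\emph{Part (e).} The coordinates of $\delta$ with respect to the standard basis of $D_2U_0^*$ are the entries of the matrix of $\Gamma^\vee$, i.e.\ up to signs the $2\times2$ minors of the matrix of $\Gamma$; hence $I_1(\mathfrak C_3)=I_2(\Gamma)$ on the nose, and likewise $\im(\mathfrak C_3^*)=I_2(\Gamma)$. Since two ideals have the same radical iff they are contained in the same primes, and since $I_r(\phi)\subseteq\mathfrak p$ iff $\rank_{\kappa(\mathfrak p)}(\phi\t_R\kappa(\mathfrak p))<r$, it remains to prove a statement of linear algebra: for a symmetric homomorphism $\Gamma_0:V^*\to V$ over a field $K$ with $\dim_K V=3$ and $\rho=\rank\Gamma_0$, the complex $\mathfrak C_{\Gamma_0}$ obtained from $\mathfrak C_\Gamma$ by base change (all the constructions commuting with base change of free modules) satisfies $\rank(\mathfrak C_1)_{\Gamma_0}<3\iff\rho\le 1$ and $\rank(\mathfrak C_2)_{\Gamma_0}<5\iff\rho\le1$. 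Granting this, $V(I_3(\mathfrak C_1))=V(I_5(\mathfrak C_2))=V(I_2(\Gamma))=\{\mathfrak p:\rank_{\kappa(\mathfrak p)}\Gamma\le1\}$, which gives (e). For the linear-algebra statement, symmetry forces $\im\Gamma_0=(\ker\Gamma_0)^{\perp}$, so after a change of basis the matrix of $\Gamma_0$ is $\operatorname{diag}(G',0)$ with $G'$ an invertible symmetric $\rho\times\rho$ matrix --- a normal form valid over every field, characteristic $2$ included, since it only requires choosing a complement of $\ker\Gamma_0$. A short computation with this normal form gives $\rank(\mathfrak C_1)_{\Gamma_0}=0,2,3,3$ and $\rank(\mathfrak C_2)_{\Gamma_0}=0,3,5,5$ for $\rho=0,1,2,3$; for $\mathfrak C_2$ one computes the rank of the lift $D_2V^*\to V\t V^*$, which lands in $(\im\Gamma_0)\t V^*$, and notes that $\ev^*(1)$ meets that image only when $\rho=3$, in which case the substitution $w=\Gamma_0(\nu)$ identifies the lift with $D_2V\hookrightarrow V\t V\xrightarrow{\ \id\t\Gamma_0^{-1}\ }V\t V^*$.

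\emph{Parts (b), (c), (d).} The terms of $\mathfrak C$ are free of ranks $1,6,8,3$, so its expected ranks are $1,5,3$ (in positions $3,2,1$), and since $\mathfrak C$ is a complex one has $\rank\mathfrak C_3\le1$, $\rank\mathfrak C_1\le3$, and $\rank\mathfrak C_2\le 6-\rank\mathfrak C_3$. Under the hypotheses of (c), part (e) together with $I_1(\mathfrak C_3)=I_2(\Gamma)$ shows that $I_1(\mathfrak C_3)$, $I_5(\mathfrak C_2)$, $I_3(\mathfrak C_1)$ all have radical $\sqrt{I_2(\Gamma)}$, hence grade $\ge 3$; being nonzero, they force $\rank\mathfrak C_3=1$, $\rank\mathfrak C_1=3$, and then $\rank\mathfrak C_2=5$. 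The Buchsbaum--Eisenbud criterion now shows $\mathfrak C$ is acyclic, and since ideals of minors are unchanged under transposing, the same criterion applied to $\mathfrak C^*$ (again with expected ranks $3,5,1$, and with $I_1$ of its last map equal to $\im\mathfrak C_3^*=I_2(\Gamma)$) shows $\mathfrak C^*$ is acyclic; this is (c). For (d), $\mathfrak C^*$ is then a free resolution of $\operatorname{coker}\mathfrak C_3^*=R/\im\mathfrak C_3^*=R/I_2(\Gamma)$; dualizing this resolution and using $\mathfrak C^{**}=\mathfrak C$ gives $\Ext^i_R(R/I_2(\Gamma),R)=\HH^i(\mathfrak C)$, which by acyclicity of $\mathfrak C$ vanishes for $i\ne3$ and equals $\operatorname{coker}\mathfrak C_1$ for $i=3$, so $\mathfrak C$ is a free resolution of $\Ext^3_R(R/I_2(\Gamma),R)$. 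For (b), if $I_2(\Gamma)=R$ then $I_1(\mathfrak C_3)=I_5(\mathfrak C_2)=I_3(\mathfrak C_1)=R$, so $\mathfrak C$ is acyclic by Buchsbaum--Eisenbud; moreover $\rho\ge2$ at every prime, so the base change of $\mathfrak C_1$ has rank $3=\dim_{\kappa(\mathfrak p)}\bw^2(U_0\t_R\kappa(\mathfrak p))$ at each residue field, $\mathfrak C_1$ is surjective, and $\mathfrak C$ is exact; an exact complex of finitely generated free modules is split exact. The whole argument rests on the rank computations in (e), and the delicate case is $\mathfrak C_2$: its target is the quotient $\frac{U_0\t U_0^*}{\ev^*(1)}$, so one must track both the rank of the un-quotiented lift and whether $\ev^*(1)$ meets its image, and one must be sure the block normal form is valid in characteristic $2$ --- it is, because it comes from splitting off $\ker\Gamma_0$ rather than diagonalizing a quadratic form.
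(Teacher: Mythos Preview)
Your argument is correct, and it takes a genuinely different route from the paper's.

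The paper dispatches (\ref{24.4.a})--(\ref{24.4.d}) by citation (J\'ozefiak, Kutz, Kotzev) and then proves (\ref{24.4.e}) by passing to the generic symmetric matrix over $R[\{X_{i,j}\}]$: there $I_2(\widetilde\Gamma)$ is perfect of grade $3$, so $\widetilde{\mathfrak C}$ is acyclic by (\ref{24.4.c}), and a result of Hochster on acyclic complexes gives that $I_3(\widetilde{\mathfrak C}_1)$, $I_5(\widetilde{\mathfrak C}_2)$, $I_1(\widetilde{\mathfrak C}_3)$ share a radical; specializing $X_{i,j}\mapsto\Gamma_{i,j}$ pushes this down to $R$. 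Thus the paper's logical order is (a)--(d) first, then (e) via the generic case.

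You reverse this: (\ref{24.4.e}) is established directly, prime by prime, via an explicit rank computation over each residue field using the block normal form $\operatorname{diag}(G',0)$ for a symmetric map, and then (\ref{24.4.b})--(\ref{24.4.d}) fall out of (\ref{24.4.e}) together with Buchsbaum--Eisenbud. Your approach is more self-contained --- no appeal to the structure theory of symmetric determinantal ideals or to Hochster's CBMS notes --- and the fiberwise rank table you produce ($\rank\mathfrak C_1=0,2,3,3$ and $\rank\mathfrak C_2=0,3,5,5$ for $\rho=0,1,2,3$) in fact recovers the content of the paper's Theorem~\ref{6.3} as well. The paper's route, on the other hand, explains \emph{why} the radicals coincide: it is a shadow of the exactness of the Gulliksen--Neg\aa rd--type complex in the generic case, a phenomenon that holds for any acyclic complex of free modules. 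Both arguments are valid in all characteristics; your remark that the block normal form needs only a complement to $\ker\Gamma_0$ (not a diagonalization of a quadratic form) is exactly the point that makes your fiberwise computation go through in characteristic $2$.
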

\begin{proof}
See \cite[Thm.~3.1]{J78} and  \cite{K74,Ko91}  for the proof of (\ref{24.4.a}), (\ref{24.4.b}), (\ref{24.4.c}), (\ref{24.4.d}). We give an argument for (\ref{24.4.e}).
Let $\widetilde{R}$ be the polynomial ring $R[\{X_{i,j}|1\le i\le j\le 3\}]$,  $\widetilde{\Gamma}$ be the generic symmetric homomorphism $\widetilde{R}\to \widetilde{R}$ whose matrix is  
$$
\bmatrix X_{1,1}&X_{1,2}&X_{1,3}\\X_{1,2}&X_{2,2}&X_{2,3}\\X_{1,3}&X_{2,3}&X_{3,3}\endbmatrix,$$ and $\widetilde{\frak C}$ be the complex of Definition~\ref{24.2} built using $\widetilde{\C}$ in place of $\C$.  The ideal $I_2(\widetilde{\Gamma})$  is perfect of grade $3$ by \cite{K74}; hence $\widetilde{\mathfrak C}$ is exact by (\ref{24.4.c}); thus, 
$I_3(\widetilde{\C}_1)$, $I_5(\widetilde{\C}_2)$, and $I_1(\widetilde{\C}_3)$ 
all have the same radical by \cite[Props.~6.8 and 6.3.(c)]{Ho75}. 
Let $f: \widetilde{R}\to R$ be the $R$-algebra homomorphism which sends $\widetilde{\Gamma}$ to $\Gamma$. Observe that 
$f(I_i(\widetilde{\C}_j))=I_i(\C_j)$ for all $i$ and $j$.
The proof is complete because,  $I_1(\C_3)=I_2(\Gamma)$.
\end{proof}

\subsection{The complex $\mathfrak C$ that is used throughout the paper.}

$ $

We return to the Data of \ref{2.1}. 
Recall the  symmetric bilinear form           
$$\la-,-\ra:\Sym_2 U_0\times \Sym_2 U_0\to \kk,$$ given by 
$$\la u_2,u_2'\ra=[(p^{-1}u_2\Phi_3)u_2'](\Phi_3),$$ which is introduced in Data~\ref{2.1}.(\ref{2.1.d}).  

\begin{definition}
\label{8.1}
 Adopt Data~{\ref{2.1}}.
\begin{enumerate}
[\rm (a)]\item Define \begin{align*}
&F:\ts \bw^2U_0\t \bw^2 U_0\to \kk,\\ 
&F':U_0^*\t U_0^* \to \kk,\quad\text{and}\quad \\
&\Gamma:U_0^*\to U_0\end{align*} by
\begin{align}\notag F(u_{1,0}\w u_{1,0}'\t u_{1,0}''\w u_{1,0}''')={}&\la u_{1,0}u_{1,0}'',u_{1,0}'u_{1,0}'''\ra-\la u_{1,0}u_{1,0}''',u_{1,0}'u_{1,0}''\ra,
\\ 
\notag F'(\nu_{1,0}\t \nu_{1,0}')={}&F(\nu_{1,0}(\omega_{U_0})\t 
\nu_{1,0}'(\omega_{U_0})),\quad \text{and}\\
\label{6.1.1}[\Gamma(\nu_{1,0})](\nu_{1,0}')={}&F'(\nu_{1,0}\t \nu_{1,0}')
.\end{align}

\item The homomorphism $\Gamma^{\vee}:U_0\to U_0^*$ is symmetric; so there exists $\delta $ in $D_2U_0^*$ with
$$
\Gamma^{\vee}(u_{1,0})=u_{1,0}\delta \quad\text{for all $u_{1,0}$ in $U_0$}. 
$$In the same manner, if $\Gamma$ is invertible, then there exists
a unit $\lambda$ in $\kk$ with $$
\Gamma^{-1}(u_{1,0})=u_{1,0}\lambda \delta \quad\text{for all $u_{1,0}$ in $U_0$}. 
$$

 \item Let $\SM$ be the matrix for $\Gamma$; use the basis $y^*$, $z^*$, $w^*$ for $U_0^*$ and $y$, $z$, $w$ for $U$:
\begin{equation} \SM=\label{SM}
\begin{array}{|c||c|c|c|}\hline
         &y^*&z^*&w^*\\\hline\hline
y& \la z^2, w^2\ra-\la zw, zw\ra&\la zw, yw\ra-\la zy, w^2\ra&\la yz, zw\ra-\la z^2, yw\ra\\
\hline
z&\la yw, zw\ra-\la yz, w^2\ra&\la y^2, w^2\ra-\la yw, yw\ra&\la yz, yw\ra-\la y^2, wz\ra\\\hline
w&\la yz, zw\ra-\la yw, z^2\ra&\la yw, yz\ra-\la y^2, zw\ra&\la y^2, z^2\ra -\la yz, yz\ra\\\hline
\end{array}\,.\end{equation}
\item Let $\mathfrak C$ be the complex $\mathfrak C_{\Gamma}$:
\begin{equation} \label{6.4.3} 
\mathfrak C: \quad 0\to \kk \xrightarrow{\mathfrak C_3} D_2U_0^*\xrightarrow{\mathfrak C_2} \frac{U_0\t U_0^*}{\ev^*(1)}\xrightarrow{\mathfrak C_1} {\ts\bw^2}U_0\to 0,\end{equation}
built as described in Definition~\ref{24.2} using the data $\Gamma$ of (\ref{6.1.1}).
\end{enumerate}
\end{definition}

\begin{observation}
\label{8.2}
Adopt the data of 
{\rm\ref{2.1}} and the language of Definition~{\rm\ref{8.1}}. Let $B$ be the matrix  of Theorems {\rm\ref{resol}} and {\rm\ref{a res}}. Then 
\begin{enumerate}
[\rm (a)]
\item\label{8.2.a} $I_1(\C_3)=I_2(\SM)$, 
\item\label{8.2.b} the matrices $\C_1$ and $B$ have the same rank, and
\item\label{8.2.c} if $2\le \rank \SM$, then the complex $\mathfrak C$ of {\rm(\ref{6.4.3})} is split exact.\end{enumerate} 
\end{observation}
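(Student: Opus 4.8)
The plan is to prove the three assertions in turn, exploiting the explicit descriptions of $\C_3$, $\C_1$, and $B$ that are already available.

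\textbf{Part (a).} By Definition~\ref{8.1}(b)--(c), $\SM$ is the matrix of $\Gamma$, and by Background~\ref{2.9.1}(d) and Definition~\ref{24.2} the element $\delta\in D_2U_0^*$ satisfies $\Gamma^\vee(u_{1,0})=u_{1,0}\delta$, where $\Gamma^\vee$ is the classical adjoint of $\Gamma$. Since $U_0$ has rank $3$, the matrix of $\Gamma^\vee$ with respect to the bases $y,z,w$ and $y^*,z^*,w^*$ is the classical adjugate of $\SM$, whose entries are exactly the $2\times 2$ minors of $\SM$ (up to sign). Thus $\delta$, viewed through the identification $D_2U_0^*\leftrightarrow$ symmetric $3\times 3$ matrices, has entries generating $I_2(\SM)$. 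Since $\C_3(1)=\delta$, the ideal $I_1(\C_3)$ is generated by the coordinates of $\delta$ in any basis of $D_2U_0^*$, which is precisely $I_2(\SM)$. I would simply spell out this bookkeeping identification, since the content is just ``adjugate entries = $2\times 2$ minors.''

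\textbf{Part (b).} This is the heart of the matter. I would compare the map $\C_1:\frac{U_0\t U_0^*}{\ev^*(1)}\to\bw^2U_0$, which by Definition~\ref{24.2} sends $u_{1,0}\t\nu_{1,0}\mapsto u_{1,0}\w\Gamma(\nu_{1,0})$, with the map $\mB:\frac{U_0\t U_0^*}{\ev^*(1)}\to U_0^*$ of Theorem~\ref{resol}, whose matrix is $B$. The alternate description in Remark~\ref{5.4} gives, for $u_{1,0}'''$, $u_{1,0}''$, $u_{1,0}'$ in $U_0$,
$$[\mB(u_{1,0}'\t(u_{1,0}''\w u_{1,0}''')(\omega_{U_0^*}))](u_{1,0})=\la u_{1,0}u_{1,0}'',u_{1,0}'u_{1,0}'''\ra-\la u_{1,0}u_{1,0}''',u_{1,0}'u_{1,0}''\ra,$$
and the right side is exactly $F(u_{1,0}\w u_{1,0}'\t u_{1,0}''\w u_{1,0}''')$ from Definition~\ref{8.1}(a). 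Meanwhile, using the isomorphism $\bw^2U_0\xrightarrow{\w}U_0^*$ given by $\theta\mapsto\theta(\omega_{U_0^*})\cdot(\text{unit})$, the composite $U_0\xrightarrow{\;\w\;}$ of $\C_1$ is governed by the bilinear form $(u_{1,0},u_{1,0}')\mapsto$ the $\omega_{U_0^*}$-component of $u_{1,0}\w\Gamma(\nu_{1,0}')$-type expressions, which unwinds through (\ref{6.1.1}) to the same form $F$ (since $\Gamma$ is defined by $[\Gamma(\nu_{1,0})](\nu_{1,0}')=F'(\nu_{1,0}\t\nu_{1,0}')=F(\nu_{1,0}(\omega_{U_0})\t\nu_{1,0}'(\omega_{U_0}))$). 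Hence, after composing $\C_1$ with the fixed isomorphism $\bw^2U_0\cong U_0^*$, one gets exactly $\mB$; an isomorphism on the target does not change the rank, so $\rank\C_1=\rank B$. The main obstacle here is carefully matching the various sign conventions and the two isomorphisms $\bw^2U_0\cong U_0^*$ (one via $\omega_{U_0}$, one via $\omega_{U_0^*}$, normalized by $\omega_{U_0}(\omega_{U_0^*})=1$) so that the forms literally agree rather than agreeing up to a nonzero scalar — though even a nonzero scalar would suffice for the rank claim.

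\textbf{Part (c).} If $2\le\rank\SM$, then $\SM$ has a nonzero $2\times 2$ minor, so $I_2(\SM)=\kk$ (as $\kk$ is a field). By part (a), $I_1(\C_3)=\kk$, hence $I_2(\Gamma)=I_1(\C_3)=\kk$ as well (these agree by Lemma~\ref{24.4}(e), or directly since $\SM$ is the matrix of $\Gamma$ so $I_2(\Gamma)=I_2(\SM)$). Now apply Lemma~\ref{24.4}(b): when $I_2(\Gamma)=R$ (here $R=\kk$), the complex $\mathfrak C=\mathfrak C_\Gamma$ is split exact. This is immediate and requires no further work. I would present the proof in this order: (a) by the adjugate identification, then (c) as an immediate corollary of (a) plus Lemma~\ref{24.4}(b), and (b) via Remark~\ref{5.4} and the definition of $\Gamma$. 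The only genuinely delicate point is the sign/normalization bookkeeping in (b).
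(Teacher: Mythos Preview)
Your proposal is correct and follows essentially the same approach as the paper: part (a) via the adjugate identification, part (b) by showing $\mB=\psi_2\circ\C_1$ for the isomorphism $\psi_2:\bigwedge^2U_0\to U_0^*$, $\theta\mapsto\theta(\omega_{U_0^*})$ (the paper carries out the explicit $S_1=S_2$ computation you sketch, using Remark~\ref{5.4} on one side and Definition~\ref{24.2} on the other), and part (c) directly from Lemma~\ref{24.4}(\ref{24.4.b}) once $I_2(\Gamma)=I_2(\SM)=\kk$.
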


\begin{proof} 
(\ref{8.2.a}) Let $y,z,w$ be a basis for $U_0$. One basis for $D_2U_0^*$ is $\{
m^*\mid m\in \binom{y,z,w}2\}$. Observe that
$$\C_3(1)=\sum_{m\in\binom{y,z,w}2}[\Gamma^{\vee}(m)]\cdot m^*.$$If 
$\C_3$ is expressed as a $6\times 1$ matrix (see, for example, (\ref{delta-mat})), then the entries of this matrix are precisely the collection of two by two minors of the matrix for $\Gamma$, which is $\SM$.  We conclude that $I_1(\C_3)=I_2(\SM)$. 

\ms \noindent (\ref{8.2.b})
Let  
$\psi_2:{\ts\bigwedge^2} U_0\to U_0^*$ be the isomorphism with $\psi_2(\theta_2)= \theta_2(\omega_{U_0^*})$. We prove that 
the  diagram  
\begin{equation}\label{B=C(Gamma)sub1!}\xymatrix{ \frac{U_0\t U_0^*}{(\ev^*(1))}\ar[rr]^{\C_1}
\ar[rrd]^{\mB}&& \bigwedge^2U_0\ar[d]^{\cong}_{\psi_2}\\
&&U_0^*}\end{equation}
commutes.
Take $u_{1,0}, u_{1,0}',u_{1,0}'',u_{1,0}'''\in U_0$. We compare the elements
\begin{align*}
S_1={}&\Big[\mB\Big(u_{1,0}'\t (u_{1,0}''\w u_{1,0}''')(\omega_{U_0^*})\Big)\Big](u_{1,0})
\quad \text{and}\\ 
S_2={}&\Big[\big(\psi_2\circ \C_1\big)\Big(u_{1,0}'\t (u_{1,0}''\w u_{1,0}''')(\omega_{U_0^*})\Big)\Big](u_{1,0})\end{align*}
of $\kk$.
Apply Remark~\ref{5.4} to see that $$S_1=\la u_{1,0}u_{1,0}'',u_{1,0}'u_{1,0}'''\ra-\la u_{1,0}u_{1,0}''',u_{1,0}'u_{1,0}''\ra.$$ 
On the other hand,
\begingroup \allowdisplaybreaks
\begin{align*}
 S_2={}& \Big(\psi_2\big(u_{1,0}'\w \Gamma\big[(u_{1,0}''\w u_{1,0}''')(\omega_{U_0^*})\big]\big)\Big)(u_{1,0}),
&&\text{by Def. \ref{24.2}}, \\
{}={}& 
\Big[\Big(u_{1,0}'\w \Gamma\big[(u_{1,0}''\w u_{1,0}''')(\omega_{U_0^*})\big]\Big)(\omega_{U_0^*})\Big](u_{1,0}) \\
{}={}& 
\Big(\Gamma\big[(u_{1,0}''\w u_{1,0}''')(\omega_{U_0^*})\big]\Big)\Big((u_{1,0}\w u_{1,0}')(\omega_{U_0^*})\Big)\\
{}={}&F((u_{1,0}''\w u_{1,0}''')\t(u_{1,0}\w u_{1,0}')),&&\text{by Def.~\ref{8.1}},\\
{}={}&\la u_{1,0}''u_{1,0},u_{1,0}'''u_{1,0}'\ra -\la u_{1,0}''u_{1,0}', u_{1,0}'''u_{1,0}\ra,&&\text{by Def.~\ref{8.1}}.
\end{align*}\endgroup 
Observe that $S_1=S_2$.

\smallskip
\noindent(\ref{8.2.c}) Recall that $\SM$ is the matrix for $\Gamma$. It follows from
Lemma~\ref{24.4}.(\ref{24.4.b}) that $\mathfrak C$ is split exact.
\end{proof}

\begin{theorem}
\label{6.3} 
Adopt Data~{\rm\ref{2.1}}. The matrix $B$ 
of Theorems {\rm\ref{resol}} and {\rm\ref{a res}} has 
 rank $0$, $2$, or $3$. The number of minimal generators of the ideal $\ann\Phi_3$ is $9$, $7$, or $6$. 
\end{theorem}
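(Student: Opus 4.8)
The plan is to reduce the whole statement to computing the rank of the map $\C_1$ from the complex $\mathfrak C$ of $(\ref{6.4.3})$. By Observation~\ref{8.2}.(\ref{8.2.b}) we have $\rank B = \rank\C_1$, and by Theorem~\ref{a res} the minimal number of generators of $\ann\Phi_3$ is $9 - \rank B$; so it suffices to show $\rank\C_1 \in \{0,2,3\}$, that is, that $\rank\C_1 \ne 1$. The two structural facts that make this immediate are: $\C_1$ carries $u_{1,0}\t\nu_{1,0}$ to $u_{1,0}\w\Gamma(\nu_{1,0})$ (Definition~\ref{24.2}), so $\im\C_1$ is the span of the elements $u_{1,0}\w v$ with $u_{1,0}\in U_0$ and $v\in\im\Gamma$; and $\rank\Gamma = \rank\SM$ by the definition of $\SM$ in $(\ref{SM})$. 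I would therefore argue by cases on the value of $\rank\SM\in\{0,1,2,3\}$.

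First I would treat $\rank\SM\ge 2$. Here Observation~\ref{8.2}.(\ref{8.2.c}) — which in turn rests on Lemma~\ref{24.4}.(\ref{24.4.b}) applied to $\Gamma$, using that $I_2(\SM)$ is the unit ideal of $\kk$ once $\rank\SM\ge 2$ — says that $\mathfrak C$ is split exact. In particular $\C_1$ is surjective onto $\bw^2 U_0$, so $\rank\C_1 = \dim_\kk\bw^2 U_0 = 3$; hence $\rank B = 3$ and, by Theorem~\ref{a res}, $\ann\Phi_3$ has $6$ minimal generators.

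It then remains to handle $\rank\SM\le 1$, and the point is simply that a subspace of $\bw^2 U_0$ of the form $a\w U_0$ has dimension $0$ (if $a=0$) or $2$ (if $a\ne 0$), never $1$. If $\rank\SM = 0$, then $\Gamma = 0$, so $\C_1 = 0$, $\rank B = 0$, and $\ann\Phi_3$ has $9$ minimal generators. If $\rank\SM = 1$, then $\im\Gamma = \kk a$ for some nonzero $a\in U_0$, so $\im\C_1 = \{\,u_{1,0}\w a \mid u_{1,0}\in U_0\,\} = a\w U_0$, which has dimension $\dim_\kk U_0 - 1 = 2$; hence $\rank B = \rank\C_1 = 2$ and $\ann\Phi_3$ has $7$ minimal generators. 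This exhausts the possibilities for $\rank\SM$ and establishes the theorem. The only step that is not bookkeeping is the split exactness used when $\rank\SM\ge 2$, but that has already been proved in the results cited above, so I do not expect a genuine obstacle; such subtlety as there is lies in recognizing that ``rank one'' is excluded precisely because the image of $\C_1$ is forced to be a subspace of the special shape $a\w U_0$.
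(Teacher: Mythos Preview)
Your proof is correct and follows the same overall architecture as the paper: case analysis on $\rank\SM$, combined with Observation~\ref{8.2} and the formula $9-\rank B$ from Theorem~\ref{a res}. The one place where you diverge is the rank-one case. The paper changes basis to put $\SM$ in the form $(\ref{rankSM1})$, writes out the resulting matrix $B$ explicitly as in $(\ref{specialB})$, and reads off that it has rank two. You instead argue coordinate-free: since $\C_1(u\otimes\nu)=u\wedge\Gamma(\nu)$ and $\im\Gamma=\kk a$ is a line, the image of $\C_1$ is exactly $a\wedge U_0$, a two-dimensional subspace of $\bw^2 U_0$. This is a cleaner argument that avoids the basis change; it buys you a shorter proof here, though the paper needs the explicit form $(\ref{specialB})$ anyway for later sections (Theorems~\ref{7.1} and~\ref{7.2}), so the basis change is not wasted work in context. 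For $\rank\SM\ge 2$ you invoke Observation~\ref{8.2}.(\ref{8.2.c}) (split exactness of $\mathfrak C$) while the paper invokes the radical equality of Lemma~\ref{24.4}.(\ref{24.4.e}); these are equivalent routes through the same lemma.
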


\begin{proof}
Every nonzero entry in the matrix $B$ is plus or minus an entry of the symmetric matrix (\ref{SM}). If $\SM$ is zero, then $B=0$. If $\SM$ has rank one, then
the usual theory of symmetric matrices guarantees that one  can choose a pair of dual bases for $U_0$ and  $U_0^*$  so that $\SM$ becomes 
\begin{equation}\label{rankSM1}\bmatrix \a&0&0\\ 0&0&0\\0&0&0\endbmatrix,\text{ for some unit $\a$ in $\kk$.}\end{equation} 
In this case,
\begin{equation}\label{specialB}B=\bmatrix 
0&0&0&0&0&0&0&0\\
0&0&\a&0&0&0&0&0\\
0&-\a&0&0&0&0&0&0\endbmatrix,\end{equation}which has rank $2$.
The matrix $\SM$ is a matrix of constants. If $2\le \rank \SM$, then $I_2(\SM)=P$; hence, $I_1(\C_3)=P$ and $I_3(B)=P$ by Lemma~\ref{24.4}.(\ref{24.4.e}) and Observation~\ref{8.2}.

We saw in Theorem~\ref{a res} that the minimum number of generators of $\ann \Phi_3$ is $9-\rank B$. 
\end{proof}

\section{The generalized sum of linked perfect ideals.}
In this section
$$(-)^* \text{ is the functor $\Hom_P(-,P)$}.$$
\begin{observation} 
\label{dec23}
Let $J$ be a grade $g$ perfect ideal in the Gorenstein ring $P$. Suppose $\theta:\Ext^g_P(P/J,P)\to P/J$ is an injection. Let $K$ be an ideal of $P$ with $\im \theta=K/J$. If $g+1\le \grade K$, then $K$ is a perfect Gorenstein in $P$ of grade $g+1$.\end{observation}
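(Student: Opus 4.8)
The plan is to pass from $P$ to the Cohen--Macaulay ring $R:=P/J$ and to read the hypothesis as the statement that a canonical module of $R$ sits inside $R$ with cokernel $P/K$. Since $J$ is perfect of grade $g$ and $P$ is Gorenstein, $R$ is Cohen--Macaulay, $\omega:=\Ext^g_P(P/J,P)$ is a canonical module for $R$, and I will use the standard facts that $\omega$ is a maximal Cohen--Macaulay $R$-module (so $\operatorname{depth}_R\omega=\dim R$), that $\Hom_R(\omega,\omega)=R$, and that $\Ext^i_P(N,P)\cong\Ext^{i-g}_R(N,\omega)$ for every $R$-module $N$ (the change-of-rings isomorphism available because $J$ is perfect over the Gorenstein ring $P$). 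Write $\bar K:=K/J$ and $S:=P/K=R/\bar K$; one may assume $K$ is proper, the case $K=P$ being degenerate. Then $\theta$ is an injection $\omega\hookrightarrow R$ of $R$-modules with image $\bar K$, and we obtain the short exact sequence of $R$-modules (equivalently of $P$-modules)
\[0\to\omega\xrightarrow{\ \theta\ }R\to S\to 0.\]

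The first step is to show that $S$ is Cohen--Macaulay of dimension $\dim R-1$. The hypothesis $\grade K\ge g+1$ provides a $P$-regular sequence of length $g+1$ inside $K$, so $\dim S\le\dim P-(g+1)=\dim R-1$. On the other hand, feeding the short exact sequence above into the depth lemma gives $\operatorname{depth}_R S\ge\min\{\operatorname{depth}_R R,\operatorname{depth}_R\omega-1\}=\dim R-1$. Since $\operatorname{depth}_R S\le\dim S$, all these inequalities are equalities and $S$ is Cohen--Macaulay of dimension $\dim R-1$. Consequently $\operatorname{depth}_P S=\dim R-1$, so Auslander--Buchsbaum over the polynomial ring $P$ yields $\pd_P S=\dim P-(\dim R-1)=g+1$; combining this with the general inequality $\grade K\le\pd_P(P/K)$ recorded in the preliminaries and with the hypothesis $\grade K\ge g+1$, we get $\grade K=\pd_P(P/K)=g+1$, i.e.\ $K$ is a perfect ideal of grade $g+1$.

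It remains to see that $S$ is Gorenstein. Applying $\Hom_R(-,\omega)$ to the short exact sequence above and using $\Hom_R(R,\omega)=\omega$, $\Hom_R(\omega,\omega)=R$, and $\Ext^1_R(R,\omega)=0$, the long exact sequence collapses to an isomorphism
\[\Ext^1_R(S,\omega)\ \cong\ \operatorname{coker}\big(\Hom_R(\theta,\omega)\colon\omega\to R\big),\]
so $\Ext^1_R(S,\omega)$ is a cyclic $R$-module, in fact a cyclic $S$-module since $\bar K=\ann_R S$ annihilates it. On the other hand, since $S$ is a perfect $P$-module of grade $g+1$ over the Gorenstein ring $P$, its canonical module is (up to a degree twist) $\omega_S\cong\Ext^{g+1}_P(S,P)\cong\Ext^1_R(S,\omega)$, the last isomorphism being the change-of-rings identification recalled above. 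Thus $\omega_S$ is a cyclic $S$-module; a Cohen--Macaulay ring whose canonical module is cyclic has Cohen--Macaulay type one and hence is Gorenstein (in the graded setting one checks this after localizing at the irrelevant maximal ideal). Therefore $S=P/K$ is Gorenstein, which is the assertion.

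The step that will require the most care is the bookkeeping in the second paragraph: verifying that the hypothesis $\grade K\ge g+1$ is exactly what forces $\dim S\le\dim R-1$, so that the depth lemma can certify the Cohen--Macaulayness of $S$ rather than merely bound its depth. The genuinely non-formal inputs are the structural facts about the canonical module of the Cohen--Macaulay ring $R$ — that $\Hom_R(\omega,\omega)=R$, the change-of-rings isomorphism $\Ext^i_P(-,P)\cong\Ext^{i-g}_R(-,\omega)$, and ``cyclic canonical module $\Rightarrow$ Gorenstein'' — and these carry the real weight of the argument; everything else is diagram-chasing with the single short exact sequence $0\to\omega\to R\to S\to0$.
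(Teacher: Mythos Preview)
Your proof is correct but takes a genuinely different route from the paper's. The paper builds an explicit free resolution of $P/K$: it dualizes a length-$g$ free resolution of $P/J$ to obtain a resolution of $\Ext^g_P(P/J,P)$, lifts the injection $\theta$ to a chain map from the dual resolution to the original, and takes the mapping cone. This mapping cone has length $g+1$ and last free module of rank one; the grade hypothesis then forces $\pd_P(P/K)=g+1$, and localizing at primes containing $K$ shows the minimal last Betti number is one, whence $K$ is Gorenstein. Your argument instead passes to $R=P/J$, interprets $\theta$ as an embedding of the canonical module $\omega_R$ into $R$, and uses the depth lemma, Auslander--Buchsbaum, and the change-of-rings isomorphism $\Ext^{g+1}_P(S,P)\cong\Ext^1_R(S,\omega_R)$ to read off that $P/K$ is Cohen--Macaulay with cyclic (hence free) canonical module. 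The paper's approach has the advantage of producing an explicit free resolution of $P/K$---precisely what is needed later when the authors exhibit the minimal resolution of $\AA$ as such a mapping cone---while your approach is more conceptual and makes transparent the connection to canonical-module duality. One small slip: you wrote ``over the polynomial ring $P$'' when invoking Auslander--Buchsbaum, but the statement only assumes $P$ Gorenstein; the formula still applies after localization once you note that $S$ has finite projective dimension over $P$ (from the short exact sequence $0\to\omega\to R\to S\to 0$ together with $\pd_P\omega,\pd_P R\le g$).
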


\begin{proof} Let $0\to F_g\to \dots \to F_0=P$ be a resolution of $P/J$ by free $P$-modules,  and 
\begin{equation}\label{gsli-mc}\xymatrix{
0\ar[r]&F_0^*\ar[r]\ar[d]&\dots\ar[r]&F_{g-1}^*\ar[r]\ar[d]&F_g^*\ar[d]\\ 
0\ar[r]&F_g\ar[r]&\dots\ar[r]&F_1\ar[r]&F_0}\end{equation} be a map of complexes which extends
$$\xymatrix{F_g^*\ar[r]&\Ext^g_P(P/J,P)\ar[r]\ar[d]^{\theta}&0\\
F_0\ar[r]&P/J\ar[r]&0.}$$ 
Observe that 
the mapping cone of  (\ref{gsli-mc}) is a resolution of $P/K$ of length $g+1$ and the last Betti number in this resolution is one.

The fact that $$\grade \ann M\le \pd M,$$ for all finitely generated $P$-modules $M$, ensures that $\pd P/K=g+1$, and  that the minimal resolution of $(P/K)_{\mathfrak p}$ has length $g+1$ and 
 last Betti number equal to one, for all prime ideals $\mathfrak p$ of $P$ which contain $K$. Thus $K$ is a Gorenstein ideal of grade $g+1$.
\end{proof}

\begin{definition}
\label{gsli} The ideal $K$ of Observation~{\rm \ref{dec23}} is called
a {\it generalized sum of linked ideals}.\end{definition}

We offer two examples of  generalized sums of linked ideals. The first example 
is the  
sum of geometrically linked perfect ideals. The second example is a hypersurface section of a Gorenstein ideal.

\begin{example}
\label{GL}
Peskine and Szpiro introduced \cite[Rem.~1.4]{PS} the idea of adding two geometrically linked ideals of grade $g$ to produce a Gorenstein ideal of grade $g+1$. Also, Ulrich's work \cite{U90} along these lines is of significant interest. Recall that the grade $g$ ideals $I$ and $J$ of the Gorenstein (not necessarily local) ring $P$ are {\it linked}, if there exists a regular sequence $\a=\a_1,\dots,\a_g$ in $I\cap J$ with $(\a):I=J$ and $(\a):J=I$. The ideals $I$ and $J$ are {\it geometrically linked} if they are linked and any of the following three equivalent conditions also occur:
\begin{enumerate}[\rm(1)]
\item $\Ass P/I \cap \Ass P/J=\emptyset$,
 \item $I\cap J=(\a_1,\dots,\a_g)$, 
\item $g+1\le\grade (I+J)$.\end{enumerate} 
We think of writing a grade 4 Gorenstein ideal as the sum of two grade three Gorenstein ideals as a labor-saving device. We are able to describe an entire length four resolution in terms of a length three resolution (and connecting maps). In particular, if $I$ is a perfect grade three ideal geometrically linked to $J$ and
\begin{equation}\label{G}0\to G_3\xrightarrow{g_3} G_2\xrightarrow{g_2} G_1\xrightarrow{g_1} P\to P/I\end{equation}is a resolution of $P/I$, then 
$$0\to P^*\xrightarrow{g_1^*} G_1^*\xrightarrow{g_2^*} G_2^* \xrightarrow{g_3^*} G_3^* $$ is a resolution of 
$$\ts \Ext^3_P(\frac PI,P)=\Hom_P(\frac PI,P/(\a))=\frac{\a:I}{\a}=\frac{J}{I\cap J}=\frac{I+J}I$$
and the mapping cone of 
$$\xymatrix{
0\ar[r]& P^*\ar[r]^{g_1^*}\ar[d]^{c_3}& G_1^*\ar[r]^{g_2^*}\ar[d]^{c_2}& G_2^* \ar[r]^{g_3^*}\ar[d]^{c_1}& G_3^*\ar[d]^{c_0} \\
0\ar[r]&G_3\ar[r]^{g_3} &G_2\ar[r]^{g_2}&G_1\ar[r]^{g_1}&P
}$$
 is a resolution of 
$P/(I+J)$, where $c_0:G_3^*\to P$ is a map for which 
$$\xymatrix{G_3^*\ar[r]\ar[d]^{c_0}&\frac{I+J}I\ar@{^{(}->}[d]\\P_0\ar[r]&\frac PI}$$commutes.
\end{example}

\begin{example}
\label{7.4}
The same trick works  when $I$ is a grade three Gorenstein ideal and $a$ is a regular element on $P/I$. The resolution of $P/(I,a)$ is the mapping cone of $a:G^*\to G$ which covers the injection $a:P/I\to P/I$, where $G\cong G^*$ both resolve $P/I$. Of course, $(I,a)$ is not literally the sum of the linked ideals $I$ and $I$.
\end{example}

In practice the resolution ``$G$'' of (\ref{G}) is already well 
 understood. 
 Indeed, for us $G$ is the resolution of a grade 3 Gorenstein ideal when $\ann \Phi_3$ has six minimal generators; $G$ is the resolution of a grade three perfect ideal of type 2 with one minimal Koszul relation as studied by Anne Brown \cite{B87} (see also \cite{K22}) when $\ann \Phi_3$
has seven minimal generators; and $G$ is a deformation of the Eagon-Northcott resolution of the monomials of degree 2 in three variables when $\ann \Phi_3$ has nine generators. 

The paper \cite{MVW} refers to the technique of forming the mapping cone of $c:G^*\to G$, where $\HH_0 (G^*)\hookrightarrow  \HH_0(G)$, as ``doubling''.

\section{The algebra with no weak Lefschetz element.}\label{Frob}

Let $\kk$ be an arbitrary field and   $\AA$ be  a standard graded
Artinian Gorenstein  
$\kk$-algebra of embedding dimension four and  socle degree three. If $\AA$ does not have the weak Lefschetz property, then it is shown in \cite{K23} that
$\kk$ has characteristic two and $\AA$ is isomorphic to
\begin{equation}\label{No}\frac{\kk[x,y,z,w]}{(xy,xz,xw,y^2,z^2,w^2,x^3+yzw)}.\end{equation}  
The Macaulay inverse system of $\AA$ is 
\begin{equation}\label{MIS}
\Phi_3=x^{*(3)}+y^*z^*w^*.\end{equation}
In this section, we show that the defining ideal of $\AA$ is the sum of linked ideals. We also show 
the algebra of \cite[Ex.~5.6]{MVW} is isomorphic to (\ref{No}).
 
\begin{observation}
Let $P$ be the ring $\kk[x,y,z,w]$, where $\kk$ has characteristic two. The mapping cone of the following map of complexes is the minimal homogeneous resolution of $\AA$ from {\rm (\ref{No})}{\rm:}

$$\xymatrix{
0\ar[r]&P\ar[r]^{K_1\transpose}\ar[d]^{L_0\transpose}&P^5\ar[r]^{K_2\transpose}\ar[d]^{L_1\transpose}&P^6\ar[r]^{K_3\transpose}\ar[d]^{L_1}&P^2\ar[d]^{L_0}\\
0\ar[r]&P^2\ar[r]^{K_3}&P^6\ar[r]^{K_2}&P^5\ar[r]^{K_1}&P^1
},$$
where 
\begin{align*}K_1={}&\bmatrix y^2&z^2&xy&xz&xw\endbmatrix, &K_2={}&\bmatrix  
        z^2 &0 &x &0 &0 &0 \\
       y^2 &x &0 &0 &0 &0 \\
       0  &0 &y &0 &w &z \\
       0  &z &0 &w &0 &y \\
      0  &0 &0 &z &y &0 \endbmatrix,\\
K_3={}&
\bmatrix
 x & 0 \\
  y^2 &0 \\
  z^2 &0 \\
 0  &y \\
  0  &z \\
 yz &w \\
\endbmatrix,&
L_0={}&\bmatrix w^2 &x^3+yzw \endbmatrix,\quad \text{and}\\
L_1={}&\bmatrix
 0 &w^2 &0  &zw& 0 & 0  \\
 0 &0  &w^2& 0 & yw& 0  \\
 0 &0  &0 & x^2 &0 & 0  \\
 0 &0  &0 & 0  &x^2& 0  \\
 w &0  &0 & 0  &0 & x^2 \endbmatrix.
\end{align*}
 \end{observation}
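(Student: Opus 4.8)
The plan is to verify directly that the displayed diagram is a map of complexes whose mapping cone is a minimal free resolution of $\AA=P/(xy,xz,xw,y^2,z^2,w^2,x^3+yzw)$. First I would check that the bottom row $0\to P^2\xrightarrow{K_3}P^6\xrightarrow{K_2}P^5\xrightarrow{K_1}P$ is the resolution of a grade-three perfect ideal $I$ of type $2$. Reading off $K_1$, the ideal is $I=(y^2,z^2,xy,xz,xw)$; one checks $\grade I=3$ (its radical is $(x)\cap(y,z,\ldots)$-type, in any case $\pd=3$ will force grade $3$ by Auslander–Buchsbaum once acyclicity is known), that $K_1K_2=0$ and $K_2K_3=0$ by a routine column-by-column multiplication, and that the complex is acyclic either by invoking the Buchsbaum–Eisenbud acyclicity criterion or, better, by recognizing $(K,f)$ as (a specialization of) the resolution of a grade-three perfect ideal of type two with one Koszul relation in the family studied by Anne Brown \cite{B87}, exactly as anticipated in the paragraph following Example~\ref{7.4}. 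The ``one minimal Koszul relation'' is visible: the last column of $K_3$ records the Koszul relation $z^2\cdot y - y^2\cdot z$ (up to the other columns), consistent with the $yz$ entry of $K_2$. Since $K\cong K^*$ after reindexing (type $2$, last Betti number as in the matrix), the map $K_3^*, K_2^*, K_1^*$ on the top row genuinely resolves $\operatorname{Ext}^3_P(P/I,P)$.

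Next I would exhibit the vertical maps $L_0,L_1,L_1,L_0$ as a lift of the inclusion $\operatorname{Ext}^3_P(P/I,P)\hookrightarrow P/I$ determined by the regular element; concretely, one checks the two square identities $L_1 K_3^* = K_3 L_0^*$ (wait—more precisely the four commuting squares $K_1 L_0 = L_0' K_1^*$ type relations), i.e.\ that $L_\bullet$ is a chain map from the top complex to the bottom complex. Each such identity is a finite matrix computation over $P$ in characteristic two; the entries of $L_1$ involve $w^2$, $zw$, $yw$, $x^2$, and $w$, and the entries of $L_0$ are $w^2$ and $x^3+yzw$, so the products land in the ideal generated by $w$ and $x$ as expected. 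Once $L_\bullet$ is a chain map covering a map $P/I\to P/I$, Observation~\ref{dec23} (with $g=3$) and Example~\ref{7.4}—this is the ``doubling'' construction—tell us that the mapping cone resolves $P/K$ where $K/I$ is the image, provided $\grade K\ge 4$. Here $K=(I, a)$ for the relevant element; I would identify $K=(xy,xz,xw,y^2,z^2,w^2,x^3+yzw)$ and note $P/K=\AA$ is Artinian, so $\grade K=4$, and the cone has length $4$ with last Betti number $1$, hence $\AA$ is Gorenstein of grade $4$ with the stated Betti numbers $(1,7,12,7,1)$.

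Finally I would confirm minimality: every entry of every map in the cone—$K_1,K_2,K_3$, their transposes, and $L_0,L_1$—lies in the maximal ideal $(x,y,z,w)$ (the ``constants'' that could spoil minimality, analogous to the matrix $B$ in Theorem~\ref{a res}, are all zero here), so the cone is the minimal homogeneous resolution of $\AA$. The main obstacle is purely bookkeeping: verifying the chain-map identities $K_i L_{\bullet} = L_{\bullet} K_i^*$ and the complex relations $K_iK_{i+1}=0$ correctly in characteristic two, where several sign cancellations become coincidences of equality; this is exactly the kind of verification that \cite{M2} handles, and indeed one can simply run the Macaulay2 check that the homology of the displayed cone is $\AA$ concentrated in degree zero. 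I would also remark, to finish the section's stated goal, that since $I=(y^2,z^2,xy,xz,xw)$ and the linkage $(\underline{\alpha}):I$ for a suitable regular sequence $\underline\alpha\subseteq I$ produces a second grade-three ideal $J$ with $I+J=K$, the defining ideal of $\AA$ is literally the sum of two linked perfect grade-three ideals, and comparing Macaulay inverse systems after the change of variables identifying \eqref{MIS} with \cite[Ex.~5.6]{MVW} shows that example is isomorphic to \eqref{No}.
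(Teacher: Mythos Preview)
The paper's own proof is a single sentence: ``The proof is obvious.'' In other words, the intended argument is a direct (possibly computer-assisted) check that the rows are complexes, the squares commute, and all entries lie in the maximal ideal. Your proposal is correct in outline and supplies the structural narrative the paper omits, but you have introduced a couple of inaccuracies that you should clean up.

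First, the line ``Since $K\cong K^*$ after reindexing'' is not right: the bottom complex has Betti numbers $(1,5,6,2)$, so it is \emph{not} self-dual. What is true, and what you actually need, is that if the bottom row resolves the grade-three perfect module $P/I$, then its $P$-dual is automatically acyclic and resolves $\Ext^3_P(P/I,P)$; no self-duality is involved.

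Second, your invocation of Example~\ref{7.4} and the phrase ``$K=(I,a)$ for the relevant element'' are misplaced. Example~\ref{7.4} is the hypersurface-section case, which requires $I$ to be Gorenstein; here $I=(y^2,z^2,xy,xz,xw)$ has type two, so that template does not apply. The correct framework is Observation~\ref{dec23} (or Example~\ref{GL}): $L_0=\bmatrix w^2 & x^3+yzw\endbmatrix$ has two entries, the image of $\Ext^3_P(P/I,P)\to P/I$ is $(I,w^2,x^3+yzw)/I$, and the defining ideal of $\AA$ is $I+(w^2,x^3+yzw)$, a genuine sum of linked ideals rather than a hypersurface section.

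With those two corrections your argument goes through, and it is in fact more informative than the paper's, since it explains \emph{why} the cone works rather than merely asserting that the matrices multiply correctly.
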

The proof is obvious.

\begin{example}
We change the basis to the Macaulay inverse system for   \cite[Ex.~5.6]{MVW} into the form of (\ref{MIS}). 
In this example $\kk$ has characteristic $2$, and the Macaulay Inverse System for $\AA$ is  $$\Phi_3=x^{*(3)}+x^*y^*z^*+x^*y^*w^*+x^*z^*w^*+y^*z^*w^*\in D_3U^*.$$
Use the  
$X^*=x^*$, $Y^*=y^*+w^*$, $Z^*=z^*+w^*$, $W^*=w^*$ for $U^*$. 
Notice that 
$$\Phi_3=X^{*(3)}+X^*Y^*Z^*+Y^*Z^*W^*$$ 
Do another change of basis! Replace  $W^*$ with $\widetilde {W}^*=X^*+W^*$ and keep $X^*$, $Y^*$, and $Z^*$. The Macaulay inverse system for $\AA$ is 
$$X^{*(3)}+Y^*Z^*\widetilde {W}^*.$$
\end{example}

\section{The case when $\SM$ is zero.}

\begin{theorem}
\label{9.1} Adopt the data of {\rm \ref{2.1}} and the matrix $\SM$ of {\rm(\ref{SM})}. If $\SM$ is zero, then $\ann \Phi_3$ has nine minimal generators and the minimal resolution of $\AA$ is given by the complex $(\frak F, \widetilde{\frak f})$ from Theorem~{\rm \ref{a res}}. Furthermore,
 the defining ideal  of $\AA$ is the sum of two linked codimension three perfect ideals.
\end{theorem}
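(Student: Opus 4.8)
The first assertion—that $\ann\Phi_3$ has nine minimal generators—is immediate from Theorem~\ref{a res} and Theorem~\ref{6.3}: when $\SM=0$, then $B=0$, so $(\frak F,\widetilde{\frak f})$ is minimal, and the minimal number of generators is $9-\rank B=9$. The real content is the ``furthermore'' claim, that $\ann\Phi_3$ is a generalized sum of two linked codimension-three perfect ideals in the sense of Definition~\ref{gsli}. The plan is to exhibit the resolution $(\frak F,\widetilde{\frak f})$ itself, read off from Theorem~\ref{a res} with $B=0$, as a mapping cone of the shape appearing in Observation~\ref{dec23}, and then identify the ingredients.

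\textbf{Step 1: recognize the skeleton.} When $B=0$, the resolution $(\frak F,\widetilde{\frak f})$ has differentials
$$\widetilde{\frak f_2}=\bmatrix \mA&0\\x^2\t\mC&\mD\endbmatrix,\qquad \widetilde{\frak f_3}=\bmatrix 0&\mD^*\\\mA^*&x^2\t\mC^*\endbmatrix.$$
This is precisely the mapping cone structure: the upper-left and lower-right blocks being zero (in $\widetilde{\frak f_2}$ the $(1,2)$ block, in $\widetilde{\frak f_3}$ the $(1,1)$ block) is exactly what makes the total complex split into a ``bottom'' complex and a ``top'' complex linked by a chain map. Concretely, I expect the bottom complex to be
$$0\to P\t D_2U_0^*\xrightarrow{\mD}P\t\Sym_2U_0\xrightarrow{\mf_{1,2}}P,$$
i.e.\ the truncation of Theorem~\ref{resol} using only the $\mD$, $\mf_{1,2}$ blocks, which resolves $P/J$ for an appropriate ideal $J$; and the top complex to be its $\Hom_P(-,P)$-dual (up to twist), covering an injection $\Ext^3_P(P/J,P)\hookrightarrow P/J$. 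This matches Observation~\ref{x=0}, where precisely the complex built from $\bar A$, $\bar D$, $\bar f_{1,2}$ was identified over $\Pbar$ as an Eagon--Northcott resolution and its dual.

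\textbf{Step 2: identify $J$ and verify the hypotheses of Observation~\ref{dec23}.} I would take $J=\ann x\Phi_3$ restricted appropriately—more precisely, the ideal generated by $\{xp^{-1}(\nu_{1,0})\mid\nu_{1,0}\in U_0^*\}\cup\{u_{2,0}-xp^{-1}(u_{2,0}\Phi_3)\}$ is $\ann x\Phi_3$ by Observation~\ref{GLR}, but for the doubling I want the grade-three piece: the ideal $J$ whose free resolution is the bottom complex above. Since $\AA=P/\ann x\Phi_3$ has socle degree two with a Gorenstein-linear resolution of length four, but over $\Pbar$ the relevant piece becomes the Eagon--Northcott resolution of $\Pbar/I_2(M)$ with $M$ the $2\times 4$ Hankel-type matrix of Observation~\ref{x=0}, I expect $J$ to be (a lift of) this determinantal ideal $I_2(M)$, which is perfect of grade three (Eagon--Northcott). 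Then $\grade(\ann\Phi_3)=4=g+1$ gives the grade hypothesis, and $P$ is Gorenstein, so Observation~\ref{dec23} applies and yields that $\ann\Phi_3$ is Gorenstein of grade four with $(\frak F,\widetilde{\frak f})$ its minimal resolution, realized as the mapping cone $\Hom_P(E,P)[-1]\to E$ of Example~\ref{GL}. By Example~\ref{GL}, such a mapping cone presents $\ann\Phi_3/J$ as $(I+J)/J=\Ext^3_P(P/J,P)$ for the linked ideal $I=(\boldsymbol\alpha):J$; choosing a regular sequence $\boldsymbol\alpha$ of length three inside $J$ and setting $I=(\boldsymbol\alpha):J$, one gets $\ann\Phi_3=I+J$ with $I,J$ linked perfect grade-three ideals—and they are geometrically linked since $\grade(I+J)=\grade\ann\Phi_3=4\ge g+1$.

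\textbf{The main obstacle} will be Step~2: pinning down exactly which grade-three ideal $J$ the bottom complex resolves, and checking that $J$ is genuinely perfect of grade three (not just that the complex has the right shape). Over $\Pbar$ this is transparent from Observation~\ref{x=0}, but lifting back to $P$ requires showing the bottom complex $0\to P\t D_2U_0^*\xrightarrow{\mD}P\t\Sym_2U_0\xrightarrow{\mf_{1,2}}P$ is acyclic and that $I_1$ of its last map has grade three; I would do this either by a direct acyclicity criterion (Buchsbaum--Eisenbud) applied to the explicit matrices $D$ and $f_{1,2}$ of Example~\ref{record}, or by the change-of-rings/flatness argument already used in the proof of Theorem~\ref{a res} together with the fact that $\AA$ has the stated Hilbert series $(1,4,4,1)$, which forces the Betti numbers and hence the grade. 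Given the machinery already assembled—especially Observation~\ref{x=0} and the mapping-cone package of Observation~\ref{dec23} and Example~\ref{GL}—the remaining verification is bookkeeping rather than a genuinely new difficulty.
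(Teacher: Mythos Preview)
Your overall strategy---recognize $(\frak F,\widetilde{\frak f})$ with $B=0$ as a mapping cone, identify the bottom row as a resolution of a grade-three perfect ideal via its specialization to the Eagon--Northcott complex of Observation~\ref{x=0}, and invoke Observation~\ref{dec23}/Example~\ref{GL}---matches the paper's approach exactly. Your proposed method for showing acyclicity of the bottom complex (reduce modulo $x$ and lift) is precisely what the paper does via Lemma~\ref{9.3}.

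However, there is a concrete error in Step~1: the ``bottom complex'' you wrote down is wrong. You have
\[
0\to P\t D_2U_0^*\xrightarrow{\mD}P\t\Sym_2U_0\xrightarrow{\mf_{1,2}}P,
\]
but $\mD$ does not map out of $D_2U_0^*$; its domain is $\frac{U_0\t U_0^*}{\ev^*(1)}$ (see Theorem~\ref{resol}). Moreover, a length-two free complex cannot resolve a grade-three perfect module. The correct bottom row, as the paper displays, is the length-three complex
\[
0\to P\t U_0\xrightarrow{\ \mA^*\ }P\t\frac{U_0\t U_0^*}{\ev^*(1)}\xrightarrow{\ \mD\ }P\t\Sym_2U_0\xrightarrow{\ \mf_{1,2}\ }P,
\]
i.e.\ the ``lower-right'' strand of the block matrices $\widetilde{\frak f_3},\widetilde{\frak f_2},\widetilde{\frak f_1}$, using that $\mA^*$ lands in the second summand of $\frak F_2$ when $\mB^*=0$. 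This complex has ranks $3,8,6,1$, matching the Eagon--Northcott complex $E$ in Observation~\ref{x=0}, so the deformation/lifting argument goes through. Your speculation that $J$ should be $\ann x\Phi_3$ or $I_2(M)$ is also off-target; $J$ is simply the image of $\mf_{1,2}$, namely the ideal generated by the six quadrics $u_{2,0}-xp^{-1}(u_{2,0}\Phi_3)$. Once the bottom complex is corrected, the rest of your plan works without change.
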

\begin{Remark}
Theorem~\ref{9.1} is implemented in the Macaulay2 script ``SumOfLinkedIdeals9''; see Section~\ref{14.A.4}. \end{Remark}
\begin{proof}
 If $\SM=0$, then $B=0$ and the minimal number of generators of $\ann \Phi_3$ is $9-\rank B=9$. It is already shown in the proof of Theorem~\ref{a res} 
that $(\frak F, \widetilde{\frak f})$ is a minimal resolution of $\AA$.

We show that the defining ideal of $\AA$ is the sum of two linked codimension three perfect ideals. When $B=0$, then a quick look at Theorem~\ref{a res} shows that the minimal resolution $(\frak F, \widetilde{\frak f})$ of $\AA$ is the mapping cone of the following map of complexes:
{\small$$\xymatrix{
0\ar[r]&P\ar[rr]^{\mf_{1,2}^*}\ar[d]^{-x\mf_{1,1}^*}&&{P\t D_2U_0^*}\ar[r]^{\mD^*}\ar[d]^{\mC ^*}& P\t \ker(U_0\t U_0^*) \ar[rr]^{\mA }\ar[d]^{-\mC }&& P\t U_0^*\ar[d]^{x \mf_{1,1}}\\
0\ar[r]&{P\t U_0}\ar[rr]^{\mA ^*}&&P\t\frac{U_0\t U_0^*}{\ev^*(1)}
\ar[r]^{\mD}&P\t\Sym_2U_0\ar[rr]^{\mf_{1,2}}&& P.
}$$}
We complete the proof by observing that 
{\small\begin{equation}\label{9.1.1}\xymatrix{0\ar[r]&{P\t U_0}\ar[rr]^{\mA ^*}&&P\t\frac{U_0\t U_0^*}{\ev^*(1)}
\ar[r]^{\mD}&P\t\Sym_2U_0\ar[rr]^{\mf_{1,2}}&& P}\end{equation}}
is the resolution of a perfect codimension three quotient of $P$. Indeed, the complex (\ref{9.1.1}) is a deformation of the Eagon-Northcott complex $E$ of Observation~\ref{x=0}. Apply Lemma~\ref{9.3}.
\end{proof}

\begin{lemma}
\label{9.3} Let $P=P_0[x]$ be a polynomial ring, $F$ be a complex of finitely generated free $P$-modules, and $\bar P$ be the quotient ring $P/(x)$. If 
$$\HH_i(F\t_P \bar P)\text{ and }\HH_{i+1}(F\t_P \bar P)\text{ are both zero,}$$ then $\HH_i(F
)$ is zero.\end{lemma}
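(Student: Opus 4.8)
The plan is to run the standard ``reduction modulo a nonzerodivisor'' argument. Since $x$ is a variable of $P=P_0[x]$, it is a nonzerodivisor on $P$, hence on every free $P$-module; in particular, for each $j$, multiplication by $x$ on $F_j$ is injective and $F_j/xF_j$ is canonically $F_j\t_P\bar P$. Because the differentials of $F$ are $P$-linear, multiplication by $x$ therefore assembles into a short exact sequence of complexes
$$0\to F\xrightarrow{\,x\,}F\to F\t_P\bar P\to 0.$$

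Next I would pass to the associated long exact homology sequence, a segment of which reads
$$\HH_{i+1}(F\t_P\bar P)\longrightarrow \HH_i(F)\xrightarrow{\,x\,}\HH_i(F)\longrightarrow \HH_i(F\t_P\bar P).$$
By hypothesis the two flanking terms vanish, so multiplication by $x$ is an isomorphism on $\HH_i(F)$; in particular it is surjective. (This is precisely the role of the two vanishing hypotheses: the vanishing of $\HH_{i+1}(F\t_P\bar P)$ supplies injectivity and that of $\HH_i(F\t_P\bar P)$ supplies surjectivity.)

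To finish, observe that $\HH_i(F)$ is a subquotient of $F_i$, hence a finitely generated $P$-module, $P$ being Noetherian; and in every application of this lemma $F$ is a complex of graded free $P$-modules with homogeneous differential, so $\HH_i(F)$ is in fact a finitely generated graded $P$-module. Since $x$ is a homogeneous element of positive degree lying in the irrelevant maximal ideal of $P$ and $x\,\HH_i(F)=\HH_i(F)$, the graded Nakayama lemma forces $\HH_i(F)=0$. The one step that requires care is this last one: $P$ is not local, and over $P$ the equality $\HH_i(F)=x\,\HH_i(F)$ alone does not annihilate an arbitrary finitely generated module, so one genuinely uses the graded structure (which is present in all of our applications).
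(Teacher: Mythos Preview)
Your proposal is correct and takes essentially the same approach as the paper: both set up the short exact sequence $0\to F\xrightarrow{x}F\to F\t_P\bar P\to 0$, use the long exact sequence to see that multiplication by $x$ is an isomorphism on $\HH_i(F)$, and then invoke the grading to kill $\HH_i(F)$. The only cosmetic difference is that the paper tracks the internal degree shift and argues directly that $x:\HH_i(F)_{j-1}\to\HH_i(F)_j$ is an isomorphism for every $j$, together with $\HH_i(F)_j=0$ for $j\ll 0$, while you package the same step as the graded Nakayama lemma. You were right to flag that the graded hypothesis is genuinely needed and is not part of the lemma as stated; the paper's own proof silently uses it too.
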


\begin{proof} The long exact sequence of homology that corresponds to the short exact sequence of complexes
$$0\to F(-1) \xrightarrow{x} F\to F\t_P \bar P\to 0$$
yields that multiplication by $x$ from $\HH_{i}(F)_{j-1}$ to $\HH_{i}(F)_{j}$ is an isomorphism for all $j$. (The ``$(-1)$'' refers to a shift of internal degree.)  However $\HH_{i}(F)_{j-1}$ is zero for all $j\ll 0$; consequently, $\HH_i(F)=0$.
\end{proof}

\section{The case when $\SM$ has rank one.}
Adopt the data of {\rm \ref{2.1}} and the matrix $\SM$ of {\rm(\ref{SM})}. Assume that $\SM$ has rank one. 
The usual theory of symmetric matrices guarantees that one  can choose a pair of dual bases for $U_0$ and  $U_0^*$  so that $\SM$ is given in (\ref{rankSM1}).
Recall the resolution $(\widetilde {\frak F},\widetilde{\frak f})$ of $\AA$ which is given in Theorem~\ref{a res}. The differential    $\widetilde{\frak f}_2$ of $\widetilde {\frak F}$ 
is the map $$\begin{matrix} P^8\\\p\\ P^8\end{matrix}\xrightarrow{\widetilde{\frak f}_2=\bmatrix A&B\\x^2C&D\endbmatrix}\begin{matrix} P^3\\\p\\ P^6\end{matrix},$$
where the matrices $A$, $B$, $C$, and $D$ are given in  Example~\ref{record}. When $\SM$ is given in (\ref{rankSM1}), then $B$ is given in (\ref{specialB}).

The minimal resolution $\AA$ is given in Theorem~\ref{7.1}. It is shown in Theorem~\ref{7.2} that the  defining ideal  of $\AA$ is the sum of two linked codimension three perfect ideals.

\begin{definition}
\label{10.1}
Adopt the data of  {\rm \ref{2.1}} and the matrix $\SM$ of {\rm(\ref{SM})}. Assume that $\SM$ has rank one. Assume further that $\SM$ is given in 
{\rm(\ref{rankSM1})}. Recall the matrices \begingroup\allowdisplaybreaks
\begin{align*}A={}&\bmatrix
a_{1,1}&\dots&a_{1,8}\\
a_{2,1}&\dots&a_{2,8}\\
a_{3,1}&\dots&a_{3,8}\endbmatrix,&B={}&\bmatrix
b_{1,1}&\dots&b_{1,8}\\
b_{2,1}&\dots&b_{2,8}\\
b_{3,1}&\dots&b_{3,8}\endbmatrix,&C={}&\bmatrix
c_{1,1}&\dots&c_{1,8}\\
\vdots&\dots&\vdots\\
c_{6,1}&\dots&c_{6,8}\endbmatrix,\ \text{and}\\D={}&\bmatrix
d_{1,1}&\dots&d_{1,8}\\
\vdots&\dots&\vdots\\
d_{6,1}&\dots&d_{6,8}\endbmatrix,\ \end{align*}\endgroup
from Theorem~{\rm\ref{resol}} and Example~{\rm\ref{record}}.
The three rows of $A$ 
are denoted $A_{1,*}$, $A_{2,*}$, $A_{3,*}$. The eight 
 columns of $D$ denoted $D_{*,1},D_{*,2},\dots,D_{*,8}$. The same convention is used to denote the rows and columns of all four matrices.

Define the invertible matrices $\theta_1$ and  $\theta_2$ 
by 
$$\theta_1=\begin{array} {|l|ll|l|}\hline
1&0&0&\\
0&0&\a&0_{3\times 6}\\
0&-\a&0&\\\hline
0_{6\times 1}&D_{*,2}&D_{*,3}&I_6\\\hline\end{array}\quad\text{and}\quad
\theta_2= 
\begin{array}{|l|l|}\hline
I_8&0_{8\times 8}\\\hline
M&I_8\\
\hline\end{array}\,, $$
where $M$ is the $8\times 8$  
matrix
$$M=\frac 1{\alpha}\bmatrix 
0&-a_{3,1}&a_{2,1}&0&0&0&0&0\\
a_{3,1}&0&a_{3,3}&a_{3,4}&a_{3,5}&a_{3,6}&a_{3,7}&a_{3,8}\\
-a_{2,1}&-a_{2,2}&0&-a_{2,4}&-a_{2,5}&-a_{2,6}&-a_{2,7}&-a_{2,8}\\
0&-a_{3,4}&a_{2,4}&0&0&0&0&0\\
0&-a_{3,5}&a_{2,5}&0&0&0&0&0\\
0&-a_{3,6}&a_{2,6}&0&0&0&0&0\\
0&-a_{3,7}&a_{2,7}&0&0&0&0&0\\
0&-a_{3,8}&a_{2,8}&0&0&0&0&0\\
\endbmatrix.$$
Let $J$ be the matrix
$$J=\bmatrix 0_{6\times 6}&I_6\\I_6&0_{6\times 6}\endbmatrix.$$
Define the matrices $E_1$, $E_2$, $E_3$, $E_4$ by
\begin{itemize}
\item $E_1$ is $\widetilde{\frak f}_1 \theta_1$ with columns $2$ and $3$ deleted; 
\item $E_2$ is $\theta_1^{-1}  \widetilde {\frak f}_2 \theta_2$ with  rows $2$ and $3$ and columns $2,3,10,11$ deleted; 
\item $E_3= J  E_2\transpose$; and
\item $E_4=E_1\transpose$.
\end{itemize}
 \end{definition}
\begin{remark}
\label{10.2}
In Section~\ref{calculations} we have collected many consequences of the hypothesis that $B$ has the form of (\ref{specialB}). In particular, in 
(\ref{12.1.1}) we show that
$$a_{1,3}=a_{2,3}=a_{3,2}=a_{1,2}=0\quad \text{and}\quad a_{3,3}=a_{2,2}.$$
It follows   that $M$ is an alternating matrix; row 2 of $M$ is $\frac 1{\alpha} A_{3,*}$; and row 3 of $M$ is $-\frac 1{\alpha} A_{2,*}$. The fact  that $M$ is an alternating matrix ensures that \begin{equation}\label{10.2.1}\theta_2J\theta_2\transpose=J.\end{equation}
\end{remark}

\begin{theorem}
\label{7.1} 
Adopt the data of  {\rm \ref{2.1}} and the matrix $\SM$ of {\rm(\ref{SM})}. Assume that $\SM$ has rank one. Assume further that $\SM$ is given in 
{\rm(\ref{rankSM1})}. Let $E_1$, $E_2$, $E_3$, $E_4$ be the matrices of Definition~{\rm\ref{10.1}}. Then the minimal homogeneous resolution of $\AA$ by free $P$-modules is given by
$$E:\quad 0\to P\xrightarrow{E_4}P^7\xrightarrow{E_3}P^{12}\xrightarrow{E_2}P^7\xrightarrow{E_1} P.$$
\end{theorem}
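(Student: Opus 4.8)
The plan is to produce $E$ from the (in general non-minimal) resolution $(\widetilde{\frak F},\widetilde{\frak f})$ of Theorem~\ref{a res} by simultaneously cancelling the two unit entries $\pm\a$ of the matrix $B$ displayed in \eqref{specialB}, together with the two matching unit entries of $B^*=B\transpose$ that the self-duality of $(\widetilde{\frak F},\widetilde{\frak f})$ puts into $\widetilde{\frak f}_3$. The matrices $\theta_1$ and $\theta_2$ of Definition~\ref{10.1} are designed to be the changes of basis that carry out these cancellations all at once, and the columns of $\widetilde{\frak f}_1$, together with the rows and columns of $\widetilde{\frak f}_2$, that are deleted in Definition~\ref{10.1} are exactly the free summands of $\frak F_1$, $\frak F_2$, $\frak F_3$ that get absorbed. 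So it will suffice to show that, after these changes of basis, the deleted coordinates span a direct summand of the complex which is a direct sum of trivial complexes $0\to P\xrightarrow{\ \a\ }P\to 0$; the complementary direct summand is then $E$, and the three things we must verify --- that $E$ is a complex, that it is a resolution of $\AA$, and that it is minimal --- will all follow. The identities $E_3=JE_2\transpose$ and $E_4=E_1\transpose$ are just the restrictions to the surviving coordinates of the self-duality of $(\widetilde{\frak F},\widetilde{\frak f})$.

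First I would check that $\theta_1$ and $\theta_2$ are invertible: $\theta_1$ is block lower triangular with diagonal blocks $1$, $\bmatrix 0&\a\\-\a&0\endbmatrix$, and $I_6$, and $\theta_2=\bmatrix I_8&0\\M&I_8\endbmatrix$ is unipotent. Then I would conjugate the differential of $(\widetilde{\frak F},\widetilde{\frak f})$: replace the basis of $\frak F_1$ by $\theta_1$ and that of $\frak F_2$ by $\theta_2$, and replace the bases of $\frak F_3$ and $\frak F_4$ by the corresponding dual changes of basis, so that the new differentials are $\widetilde{\frak f}_1\theta_1$, $\theta_1^{-1}\widetilde{\frak f}_2\theta_2$ and the transposed maps obtained from $\widetilde{\frak f}_3$ and $\widetilde{\frak f}_4$. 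The key point here is that $\theta_2 J\theta_2\transpose=J$ of Remark~\ref{10.2} --- which holds because the consequences of \eqref{specialB} assembled in Section~\ref{calculations}, in particular $a_{1,2}=a_{1,3}=a_{2,3}=a_{3,2}=0$ and $a_{2,2}=a_{3,3}$, make $M$ alternating --- is exactly what makes this conjugated complex self-dual in the same sense as $(\widetilde{\frak F},\widetilde{\frak f})$, so that the eventual $E_3=JE_2\transpose$ and $E_4=E_1\transpose$ come out correctly, with $J$ arising as the restriction of the coordinate swap to the set $\{2,3,10,11\}^{c}$ (which set is itself swap-stable). In any case the conjugated complex is isomorphic to $(\widetilde{\frak F},\widetilde{\frak f})$, hence is still a homogeneous resolution of $\AA$.

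The heart of the argument --- and the step I expect to be the main obstacle --- is the block computation: using the explicit matrices of Example~\ref{record} and the linear relations among the entries of $\mA$, $\mC$, $\mD$ forced by $B$ being of the form \eqref{specialB} (the identities collected in Section~\ref{calculations}), one verifies that in $\widetilde{\frak f}_1\theta_1$ the two columns indexed $2,3$ vanish; that in $\theta_1^{-1}\widetilde{\frak f}_2\theta_2$ the two columns indexed $2,3$ vanish, the two columns indexed $10,11$ are $\a$ times the unit vectors in rows $2,3$, and rows $2,3$ vanish away from columns $10,11$; and that, dually, the rows and columns indexed $2,3$ of the transposed maps behave the same way. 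Equivalently, after reordering coordinates the conjugated differential is block diagonal, one block being a direct sum of two copies of $0\to P\xrightarrow{\a}P\to 0$ in homological degrees $2,1$ (cancelling the summands $\{10,11\}$ of $\frak F_2$ against $\{2,3\}$ of $\frak F_1$) and two copies in degrees $3,2$ (cancelling $\{2,3\}$ of $\frak F_3$ against $\{2,3\}$ of $\frak F_2$), and the other block being $E$. Since the first block is split exact, $E$ is a complex and a homogeneous resolution of $\AA$.

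Finally, minimality. The entries of $\widetilde{\frak f}_1$ have degree $2$, those of $\mA$ and $\mD$ degree $1$, and those of $x^2\mC$ degree $2$, so the only degree-$0$ entries of $\widetilde{\frak f}_1,\widetilde{\frak f}_2,\widetilde{\frak f}_3$ are the $\pm\a$'s of $B$ and of $B^*$; since the non-constant part of $\theta_1$ and the matrix $\theta_2-I$ have all entries in $(x,y,z,w)$, the degree-$0$ component of the conjugated differential is obtained from the degree-$0$ component of $\widetilde{\frak f}$ by the constant parts of $\theta_1,\theta_2$ alone, hence its only units lie in the rows and columns deleted in Definition~\ref{10.1}. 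Therefore every entry of $E_1,E_2,E_3,E_4$ lies in $(x,y,z,w)$, so $E$ is the minimal homogeneous resolution of $\AA$, and its ranks $1,7,12,7,1$ agree with the count $9-\rank B=7$ of minimal generators of $\ann\Phi_3$ from Theorem~\ref{6.3}.
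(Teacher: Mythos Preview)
Your proposal is correct and follows essentially the same approach as the paper: conjugate $(\widetilde{\frak F},\widetilde{\frak f})$ by $\theta_1$ and $\theta_2$, use the alternating property of $M$ (equivalently $\theta_2 J\theta_2\transpose=J$) to keep the complex self-dual, verify that rows/columns $2,3$ and columns $10,11$ of the conjugated $\widetilde{\frak f}_2$ isolate a split-exact summand, and peel it off to obtain $E$. One minor point: the paper's computation shows the unit block in $\frak f_2'$ is $I_2$ rather than $\a I_2$, and for minimality the paper simply invokes the Betti-number count from Theorem~\ref{6.3} rather than your degree argument, but both routes are valid and the overall strategy is the same.
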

\begin{Remark}
Theorem~\ref{7.1} is implemented in the Macaulay2 script ``minres7''; see Section~\ref{14.A.2}. \end{Remark}
\begin{proof} 
Recall the resolution $(\frak F,\widetilde{\frak f})$ of Theorem~\ref{a res}. Define $(\frak F', \frak f')$ to be
$$0\to P\xrightarrow{\frak f_4'=\frak f_1'{}\transpose}P^9\xrightarrow{\frak f_3'= J\,\frak f_2'{}\transpose}P^{16}\xrightarrow{\frak f_2'}P^9\xrightarrow{\frak f_1'}P,$$ where
$$ 
\frak f_2'=\theta_1^{-1}\,{} \widetilde{\frak f}_2 {}\theta_2 \quad\text{ and}\quad
\frak f_1'=\widetilde{\frak f}_1\theta_1.$$
Use (\ref{10.2.1}) in order to observe that
$$\xymatrix{
(\frak F',\frak f'):\ar[d]&0\ar[r]&P
\ar@{=}[d]
\ar[rr]^{\frak f_1'{}\transpose} 
&&P^9
\ar[rr]^{J\frak f_2'{}\transpose}
\ar[d]^{\theta_1^{-1}\,\transpose}
&&P^{16}
\ar[rr]^{\frak f'_2}
\ar[d]^{\theta_2}
&&P^9\ar[r] ^{\frak f'_1}  \ar[d]^{\theta_1}
&P\ar@{=}[d]
\\
(\frak F,\widetilde{\frak f}):&0\ar[r]&P\ar[rr]^{\widetilde{\frak f}_1\,\transpose}&&P^9\ar[rr]^{J\,{}\widetilde{\frak f}_2{}\transpose}&&P^{16}\ar[rr]^{\widetilde{\frak f}_2 }&&P^9\ar[r] ^{\widetilde{\frak f}_1}&P,\\
}$$ 
is an isomorphism of complexes. 

\begin{claim-no-advance} The following statements hold.
\label{10.3.1}
\begin{enumerate}[\rm (a)]
\item\label{10.3.1.a} Columns  $2$ and $3$ of $\frak f'_1$ are  
$$\left[ \begin{array}{c} 0_{1\times 2}\end{array}\right].$$ 
\item\label{10.3.1.c} Columns $2$ and $3$ of $\frak f'_2 $ are 
$$\left[ \begin{array}{c}0_{9\times 2}\end{array}\right].$$

\item\label{10.3.1.d} Columns $10$ and $11$ of $\frak f'_2$ are 
\begin{equation}
\label{J20.1}\left[ \begin{array}{c} 0_{1\times 2}\\ I_2 
\\0_{6\times 2}\end{array}\right].\end{equation} 

\item \label{10.3.1.b}Rows $2$ and $3$ of 
$\frak f'_2 $ are
$$\left[\begin{array}{c|c|c} 0_{2\times 9}& I_2
&0_{2\times 5}\end{array}\right],$$ 
\end{enumerate}
\end{claim-no-advance}

\ms\noindent{\it Proof of Claim {\rm \ref{10.3.1}}.} (\ref{10.3.1.a})
Columns 2 and 3 of $\theta_1$ are equal to columns 2 and 3 of $\widetilde{\frak f}_2$; hence columns 2 and 3 of $\frak f'_1=\widetilde{\frak f}_1\theta_1$ are identically zero.

\ms
\noindent (\ref{10.3.1.c}) Column 3 of $\theta_2$ is equal to $\frac 1{\a}$ times column 2 of $\widetilde{f_3}$. Column 2 of $\theta_2$ is equal to $-\frac 1{\a}$ times column 3 of $\widetilde{f_3}$; hence columns 2 and 3 of $\frak f_2'=\theta_1^{-1}\,\widetilde{\frak f}_2\,\theta_2$ are identically zero.

\ms
\noindent (\ref{10.3.1.d}) Observe that
$$\theta_1^{-1}=\begin{array}{|ccc|c|}\hline 1&0&0& \\
0&0&\frac{-1}{\alpha}&0\\
0&\frac 1{\alpha}&0&\\\hline
0&\frac{-1}{\alpha}D_{*,3}&\frac 1{\alpha}D_{*,2}&I_6\\\hline\end{array}\,,\quad 
(\,\,\widetilde{\frak f}_2\,\theta_2)_{*,10}=(\,\,\widetilde{\frak f}_2\,)_{*,10}=\bmatrix 0\\0\\-\alpha\\\hline D_{*,2}\endbmatrix,\quad\text{and}$$$$ (\,\,\widetilde{\frak f}_2\,\theta_2)_{*,11}=(\,\,\widetilde{\frak f}_2\,)_{*,11}=\bmatrix 0\\\alpha\\0\\\hline D_{*,3}\endbmatrix.$$ Conclude that columns 10 and 11 of $\frak f_2'=\theta_1^{-1}\,\widetilde{\frak f}_2\,\theta_2$ are given in (\ref{J20.1}). 

\ms
\noindent (\ref{10.3.1.b})
Rows 2 and 3 of $\frak f_2'=(\theta_1^{-1}\,\widetilde{\frak f}_2)\theta_2$
\begingroup\allowdisplaybreaks\begin{align*}
{}={}&\left[\begin{array}{c|cccc} 
-\frac 1{\alpha}A_{3,*}&0&1&0&0_{1\times 5}\\[3pt]
\phantom{-}\frac 1{\alpha}A_{2,*}&0&0&1&0_{1\times 5}\\
\end{array}\right]
\left[\begin{array}{l|l}
I_8&0_{8\times 8}\\\hline
M&I_8\\
\end{array}\right]\\
{}={}&\left[\begin{array}{c|cccc} 
-\frac 1{\alpha}A_{3,*}+M_{*,2}&0&1&0&0_{1\times 5}\\[3pt]
\phantom{-}\frac 1{\alpha}A_{2,*}+M_{*,3}&0&0&1&0_{1\times 5}\\
\end{array}\right]\\
{}={}&\left[\begin{array}{c|cccc} 
-\frac 1{\alpha}A_{3,*}+\frac 1{\alpha}A_{3,*} &0&1&0&0_{1\times 5}\\[3pt]
\phantom{-}\frac 1{\alpha}A_{2,*}-\frac1{\alpha}A_{*,2}&0&0&1&0_{1\times 5}\\
\end{array}\right]\\
{}={}&\left[\begin{array}{c|cccc} 
0_{1\times 8}&0&1&0&0_{1\times 5}\\
0_{1\times 8}&0&0&1&0_{1\times 5}\\\end{array}\right],
\end{align*}
\endgroup
as claimed.
This completes the proof of Claim\ref{10.3.1}.

\bs

Claim~\ref{10.3.1} establishes that the complex $E$ is a direct summand of the resolution $\frak F'$ and that the complement of $E$ in $\frak F'$ is a split exact complex. It follows that $E$ is a resolution. The resolution $E$ is homogeneous and has the same Betti numbers as the minimal homogeneous resolution of $\AA$; therefore, $E$ is the minimal resolution of $\AA$. 
\end{proof}
\begin{remark} For future reference, we notice that the graded Betti numbers of $E$ are
$$0\to P(-7)\to \begin{matrix} P(-4)\\\p\\P(-5)^6\end{matrix}\to
\begin{matrix} P(-3)^6\\\p\\P(-4)^6\end{matrix}
\to
\begin{matrix} P(-2)^6\\\p\\P(-3)\end{matrix}\to P.$$\end{remark}

\section{Calculations.}\label{calculations} 
We have already applied (\ref{12.1.1}) in the proof of Theorem~\ref{7.1};  furthermore, the 
calculations of this section are used extensively in the proof of Theorem~\ref{7.2}.
\begin{lemma} 
\label{12.1}
Let $\mA $,  $\mB $, $\mC $, and $\mD $ be the maps of Theorem~{\rm\ref{resol}}.  
Then
$$\mA \mB^* :P\t U_0\to P\t U_0^*$$ is an alternating map. In particular, if $B$ is given in  {\rm(\ref{specialB})}, then
$$BA \transpose=\a\bmatrix 0&0&0\\a_{1,3}&a_{2,3}&a_{3,3}\\-a_{1,2}&-a_{2,2}&-a_{3,2}\endbmatrix$$is an alternating matrix. In particular,
\begin{equation}\label{12.1.1}a_{1,3}=a_{2,3}=a_{3,2}=a_{1,2}=0\quad \text{and}\quad a_{3,3}=a_{2,2}.\end{equation} 
In other words,
\begin{align}
\label{i1} [y^2p^{-1}(w^*)](\Phi_3)={}& w^*[p^{-1}(y^2\Phi_3)]=0,\\
\label{i2}[yzp^{-1}(w^*)](\Phi_3)={}& w^*[p^{-1}(yz\Phi_3)]=0,\\
\label{i3} [ywp^{-1}(z^*)](\Phi_3)={}& z^*[p^{-1}(yw\Phi_3)]=0,\\
\label{i4} [y^2p^{-1}(z^*)](\Phi_3)={}& z^*[p^{-1}(y^2\Phi_3)]=0,\\
\label{i5} [yzp^{-1}(z^*)](\Phi_3)={}&[ywp^{-1}(w^*)](\Phi_3),\text{ and}\\ \notag z^*[p^{-1}(yz\Phi_3)]={}&w^*[p^{-1}(yw\Phi_3)].\end{align}
\end{lemma}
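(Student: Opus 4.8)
The plan is to read off the alternating property of $\mA\mB^*$ from the single relation $\frak f_2\circ\frak f_3=0$ provided by Theorem~\ref{resol}, and then to unwind the explicit matrices of Example~\ref{record}. First I would write $\frak f_2=\bmatrix\mA&x\t\mB\\x\t\mC&\mD\endbmatrix$ and $\frak f_3=\bmatrix x\t\mB^*&\mD^*\\\mA^*&x\t\mC^*\endbmatrix$ and note that the upper-left block of $\frak f_2\,\frak f_3$ is $x\t(\mA\mB^*+\mB\mA^*)$; since this vanishes and $x$ is a nonzerodivisor on $P$, we get $\mA\mB^*=-\mB\mA^*=-(\mA\mB^*)^*$, so $\mA\mB^*\colon P\t U_0\to P\t U_0^*$ is skew-symmetric.

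Next I would promote skew-symmetry to the alternating property. In characteristic $\ne 2$ there is nothing to prove. In characteristic $2$ the cleanest route is to pass to the universal construction of \cite{EKK3}, in which the coefficients of the inverse system and of its chosen integral are indeterminates over $\Z$ and $\pmb\delta$ is inverted, so the ring carrying the generic resolution is torsion-free; over a torsion-free ring the identity $N+N\transpose=0$ forces each diagonal entry of $N=\mathfrak A\mathfrak B^*$ to satisfy $2N_{ii}=0$, hence $N_{ii}=0$, so $\mathfrak A\mathfrak B^*$ is alternating there and remains so after specializing to the present $P$ and $\AA$. (Alternatively one can substitute the description of $\mB^*$ from Remark~\ref{4.4+} into the formula for $\mA$ in Theorem~\ref{resol} and check directly that $[(\mA\mB^*)(u)](u)=0$ for every $u\in U_0$.) Either way $\mA\mB^*$ is an alternating map in every characteristic.

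Then I would translate this into matrices. Relative to the dual bases of $\frak F_1$ and $\frak F_3$ recorded in (\ref{bases}), $\mB^*$ has matrix $B\transpose$ and $\mA$ has matrix $A$, so $\mA\mB^*$ is represented by $AB\transpose$; a map $V\to V^*$ is alternating exactly when its matrix with respect to dual bases is an alternating matrix, and a matrix is alternating iff its transpose is, so $BA\transpose$ is an alternating $3\times 3$ matrix. When $B$ is the matrix (\ref{specialB}), multiplying into $A\transpose$ gives at once
$$BA\transpose=\a\bmatrix 0&0&0\\a_{1,3}&a_{2,3}&a_{3,3}\\-a_{1,2}&-a_{2,2}&-a_{3,2}\endbmatrix,$$
and imposing that this be alternating (zero diagonal together with $N=-N\transpose$) yields precisely $a_{1,3}=a_{2,3}=a_{3,2}=a_{1,2}=0$ and $a_{3,3}=a_{2,2}$, which is (\ref{12.1.1}). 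Finally I would read the relevant entries of $A$ off Example~\ref{record} — $a_{1,2}=-[y^2p^{-1}(z^*)](\Phi_3)x$, $a_{1,3}=-[y^2p^{-1}(w^*)](\Phi_3)x$, $a_{2,3}=-[yzp^{-1}(w^*)](\Phi_3)x$, $a_{3,2}=-[ywp^{-1}(z^*)](\Phi_3)x$, and $a_{2,2}-a_{3,3}=\big(-[yzp^{-1}(z^*)](\Phi_3)+[ywp^{-1}(w^*)](\Phi_3)\big)x$ — which converts (\ref{12.1.1}) into (\ref{i1})--(\ref{i5}); the two expressions written for each identity agree after a routine rewriting using the divided-power module structure and the definition of $p$.

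The only step with genuine content is the passage from skew-symmetry to the alternating property, which has substance only in characteristic $2$; I expect the torsion-free specialization argument to be the least painful way through it, with the explicit diagonal computation as a fallback. Everything else is bookkeeping — expanding $\frak f_2\,\frak f_3$ and matching the matrices $A$ and $B$ of Example~\ref{record} to the maps $\mA$ and $\mB$ through the bases of (\ref{bases}).
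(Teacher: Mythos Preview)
Your argument is correct, and it is organized differently from the paper's. The paper proves the alternating property by brute force: it fixes $u_{1,0}\in U_0$, expands $-\big[(\mA\circ\mB^*)(u_{1,0})\big](u_{1,0})$ using the formula for $\mB^*$ in Remark~\ref{4.4+} and the formula for $\mA$ in Theorem~\ref{resol}, splits the result into two sums $S_1$ (the $x$--part) and $S_2$ (the rest), and checks that each vanishes identically. This is exactly your ``fallback'' route.

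Your primary route --- reading off $\mA\mB^*+\mB\mA^*=0$ from the upper--left block of $\frak f_2\frak f_3=0$ and then upgrading skew--symmetry to alternating by passing to the universal torsion--free coefficient ring of \cite{EKK3} --- is a genuinely different and more conceptual argument. It explains \emph{why} the relation holds (it is forced by the complex structure) rather than verifying it by expansion. The cost is that the char~$2$ upgrade depends on knowing that the formulas of Theorem~\ref{resol}, with the arbitrary integral $\Phi_3$, already form a complex over the generic torsion--free base; the paper asserts this just before Data~\ref{Opening-Data} (reducing to \cite[Thm.~3.6]{EKK-qp} for $d=3$ and claiming the same proof in general), so you are leaning on that assertion. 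The paper's direct computation avoids this dependence entirely. Your derivation of $BA\transpose$, of (\ref{12.1.1}), and of (\ref{i1})--(\ref{i5}) from the table in Example~\ref{record} matches the paper.
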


\begin{proof} 
Let $u_{1,0}$ be an element of $U_0$.
Use Remark~\ref{4.4+} and Theorem~\ref{resol} to see that
\begingroup\allowdisplaybreaks
\begin{align*}&-\big((\mA \circ \mB ^*)(u_{1,0})\big)(u_{1,0})\\{}={}&
\mA \left(\begin{cases}
- w\t [zp^{-1}(yu_{1,0})(\Phi_3)](\Phi_3)
+z\t [wp^{-1}(yu_{1,0})(\Phi_3)](\Phi_3)\\
+w\t [yp^{-1}(zu_{1,0})(\Phi_3)](\Phi_3)
-y\t [wp^{-1}(zu_{1,0})(\Phi_3)](\Phi_3)\\
-z\t [yp^{-1}(wu_{1,0})(\Phi_3)](\Phi_3)
+y\t [zp^{-1}(wu_{1,0})(\Phi_3)](\Phi_3)\\
\end{cases}\right)(u_{1,0})\\{}={}&S_1+S_2,\end{align*}\endgroup
with 
\begingroup\allowdisplaybreaks
\begin{align*}
S_1={}&
x\t \left(\begin{cases}
\phantom{+}
 \Big[wp^{-1}\Big([zp^{-1}(yu_{1,0})(\Phi_3)](\Phi_3)\Big)\Big](\Phi_3)\\
-\Big[zp^{-1}\Big([wp^{-1}(yu_{1,0})(\Phi_3)](\Phi_3)\Big)\Big](\Phi_3)\\
-\Big[wp^{-1}\Big([yp^{-1}(zu_{1,0})(\Phi_3)](\Phi_3)\Big)\Big](\Phi_3)\\
+\Big[yp^{-1}\Big([wp^{-1}(zu_{1,0})(\Phi_3)](\Phi_3)\Big)\Big](\Phi_3)\\
+\Big[zp^{-1}\Big([yp^{-1}(wu_{1,0})(\Phi_3)](\Phi_3)\Big)\Big](\Phi_3)\\
-\Big[yp^{-1}\Big([zp^{-1}(wu_{1,0})(\Phi_3)](\Phi_3)\Big)\Big](\Phi_3)\\
\end{cases}
\right)(u_{1,0})
\\
{}={}&
x\t \begin{cases}
+\Big[\Big([zp^{-1}(yu_{1,0})(\Phi_3)](\Phi_3)\Big)\Big][p^{-1}(wu_{1,0}\Phi_3)]\\
-\Big[\Big([wp^{-1}(yu_{1,0})(\Phi_3)](\Phi_3)\Big)\Big][p^{-1}(zu_{1,0}(\Phi_3)]\\
-\Big[\Big([yp^{-1}(zu_{1,0})(\Phi_3)](\Phi_3)\Big)\Big][p^{-1}(wu_{1,0}\Phi_3)]\\
+\Big[\Big([wp^{-1}(zu_{1,0})(\Phi_3)](\Phi_3)\Big)\Big][p^{-1}(yu_{1,0}\Phi_3)]\\
+\Big[\Big([yp^{-1}(wu_{1,0})(\Phi_3)](\Phi_3)\Big)\Big][p^{-1}(zu_{1,0}(\Phi_3)]\\
-\Big[\Big([zp^{-1}(wu_{1,0})(\Phi_3)](\Phi_3)\Big)\Big][p^{-1}(yu_{1,0}\Phi_3)]\\
\end{cases}\\
{}={}&
x\t \begin{cases}
+\Big[z[p^{-1}(wu_{1,0}\Phi_3)][p^{-1}(yu_{1,0}\Phi_3)]\Big](\Phi_3)\\
-\Big[w[p^{-1}(zu_{1,0}\Phi_3)][p^{-1}(yu_{1,0}\Phi_3)]\Big](\Phi_3)\\
-\Big[y[p^{-1}(wu_{1,0}\Phi_3)][p^{-1}(zu_{1,0}\Phi_3)]\Big](\Phi_3)\\
+\Big[w[p^{-1}(yu_{1,0}\Phi_3)][p^{-1}(zu_{1,0}\Phi_3)]\Big](\Phi_3)\\
+\Big[y[p^{-1}(zu_{1,0}\Phi_3)][p^{-1}(wu_{1,0}\Phi_3)]\Big](\Phi_3)\\
-\Big[z[p^{-1}(yu_{1,0}\Phi_3)][p^{-1}(wu_{1,0}\Phi_3)]\Big](\Phi_3)
\end{cases}\\
{}={}0
\end{align*}
and
$$S_2= \begin{cases}
- w\cdot [u_{1,0}zp^{-1}(yu_{1,0})(\Phi_3)](\Phi_3)
+z\cdot [u_{1,0}wp^{-1}(yu_{1,0})(\Phi_3)](\Phi_3)\\
+w\cdot [u_{1,0}yp^{-1}(zu_{1,0})(\Phi_3)](\Phi_3)
-y\cdot [u_{1,0}wp^{-1}(zu_{1,0})(\Phi_3)](\Phi_3)\\
-z\cdot [u_{1,0}yp^{-1}(wu_{1,0})(\Phi_3)](\Phi_3)
+y\cdot [u_{1,0}zp^{-1}(wu_{1,0})(\Phi_3)](\Phi_3)\\
\end{cases}=0.$$
\endgroup
\end{proof}

\begin{corollary} 
\label{calculation}
Let $A$,  $B$, $C$, and $D$ be  
the matrices of Example~{\rm\ref{record}}. If $B$ is given in  {\rm(\ref{specialB})}, then
\begin{enumerate}[\rm(a)]
\item\label{12.2.a} $d_{1,1}=d_{4,4}=d_{4,5}=d_{4,6}=d_{6,7} =d_{1,4}+d_{2,1}=d_{1,7}+d_{3,1}=d_{4,8}+d_{5,5}=0$,
$a_{1,1}+d_{3,4}=a_{1,4}+d_{1,7}=a_{1,7}+d_{2,1}=a_{2,8}+d_{4,2}=a_{3,8}+d_{4,3}=c_{4,8}=0$,
\item\label{12.2.b} $d_{5,7}=d_{6,4}=0$,
\item\label{12.2.c} $d_{3,7}=d_{6,1}=d_{6,5}=0$,
\item\label{12.2.d} $a_{1,8}=d_{2,4}=d_{4,1}=d_{5,6}=0$,
\item\label{12.2.e} $d_{4,7}=d_{5,4}=0$, and
\item\label{12.2.f} $a_{1,5}=d_{5,1}=d_{2,7}+d_{3,4}=d_{5,5}-d_{6,6}=0$.
\end{enumerate}
 \end{corollary}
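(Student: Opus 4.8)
The plan is to verify the list entry by entry, drawing on three ingredients: the explicit descriptions of the entries of $A$, $C$, and $D$ displayed in Example~\ref{record}; two elementary identities that hold for every $\Phi_3$; and the five identities (\ref{i1})--(\ref{i5}) of Lemma~\ref{12.1}, which is the only place the hypothesis that $B$ has the form (\ref{specialB}) is used. The two hypothesis-free identities I would record first are the commutativity $yw=wy$ in $\Sym_2 U_0$ and the adjointness relation
$$[u_{2,0}\,p^{-1}(\nu_{1,0})](\Phi_3)=[\nu_{1,0}\,p^{-1}(u_{2,0}\Phi_3)]$$
for $u_{2,0}\in\Sym_2U_0$ and $\nu_{1,0}\in U_0^*$. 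The latter is proved by setting $v=p^{-1}(\nu_{1,0})$ and $u=p^{-1}(u_{2,0}\Phi_3)$, so that $v(x\Phi_3)=\nu_{1,0}$ and $u(x\Phi_3)=u_{2,0}\Phi_3$, and then computing $[u_{2,0}v](\Phi_3)=v(u_{2,0}\Phi_3)=v(u(x\Phi_3))=(uv)(x\Phi_3)=u(v(x\Phi_3))=u(\nu_{1,0})$, using only that $\Sym_\bullet U$ is commutative and that $D_\bullet U^*$ is a module over it.

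With these two identities in hand, part (a) requires no hypothesis on $B$ at all. Several of the listed entries ($d_{1,1}$, $d_{4,4}$, $d_{4,5}$, $d_{4,6}$, $d_{6,7}$, $c_{4,8}$) are literally $0$ in the display of Example~\ref{record}. Each of the remaining ``sum'' relations in (a) — for instance $d_{1,4}+d_{2,1}$, $d_{1,7}+d_{3,1}$, $d_{4,8}+d_{5,5}$, $a_{1,1}+d_{3,4}$, $a_{1,4}+d_{1,7}$, $a_{1,7}+d_{2,1}$, $a_{2,8}+d_{4,2}$, $a_{3,8}+d_{4,3}$ — collapses to $0$ once the two entries are read off Example~\ref{record}: the degree-one (constant-coefficient) pieces cancel by $yw=wy$, and the coefficients of $x$ cancel after rewriting each expression of the shape $[u_{2,0}p^{-1}(\nu_{1,0})](\Phi_3)$ via the adjointness relation above.

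For parts (b)--(f) the plan is to trace each asserted vanishing back to a single one of (\ref{i1})--(\ref{i5}). Concretely: (b) follows from (\ref{i1}), since $d_{5,7}$ and $d_{6,4}$ are both $\pm[w^*p^{-1}(y^2\Phi_3)]\cdot x$; (c) follows from (\ref{i2}), since $d_{3,7}$, $d_{6,1}$, $d_{6,5}$ are all $\pm[w^*p^{-1}(yz\Phi_3)]\cdot x$; (d) follows from (\ref{i3}), since $a_{1,8}=-[ywp^{-1}(z^*)](\Phi_3)\cdot x$ while $d_{2,4}$, $d_{4,1}$, $d_{5,6}$ are $\pm[z^*p^{-1}(yw\Phi_3)]\cdot x$, and these two scalars agree by adjointness; (e) follows from (\ref{i4}), since $d_{4,7}$, $d_{5,4}$ are $\pm[z^*p^{-1}(y^2\Phi_3)]\cdot x$; and (f) follows from (\ref{i5}), since $a_{1,5}$, $d_{5,1}$, $d_{2,7}+d_{3,4}$, and $d_{5,5}-d_{6,6}$ each equal, up to sign, $\big([yzp^{-1}(z^*)](\Phi_3)-[ywp^{-1}(w^*)](\Phi_3)\big)\cdot x$ — equivalently $\big([z^*p^{-1}(yz\Phi_3)]-[w^*p^{-1}(yw\Phi_3)]\big)\cdot x$ — which vanishes by (\ref{i5}) after one application of adjointness to match the two forms of the scalar.

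I do not anticipate any substantive obstacle: granted Lemma~\ref{12.1}, the whole statement is bookkeeping with the explicit matrices of Example~\ref{record} together with the two trivial identities above. The only point that demands care is keeping the column labels and sign conventions of Example~\ref{record} straight so that the correct entry is selected in each case; this is purely clerical, and is exactly the sort of check the accompanying Macaulay2 scripts also confirm.
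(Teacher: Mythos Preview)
Your proposal is correct and follows exactly the paper's approach: part (a) is read directly from Example~\ref{record} (with the adjointness identity you spell out being the only nontrivial step), and parts (b)--(f) follow from (\ref{i1})--(\ref{i5}) of Lemma~\ref{12.1}, respectively. Your explicit statement and proof of the adjointness relation is a helpful elaboration that the paper leaves implicit.
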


\begin{proof} 
One may read (\ref{12.2.a}) directly from  
 Example~\ref{record}; (\ref{12.2.b}) follows from 
(\ref{i1}); (\ref{12.2.c}) follows from 
(\ref{i2}); (\ref{12.2.d}) follows from 
(\ref{i3}); (\ref{12.2.e}) follows from 
(\ref{i4});          
and (\ref{12.2.f}) follows from (\ref{i5}). 
\end{proof}

\begin{lemma}
\label{11.3} 
Let $A$,  $B$, $C$, and $D$ be  
the matrices of Example~{\rm\ref{record}}. If $B$ is given in  {\rm(\ref{specialB})}, then
the following identities all hold in  
$\kk$. 

\begin{enumerate}
[\rm(a)]

\item\label{12.2.h} 
$$0=\begin{cases}
\phantom{+}[w^2p^{-1}(z^*)](\Phi_3)\cdot [y^*p^{-1}(z^2\Phi_3)]
-[zwp^{-1}(z^*)](\Phi_3)\cdot [y^*p^{-1}(zw\Phi_3)]\\
-\a [y^*p^{-1}(z^*)]
-[y^*p^{-1}(w^2\Phi_3)]\cdot 
[w^*p^{-1}(wz\Phi_3)]
\\
+  [y^*p^{-1}(w^2\Phi_3)]\cdot    
[y^*p^{-1}(yz\Phi_3)]
-[y^*p^{-1}(zw\Phi_3)]\cdot   
[y^*p^{-1}(yw\Phi_3)]
\\
+[y^*p^{-1}(zw\Phi_3)]\cdot 
[w^*p^{-1}(w^2\Phi_3)],
\end{cases}$$

\item\label{20.3.5} 
$$0=\begin{cases}
+\a z^*p^{-1}(z^*)
-[y^*p^{-1}(yw\Phi_3)]\cdot [ywp^{-1}(y^*)](\Phi_3)\\
+[y^*p^{-1}(yw\Phi_3)]\cdot [w^2p^{-1}(w^*)](\Phi_3)
-[y^*p^{-1}(w^2\Phi_3)]\cdot [ywp^{-1}(w^*)](\Phi_3)\\
+[z^*p^{-1}(zw\Phi_3)]\cdot [zwp^{-1}(z^*)](\Phi_3)
-[z^*p^{-1}(zw\Phi_3)]\cdot [w^2p^{-1}(w^*)](\Phi_3)\\
+[z^*p^{-1}(w^2\Phi_3)]\cdot [zwp^{-1}(w^*)](\Phi_3) 
+[y^*p^{-1}(y^2\Phi_3)]\cdot [w^2p^{-1}(y^*)](\Phi_3)\\
-[z^*p^{-1}(zy\Phi_3)]\cdot [w^2p^{-1}(y^*)](\Phi_3)
+[y^*p^{-1}(yz\Phi_3)]\cdot [w^2p^{-1}(z^*)](\Phi_3)\\
-[z^*p^{-1}(z^2\Phi_3)]\cdot [w^2p^{-1}(z^*)](\Phi_3),
\end{cases}$$

\item\label{20.3.3}
$$0=\left\{\begin{array}{ll}-\a [z^*p^{-1}(w^*)]
-[y^*p^{-1}(yw\Phi_3)]\cdot[yzp^{-1}(y^*)](\Phi_3)\\
+[y^*p^{-1}(yw\Phi_3)]\cdot [wzp^{-1}(w^*)](\Phi_3)
-[z^*p^{-1}(zw\Phi_3)]\cdot [wzp^{-1}(w^*)](\Phi_3)\\
+[z^*p^{-1}(w^2\phi_3)]\cdot [z^2p^{-1}(w^*)](\Phi_3)
+[y^*p^{-1}(y^2\Phi_3)]\cdot [zwp^{-1}(y^*)](\Phi_3)\\
+[y^*p^{-1}(yz\Phi_3)]\cdot [zwp^{-1}(z^*)](\Phi_3)
-2[z^*p^{-1}(yz\Phi_3)]\cdot [zwp^{-1}(y^*)](\Phi_3),\\\end{array}\right.
$$

\item\label{22.0.14}
$$0=\begin{cases}
-[x^*p^{-1}(w^2\Phi_3)]
+\a [z^*p^{-1}(z^*)]+[w^*p^{-1}(zw\Phi_3)]\cdot [z^*p^{-1}(w^2\Phi_3)]\\
-[y^*p^{-1}(w^2\Phi_3)]\cdot [z^*p^{-1}(yz\Phi_3)]
-[z^*p^{-1}(w^2\Phi_3)]\cdot [z^*p^{-1}(z^2\Phi_3)]\\
+[z^*p^{-1}(zw\Phi_3)]\cdot [z^*p^{-1}(zw\Phi_3)]
-[w^*p^{-1}(w^2\Phi_3)]\cdot[z^*p^{-1}(zw\Phi_3)],\\
\end{cases}$$

\item \label{22.0.9}
$$0=\left\{\begin{array}{ll}-\a [z^*p^{-1}(w^*)]
-[y^*p^{-1}(zw\Phi_3)][z^*p^{-1}(yz\Phi_3)]-[x^*p^{-1}(zw\Phi_3)] \\
-[z^*p^{-1}(zw\Phi_3)]\cdot [z^*p^{-1}(z^2\Phi_3)]
+[z^*p^{-1}(z^2\Phi_3)]\cdot [z^*p^{-1}(zw\Phi_3)]\\
-[w^*p^{-1}(zw\Phi_3)]\cdot [z^*p^{-1}(zw\Phi_3)]
+[w^*p^{-1}(z^2\Phi_3)]\cdot [z^*p^{-1}(w^2\Phi_3)],\\\end{array}\right.
$$

\item\label{22.0.21}
$$0=\begin{cases}
\phantom{+}[w^*p^{-1}(wy\Phi_3)]\cdot [w^*p^{-1}(yw\Phi_3)]
-[y^*p^{-1}(y^2\Phi_3)]\cdot [w^*p^{-1}(yw\Phi_3)]\\
-[x^*p^{-1}(y^2\Phi_3)],\\
\end{cases}$$
\item 
\label{22.0.25}
$0=
[y^*p^{-1}(yz\Phi_3)]\cdot [w^*p^{-1}(yw\Phi_3)]
+[x^*p^{-1}(yz\Phi_3)],\quad\text{and}
$
\item \label{22.0.19}
$0=
[w^*p^{-1}(wy\Phi_3)]\cdot [y^*p^{-1}(yw\Phi_3)]
+[x^*p^{-1}(yw\Phi_3)].$
\end{enumerate}

\end{lemma}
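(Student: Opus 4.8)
The plan is to obtain each of the eight identities of Lemma~\ref{11.3} by comparing coefficients of monomials in the matrix identities expressing that $(\frak F,\widetilde{\frak f})$ is a complex (Theorem~\ref{a res})---equivalently, in $\frak f_1\circ\frak f_2=0$ and $\frak f_2\circ\frak f_3=0$ for the resolution of Theorem~\ref{resol}. Two of these relations, written in block form, are
$$x\,f_{1,1}\,B+f_{1,2}\,D=0\qquad\text{and}\qquad A\,D\transpose+x^{2}B\,C\transpose=0,$$
where $A,B,C,D,f_{1,1},f_{1,2}$ are the explicit matrices of Example~\ref{record}; the second relation is also $x^{2}C\,B\transpose+D\,A\transpose=0$. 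Every entry of each product on the left is an explicit homogeneous polynomial in $\kk[x,y,z,w]$ whose coefficients are polynomial expressions in the entries of $B$ and in the scalars $[\nu\,p^{-1}(\nu')]$, $[\nu\,p^{-1}(m\Phi_3)]$, and $[x^{*}p^{-1}(m\Phi_3)]$ with $\nu,\nu'\in U_0^{*}$ and $m\in\binom{y,z,w}{2}$; here one uses the self-adjointness $[m\,p^{-1}(\nu)](\Phi_3)=[\nu\,p^{-1}(m\Phi_3)]$ to put the entries of $A$ and $D$ in this shape. Thus for each of (a)--(h) the task is to pick the correct entry and monomial, read off that coefficient, and simplify.

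The simplification is where the hypothesis is used. Since $B$ is assumed to be \eqref{specialB}, all but two of its entries vanish, and moreover one may substitute the vanishings and equalities of \eqref{12.1.1} and of Corollary~\ref{calculation}, all of which are consequences of \eqref{specialB}. I expect the identities to fall into two groups. The identities~(d), (g), (h)---the ones containing a scalar $[x^{*}p^{-1}(m\Phi_3)]$, which is minus the coefficient of $x^{2}$ in an entry of $f_{1,2}$---come from $x\,f_{1,1}\,B+f_{1,2}\,D=0$. For instance, (d) is precisely the coefficient of $x^{2}z$ in the $w\t y^{*}$ component: the summand $x\,f_{1,1}\,B$ contributes only the term $\alpha\,[z^{*}p^{-1}(z^{*})]$ (because \eqref{specialB} annihilates every entry of $B$ outside the two columns $z\t y^{*}$ and $w\t y^{*}$), the summand $f_{1,2}\,D$ contributes the remaining terms, and the term $[z^{*}p^{-1}(yw\Phi_3)]\cdot[y^{*}p^{-1}(zw\Phi_3)]$ that would otherwise appear drops out because $[z^{*}p^{-1}(yw\Phi_3)]=0$ by Corollary~\ref{calculation}; identities~(g) and~(h) sit in components attached to columns of $B$ that are entirely zero, which is why they contain no $\alpha$. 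The identities~(a), (b), (c), (e), (f) come from $A\,D\transpose+x^{2}B\,C\transpose=0$ (or its transpose): comparing the coefficient of a suitable power of $x$ yields a sum of products of an entry of $A$ with an entry of $D$, plus the two surviving $B$-contributions (from the two nonzero columns of $B$, each of the form $\pm\alpha$ times an entry of $C$), after which \eqref{12.1.1} and Corollary~\ref{calculation} collapse what remains to exactly the stated expression.

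The main obstacle here is organizational rather than conceptual: there is no new idea, only a long, multi-term, sign-sensitive computation. For each of the eight identities one must locate the exact entry and monomial, expand a product of the explicit matrix entries of Example~\ref{record}, and keep careful track of the many sign conventions and of the cancellations forced by \eqref{specialB}, Lemma~\ref{12.1}, and Corollary~\ref{calculation}. In the final write-up I would carry out one or two representative cases in full---say (d) for the first block and (a) for the second---and then assert that the remaining identities follow in the same way; this is exactly the kind of verification that can also be performed with the Macaulay2 scripts accompanying the paper.
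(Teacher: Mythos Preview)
Your approach is exactly the paper's: extract each identity as a coefficient in the block relations $x^2CB\transpose+DA\transpose=0$ and $xf_{1,1}B+f_{1,2}D=0$, then simplify using \eqref{specialB}. One bookkeeping slip: by your own criterion (presence of a term $[x^*p^{-1}(m\Phi_3)]$), identities (e) and (f) also contain such a term, so they too come from $xf_{1,1}B+f_{1,2}D=0$, not from $DA\transpose+x^2CB\transpose=0$; the paper confirms this, reading (d), (e), (f), (g), (h) as the $x^2z$, $x^2z$, $x^2w$, $x^2w$, $x^2y$ coefficients in columns $3$, $2$, $4$, $1$, $4$ of $xf_{1,1}B+f_{1,2}D$, and (a), (b), (c) as $x^2$-coefficients of entries $(1,3)$, $(2,3)$, $(2,2)$ of $x^2CB\transpose+DA\transpose$.
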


\begin{proof} All of these identities are consequences of the fact that 
$\left(\frak F, \widetilde {\frak f}\,\,\right)$,   
 from Theorem~\ref{a res}, is a complex and $B$ has the form of (\ref{specialB}).
In particular, 
$$\bmatrix A&B\\x^2C&D\endbmatrix\bmatrix B\transpose&D\transpose\\A\transpose&x^2C\transpose\endbmatrix=0 
\quad\text{and}\quad
\bmatrix xf_{1,1}&f_{1,2}\endbmatrix\bmatrix A&B\\x^2C&D\endbmatrix=0. $$
Consequently, 

\begin{equation}\notag 
x^2 CB\transpose+DA\transpose=0\quad\text{and}\quad 
xf_{1,1}B+f_{1,2}D=0.\end{equation}
The sum 
(\ref{12.2.h}) is minus the coefficient of  $x^2$ in $$(x^2CB\transpose+DA\transpose)_{1,3}=0;$$
(\ref{20.3.5}) is the coefficient of  $x^2$ in $$(x^2CB\transpose+DA\transpose)_{2,3}=0;$$
(\ref{20.3.3}) is the coefficient of $x^2$ in $$(x^2CB\transpose+DA\transpose)_{2,2}=0;$$
(\ref{22.0.14}) is the coefficient of $x^2z$ in 
$$(xf_{1,1}B+f_{1,2}D)_3=0;$$
(\ref{22.0.9}) is the coefficient of $x^2z$ in $$(xf_{1,1}B+f_{1,2}D)_2=0;$$
(\ref{22.0.21}) is the coefficient of $x^2w$ in $$(xf_{1,1}B+f_{1,2}D)_{4}=0;$$
(\ref{22.0.25}) is the coefficient of $x^2w$ in $$(xf_{1,1}B+f_{1,2}D)_1=0;$$
and (\ref{22.0.19}) is the coefficient of $x^2y$ in $$(xf_{1,1}B+f_{1,2}D)_{4}=0.$$
\end{proof}

\section{The seven-generated ideal is a sum of linked perfect ideals.}
In this section we prove the following result.

\begin{theorem}
\label{7.2} 
Adopt the data of {\rm \ref{2.1}} and the matrix $\SM$ of {\rm(\ref{SM})}. Assume that $\SM$ has rank one. 
Choose a pair of dual bases for $U_0$ and  $U_0^*$  so that $\SM$ is given in {\rm (\ref{rankSM1})}. Then the defining ideal  of $\AA$ is the sum of two linked codimension three perfect ideals.
\end{theorem}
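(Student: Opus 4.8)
The plan is to display the minimal resolution $E$ of Theorem~\ref{7.1} (equivalently, the resolution $(\frak F,\widetilde{\frak f})$ of Theorem~\ref{a res} with $B$ in the form \eqref{specialB}) as the self-dual mapping cone of a map of complexes $\psi\colon G^*(-7)\to G$, where
$$G\colon\quad 0\to P(-4)\oplus P(-5)\xrightarrow{\ g_3\ }P(-3)^5\oplus P(-4)\xrightarrow{\ g_2\ }P(-2)^5\xrightarrow{\ g_1\ }P$$
resolves $P/I$ for a suitable ideal $I$ with five quadric generators. The graded Betti numbers force exactly this shape: the degree-one term of the cone is $G_1\oplus G_3^*(-7)=P(-2)^5\oplus(P(-2)\oplus P(-3))$, the degree-two term is $G_2\oplus G_2^*(-7)$, and so on, matching the Betti table of $E$. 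First I would read off the blocks $g_1,g_2,g_3$ and the connecting maps $c_0,c_1,c_2,c_3$ from $E_1,\dots,E_4$ after the change of variables that puts $B$ in the form \eqref{specialB}. The vanishing identities collected in Section~\ref{calculations} — in particular Lemma~\ref{12.1}, Corollary~\ref{calculation}, and Lemma~\ref{11.3} — are precisely what is needed to check that the off-diagonal blocks fit together so that $E$ genuinely is this mapping cone, and that $\psi$ is self-dual (so that $\psi$ covers a map $\theta$ out of $\Ext^3$ of its source).

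Next I would verify that $G$ is acyclic, so that $I$ is a perfect ideal of grade three (necessarily of type two with one minimal Koszul syzygy among the five quadrics, a Brown resolution in the sense of \cite{B87}). For this I would reduce modulo the regular element $x$: arguing as in Observation~\ref{x=0} and in the proof of Theorem~\ref{9.1}, $G\otimes_P\Pbar$ splits as a twist of the Eagon--Northcott resolution of $\Pbar/I_2(M)$ together with a split exact complex, hence is acyclic, and then Lemma~\ref{9.3} lifts acyclicity back to $G$ over $P$. Since $H_0(G)=P/I$ and the resolution has length three, $\pd_P P/I=3$ and $I$ is perfect of grade three.

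With $I$ in hand, Observation~\ref{dec23} applies: the mapping-cone structure shows that $\psi$ covers an injection $\theta\colon\Ext^3_P(P/I,P)\hookrightarrow P/I$ whose image is $K/I$, where $K=\ann\Phi_3$ (the cokernel of $\theta$ is $H_0(E)=\AA$). To promote this "generalized sum" to an honest sum of two geometrically linked ideals, I would choose a regular sequence $\a=\a_1,\a_2,\a_3$ contained in $I$ (possible since $\grade I=3$), set $I'=(\a):I$, and invoke the standard linkage theory of \cite{PS} and Example~\ref{GL}: $I'$ is again perfect of grade three, $(\a):I'=I$, and $\Ext^3_P(P/I,P)\cong\Hom_P(P/I,P/(\a))=I'/(\a)$. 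Under this identification $\theta$ becomes the natural map $I'/(\a)\to P/I$ induced by $I'\hookrightarrow P\twoheadrightarrow P/I$ (a compatibility I would confirm by comparing with the explicit form of $c_0$ extracted from $E_3$), whose image is $(I+I')/I$; hence $K=I+I'$. Finally $\grade(I+I')=\grade K=\grade\ann\Phi_3=4\ge 3+1$, because $\AA=P/K$ is Artinian, so by the equivalent conditions in Example~\ref{GL} the linkage of $I$ and $I'$ is geometric. Therefore $\ann\Phi_3$ is the sum of two geometrically linked codimension-three perfect ideals, as claimed.

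The step I expect to be the main obstacle is the first one: producing $G$ and the connecting maps explicitly enough — and invoking all the vanishing relations of Section~\ref{calculations} in the right combinations — to see that $E$ really does decompose as the claimed self-dual mapping cone; a secondary delicate point is matching the connecting map $\theta$ with the natural linkage map $I'/(\a)\to P/I$, which is what upgrades the inclusion $I\subseteq K$ to the equality $K=I+I'$.
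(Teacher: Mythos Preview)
Your overall architecture matches the paper's: rearrange the minimal resolution $E$ of Theorem~\ref{7.1} into a self-dual mapping cone over a length-three complex $G$, verify via the identities of Section~\ref{calculations} that the off-diagonal block vanishes, and then show the bottom row resolves a grade-three perfect ideal so that Example~\ref{GL} applies. That is exactly what the paper does in Definition~\ref{11.4}, Lemma~\ref{Ziszero}, and Proposition~\ref{perfect}.

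The genuine gap is your acyclicity argument for $G$. Reducing modulo $x$ does \emph{not} work here the way it does in Theorem~\ref{9.1}. Modulo $x$, the five quadrics in $g_1$ become $y^2,\,yz,\,yw,\,zw,\,w^2$; this is $(y,w)(y,z,w)$, whose radical is $(y,w)$, so it has grade~$2$ in $\Pbar=\kk[y,z,w]$, not grade~$3$. In particular $\bar G=G\otimes_P\Pbar$ is \emph{not} the Eagon--Northcott complex of Observation~\ref{x=0} (one monomial, $z^2$, is missing) and is \emph{not} acyclic: the long exact sequence for $0\to G\xrightarrow{x}G\to\bar G\to 0$ identifies $H_1(\bar G)$ with $\ker\bigl(x\colon P/I_1(g_1)\to P/I_1(g_1)\bigr)$, and this kernel is nonzero because $(x,y,w)$ is a minimal (hence associated) prime of the grade-three perfect ideal $I_1(g_1)$. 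So Lemma~\ref{9.3} cannot supply $H_1(G)=0$.

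The paper gets around this by a different route: it first massages $G$ (Lemma~\ref{K_1=Br_1}, Proposition~\ref{12.6}, Observation~\ref{12.7}) into precisely the Brown complex of Proposition~\ref{Bres} with $z_2=0$, and then checks the single hypothesis of that proposition, namely $\grade I_1(g_1)\ge 3$, by the one-line grade count in Proposition~\ref{perfect}: $I_1(g_1)+(x)$ contains the defining ideal of $\AA$, which has grade~$4$, hence $\grade I_1(g_1)\ge 3$ locally at the irrelevant ideal. That is the missing ingredient; if you replace your reduction-mod-$x$ step with this Brown-complex identification and grade bound, the rest of your outline goes through and the final linkage paragraph becomes exactly the paper's invocation of Example~\ref{GL}.
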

\begin{Remark}
Theorem~\ref{7.2} is implemented in the Macaulay2 script ``SumOfLinkedIdeals7''; see Section~\ref{14.A.3}. \end{Remark}
\begin{proof}
 The minimal homogeneous resolution $E$ of $\AA$ is given in Theorem~\ref{7.1}. In Definition~\ref{11.4} we rearrange the rows and columns of the matrices of $E$ and produce the minimal homogeneous resolution $E'$ of $\AA$. In 
Definition~\ref{11.4} we also partition $E'$ in the form 
\begingroup\allowdisplaybreaks
\begin{align} \label{partition} E_1'={}&\left[\begin{array}{l|l}L_0& K_1\end{array}\right],&
E_2'={}&\left[\begin{array}{l|l} K_3\transpose&Z\\\hline
 {L}_1&K_2\\\end{array}\right],\\ 
\notag E_3'={}&\left[\begin{array}{l|l} Z\transpose&K_2\transpose\\\hline
K_3&\fakeht L_1\transpose\end{array}\right],\text{ and}&
E_4'={}&\left[\begin{array}{l} L_0\transpose\\\hline \fakeht
K_1\transpose\end{array}\right], 
\end{align}\endgroup
where the shape of each component matrix is given by
$$\begin{array}{|l|l|}\hline 
\text{matrix}&\text{shape}\\\hline
Z&2\times 6\\\hline K_1&1\times 5\\\hline K_2&5\times 6\\\hline K_3&6\times 2\\  \hline L_0&1\times 2\\\hline
L_1&5\times 6\\\hline\end{array}\ .$$

In Lemma~\ref{Ziszero} we prove that the matrix $Z$ (which appears in both $E_2'$ and $E_3'$)  is identically zero. It follows immediately that
\begin{equation}
\label{moc}
\xymatrix{
0\ar[r]&P^1\ar[r]^{K_1\transpose}\ar[d]^{-L_0\transpose}&P^5\ar[r]^{K_2\transpose}\ar[d]^{ L_1\transpose}&P^6\ar[r]^{K_3\transpose}\ar[d]^{- L_1}&P^2\ar[d]^{L_0}\\
0\ar[r]&P^2\ar[r]^{K_3}&P^6\ar[r]^{K_2}&P^5\ar[r]^{K_1}&P}\end{equation}
is a map of complexes and $E'$ is isomorphic to the  mapping cone  of (\ref{moc}). In Proposition~\ref{perfect} we prove that the bottom row (\ref{moc}) is a minimal homogeneous resolution and the image of $K_1$ is a perfect ideal of grade three. 
It follows immediately that the top row of (\ref{moc}) is the minimal homogeneous resolution of $\Ext^3_P(P/\im K_1,P)$. The fact that the mapping cone of (\ref{moc}) (which is isomorphic to $E'$) is acyclic guarantees that the map from the top row of (\ref{moc}) to the bottom row of (\ref{moc}) induces an isomorphism on homology. In particular, the induced complex  
$$0\to \Ext^3_P(P/\im K_1,P)\to P/\im K_1\to \AA\to 0$$ 
is exact and  
the defining ideal  of $\AA$ is the sum of two linked codimension three perfect ideals; see Example~\ref{GL}. \end{proof}

The complex $E'$ is obtained by rearranging the rows and columns of the matrices of the complex  $E$ of Theorem~\ref{7.1}.
\begin{definition}
\label{11.4}
Adopt the data of Theorem~{\rm\ref{7.2}}.
Let $E$  be the minimal homogeneous  resolution of $\AA$ which is given Theorem~{\rm\ref{7.1}}. We define a new minimal homogeneous resolution 
$$E':\quad 0\to P\xrightarrow{E_4'} P^7 \xrightarrow{E_3'} P^{12}\xrightarrow{E_2'} P^7\xrightarrow{E_1'} P.$$
Let $E_1'$ be the matrix obtained from $E_1$ by arranging the columns in the order \begin{equation}\label{5,1}5,1,7,6,2,3,4;\end{equation} let $E_2'$ be the matrix obtained from $E_2$ by arranging the rows in the order (\ref{5,1})
 and arranging the columns in the order \begin{equation}\label{12,3}12,3,4,1,2,5,6,9,10,7,8,11;\end{equation} let $E_3'$ be the matrix obtained from $E_3$ by arranging the rows in the order (\ref{12,3}) 
and arranging the columns in the order \begin{equation}\label{7,6}7,6,2,3,4,5,1;\end{equation} and let $E_4'$ be the matrix obtained from $E_4$ by arranging the rows in the order (\ref{7,6}). 
Partition the matrices $E_i'$  
in the form (\ref{partition}).  

The component matrices are given by
\begingroup\allowdisplaybreaks
\begin{align*}Z={}&\bmatrix
\frac 1{\a}a_{3,8}d_{4,2}-\frac 1{\a}a_{2,8}d_{4,3}+x^2c_{4,8}& d_{4,5}& d_{4,6}& d_{4,1} &d_{4,4}& d_{4,7} \\
      a_{1,8}&                                 0&       0&       0&       0&       0 \endbmatrix,\\
K_1={}&\text{\tiny $\bmatrix w^2-xp^{-1}(w^2\Phi_3)&zw-xp^{-1}(zw\Phi_3)&y^2-xp^{-1}(y^2\Phi_3)&yz-xp^{-1}(yz\Phi_3)&yw-xp^{-1}(yw\Phi_3)\endbmatrix$}\\
K_2={}&\bmatrix
      \frac 1{\a}a_{3,8}d_{6,2}-\frac 1{\a}a_{2,8}d_{6,3}+x^2c_{6,8} &d_{6,5}& d_{6,6}& d_{6,1}& d_{6,4}& d_{6,7} \\
      \frac 1{\a}a_{3,8}d_{5,2}-\frac 1{\a}a_{2,8}d_{5,3}+x^2c_{5,8}& d_{5,5}& d_{5,6}& d_{5,1}& d_{5,4}& d_{5,7} \\
      \frac 1{\a}a_{3,8}d_{1,2}-\frac 1{\a}a_{2,8}d_{1,3}+x^2c_{1,8}& d_{1,5}& d_{1,6}& d_{1,1}& d_{1,4}& d_{1,7} \\
      \frac 1{\a}a_{3,8}d_{2,2}-\frac 1{\a}a_{2,8}d_{2,3}+x^2c_{2,8}& d_{2,5}& d_{2,6}& d_{2,1}& d_{2,4}& d_{2,7} \\
      \frac 1{\a}a_{3,8}d_{3,2}-\frac 1{\a}a_{2,8}d_{3,3}+x^2c_{3,8}& d_{3,5}& d_{3,6}& d_{3,1}& d_{3,4}& d_{3,7}\\
\endbmatrix,\\
K_3={}&\bmatrix
d_{4,8}                                & 0      \\
\frac 1{\a}a_{3,5}d_{4,2}-\frac 1{\a}a_{2,5}d_{4,3}+x^2c_{4,5}& a_{1,5}\\
\frac 1{\a}a_{3,6}d_{4,2}-\frac 1{\a}a_{2,6}d_{4,3}+x^2c_{4,6}& a_{1,6}\\
\frac 1{\a}a_{3,1}d_{4,2}-\frac 1{\a}a_{2,1}d_{4,3}+x^2c_{4,1} &a_{1,1}\\
\frac 1{\a}a_{3,4}d_{4,2}-\frac 1{\a}a_{2,4}d_{4,3}+x^2c_{4,4}& a_{1,4}\\
\frac 1{\a}a_{3,7}d_{4,2}-\frac 1{\a}a_{2,7}d_{4,3}+x^2c_{4,7}& a_{1,7}\\ 
\endbmatrix,\\
L_0={}&\bmatrix z^2-xp^{-1}(z^2\Phi_3)&x^2p^{-1}(y^*)\endbmatrix,\quad\text
{and} \\
 L_1={}& \bmatrix 
d_{6,8}&h_{6,5}&h_{6,6}&h_{6,1}&h_{6,4}&h_{6,7}\\
d_{5,8}&h_{5,5}&h_{5,6}&h_{5,1}&h_{5,4}&h_{5,7}\\
d_{1,8}&h_{1,5}&h_{1,6}&h_{1,1}&h_{1,4}&h_{1,7}\\
d_{2,8}&h_{2,5}&h_{2,6}&h_{2,1}&h_{2,4}&h_{2,7}\\
d_{3,8}&h_{3,5}&h_{3,6}&h_{3,1}&h_{3,4}&h_{3,7}\endbmatrix,
\end{align*}\endgroup
where $$h_{i,j}=x^2 c_{i,j}+\frac 1{\a}\left|\begin{matrix} 
d_{i,2}&d_{i,3}\\
a_{2,j}&a_{3,j}\end{matrix}\right|.$$
\end{definition}

\begin{definition}
\label{12.3} Adopt the data of Theorem~{\rm\ref{7.2}}.
Let \begingroup\allowdisplaybreaks\begin{align*}
w_1={}&d_{6,6}=[w^*p^{-1}(yw\Phi_3)]\cdot x-y\\
\pi_1={}&d_{3,4}=[y^*p^{-1}(y^2\Phi_3)]\cdot x-[w^*p^{-1}(wy\Phi_3)]\cdot x-y,\\
\pi_2={}&d_{1,7}=[y^*p^{-1}(yz\Phi_3)]\cdot x-z\\  
\pi_3={}&d_{2,1}=[y^*p^{-1}(yw\Phi_3)]\cdot x-w \\
u_{1,1}={}&d_{1,6}=-[y^*p^{-1}(w^2\Phi_3)]\cdot x\\
u_{1,2}={}&d_{2,6}=-[z^*p^{-1}(w^2\Phi_3)]\cdot x\\
u_{1,3}={}&d_{3,6}=[y^*p^{-1}(yw\Phi_3)]\cdot x-[w^*p^{-1}(w^2\Phi_3)]\cdot x+w\\
u_{2,1}={}&d_{1,5}=-[y^*p^{-1}(wz\Phi_3)]\cdot x\\
u_{2,2}={}&d_{2,5}=-[z^*p^{-1}(zw\Phi_3)]\cdot x+w,\text{ and}\\
u_{2,3}={}&d_{3,5}=[y^*p^{-1}(yz\Phi_3)]\cdot x-[w^*p^{-1}(wz\Phi_3)]\cdot x.\\\end{align*}\endgroup
Let $U$ and $\Pi$ be the matrices
$$U=\bmatrix u_{1,1}&u_{1,2}&u_{1,3}\\u_{2,1}&u_{2,2}&u_{2,3}\endbmatrix\quad\text{and}\quad \Pi=\bmatrix \pi_1\\\pi_2\\\pi_3\endbmatrix.$$For $1\le k\le 3$, let $\Delta_k$ be $(-1)^{k+1}$ times the determinant of $U$ with column $k$ deleted.
\end{definition}

\begin{lemma}
\label{Ziszero} Apply the language of Definition~{\rm \ref{12.3}}. The matrices $Z$, $K_2$, and $K_3$ of Definition~{\rm\ref{11.4}} are equal to $Z=0$,
$$K_2=\bmatrix
      \frac 1{\a}a_{3,8}d_{6,2}-\frac 1{\a}a_{2,8}d_{6,3}+c_{6,8}x^2 &0& w_1& 0& 0& 0 \\
      \frac 1{\a}a_{3,8}d_{5,2}-\frac 1{\a}a_{2,8}d_{5,3}+c_{5,8}x^2& w_1& 0& 0& 0& 0 \\
      \frac 1{\a}a_{3,8}d_{1,2}-\frac 1{\a}a_{2,8}d_{1,3}+c_{1,8}x^2& u_{2,1}& u_{1,1}& 0& -\pi_3& \pi_2 \\
      \frac 1{\a}a_{3,8}d_{2,2}-\frac 1{\a}a_{2,8}d_{2,3}+c_{2,8}x^2& u_{2,2}& u_{1,2}& \pi_3& 0& -\pi_1 \\
      \frac 1{\a}a_{3,8}d_{3,2}-\frac 1{\a}a_{2,8}d_{3,3}+c_{3,8}x^2& u_{2,3}& u_{1,3}& -\pi_2& \pi_1& 0\\
\endbmatrix,\text{ and}$$
 $$K_3=\bmatrix
-w_1                                & 0      \\
\frac 1{\a}a_{3,5}d_{4,2}-\frac 1{\a}a_{2,5}d_{4,3}+c_{4,5}x^2& 0\\
\frac 1{\a}a_{3,6}d_{4,2}-\frac 1{\a}a_{2,6}d_{4,3}+c_{4,6}x^2& 0\\
\frac 1{\a}a_{3,1}d_{4,2}-\frac 1{\a}a_{2,1}d_{4,3}+c_{4,1}x^2 &-\pi_1\\
\frac 1{\a}a_{3,4}d_{4,2}-\frac 1{\a}a_{2,4}d_{4,3}+c_{4,4}x^2& -\pi_2\\
\frac 1{\a}a_{3,7}d_{4,2}-\frac 1{\a}a_{2,7}d_{4,3}+c_{4,7}x^2& -\pi_3\\ 
\endbmatrix.$$
\end{lemma}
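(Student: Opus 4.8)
The plan is to prove all three identities by a single direct comparison: write out the matrices $Z$, $K_2$, $K_3$ exactly as displayed in Definition~\ref{11.4} and match them entry by entry against the asserted forms. The only inputs needed are the vanishing relations of Corollary~\ref{calculation} (whose parts I cite below simply as \ref{12.2.a},\ldots,\ref{12.2.f}), the identities (\ref{i1})--(\ref{i5}) and (\ref{12.1.1}) of Lemma~\ref{12.1}, and the abbreviations $w_1$, $\pi_1,\pi_2,\pi_3$, $u_{i,j}$ fixed in Definition~\ref{12.3}; no fresh computation with $\Phi_3$ is required, so the argument is purely one of bookkeeping.

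The first step is to establish $Z=0$, which is the one structurally meaningful assertion here, since it is precisely what makes the diagram (\ref{moc}) a map of complexes and thereby exhibits $E'$ as a mapping cone. The second row of $Z$ is $(a_{1,8},0,0,0,0,0)$ with $a_{1,8}=0$ by \ref{12.2.d}. In the first row, $d_{4,5}=d_{4,6}=d_{4,4}=0$ by \ref{12.2.a}, $d_{4,1}=0$ by \ref{12.2.d}, and $d_{4,7}=0$ by \ref{12.2.e}; for the remaining entry one uses $c_{4,8}=0$ together with $a_{2,8}+d_{4,2}=0$ and $a_{3,8}+d_{4,3}=0$ from \ref{12.2.a}, which gives $\tfrac1{\a}a_{3,8}d_{4,2}-\tfrac1{\a}a_{2,8}d_{4,3}=0$.

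Next I would read off $K_2$ and $K_3$. Column~$1$ of $K_2$, and column~$1$ of $K_3$ below its top entry, are copied verbatim. In $K_2$ the entries of rows $3$, $4$, $5$ are the $d_{i,j}$ with $i\in\{1,2,3\}$, and these become $u_{2,k}$, $u_{1,k}$, $\pm\pi_k$ directly from the definitions of those symbols, once one also invokes $d_{3,1}=-d_{1,7}$, $d_{1,4}=-d_{2,1}$, $d_{2,7}=-d_{3,4}$ from \ref{12.2.a} and \ref{12.2.f} and the incidental zeros $d_{1,1}=d_{3,7}=d_{2,4}=0$ from \ref{12.2.a}, \ref{12.2.c}, \ref{12.2.d}; rows $1$ and $2$ collapse to the claimed shape because $d_{6,5}=d_{6,1}=0$ by \ref{12.2.c}, $d_{6,4}=0$ by \ref{12.2.b}, $d_{5,6}=0$ by \ref{12.2.d}, $d_{5,4}=d_{5,7}=0$ by \ref{12.2.e} and \ref{12.2.b}, $d_{6,7}=0$ by \ref{12.2.a}, and $d_{5,1}=0$, $d_{5,5}=d_{6,6}=w_1$ by \ref{12.2.f}. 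For $K_3$, the top entry of column~$1$ satisfies $d_{4,8}=-d_{5,5}=-d_{6,6}=-w_1$ by $d_{4,8}+d_{5,5}=0$ from \ref{12.2.a} and $d_{5,5}=d_{6,6}$ from \ref{12.2.f}; in column~$2$, $a_{1,5}=0$ by \ref{12.2.f}, $a_{1,6}=0$ by (\ref{i2}), while $a_{1,1}=-\pi_1$, $a_{1,4}=-\pi_2$, $a_{1,7}=-\pi_3$ because $a_{1,1}+d_{3,4}=a_{1,4}+d_{1,7}=a_{1,7}+d_{2,1}=0$ by \ref{12.2.a} together with $\pi_1=d_{3,4}$, $\pi_2=d_{1,7}$, $\pi_3=d_{2,1}$ from Definition~\ref{12.3}.

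I expect no real obstacle: the genuine content is already compressed into Corollary~\ref{calculation}, which itself follows by inspecting Example~\ref{record} in light of the alternating-map computation of Lemma~\ref{12.1}. The only hazard here is mislabeling an index or a sign while transcribing; the abbreviations of Definition~\ref{12.3} were chosen precisely so that each surviving entry becomes a renaming rather than a new relation, and the whole identity is in any case verified independently by the Macaulay2 script referenced after Theorem~\ref{7.2}.
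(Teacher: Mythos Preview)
Your proposal is correct and follows exactly the paper's approach: the paper's entire proof is the one line ``Apply Corollary~\ref{calculation},'' and you have simply spelled out which item of that corollary (or, in the case of $a_{1,6}=0$, the identity~(\ref{i2}) underlying it) handles each entry. The bookkeeping you record is accurate, including the slightly hidden points that the $(1,1)$ entry of $Z$ vanishes via $a_{2,8}=-d_{4,2}$, $a_{3,8}=-d_{4,3}$, $c_{4,8}=0$ and that $a_{1,6}=0$ is not listed verbatim in Corollary~\ref{calculation} but follows from the same identity~(\ref{i2}) used there.
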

\begin{proof}
Apply Corollary~\ref{calculation}.
\end{proof}

It follows from Lemma~\ref{Ziszero} that the bottom row of (\ref{moc}) is a complex, which we denote $K$:
\begin{equation}
\label{unknown}K:\quad 0\to P^2\xrightarrow{K_3}P^6\xrightarrow{K_2}P^5\xrightarrow{K_1}P.\end{equation}
We  show  in Proposition~\ref{perfect} that $K$ 
is a (minimal homogeneous) resolution of a perfect module. Indeed we prove that $K$ 
 is a specialization of the resolution of  Anne Brown \cite[Prop.~3.6]{B87}. The precise formulation of the Brown resolution which we use may be found in \cite[Prop.~5.8]{K22} and also in Proposition~\ref{Bres}. 
We now put $K$ into the form of the Brown resolution.

\begin{lemma}
\label{K_1=Br_1} Apply the language of Definition~{\rm \ref{12.3}}.
Let $\Br_1$ be the matrix $$\Br_1=\left[\begin{array}{ccccc}
-(U\Pi)_1& -(U\Pi)_2&  w_1\pi_1& w_1\pi_2& w_1\pi_3\end{array}\right].$$ The matrix $K_1$ of Definition~{\rm\ref{11.4}} satisfies $K_1=\Br_1$. 
\end{lemma}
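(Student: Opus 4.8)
\textbf{Proof proposal for Lemma~\ref{K_1=Br_1}.}

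The plan is to compute $K_1$ explicitly from its definition in Definition~\ref{11.4} and match it entry-by-entry against $\Br_1$. Recall that $K_1$ was extracted from $E_1$ (the first differential of the minimal resolution $E$ of Theorem~\ref{7.1}), after the column rearrangement of Definition~\ref{11.4}; by construction its five entries are precisely the degree-two generators $w^2-xp^{-1}(w^2\Phi_3)$, $zw-xp^{-1}(zw\Phi_3)$, $y^2-xp^{-1}(y^2\Phi_3)$, $yz-xp^{-1}(yz\Phi_3)$, $yw-xp^{-1}(yw\Phi_3)$. The target claim $K_1=\Br_1$ therefore amounts to five scalar-in-$P$ identities:
\begin{align*}
w^2-xp^{-1}(w^2\Phi_3)={}&-(U\Pi)_1,\\
zw-xp^{-1}(zw\Phi_3)={}&-(U\Pi)_2,\\
y^2-xp^{-1}(y^2\Phi_3)={}&w_1\pi_1,\\
yz-xp^{-1}(yz\Phi_3)={}&w_1\pi_2,\\
yw-xp^{-1}(yw\Phi_3)={}&w_1\pi_3.
\end{align*}
Each right-hand side is, after expanding via Definition~\ref{12.3}, a quadratic expression in $x$ with coefficients built from the scalars $[\,\nu^*p^{-1}(u_2\Phi_3)\,]$; so each identity will be verified by comparing the $x^0$, $x^1$, and $x^2$ coefficients.

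First I would dispose of the three identities involving $w_1$. Since $w_1=d_{6,6}=[w^*p^{-1}(yw\Phi_3)]\cdot x-y$, the products $w_1\pi_j$ expand to genuinely quadratic polynomials in $x$. For instance $w_1\pi_3 = \bigl([w^*p^{-1}(yw\Phi_3)]x-y\bigr)\bigl([y^*p^{-1}(yw\Phi_3)]x-w\bigr)$; its constant term is $yw$, which already matches, so the content is the vanishing of the $x$ and $x^2$ coefficients of $w_1\pi_3 - (yw - xp^{-1}(yw\Phi_3))$. The $x^2$ coefficient is $[w^*p^{-1}(yw\Phi_3)]\cdot[y^*p^{-1}(yw\Phi_3)]$, and the $x^1$ coefficient is $-w[w^*p^{-1}(yw\Phi_3)] - y[y^*p^{-1}(yw\Phi_3)] + p^{-1}(yw\Phi_3)$. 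These are precisely the kinds of relations packaged in Lemma~\ref{11.3}: indeed \eqref{22.0.19} gives $[w^*p^{-1}(wy\Phi_3)]\cdot[y^*p^{-1}(yw\Phi_3)]+[x^*p^{-1}(yw\Phi_3)]=0$, and combined with the decomposition $p^{-1}(yw\Phi_3)=[x^*p^{-1}(yw\Phi_3)]x + [y^*p^{-1}(yw\Phi_3)]y + [z^*p^{-1}(yw\Phi_3)]z + [w^*p^{-1}(yw\Phi_3)]w$ together with the vanishing identities \eqref{i3} (namely $[z^*p^{-1}(yw\Phi_3)]=0$), this forces the $x^1$ and $x^2$ coefficients to be zero. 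The identities for $w_1\pi_1$ and $w_1\pi_2$ are handled the same way, drawing on \eqref{22.0.21}, \eqref{22.0.25}, \eqref{i1}, \eqref{i2}, \eqref{i4}, and the relevant coefficient decompositions; these are exactly the sort of bookkeeping that Section~\ref{calculations} was set up to supply.

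For the two identities with right-hand sides $-(U\Pi)_1$ and $-(U\Pi)_2$, note $U$ has degree-one entries in $x$ and $\Pi$ has degree-one entries in $x$, so $-(U\Pi)_k$ is again quadratic in $x$; writing out $-(U\Pi)_1 = -(u_{1,1}\pi_1+u_{1,2}\pi_2+u_{1,3}\pi_3)$ and substituting from Definition~\ref{12.3}, one checks that the constant term is $w^2$ (from $-u_{1,3}\cdot(-w)$, since $u_{1,1},u_{1,2}$ have zero constant term and $\pi_3$ has constant term $-w$, and the constant term of $u_{1,3}$ is $+w$), the $x^2$ coefficient vanishes by the quadratic relations \eqref{22.0.14} and \eqref{22.0.21}-type identities, and the $x^1$ coefficient reassembles to $-p^{-1}(w^2\Phi_3)$ using the coefficient decomposition of $p^{-1}(w^2\Phi_3)$ together with \eqref{i1}--\eqref{i5} and the appropriate entries of Lemma~\ref{11.3}. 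The case $-(U\Pi)_2$ is parallel, using \eqref{22.0.9} and \eqref{20.3.3}. The main obstacle here is purely organizational: there are many terms, and one must be disciplined about which Lemma~\ref{11.3} identity kills which coefficient, and about consistently using the vanishing facts \eqref{i1}--\eqref{i5} (equivalently Corollary~\ref{calculation}) to prune the expansions. No new idea is needed beyond the relation ``$E$ is a complex'' that produced Lemma~\ref{11.3}; the work is in carefully matching $x^0,x^1,x^2$ coefficients across all five slots, which I would present compactly rather than term-by-term, pointing at the specific identities of Lemma~\ref{11.3} and Corollary~\ref{calculation} as each cancellation occurs.
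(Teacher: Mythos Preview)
Your approach is essentially the paper's: reduce each of the five equalities to a coefficient comparison in the decomposition $P_2=\kk x^2\oplus xU_0\oplus\Sym_2U_0$, use the identities \eqref{i1}--\eqref{i5} (Corollary~\ref{calculation}) together with the expansion $\ell=[x^*\ell]x+[y^*\ell]y+[z^*\ell]z+[w^*\ell]w$ to see that the $x^0$ and $x^1$ parts cancel, and then kill the remaining $x^2$ constant via the appropriate identity of Lemma~\ref{11.3}. Two small corrections: in your $w_1\pi_3$ calculation the $x^2$ coefficient of the difference is $[w^*p^{-1}(yw\Phi_3)][y^*p^{-1}(yw\Phi_3)]+[x^*p^{-1}(yw\Phi_3)]$ (you dropped the second term, which comes from $-xp^{-1}(yw\Phi_3)$), though \eqref{22.0.19} is indeed what kills it; and for $-(U\Pi)_1$ the $x^2$ coefficient is dispatched by \eqref{20.3.5} minus \eqref{22.0.14}, not a ``\eqref{22.0.21}-type'' identity.
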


\begin{proof} 
Use straightforward calculations, including the identities (\ref{i1})-(\ref{i5}) and the fact that if $\ell\in U$, then 
\begin{equation}\notag 
 \ell =[x^*\ell]\cdot x
+[y^*\ell]\cdot z
+[z^*\ell]\cdot y
+[w^*\ell]\cdot w,\end{equation}
to 
  show that $$(K_1)_i-(\Br_1)_i=\blop_i\cdot x^2,$$
for $1\le i\le 5$, with   $\blop_i\in\kk$ as defined below
\begingroup\allowdisplaybreaks
\begin{align*}
\blop_1={}&\begin{cases}
- [y^*p^{-1}(w^2\Phi_3)]\cdot [y^*p^{-1}(y^2\Phi_3)]
+ [y^*p^{-1}(w^2\Phi_3)]\cdot [w^*p^{-1}(wy\Phi_3)]\\
- [z^*p^{-1}(w^2\Phi_3)]\cdot [y^*p^{-1}(yz\Phi_3)]
+ [y^*p^{-1}(yw\Phi_3)]\cdot [y^*p^{-1}(yw\Phi_3)]\\
- [w^*p^{-1}(w^2\Phi_3)]\cdot [y^*p^{-1}(yw\Phi_3)]
-[x^*p^{-1}(w^2\Phi_3)],\\
\end{cases}\\
\blop_2={}&\begin{cases}
-[y^*p^{-1}(wz\Phi_3)]\cdot [y^*p^{-1}(y^2\Phi_3)]
+[y^*p^{-1}(wz\Phi_3)]\cdot [w^*p^{-1}(wy\Phi_3)]\\
-[z^*p^{-1}(zw\Phi_3)]\cdot [y^*p^{-1}(yz\Phi_3)]
+[y^*p^{-1}(yz\Phi_3)]\cdot [y^*p^{-1}(yw\Phi_3)]\\
-[w^*p^{-1}(wz\Phi_3)]\cdot [y^*p^{-1}(yw\Phi_3)]
-[x^*p^{-1}(zw\Phi_3)],\\
\end{cases}\\
\blop_3={}&\begin{cases}
- [w^*p^{-1}(yw\Phi_3)]\cdot [y^*p^{-1}(y^2\Phi_3)]
+ [w^*p^{-1}(yw\Phi_3)]\cdot [w^*p^{-1}(wy\Phi_3)]\\
-[x^*p^{-1}(y^2\Phi_3)],\\
\end{cases}\\
 \blop_4={}& 
-[z^*p^{-1}(yz\Phi_3)]\cdot [y^*p^{-1}(yz\Phi_3)]
-[x^*p^{-1}(yz\Phi_3)]
,\text{ and}\\
\blop_5={}&
-[x^*p^{-1}(yw\Phi_3)]
-[w^*p^{-1}(yw\Phi_3)]\cdot [y^*p^{-1}(yw\Phi_3)].\\
\end{align*}
\endgroup

The identities of Lemma~\ref{11.3}:
\text{(\ref{20.3.5})} minus \text{(\ref{22.0.14})}, \text{(\ref{20.3.3})} minus \text{(\ref{22.0.9})}, \text{(\ref{22.0.21})}, \text{(\ref{22.0.25})}, \text{and}  
\text{(\ref{22.0.19})} show that $\blop_1$, $\blop_2$, $\blop_3$, $\blop_4$, and  $\blop_5$ are zero, respectively.\end{proof}
\begin{proposition}
\label{12.6} Apply the language of Definition~{\rm \ref{12.3}}. Let $\theta$ be the invertible matrix
 $$\theta=
\bmatrix 
1&0&0&0&0&0\\ 
\frac{1}{\a}[y^*p^{-1}(w^2\Phi_3)]\cdot x&1&0&0&0&0\\[3pt] 
\frac {-1}{\a} [y^*p^{-1}(zw\Phi_3)]\cdot x&0&1&0&0&0\\[3pt]
0&0&0&1&0&0\\
0&0&0&0&1&0\\
0&0&0&0&0&1
\endbmatrix,$$
$K'$ be the complex
$$0\to P^2\xrightarrow{K_3'}P^6\xrightarrow{K_2'}P^5\xrightarrow{K_1} P,$$
where $K_2'=K_2\theta$ and $K_3'=\theta^{-1}K_3$, and $\Br_3$ and $\Br_2$ be the matrices
$$\Br_2=\bmatrix 
\frac 1{\a}(U\Pi)_2&0&w_1&0&0&0\\
-\frac1{\a}(U\Pi)_1&w_1&0&0&0&0\\
0&u_{2,1}&u_{1,1}&0&-\pi_3&\pi_2\\
0&u_{2,2}&u_{1,2}&\pi_3&0&-\pi_1\\
0&u_{2,3}&u_{1,3}&-\pi_2&\pi_1&0\endbmatrix\ \text{and}$$
$$\Br_3=\bmatrix -w_1&0\\-\frac1{\a}(U\Pi)_1&0\\\frac1{\a}(U\Pi)_2&0\\-\frac1{\a}\Delta_1&-\pi_1\\-\frac1{\a}\Delta_2&-\pi_2\\-\frac1{\a}\Delta_3&-\pi_3\endbmatrix.$$
Then $K_3'=\Br_3$ and $K_2'=\Br_2$.
\end{proposition}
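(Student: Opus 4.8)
The plan is to prove the two matrix identities $K_3'=\Br_3$ and $K_2'=\Br_2$ by direct computation, using the explicit description of $K_2$ and $K_3$ from Lemma~\ref{Ziszero}, the definition of $\theta$, and the collection of scalar identities assembled in Corollary~\ref{calculation} and Lemma~\ref{11.3}. First I would handle $K_3'=\theta^{-1}K_3=\Br_3$, since $\theta^{-1}$ differs from the identity only in its first column (with entries $-\frac1\a[y^*p^{-1}(w^2\Phi_3)]\cdot x$ in row $2$ and $\frac1\a[y^*p^{-1}(zw\Phi_3)]\cdot x$ in row $3$), so the product $\theta^{-1}K_3$ only modifies rows $2$ and $3$ of $K_3$. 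Comparing these modified entries against the entries $-\frac1\a(U\Pi)_1$ and $\frac1\a(U\Pi)_2$ of $\Br_3$ requires expanding $(U\Pi)_1=u_{1,1}\pi_1+u_{1,2}\pi_2+u_{1,3}\pi_3$ and $(U\Pi)_2=u_{2,1}\pi_1+u_{2,2}\pi_2+u_{2,3}\pi_3$ in terms of the $p^{-1}$ expressions from Definition~\ref{12.3}, and matching the result with the $\frac1\a a_{3,k}d_{4,2}-\frac1\a a_{2,k}d_{4,3}+x^2c_{4,k}$ entries of $K_3$; the lower three rows (the $\Delta_k$ rows) of $\Br_3$ should match the corresponding rows of $K_3$ after using that $d_{4,2}=\frac1\a(\text{something})$, i.e.\ reading $d_{4,2}$, $d_{4,3}$ off Example~\ref{record} and recognizing the $2\times2$ determinants $\Delta_k$.

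Next I would verify $K_2'=K_2\theta=\Br_2$. Since $\theta$ differs from the identity only in its first column, right-multiplication by $\theta$ alters only the first column of $K_2$: each new first-column entry is the old first-column entry plus $\frac1\a[y^*p^{-1}(w^2\Phi_3)]\cdot x$ times column $2$ minus $\frac1\a[y^*p^{-1}(zw\Phi_3)]\cdot x$ times column $3$ of $K_2$. Using the form of $K_2$ from Lemma~\ref{Ziszero}, column $2$ is $(0,w_1,u_{2,1},u_{2,2},u_{2,3})\transpose$ and column $3$ is $(w_1,0,u_{1,1},u_{1,2},u_{1,3})\transpose$, so the new first column of $K_2'$ has entries that are polynomial combinations which must be shown to equal $(\frac1\a(U\Pi)_2,-\frac1\a(U\Pi)_1,0,0,0)\transpose$. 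Rows $3$, $4$, $5$ of the new first column must vanish: e.g.\ row $3$ gives $\big[\frac1\a a_{3,8}d_{1,2}-\frac1\a a_{2,8}d_{1,3}+x^2c_{1,8}\big]+\frac1\a[y^*p^{-1}(w^2\Phi_3)]x\cdot u_{2,1}-\frac1\a[y^*p^{-1}(zw\Phi_3)]x\cdot u_{1,1}$, and this is where identity (\ref{12.2.h}) of Lemma~\ref{11.3} (together with the vanishing facts from Corollary~\ref{calculation}) should be invoked; rows $4$ and $5$ similarly use (\ref{20.3.5}) and (\ref{20.3.3}) (or combinations thereof). Rows $1$ and $2$ of the first column produce the $\pm\frac1\a(U\Pi)_i$ entries after expanding $(U\Pi)_i$ as above. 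The remaining columns $2$–$6$ of $K_2$ are unchanged by $\theta$ and already match columns $2$–$6$ of $\Br_2$ by inspection of Lemma~\ref{Ziszero}.

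The main obstacle will be the bookkeeping in matching $(U\Pi)_1$, $(U\Pi)_2$ — each a sum of three products of degree-$\le1$ forms, hence a degree-$\le2$ polynomial with both an $x^2$ part, an $x\cdot(\text{linear in }y,z,w)$ part, and a constant-times-quadratic part — against the entries of $K_2$, $K_3$, which mix $x^2c_{i,j}$ terms with $\frac1\a(a_{\cdot,\cdot}d_{\cdot,\cdot})$ scalar-times-$x^2$ terms and with the $d_{i,j}$ themselves (which are again degree-$\le1$ in $x$ with linear-form corrections). The strategy to control this is to separate each claimed identity into its homogeneous components in the grading where $x$ has degree $1$: the ``$x^2$-coefficient'' components are exactly the scalar identities of Lemma~\ref{11.3}, while the lower-degree components reduce to the vanishing statements of Corollary~\ref{calculation} and the elementary substitution $\ell=[x^*\ell]x+[y^*\ell]z+[z^*\ell]y+[w^*\ell]w$ already used in the proof of Lemma~\ref{K_1=Br_1}. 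I would organize the verification column by column (really, for $K_2'$, entry by entry within the first column, plus a one-line inspection for the other columns), citing the specific item of Lemma~\ref{11.3} needed for each, exactly as was done for $\blop_1,\dots,\blop_5$ in Lemma~\ref{K_1=Br_1}. Since this is entirely mechanical once the grading decomposition is set up, I would state that all the required cancellations follow from Corollary~\ref{calculation} and Lemma~\ref{11.3} and leave the detailed expansions to the reader (or to the Macaulay2 verification referenced in the paper).
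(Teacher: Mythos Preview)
Your approach is essentially the paper's: reduce to column one of each matrix (since $\theta$ and $\theta^{-1}$ only perturb that column), expand, isolate the $x^2$-coefficients, and kill those using the scalar identities of Lemma~\ref{11.3} together with Corollary~\ref{calculation}. That is exactly what the paper does.

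One bookkeeping correction: you have the hard and easy rows of the first column of $K_2'$ swapped. In fact the contributions from $\theta$ in rows $3$ and $4$ cancel internally (because $u_{1,1},u_{2,1}$ and $u_{1,2},u_{2,2}$ are proportional to the same $[y^*p^{-1}(\cdots\Phi_3)]$ factors appearing in $\theta$), so $(K_2')_{3,1}$ and $(K_2')_{4,1}$ are ``obviously zero'' as the paper puts it; it is rows $1$, $2$, and $5$ that each leave a nonzero $x^2$-coefficient $\blop_7,\blop_8,\blop_9$, killed respectively by items (\ref{20.3.3}), (\ref{20.3.5}), and (\ref{12.2.h}) of Lemma~\ref{11.3}. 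Similarly for $K_3'$: rows $2$, $3$, and $5$ need (\ref{20.3.5}), (\ref{20.3.3}), and (\ref{12.2.h}), while rows $4$ and $6$ match directly. This does not affect the validity of your plan, only the labelling of which identity handles which entry.
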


\begin{proof}
In light of Lemma~\ref{Ziszero}, it suffices to study column one of each matrix.
We compute that $\a(K_2')_{1,1}$ is equal to
$$\begin{cases}
+[w^2p^{-1}(z^*)](\Phi_3)\cdot [w^*p^{-1}(z^2\Phi_3)]\cdot x^2\\
-[zwp^{-1}(z^*)](\Phi_3)\cdot [w^*p^{-1}(zw\Phi_3)]\cdot x^2\\
+[w^*p^{-1}(zw\Phi_3)]\cdot xw
+[zwp^{-1}(z^*)](\Phi_3)\cdot xz\\
-zw
-\a [w^*p^{-1}(z^*)]\cdot x^2\\
-[w^*p^{-1}(yw\Phi_3)]\cdot [y^* p^{-1}(zw\Phi_3)]\cdot x^2
+[y^* p^{-1}(zw\Phi_3)]\cdot xy\\
\end{cases}$$ 
and $\a(\Br_2)_{1,1}$ is equal to
$$ \begin{cases}
-[y^*p^{-1}(wz\Phi_3)]\cdot [y^*p^{-1}(y^2\Phi_3)]\cdot x^2
+[y^*p^{-1}(wz\Phi_3)]\cdot [w^*p^{-1}(wy\Phi_3)]\cdot x^2\\
-[z^*p^{-1}(zw\Phi_3)]\cdot [y^*p^{-1}(yz\Phi_3)]\cdot x^2
-zw\\
+[y^*p^{-1}(yz\Phi_3)]\cdot [y^*p^{-1}(yw\Phi_3)]\cdot x^2
-[w^*p^{-1}(wz\Phi_3)]\cdot [y^*p^{-1}(yw\Phi_3)]\cdot x^2\\
+[y^*p^{-1}(wz\Phi_3)]\cdot xy
+[z^*p^{-1}(zw\Phi_3)]\cdot xz\\
+[w^*p^{-1}(wz\Phi_3)]\cdot xw.
\end{cases}$$
Thus, $\a(K_2')_{1,1}- \a(\Br_2)_{1,1}=\blop_7\cdot x^2$ for 
\begin{equation}\notag 
\blop_7=\begin{cases}
+[w^2p^{-1}(z^*)](\Phi_3)\cdot [w^*p^{-1}(z^2\Phi_3)]
-[zwp^{-1}(z^*)](\Phi_3)\cdot [w^*p^{-1}(zw\Phi_3)]\\
-\a [w^*p^{-1}(z^*)]
+ [y^*p^{-1}(wz\Phi_3)]\cdot [y^*p^{-1}(y^2\Phi_3)]\\
+ [z^*p^{-1}(zw\Phi_3)]\cdot [y^*p^{-1}(yz\Phi_3)]
- [y^*p^{-1}(yz\Phi_3)]\cdot [y^*p^{-1}(yw\Phi_3)]\\
+ [w^*p^{-1}(wz\Phi_3)]\cdot [y^*p^{-1}(yw\Phi_3)]
-2[w^*p^{-1}(yw\Phi_3)]\cdot [y^* p^{-1}(zw\Phi_3)].
\end{cases}\end{equation}
Apply Lemma~\ref{11.3}.(\ref{20.3.3}) to see that  $\blop_7=0$. 

In a similar manner, one computes that $$\a(K_2')_{2,1}-\a(\Br_2)_{2,1}=\blop_8\cdot x^2\quad \text{and}\quad \a(K_2')_{5,1}=\blop_9\cdot x^2.$$ 
The constants  $\blop_8$ and $\blop_9$ are zero by items   
(\ref{20.3.5}) and (\ref{12.2.h}), respectively, of Lemma~\ref{11.3} 
The entries $\a(K_2')_{3,1}$ and $\a(K_2')_{4,1}$ are obviously zero.

One computes that 
\begin{align*}
&\a (K_3')_{2,1}-\a(\Br_3)_{2,1}= 
\blop_{10} \cdot x^2,\\
&\a(K_3')_{3,1}-\a(\Br_3)_{3,1}=\blop_{11} \cdot x^2,\\
&\a(K_3')_{4,1}-\a(\Br_3)_{4,1}=0,\\
&\a(K_3')_{5,1}-\a(\Br_3)_{5,1}=\blop_{12}\cdot x^2,\text{ and}\\
&\a(K_3')_{6,1}-\a(\Br_3)_{6,1}=0.
\end{align*}
Apply items  (\ref{20.3.5}), (\ref{20.3.3}), and (\ref{12.2.h}) of  Lemma~\ref{11.3} to see that $\blop_{10}=0$,  $\blop_{11}=0$, and 
$\blop_{12}=0$, respectively. 
\end{proof}

\begin{observation} 
\label{12.7}
The complex $K$ of {\rm(\ref{unknown})} is isomorphic to
$$0\to P^2\xrightarrow{B_3}P^6\xrightarrow{B_2}P^5\xrightarrow{B_1}P,$$
for
$$B_1=\left[\begin{array}{ccccc}
(U\Pi)_1& (U\Pi)_2&  -w_1\pi_1& -w_1\pi_2& -w_1\pi_3\end{array}\right].$$$$B_2=\bmatrix 
(U\Pi)_2&0&-w_1&0&0&0\\
-(U\Pi)_1&w_1&0&0&0&0\\
0&u_{2,1}&-u_{1,1}&0&-\pi_3&\pi_2\\
0&u_{2,2}&-u_{1,2}&\pi_3&0&-\pi_1\\
0&u_{2,3}&-u_{1,3}&-\pi_2&\pi_1&0\endbmatrix\ \text{and}$$
$$B_3=\bmatrix w_1&0\\(U\Pi)_1&0\\(U\Pi)_2&0\\\Delta_1&\pi_1\\\Delta_2&\pi_2\\\Delta_3&\pi_3\endbmatrix.$$
\end{observation}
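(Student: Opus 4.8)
The plan is to deduce Observation~\ref{12.7} directly from Proposition~\ref{12.6}, which has already done the substantive work. Proposition~\ref{12.6} identifies the complex $K$ of (\ref{unknown}) with the complex $K'$ whose differentials are $K_1=\Br_1$, $K_2'=\Br_2$, and $K_3'=\Br_3$; the isomorphism $K'\to K$ is the identity in homological degrees $0$, $1$, $3$ and the invertible matrix $\theta$ in homological degree $2$. So it suffices to produce an isomorphism from $K'$ to the complex $0\to P^2\xrightarrow{B_3}P^6\xrightarrow{B_2}P^5\xrightarrow{B_1}P$ of Observation~\ref{12.7}, and comparing the two explicit lists of matrices shows that such an isomorphism is given simply by invertible diagonal matrices that rescale rows and columns by units of $\kk$.

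Concretely, I would take the identity on $P$ in homological degree $0$; the matrix $-I_5$ on $P^5$ in homological degree $1$ (legitimate because $B_1=-\Br_1$, as one sees by comparing the two row vectors); the diagonal matrix $\operatorname{diag}(-\a,-1,1,-1,-1,-1)$ on $P^6$ in homological degree $2$; and the diagonal matrix $\operatorname{diag}(\a,1)$ on $P^2$ in homological degree $3$. All four are invertible since $\a$ is a unit of $\kk$. One then verifies, one column at a time from the displayed matrices, the three square-commutativity identities $B_1=\Br_1\cdot(-I_5)$, $(-I_5)\cdot B_2=\Br_2\cdot\operatorname{diag}(-\a,-1,1,-1,-1,-1)$, and $\operatorname{diag}(-\a,-1,1,-1,-1,-1)\cdot B_3=\Br_3\cdot\operatorname{diag}(\a,1)$. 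These identities say exactly that the diagonal matrices assemble into a chain map from the $B$-complex to $K'$; being bijective in each degree, that chain map is an isomorphism of complexes. Composing it with the isomorphism of Proposition~\ref{12.6} yields the asserted isomorphism between $K$ and the complex $0\to P^2\xrightarrow{B_3}P^6\xrightarrow{B_2}P^5\xrightarrow{B_1}P$. (That the latter is a complex need not be checked separately: it is degreewise isomorphic to $K'$, which is a complex because $K$ is, by the discussion following Lemma~\ref{Ziszero}.)

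The whole argument is mechanical; the only points needing attention are keeping the signs straight and confirming that the rescaling forced on one free module by matching one differential agrees with the one forced by matching the adjacent differential. This consistency is automatic, since the diagonal matrices are pinned down successively starting from homological degree $0$ and the required agreement is precisely the commutativity of the next square. I therefore do not expect any genuine obstacle.
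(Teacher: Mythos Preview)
Your proposal is correct and follows essentially the same approach as the paper: invoke Proposition~\ref{12.6} to replace $K$ by $K'$ with differentials $\Br_1,\Br_2,\Br_3$, and then exhibit an explicit isomorphism between $K'$ and the $B$-complex via invertible diagonal matrices. The paper's diagonal matrices are $-1$, $I_5$, $\operatorname{diag}(\tfrac1{\a},1,-1,1,1,1)$, $\operatorname{diag}(-\tfrac1{\a},-1)$ (running in the direction $K'\to B$), which is precisely the negative of the inverse of your chain map; the verifications are the same column-by-column checks you describe.
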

\begin{proof} The complex $K$ is isomorphic to the complex $K'$ of 
Proposition~\ref{12.6}. Furthermore,
$$\xymatrix{
0\ar[r]& P^2\ar[r]^{K_3'}\ar[d]^{\beta_3}&P^6\ar[r]^{K_2'}\ar[d]^{\beta_2}&P^5\ar[r]^{K_1}\ar[d]^{1}&P\ar[d]^{-1}\\
0\ar[r]& P^2\ar[r]^{B_3}&P^6\ar[r]^{B_2}&P^5\ar[r]^{B_1}&P,\\
}$$
with
$$\beta_3=\bmatrix -\frac1{\a}&0\\0&-1\endbmatrix\quad\text{and}\quad \beta_2=\bmatrix
\frac1{\a}&0&0&0&0&0\\
0&1&0&0&0&0\\
0&0&-1&0&0&0\\
0&0&0&1&0&0\\
0&0&0&0&1&0\\
0&0&0&0&0&1\endbmatrix,$$
is an isomorphism of complexes.
\end{proof}

\begin{proposition}
\label{Bres}  
Let $P$ be a commutative Noetherian ring, and let $w_1$,  $z_2$, 
$u_{i,j}$ and  $\pi_j$, with $1\le i\le 2$ and $1\le j\le 3$, be elements of $P$. Let $U$ and $\Pi$ be the matrices
$$U=\bmatrix u_{1,1}&u_{1,2}&u_{1,3}\\u_{2,1}&u_{2,2}&u_{2,3}\endbmatrix\quad\text{and}\quad \Pi=\bmatrix \pi_1\\\pi_2\\\pi_3\endbmatrix.$$For $1\le k\le 3$, let $\Delta_k$ be $(-1)^{k+1}$ times the determinant of $U$ with column $k$ deleted.
Let $\B$ be the maps and modules 
$$0\to P^2\xrightarrow{b_3} P^6\xrightarrow{b_2}P^5\xrightarrow{b_1} P,$$
where

\begingroup\allowdisplaybreaks
\begin{align*}
b_3={}&\left[\begin{array}{cc}
w_1&z_2\\
(U\Pi)_1&0\\
(U\Pi)_2&0\\
\Delta_1&\pi_1\\
\Delta_2&\pi_2\\
\Delta_3&\pi_3\\\end{array}\right]\ ,\\
b_2 ={}&\left[\begin{array}{cccccc}
(U\Pi)_2&0&-w_1&-z_2u_{21}&-z_2u_{22}&-z_2u_{23}\\
-(U\Pi)_1&w_1&0&z_2u_{11}&z_2u_{12}&z_2u_{13}\\
0&u_{21}&-u_{11}&0&-\pi_3&\pi_2\\
0&u_{22}&-u_{12}&\pi_3&0&-\pi_1\\
0&u_{23}&-u_{13}&-\pi_2&\pi_1&0\\
\end{array}\right]\ , 
\text{ and}\\ 
b_1={}&\left[\begin{array}{ccccc}
(U\Pi)_1& (U\Pi)_2&  z_2 \Delta_1-w_1\pi_1& 
z_2 \Delta_2-w_1\pi_2& 
z_2\Delta_3-w_1\pi_3\end{array}\right]\ \ .\end{align*}\endgroup
Then $\B$ is a complex and if $3\le \grade I_1(b_1)$, then $\B$ is a resolution of a perfect $P$-module.\end{proposition}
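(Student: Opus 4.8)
The plan is to recognize $\B$ as, up to relabeling, the Anne Brown resolution recorded in \cite[Prop.~5.8]{K22} (see also \cite[Prop.~3.6]{B87}); on that reading the statement follows at once by citing \cite[Prop.~5.8]{K22}. For a self-contained argument I would proceed in two stages: first verify that $\B$ is a complex, and then deduce acyclicity from the grade hypothesis by the usual specialization device.

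To check that $\B$ is a complex I would multiply out $b_1b_2$ and $b_2b_3$ entry by entry. Writing $\Delta_1,\Delta_2,\Delta_3$ for the signed $2\times 2$ minors of $U$ and recalling $(U\Pi)_i=\sum_j u_{i,j}\pi_j$, every resulting entry collapses to $0$ by elementary determinantal identities, principally $\sum_k\Delta_k u_{i,k}=0$ for $i=1,2$ (Laplace expansion of a determinant with a repeated row) together with the relations among the $\Delta_k$ and the $(U\Pi)_i$ obtained from expanding $2\times 2$ determinants. (One can also avoid this computation altogether: that the generic form of $\B$ is a complex is part of the content of \cite{B87,K22}, and the complex property survives the base change used below.)

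For acyclicity I would set $\widetilde P=\Z[W_1,Z_2,\{U_{i,j}\},\{\Pi_j\}]$, let $\widetilde\B$ be the complex built exactly as $\B$ but over $\widetilde P$ with these indeterminates in place of $w_1,z_2,u_{i,j},\pi_j$, and let $f\colon\widetilde P\to P$ be the ring homomorphism sending each indeterminate to the corresponding element of $P$. Since every entry of every $b_i$ is an integer polynomial in $w_1,z_2,u_{i,j},\pi_j$, one has $\B=\widetilde\B\otimes_{\widetilde P}P$ and $I_r(b_i)=f\bigl(I_r(\widetilde b_i)\bigr)P$ for all $r$ and $i$. By \cite[Prop.~5.8]{K22} (equivalently \cite[Prop.~3.6]{B87}) the complex $\widetilde\B$ is acyclic and resolves $\widetilde P/I_1(\widetilde b_1)$, which is perfect of grade $3$; hence, just as in the proof of Lemma~\ref{24.4}.(\ref{24.4.e}) (using \cite{Ho75}), the ideals $I_1(\widetilde b_1)$, $I_4(\widetilde b_2)$, and $I_2(\widetilde b_3)$ all have the same radical. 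Applying $f$ gives $I_1(b_1)\subseteq\sqrt{I_4(b_2)}$ and $I_1(b_1)\subseteq\sqrt{I_2(b_3)}$, so the hypothesis $3\le\grade I_1(b_1)$ forces $\grade I_4(b_2)\ge 3$ and $\grade I_2(b_3)\ge 3$. The generic ranks of $b_1,b_2,b_3$ are $1,4,2$ and satisfy $\rank b_i+\rank b_{i+1}=\rank F_i$; so the Buchsbaum--Eisenbud acyclicity criterion (see, e.g., \cite{Eis}) applies to $\B=\widetilde\B\otimes_{\widetilde P}P$ and shows that $\B$ is a resolution. Its target $P/I_1(b_1)$ then has $\pd_P\bigl(P/I_1(b_1)\bigr)=3=\grade I_1(b_1)$, so it is a perfect $P$-module, as claimed.

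The hard part will be the bookkeeping of the first stage: checking that the matrices $b_1,b_2,b_3$ precisely as written --- with their signs and the exact placement of $w_1$ and $z_2$ --- satisfy $b_1b_2=0=b_2b_3$ and agree with the Brown resolution of \cite[Prop.~5.8]{K22}. Everything after that is formal, and in fact mirrors the specialization argument already carried out for Lemma~\ref{24.4}.(\ref{24.4.e}).
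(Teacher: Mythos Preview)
Your proposal is correct and takes essentially the same approach as the paper: the paper's entire proof is the single line ``See \cite[Prop.~3.6]{B87} or \cite[Prop.~5.8]{K22},'' and your first paragraph does exactly this. The self-contained specialization argument you sketch afterward is extra detail the paper omits; it is sound, and indeed mirrors the strategy the paper uses elsewhere (Lemma~\ref{24.4}.(\ref{24.4.e})).
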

 \begin{proof}See 
\cite[Prop.~3.6]{B87} or  
\cite[Prop.~5.8]{K22}. \end{proof} 
\begin{proposition}
\label{perfect} Adopt the hypotheses of Theorem~{\rm\ref{7.2}}. 
Then the bottom row of {\rm(\ref{moc})} is a homogeneous minimal resolution of the perfect module  $P/\im K_1$.
\end{proposition}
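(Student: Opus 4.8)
The plan is to recognize the bottom row $K$ of (\ref{moc}) as an instance of Anne Brown's resolution and then to verify its one nontrivial hypothesis.

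First I would collect the identifications already made in this section. Observation~\ref{12.7} exhibits an explicit isomorphism of complexes between $K$ of (\ref{unknown}) and
$$0\to P^2\xrightarrow{B_3}P^6\xrightarrow{B_2}P^5\xrightarrow{B_1}P,$$
and comparison with Proposition~\ref{Bres} shows that this is exactly the complex $\B$ of that proposition, built with $z_2=0$ and with $w_1$, $u_{i,j}$, $\pi_j$ the elements named in Definition~\ref{12.3}. In particular $K$ is a complex (as already noted just before (\ref{unknown})), $\im K_1=\im B_1=I_1(b_1)$, and $H_0(K)=P/\im K_1$. Hence, by Proposition~\ref{Bres}, it suffices to prove $\grade I_1(b_1)\ge 3$ in order to conclude that $K$ is a resolution of the \emph{perfect} module $P/\im K_1$. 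The resolution is homogeneous because the matrices of $K$ are blocks of the matrices of the homogeneous resolution $E'$ of Definition~\ref{11.4}, and it is minimal because, reading off Observation~\ref{12.7}, every entry of $B_1$ is a homogeneous quadric while every entry of $B_2$ and of $B_3$ is a linear form or a quadric; all of these lie in the homogeneous maximal ideal $(x,y,z,w)$.

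So the heart of the matter is the estimate $\grade I_1(b_1)\ge 3$. Since $P$ is a polynomial ring, $\grade \im K_1=4-\dim P/\im K_1$, so this is equivalent to $\dim P/\im K_1\le 1$. From the shape of $B_1$ one has
$$V(\im K_1)=\Bigl(V(w_1)\cap V\bigl((U\Pi)_1,(U\Pi)_2\bigr)\Bigr)\cup V(\pi_1,\pi_2,\pi_3).$$
By Definition~\ref{12.3} each of $\pi_1,\pi_2,\pi_3$ is a scalar multiple of $x$ minus one of $y,z,w$, so $\pi_1,\pi_2,\pi_3$ are linearly independent linear forms and $V(\pi_1,\pi_2,\pi_3)$ is a line; likewise $w_1$ is a nonzero linear form, so $V(w_1)$ is a hyperplane on which $(U\Pi)_1$ and $(U\Pi)_2$ restrict to two quadrics. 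I would then check, using the explicit entries of $U$ and $\Pi$ from Definition~\ref{12.3} together with the identities of Lemma~\ref{11.3} and Corollary~\ref{calculation}, that these two restricted quadrics have no common one-dimensional component, so that $V(w_1)\cap V((U\Pi)_1,(U\Pi)_2)$ has dimension at most $1$. This gives $\dim V(\im K_1)\le 1$, hence $\grade I_1(b_1)\ge 3$, and Proposition~\ref{Bres} then yields the assertion.

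The main obstacle is precisely this last dimension count: everything else is bookkeeping against the matrices written down in Definition~\ref{11.4}, Lemma~\ref{Ziszero}, Lemma~\ref{K_1=Br_1}, and Proposition~\ref{12.6}. If the explicit geometric argument turns out to be awkward, an alternative route to $\grade I_1(K_1)\ge 3$ is to use that $E'$ is the \emph{minimal} free resolution of the Artinian algebra $\AA$: for every prime $\mathfrak p\neq(x,y,z,w)$ one has $\AA_{\mathfrak p}=0$, so $(E')_{\mathfrak p}$ is split exact; since $(E')_{\mathfrak p}$ is the mapping cone of the localization of the chain map (\ref{moc}), that localized map is a quasi-isomorphism, and combining this with the Buchsbaum--Eisenbud exactness criterion for the three-term complex $K$ (whose rank conditions $\rank K_3=2$, $\rank K_2=4$, $\rank K_1=1$ are visible from the block forms of Lemma~\ref{Ziszero}) pins down the required grade.
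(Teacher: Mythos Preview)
Your framework matches the paper's exactly: identify $K$ with the Brown complex of Proposition~\ref{Bres} (with $z_2=0$) via Observation~\ref{12.7}, reduce everything to the single inequality $\grade I_1(K_1)\ge 3$, and read off minimality and homogeneity from the explicit block form. The divergence is entirely in how that grade estimate is obtained.

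The paper does not attempt either of your routes. It argues by a one-line containment: $\im K_1+(x)$ contains $\im K_1+(xp^{-1}(y^*),xp^{-1}(z^*))$, which the paper identifies with the defining ideal of $\AA$; since that ideal has grade four, $\grade(\im K_1+(x))_{\maxm}\ge 4$ and hence $\grade(\im K_1)_{\maxm}\ge 3$. No analysis of $V(w_1)$ and no Buchsbaum--Eisenbud bookkeeping enters.

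By contrast, your primary route is left as ``I would then check,'' and that is the gap. The identities in Lemma~\ref{11.3} and Corollary~\ref{calculation} were assembled to align $K$ with the Brown complex, not to show that the restricted quadrics $(U\Pi)_1,(U\Pi)_2$ share no component on $V(w_1)$; you give no indication how they would close that argument. Your fallback is also incomplete: split exactness of $(E')_{\mathfrak p}$ for $\mathfrak p\ne\maxm$ makes the localized chain map a quasi-isomorphism, but that by itself does not yield the depth conditions $\grade I_{r_j}(K_j)\ge j$ required by the acyclicity criterion, and in particular you have not explained how to extract $\grade I_1(K_1)\ge 3$ from it. The paper's containment trick is precisely the missing ingredient that makes the grade estimate short.
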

\begin{proof}
Observation~\ref{12.7} shows that the complex $K$ of (\ref{unknown}), which is the bottom row of (\ref{moc}), is the Anne Brown complex of Proposition~\ref{Bres} with $z_2=0$.
 In light of Proposition~\ref{Bres}, it suffices to show that $3\le \grade \im K_1$. The ideal $K_1$ is homogeneous; so it suffices to prove that $3\le \grade (\im K_1)_\maxm$, where $\maxm$ is the maximal homogeneous  ideal of $P$. The ideal $\im K_1+(x)$ contains $\im K_1+(xp^{-1}(y^*),xp^{-1}(z^*))$, which is equal to 
 the defining ideal of $\AA$. The defining ideal of $\AA$ 
 has grade four; hence $4\le \grade(\im K_1+(x))_\maxm$ and $3\le \grade (\im K_1)_\maxm$.  \end{proof}

\section{The case when the rank of $\SM$ is at least two.}

This section is devoted to the proof of the following Theorem. 

\begin{theorem}
\label{6.1} Adopt the data of {\rm \ref{2.1}} and the matrix $\SM$ of {\rm(\ref{SM})}. If $\SM$ has rank at least two, then $\AA$ is a hypersurface section in the sense that the defining ideal of $\AA$ is equal to $(f)+J$ for some homogeneous element $f$ of $P$ and some homogeneous ideal $J$ of $P$ with 
\begin{enumerate}[\rm (a)] 
\item\label{26.1.b} $P/J$ is Gorenstein ring of codimension  three, and
\item\label{26.1.c} $f$ is regular on $P/J$. 
\end{enumerate}
\end{theorem}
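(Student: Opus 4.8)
The plan is to display the minimal homogeneous resolution of $\AA$ as the mapping cone of multiplication by a homogeneous element $f$ of $P$ on a free resolution $G$ of a codimension three Gorenstein quotient $P/J$, in the spirit of Theorem~\ref{9.1} and Example~\ref{7.4}; the assertions (\ref{26.1.b}) and (\ref{26.1.c}) then fall out of the mapping cone together with Observation~\ref{dec23}. The starting point is Theorem~\ref{6.3} and Observation~\ref{8.2}: the hypothesis $2\le\rank\SM$ forces $I_2(\SM)=P$, hence $I_1(\C_3)=P$, the complex $\C$ of~(\ref{6.4.3}) is split exact, and the constant matrix $B$ of Theorems~\ref{resol} and~\ref{a res} has rank exactly three; in particular $\ann\Phi_3$ has six minimal generators, uniformly whether $\rank\SM$ equals two or three.

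First I would use the degree zero, rank three block $B$ to cut the resolution $(\frak F, \widetilde{\frak f}\,)$ of Theorem~\ref{a res} down to a minimal one. Because $B$ is a $3\times 8$ matrix over $\kk$ of rank three, there are invertible matrices over $\kk$ — which, being constant, define graded changes of basis on $\frak F_1$ and $\frak F_2$ — after which the block $B$ becomes $\bmatrix I_3&0_{3\times 5}\endbmatrix$; performing the dual changes of basis on $\frak F_2$ and $\frak F_3$, which is legitimate since $(\frak F, \widetilde{\frak f}\,)$ is self-dual (as the matrices in Theorem~\ref{a res} display), one then clears the corresponding rows and columns of $\widetilde{\frak f}_1$, $\widetilde{\frak f}_3$ and $\widetilde{\frak f}_4$. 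This splits off a trivial subcomplex $0\to P^{3}\xrightarrow{\ \id\ }P^{3}\to 0$ in homological degrees $2,1$ and, dually, one in degrees $3,2$, leaving a minimal homogeneous resolution
$$E:\quad 0\to P\to P^{6}\to P^{10}\to P^{6}\to P$$
of $\AA$, still self-dual. The summand $P\t U_0^*$ of $\frak F_1$ is absorbed entirely against $B$, and dually the summand $P\t U_0$ of $\frak F_3$; tracking which blocks survive — on the $P\t\frac{U_0\t U_0^*}{\ev^*(1)}$ side the kernel of the split surjection $\C_1$, and on the other side $\mD$, $\mf_{1,2}$ and their transposes — and using the relations forced by $\widetilde{\frak f}$ being a complex, one sees that $E$ is the mapping cone of a homogeneous map of complexes $\phi\colon G^*\to G$ covering the surjection $P\to\AA$, where
$$G:\quad 0\to P\xrightarrow{g_3}P^{5}\xrightarrow{g_2}P^{5}\xrightarrow{g_1}P$$
is the surviving ``bottom half'' and $G^*=\Hom_P(G,P)$ (suitably twisted) its dual. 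Writing $f\in P$ for the element representing the homological degree one component of $\phi$ and $J$ for $\im g_1$, the cone structure gives $\ann\Phi_3=J+(f)$.

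Next I would show $G$ is acyclic and $\grade J=3$. As $\ann\Phi_3=J+(f)$ is the defining ideal of an Artinian ring it has grade four, so $\grade J\ge 3$; once $G$ is a resolution, $\pd_P(P/J)\le 3$ gives $\grade J=3$ and $P/J$ perfect. To prove $G$ is a resolution I would follow Observation~\ref{x=0} and Lemma~\ref{9.3}: reduction modulo $x$ annihilates the blocks of $G$ divisible by $x$ or $x^{2}$ and identifies $G\t_P\Pbar$ with (a direct summand of) the skeleton considered there, whose homology in positive degrees vanishes by the Eagon--Northcott computation of Observation~\ref{x=0}; Lemma~\ref{9.3} then lifts acyclicity to $G$. (Alternatively, verify the Buchsbaum--Eisenbud grade conditions for $G$ directly, noting that after adjoining $x$ and the entries of $xp^{-1}U_0^*$ the relevant ideals of minors of $g_1,g_2,g_3$ contain $\ann\Phi_3$, which has grade four.) Now $P/J$ is a perfect $P$-module of grade three whose minimal resolution $G$ has last Betti number $\rank G_3=1$; therefore $P/J$ is Gorenstein of codimension three, which is (\ref{26.1.b}). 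Finally $P/J$ is Cohen--Macaulay of codimension three, so every associated prime of $P/J$ has codimension three, whereas $J+(f)=\ann\Phi_3$ has codimension four; hence $f$ avoids every associated prime of $P/J$, i.e.\ $f$ is a nonzerodivisor on $P/J$, which is (\ref{26.1.c}). Equivalently, one may invoke Observation~\ref{dec23} with $g=3$, $K=\ann\Phi_3$ and $\theta\colon\Ext^3_P(P/J,P)\cong P/J\xrightarrow{\,f\,}P/J$.

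The main obstacle is the second paragraph: arranging the self-dual change of basis so that, after the $B$-block is cancelled, the leftover differentials assemble visibly into a mapping cone $\phi\colon G^*\to G$ — that is, pinning down $g_1,g_2,g_3$ and $f$ and checking that the cross terms vanish. This is the six-generated analogue of the matrix bookkeeping in Definition~\ref{11.4} and Lemmas~\ref{Ziszero}--\ref{K_1=Br_1}; as there, the relations coming from $\widetilde{\frak f}_1\widetilde{\frak f}_2=0$ and $\widetilde{\frak f}_2\widetilde{\frak f}_3=0$ (the analogue of the identities of Section~\ref{calculations}) are what make the leftover blocks collapse, and here the bookkeeping should be lighter than in the rank one case because the controlling complex $\C$ is now split exact rather than merely acyclic.
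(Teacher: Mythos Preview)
Your overall strategy — split the non-minimal resolution using $\rank B=3$, then recognize the minimal resolution $E$ as a mapping cone $\phi\colon G^*\to G$ — matches the paper's target, but the paper reaches it by a genuinely different and sharper route, and your proposal has a real gap at exactly the step you flag as the ``main obstacle.''

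The gap is this. After cancellation the six minimal generators are the entries of $\mf_{1,2}$, and the five linear syzygies are $\mD|_{\ker\mB}$ (Observation~\ref{13.2}). For $E$ to be a mapping cone you must find a $\kk$-basis $g_1,\dots,g_6$ of these generators such that the five linear syzygies involve only $g_1,\dots,g_5$; equivalently, the $6\times 5$ linear-syzygy matrix admits a \emph{constant} left null vector. Self-duality of $E$ and the Betti table~(\ref{12.2.1}) do not force this; nor do the relations $\widetilde{\frak f}_1\widetilde{\frak f}_2=0$, $\widetilde{\frak f}_2\widetilde{\frak f}_3=0$ without further input. Knowing $\delta\in\ker\mathfrak L$ (which you can get from $\mathfrak C_2\circ\mathfrak C_3=0$) only says the six \emph{columns} of the $6\times 6$ matrix $L$ of $\mathfrak L:=\mD\circ\mathfrak C_2$ are $\kk$-dependent; it does not produce a dependence among the \emph{rows}. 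That extra row-dependence is exactly what the paper supplies by proving that $\mathfrak L$ is an \emph{alternating} map (Lemma~\ref{alternating}): then $L^{\mathrm T}=-L$, so $L\delta=0$ gives $\delta^{\mathrm T}L=0$, and after conjugating by an invertible $\theta$ with last column $\delta$ one obtains $\theta^{\mathrm T}L\theta=\begin{bmatrix}X&0\\0&0\end{bmatrix}$ with $X$ a $5\times5$ alternating matrix of linear forms. Now $f_{1,2}(\theta^{\mathrm T})^{-1}=[d_1\mid g_6]$ has $d_1X=0$ and no linear syzygy touches $g_6$; Observation~\ref{Oct5} then yields both conclusions of Theorem~\ref{6.1} directly (parts~(b),(c) there need only the existence of $X$, not even its alternating property). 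The computation in Lemma~\ref{alternating} is the heart of the argument and is not a consequence of the bookkeeping you propose.

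Two smaller points. First, your reduction-mod-$x$ plan for the acyclicity of $G$ is shaky: once you have made the $\kk$-linear changes of basis needed to expose the would-be cone, the differentials of $G$ need not specialize to a clean summand of the skeleton of Observation~\ref{x=0}. The paper avoids this entirely: Observation~\ref{Oct5}(d) deduces the Buchsbaum--Eisenbud resolution of $P/J$ straight from the alternating $X$, and parts~(b),(c) give regularity of $g_6$ and the Gorenstein property without ever checking acyclicity of $G$ by hand. Second, your final paragraph invoking Observation~\ref{dec23} and the codimension argument is fine once the cone structure is in hand — but it is the cone structure itself, via Lemma~\ref{alternating}, that is the missing idea.
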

\begin{Remark}
Theorem~\ref{6.1} is implemented in the Macaulay2 script ``minres6''; see Section~\ref{14.A.5}. \end{Remark}
The proof of Theorem~\ref{6.1} is given in \ref{Proof-of-step-1}. The explicit matrices in the resolution of $\AA$ are also given in \ref{Proof-of-step-1}. Furthermore, it is shown in \ref{Proof-of-step-1} 
 that the defining ideal of $\AA$ is the generalized sum of two linked codimension three perfect ideals. 

Some of the tools of Section~\ref{section SM} are used in the proof of Theorem~\ref{6.1}. In particular, recall the homomorphism $$\Gamma:U_0^*\to U_0$$ of Definition~\ref{8.1}, the complex  
\begin{equation}
\label{13.0.1}
\mathfrak C:\quad 0\to \kk \xrightarrow{\ \ \ \ \mathfrak C_3\ \ \ } D_2U_0^*\xrightarrow{\ \ \ \mathfrak C_2\ \ \ } \frac{U_0\t U_0^*}{\ev^*(1)}\xrightarrow{\ \ \  \mathfrak C_1\ \ \ } {\ts\bw^2}U_0\to 0\end{equation}
from (\ref{6.4.3}), and the element $\delta$ of $D_2U_0^*$ with \begin{equation}\label{13.0.3/2}\Gamma^{\vee}(u_1)=u_1(\delta),\quad\text{for $u_1\in U_0$},\end{equation} from Definition~\ref{24.2}. (See (\ref{delta-mat}) for a concrete version of $\delta$.)   Recall, also,  from Observation~\ref{8.2} and Lemma~\ref{24.4}, that the hypothesis that $2\le \rank \SM$ guarantees that
\begin{equation}\label{kk}I_2(\SM)=I_1(\mathfrak C_3)=I_5(\mathfrak C_2)=I_3(\mathfrak C_1)=I_3(\mB)=\kk.\end{equation}
\begin{observation}
\label{13.2}
 The hypotheses of Theorem~{\rm\ref{6.1}} guarantee that the minimal homogeneous resolution of $\AA$ has the following graded Betti numbers{\rm:}
\begin{equation}
\label{12.2.1}0\to P(-7)\to P(-5)^6\to \begin{matrix} P(-4)^5\\\p\\P(-3)^5\end{matrix} \to P(-2)^6\to P.\end{equation} Furthermore, the linear syzygies of the defining ideal of $\AA$ are given by the restriction of $\mD$ to the kernel of $\mB$.\end{observation}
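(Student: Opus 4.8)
The plan is to derive the minimal resolution of $\AA$ by pruning the resolution $(\frak F,\widetilde{\frak f})$ of Theorem~\ref{a res}, which by that theorem fails to be minimal exactly through the block $B$ of $\widetilde{\frak f}_2$, with $9-\rank B$ the minimal number of generators of $\ann\Phi_3$. By Theorem~\ref{6.3} the hypothesis $2\le\rank\SM$ forces $\rank B=3$; equivalently the constant matrix $\mB\colon P\t\frac{U_0\t U_0^*}{\ev^*(1)}\to P\t U_0^*$ of Theorem~\ref{resol} is surjective with five-dimensional kernel $\ker\mB$, and dually (Remark~\ref{4.4+}) $\mB^*\colon P\t U_0\to P\t\ker(\ev)$ is a split injection. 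So I would first fix $\kk$-splittings $\frac{U_0\t U_0^*}{\ev^*(1)}=\ker\mB\p V$ with $\mB|_V\colon V\xrightarrow{\ \cong\ }U_0^*$, together with a splitting $\ker(\ev)=\im\mB^*\p W$.

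In the resulting constant change of basis, the block form of Theorem~\ref{a res} exhibits two split acyclic subcomplexes of $(\frak F,\widetilde{\frak f})$: the restriction of $\widetilde{\frak f}_2$ gives an isomorphism $P\t V\xrightarrow{\ \cong\ }P\t U_0^*$ in homological degrees $2$ and $1$, and the restriction of $\widetilde{\frak f}_3$ gives an isomorphism $P\t U_0\xrightarrow{\ \cong\ }P\t\im\mB^*$ in homological degrees $3$ and $2$. Because the first involves the summand $P(-3)^8$ of $\frak F_2$ and the second the summand $P(-4)^8$, the two pieces sit on complementary direct summands of $\frak F_2$ and can be removed simultaneously. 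Splitting them off leaves a homogeneous resolution of $\AA$ with free modules $P$, $P\t\Sym_2U_0=P(-2)^6$, $(P\t W)\p(P\t\ker\mB)=P(-4)^5\p P(-3)^5$, $P\t D_2U_0^*=P(-5)^6$, and $P(-7)$; every block of its differentials is, up to a splicing correction whose entries have positive degree, a restriction of one of $\mD$, $\mf_{1,2}$, $x^2\mC$, $\mD^*$, $x^2\mC^*$, $\mf_{1,1}^*$, $\mf_{1,2}^*$, so by the degrees recorded after Theorem~\ref{resol} no entry of any differential is constant. Hence the pruned complex is minimal and its graded Betti numbers are exactly those displayed in (\ref{12.2.1}).

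For the final assertion I would trace the effect of the $\mB$-pruning on the columns of $\widetilde{\frak f}_2$ indexed by $\frac{U_0\t U_0^*}{\ev^*(1)}$, which carry $\mB$ into the $U_0^*$-rows and $\mD$ into the $\Sym_2U_0$-rows: only the columns lying in $P\t V$ are used to eliminate the $P\t U_0^*$ summand of $\frak F_1$, and on $P\t\ker\mB$ the $\mB$-component of $\widetilde{\frak f}_2$ already vanishes, so no column operation touches the $\ker\mB$-columns. Therefore the restriction of $\widetilde{\frak f}_2$ to $P\t\ker\mB$ passes into the minimal resolution unchanged, as the map $P\t\ker\mB\to P\t\Sym_2U_0=P(-2)^6$ induced by $\mD|_{\ker\mB}$; since $P\t\ker\mB=P(-3)^5$ is precisely the degree-$3$ (linear) part of the first syzygy module and the summand $P\t\Sym_2U_0$ carries the minimal generators $u_{2,0}-xp^{-1}(u_{2,0}\Phi_3)$ of $\ann\Phi_3$, this is the claim. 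I expect the main work to be the bookkeeping in the middle paragraph — verifying that the two split acyclic subcomplexes really are carried on complementary summands of $\frak F_2$, and that after removing both there is no residual constant entry — which is routine given the explicit matrices of Example~\ref{record} but must be checked rather than asserted.
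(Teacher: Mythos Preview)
Your proposal is correct and follows essentially the same approach as the paper's proof: both arguments use $\rank B=3$ (equivalently $I_3(\mB)=\kk$) to split off the two rank-$3$ isomorphism blocks $B$ and $B\transpose$ from the non-minimal resolution $(\frak F,\widetilde{\frak f})$ of Theorem~\ref{a res}, obtaining the displayed Betti numbers, and then trace the pruning to see that the surviving linear syzygies are exactly $\mD|_{\ker\mB}$. Your write-up is more explicit about the splittings and the bookkeeping than the paper's terse argument, but the ideas are identical.
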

\begin{Remark}
The maps $\mf_{1,2}$, $\mB$, and $\mD$ from Theorem~\ref{a res} all appear in the statement or  proof of Observation~\ref{13.2}.
\end{Remark}
\begin{proof}
 Recall
 from (\ref{kk}) that 
$I_3(\mB)=\kk$. 
It follows that the non-minimal resolution 
$\left(\frak F, \widetilde {\frak f}\,\,\right)$ of $\AA$, which is given in Theorem~\ref{a res}, has the form 
$$0\to P(-7)\to \begin{matrix} P(-4)^3\\\p\\P(-5)^6\end{matrix}
\xrightarrow{\bmatrix I_3&*\\ *&*\endbmatrix} \begin{matrix} P(-4)^8\\\p\\P(-3)^8\end{matrix}\xrightarrow{\bmatrix *&I_3\\ *&*\endbmatrix} 
\begin{matrix} P(-3)^3\\\p\\P(-2)^6\end{matrix}\to P.$$
One may split off a split exact subcomplex to obtain a complex with graded Betti numbers given in (\ref{12.2.1}).

 The linear relations on $\mf_{1,2}$ are induced by 
\begin{equation}\label{Dec1}\frac{U_0\t U_0^*}{\ev^*(1)} \xrightarrow{\ \ \ \bmatrix \mB\\\mD\endbmatrix\ \ \ } 
\begin{matrix} P\t U_0^*\\\p\\ P\t \Sym_2U_0.\end{matrix}\end{equation}
The map $\mB$ is a surjection; let $\sigma:P\t U_0^*\to \frac{U_0\t U_0^*}{\ev^*(1)}$ be a splitting map in the sense that $\mB\circ \sigma$ is the identity map on $P\t U_0^*$.
Once the isomorphism $$\im \sigma\xrightarrow{\mB|_{\im\sigma}} P\t U_0^*$$ is split from (\ref{Dec1}) one is left with 
$$\ker \mB\xrightarrow{\mD|_{\ker \mB} } P\t \Sym_2U_0.$$ 
\end{proof}

Observation~\ref{Oct5} describes our strategy for proving Theorem~\ref{6.1}. We look for a matrix $X$ and a generating set $g_1,\dots,g_6$ for the defining ideal of $\AA$ such that the hypotheses of Observation~\ref{Oct5} are satisfied.
If the matrix $X$ happens to be an alternating matrix, then we are finished.

\begin{observation}
\label{Oct5} 
Adopt the hypotheses of  
Theorem~{\rm\ref{6.1}}. Suppose that 
 $g_1,\dots, g_6$ in $P$ is a minimal  homogeneous generating set for the defining ideal of $\AA$ and $X$ is a $5\times 5$ matrix $X$ of linear forms from $P$, with linearly independent columns, such that $$\bmatrix g_1&g_2&g_3&g_4&g_5\endbmatrix X=0.$$
Let $\pmb g$  and $d_1$ be the matrices $$\pmb g=\bmatrix g_1&g_2&g_3&g_4&g_5& g_6\endbmatrix\quad\text{and}\quad d_1=\bmatrix g_1&g_2&g_3&g_4&g_5\endbmatrix,$$
and $J$ be the ideal $J=(g_1,g_2,g_3,g_4,g_5)$ of $P$.
Then the following statements hold.
\begin{enumerate}[\rm(a)]
\item\label{Oct5.a} The columns of
\begin{equation}\label{above2}\left[\begin{array}{c|c}
X&-g_6I_5\\\hline 0_{1\times 5}&d_1\end{array}\right],\end{equation}
where $I_5$ is a $5\times 5$ identity matrix,  
are  a 
minimal generating set for the module of 
  syzygies of $\pmb g$.
\item\label{Oct5.b} The element $g_6$ is regular on $P/J$.
\item\label{Oct5.c} The ideal $J$  is a grade three Gorenstein ideal of $P$.
\item\label{Oct5.d} If $X$ is an alternating matrix, then the following statements also hold.
\begin{enumerate}[\rm (i)]
\item\label{Oct5.di} The complex 
$$0\to P\xrightarrow{d_1\transpose} P^5 \xrightarrow{X} P^5\xrightarrow{d_1} P$$ is a minimal homogeneous resolution of $P/J$. 
\item\label{Oct5.dii} The mapping cone of
$$\xymatrix{0\ar[r]& P\ar[r]^{d_1\transpose}\ar[d]^{g_6}& P^5 \ar[r]^{X}\ar[d]^{g_6}& P^5\ar[r]^{d_1}\ar[d]^{g_6}& P\ar[d]^{g_6}\\
0\ar[r]& P\ar[r]^{d_1\transpose}& P^5 \ar[r]^{X}& P^5\ar[r]^{d_1}& P}$$
is a minimal homogeneous resolution of $\AA$.
\item\label{Oct5.diii} The complex 
$$0\to P\xrightarrow{\pmb g\transpose} P^6\xrightarrow{\mathfrak g_3}P^{10}\xrightarrow{\mathfrak g_2} P^6\xrightarrow{\pmb g}P$$
is a minimal homogeneous resolution of $\AA$, where $\mathfrak g_2$ is given in {\rm(\ref{above2})}, and  $$\mathfrak g_3=\bmatrix -g_6&
d_1\transpose\\-X&0 \endbmatrix.$$
 \end{enumerate}
\end{enumerate}
\end{observation}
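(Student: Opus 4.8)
The plan is to run the entire argument off the graded Betti numbers of $\AA$ recorded in Observation~\ref{13.2}, which say that $\AA$ has exactly six quadratic minimal generators and that the syzygy module $S=\ker(\pmb g\colon P^6\to P)$ is minimally generated by five linear syzygies (internal degree $3$) together with five quadratic ones (internal degree $4$), while the next free module in the resolution is $P(-5)^6$, so there are no relations among the minimal generators of $S$ in internal degree $4$. For part (\ref{Oct5.a}): the first five columns of (\ref{above2}) are syzygies of $\pmb g$ because $\bmatrix g_1&\cdots&g_5\endbmatrix X=0$, and being $\kk$-linearly independent of internal degree $3$ they span $S_3$ and hence may be taken to be the degree-$3$ part of a minimal generating set; the last five columns are the Koszul syzygies $g_6g_i-g_ig_6=0$, of internal degree $4$. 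It remains to see that these five Koszul columns minimally generate the quotient $S/L$, where $L$ is the submodule generated by the first five columns. A dimension count does this: since $S'_4=0$ one gets $\dim_\kk L_4=5\cdot\dim_\kk U=20$ while $\dim_\kk S_4=25$, so $S/L$ is generated in internal degree $4$ by exactly $\dim_\kk (S/L)_4=5$ elements; and the images of the Koszul columns there are $\kk$-linearly independent, because a nontrivial $\kk$-linear relation among them modulo $L$, read on the sixth coordinate, would give a nontrivial $\kk$-linear relation among $g_1,\dots,g_5$, which are linearly independent. Hence the ten columns of (\ref{above2}) minimally generate $S$.

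Parts (\ref{Oct5.b}) and (\ref{Oct5.c}) then follow by standard homological manipulations. If $g_6h\in J$, then $(r_1,\dots,r_5,-h)$ is a syzygy of $\pmb g$ for suitable $r_i$; expressing it through the generators found in (\ref{Oct5.a}) and reading the sixth coordinate gives $h\in J$, so $g_6$ is regular on $P/J$. Then $\grade J\ge\grade I-1=3$, and $\grade J=4$ is impossible since $P/J$ would be Artinian and could carry no homogeneous nonzerodivisor of positive degree; thus $\grade J=3$, and regularity of $g_6$ makes $P/J$ Cohen--Macaulay of codimension three. Applying $\Hom_P(-,P)$ to $0\to P/J\xrightarrow{g_6}P/J\to\AA\to0$, and using that $\AA$ is codimension-four Gorenstein while $P/J$ is codimension-three Cohen--Macaulay, collapses the long exact sequence of $\Ext$ to $0\to\Ext^3_P(P/J,P)\xrightarrow{g_6}\Ext^3_P(P/J,P)\to\Ext^4_P(\AA,P)\to0$. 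Since $\Ext^4_P(\AA,P)$ is cyclic (as $\AA$ is Gorenstein), graded Nakayama forces $\Ext^3_P(P/J,P)$ to be a cyclic $P/J$-module, so the last Betti number of $P/J$ is one and $J$ is a grade-three Gorenstein ideal.

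For part (\ref{Oct5.d}), assume $X$ is alternating. Because $J=(g_1,\dots,g_5)$ is generated by five $\kk$-linearly independent quadrics it is minimally five-generated, so by the Buchsbaum--Eisenbud structure theorem its minimal resolution has the self-dual form $0\to P\xrightarrow{d_1\transpose}P^5\xrightarrow{\Psi}P^5\xrightarrow{d_1}P$ with $\Psi$ alternating and linear; matching linear first syzygies of $d_1$ gives $X=\Psi T$ with $T\in GL_5(\kk)$, so $\rank X=4$ and $I_4(X)=I_4(\Psi)$, whence the Buchsbaum--Eisenbud acyclicity criterion (the needed depth bounds coming from $\grade J=3$ and the exactness of the minimal resolution, together with $Xd_1\transpose=0$, which holds since $X$ is alternating) shows that the displayed complex is a minimal resolution of $P/J$, proving (\ref{Oct5.di}). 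Multiplication by $g_6$ is a chain endomorphism of that resolution lifting the injection $g_6\colon P/J\to P/J$ with cokernel $\AA$, so its mapping cone resolves $\AA$ in the sense of Example~\ref{7.4}; all its entries lie in $\maxm$ and its Betti numbers $1,6,10,6,1$ match (\ref{12.2.1}), so it is the minimal resolution, giving (\ref{Oct5.dii}); and writing this mapping cone out explicitly, up to reordering basis vectors and signs, produces the complex in (\ref{Oct5.diii}).

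I expect the main obstacle to be part (\ref{Oct5.a}): pinning down that $S/L$ is generated by exactly five elements of internal degree $4$ and that the Koszul columns hit a basis of it — the bookkeeping with internal degrees in the minimal resolution and the sixth-coordinate comparison are the technical heart, and once (\ref{Oct5.a}) is established the remaining parts go through by the routine arguments sketched above.
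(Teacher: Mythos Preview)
Your proof is correct. Parts (\ref{Oct5.a}) and (\ref{Oct5.b}) match the paper's argument; in fact your dimension count and sixth-coordinate check flesh out what the paper compresses into the phrase ``the columns of (\ref{above2}) are the beginning of a minimal generating set.''

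Parts (\ref{Oct5.c}) and (\ref{Oct5.di}) take genuinely different routes. For (\ref{Oct5.c}) the paper simply localizes at a prime containing $I_1(\pmb g)$ and invokes the standard fact that a local ring is Gorenstein if and only if its quotient by a regular element is; you instead establish $\grade J=3$, deduce Cohen--Macaulayness from the existence of the regular element $g_6$, and then run the long exact $\Ext$ sequence to see that $\Ext^3_P(P/J,P)$ is cyclic by graded Nakayama. Your argument is longer but more self-contained, and it makes the grade and Cohen--Macaulay statements explicit rather than implicit.

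For (\ref{Oct5.di}) the paper first argues (via (\ref{Oct5.a}) and a hidden Nakayama step) that the columns of $X$ generate \emph{all} syzygies of $d_1$, so $P^5\xrightarrow{X}P^5\xrightarrow{d_1}P$ is already the start of the minimal resolution; Gorensteinness then pins the Betti numbers at $(1,5,5,1)$, and the alternating hypothesis forces the last map to be $d_1\transpose$ up to a unit. You instead invoke the Buchsbaum--Eisenbud structure theorem to produce an alternating linear $\Psi$ with $d_1\Psi=0$, compare $X=\Psi T$ on the space of linear first syzygies, and then apply the Buchsbaum--Eisenbud acyclicity criterion to the complex with $X$ in the middle. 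The paper's route is shorter and avoids citing the structure theorem; yours is more machinery-heavy but transparently explains why $\rank X=4$ and why the depth conditions hold. Both are sound.
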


\begin{proof}
(\ref{Oct5.a}) Recall that the graded Betti numbers of $\AA$ are given in (\ref{12.2.1}). 
The columns of \begin{equation}\label{above}\left[\begin{array}{ccccc} X\\\hline 0_{1\times 5}\end{array}\right]\end{equation}  
are the beginning of a minimal generating set for the 
linear syzygies of $\pmb g$. It follows from (\ref{12.2.1}) that the columns of (\ref{above}) 
generate all of 
the  linear syzygies of $\pmb g$. In a similar manner, the columns of (\ref{above2})
are the beginning of a minimal generating set for the module of  syzygies of $\pmb g$; hence, it follows from (\ref{12.2.1}) that the columns of (\ref{above2}) are a 
minimal generating set for the module of all  syzygies of $\pmb g$. 

\noindent(\ref{Oct5.b}) The bottom row of (\ref{above2}) generates the ideal $(g_1,\dots,g_5):_P(g_6)$; hence $g_6$ is regular on $P/J$. 

\noindent(\ref{Oct5.c}) If $\mathfrak p$ is any prime ideal of $P$ which contains $I_1(\pmb g)$, then the element $g_6$ is regular on the local ring $(P/J)_{\mathfrak p}$; hence $(P/J)_{\mathfrak p}$ is Gorenstein if and only if 
$(P/(J,g_6))_{\mathfrak p}$ is Gorenstein.

\noindent(\ref{Oct5.di}) We saw in (\ref{Oct5.a}) that 
$$P^5\xrightarrow{X}P^5\xrightarrow{d_1}P$$ is the beginning of a minimal resolution of $P/J$. We saw in (\ref{Oct5.c}) that $J$ is a grade three Gorenstein ideal. It follows that the Betti numbers of $P/J$ are 
$$0\to P^1\to P^5\to P^5\to P.$$ Let $\pmb y$ be a row vector with 
\begin{equation}\label{13.3.3}0\to P^1\xrightarrow{\pmb y\transpose} P^5\xrightarrow{X} P^5\xrightarrow{d_1} P\end{equation}exact. The matrix $X$ is an alternating matrix; hence, $Xd_1\transpose=0$ and $d_1\transpose=\a \pmb y\transpose$ for some $\a\in P$.
The quotient $P/J$ is Gorenstein; so when $\Hom_P(-,P)$ is applied to (\ref{13.3.3}) one obtains another resolution of $P/J$. It follows that $\a$ is a unit.  

\noindent Items (\ref{Oct5.dii}) and (\ref{Oct5.diii}) now follow immediately.
\end{proof}

The map ``$\mL$'' of Definition~\ref{Gamma} will lead us to the matrix ``$X$'' of Observation~\ref{Oct5}.

\begin{definition}
\label{Gamma} Adopt the hypotheses of Lemma~{\rm\ref{6.1}}.
Define the $P$-module homomorphism 
$$\mL:P\t_{\kk} D_2U_0^*\to P\t_{\kk} \Sym_2U_0$$ to be the composition 
$$P\t_{\kk} D_2U_0^*\xrightarrow{\C_2}\frac{U_0\t U_0^*}{\ev^*(1)}
\xrightarrow{\mD}\Sym_2U_0,$$
 where  $\mD$ is given in Theorem~\ref{resol} and 
$\C_2$ is given in (\ref{13.0.1}) and (\ref{6.4.3}).
\end{definition}

\begin{lemma}
\label{Indeed}  
If $\nu_{1,0}$ and $\nu_{1,0}'$ are  in $U_0^*$, then the map $\mL$ of Definition~{\rm\ref{Gamma}} satisfies  
$$[\mL(\nu_{1,0}^{(2)})]({\nu_{1,0}'}^{(2)})=\begin{cases}
\phantom{+}(\nu_{1,0}\w \nu_{1,0}')(\omega_{U_0})
\otimes   [\Gamma(\nu_{1,0})](\nu_{1,0}')\\
-
 x \otimes   \Big[p^{-1} \big(
\big[(\nu_{1,0}\w \nu_{1,0}')(\omega_{U_0})\cdot 
\Gamma(\nu_{1,0})\big](\Phi_3)\big)\Big](\nu_{1,0}').\\
\end{cases}$$ 
\end{lemma}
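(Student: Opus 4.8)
The plan is to compute $[\mL(\nu_{1,0}^{(2)})]({\nu_{1,0}'}^{(2)})$ directly by unwinding the two maps in the composition defining $\mL$. First I would apply $\C_2$ to $\nu_{1,0}^{(2)}$; by Definition~\ref{24.2} this is the class of $\Gamma(\nu_{1,0})\t \nu_{1,0}$ in $\frac{U_0\t U_0^*}{\ev^*(1)}$. Then I would feed this class into $\mD$, using the explicit formula (\ref{mD}) from Theorem~\ref{resol} with $u_{1,0}=\Gamma(\nu_{1,0})$ and the covector-slot occupied by $\nu_{1,0}$. The formula (\ref{mD}) produces six terms: three terms of the form $\pm x\t (\text{symmetric element})$ arising from the $\proj\circ p^{-1}$ pieces, and three ``polynomial-coefficient'' terms $-w\t \nu_{1,0}(y\w z)u_{1,0}+z\t\nu_{1,0}(y\w w)u_{1,0}-y\t\nu_{1,0}(z\w w)u_{1,0}$. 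The task is to show that, after pairing against ${\nu_{1,0}'}^{(2)}$ (i.e.\ evaluating the target $\Sym_2U_0$-component against $\nu_{1,0}'$ twice, and keeping the $P$-coefficient), the three polynomial-coefficient terms collapse to $(\nu_{1,0}\w\nu_{1,0}')(\omega_{U_0})\t [\Gamma(\nu_{1,0})](\nu_{1,0}')$ and the three $x$-terms collapse to the second line.

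For the first grouping, I would note that for any $u_{1,0}\in U_0$ the combination $-w\,\nu_{1,0}(y\w z)+z\,\nu_{1,0}(y\w w)-y\,\nu_{1,0}(z\w w)$ equals $-\kappa(\nu_{1,0}(\omega_{U_0})\t 1)$ (a Koszul-type contraction), and more to the point, paired with $\nu_{1,0}'$ twice against $u_{1,0}\cdot(-)$ it yields $[\nu_{1,0}'(u_{1,0})]\cdot[-w\,\nu_{1,0}(y\w z)\nu_{1,0}'(w)+\cdots]$. Using $\omega_{U_0}=y\w z\w w$ and $\omega_{U_0^*}=w^*\w z^*\w y^*$ from Data~\ref{2.1}, the scalar $-\nu_{1,0}(y\w z)\nu_{1,0}'(w)+\nu_{1,0}(y\w w)\nu_{1,0}'(z)-\nu_{1,0}(z\w w)\nu_{1,0}'(y)$ is exactly $(\nu_{1,0}\w\nu_{1,0}')(\omega_{U_0})$ up to the fixed sign normalization $\omega_{U_0}(\omega_{U_0^*})=1$; then with $u_{1,0}=\Gamma(\nu_{1,0})$ the factor $\nu_{1,0}'(\Gamma(\nu_{1,0}))=[\Gamma(\nu_{1,0})](\nu_{1,0}')$ appears, giving the first line. (I would double-check the sign bookkeeping carefully against the stated convention, since this is where an error would hide.)

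For the second grouping, the three $x$-terms in (\ref{mD}) read $-x\t w\cdot(\proj\circ p^{-1})([\nu_{1,0}(y\w z)u_{1,0}](\Phi_3))+x\t z\cdot(\proj\circ p^{-1})([\nu_{1,0}(y\w w)u_{1,0}](\Phi_3))-x\t y\cdot(\proj\circ p^{-1})([\nu_{1,0}(z\w w)u_{1,0}](\Phi_3))$, which by linearity in the inner slot equals $x\t(\proj\circ p^{-1})\big([\,(-w\,\nu_{1,0}(y\w z)+z\,\nu_{1,0}(y\w w)-y\,\nu_{1,0}(z\w w))\,u_{1,0}\,](\Phi_3)\big)$ — but wait, the outer $w,z,y$ are tensor factors in $\Sym_2U_0$, not scalars, so I must be careful: evaluating against $\nu_{1,0}'$ twice, one copy of $\nu_{1,0}'$ hits the outer $w$ (resp.\ $z$, $y$) and one copy hits the $\Sym_1U_0$ output of $\proj\circ p^{-1}$. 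The outer-factor evaluations give precisely the scalar $(\nu_{1,0}\w\nu_{1,0}')(\omega_{U_0})$ as above; then what remains is $-x\t \nu_{1,0}'\big(\proj\, p^{-1}([\,\Gamma(\nu_{1,0})\,](\Phi_3))\big)$ times that scalar, where the inner $\Sym_2U_0$-element is $\Gamma(\nu_{1,0})$ (with the other $U_0$-factor from the contraction already consumed). Since $\nu_{1,0}'\in U_0^*$ annihilates $x$, pairing against $\proj\,p^{-1}(-)$ is the same as pairing against $p^{-1}(-)$; hence the term is $-x\t\big[p^{-1}\big([(\nu_{1,0}\w\nu_{1,0}')(\omega_{U_0})\cdot\Gamma(\nu_{1,0})](\Phi_3)\big)\big](\nu_{1,0}')$, matching the second line.

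The main obstacle, then, is not conceptual but bookkeeping: correctly tracking which of the two copies of $\nu_{1,0}'$ (from ${\nu_{1,0}'}^{(2)}$, using the divided-power rule $\nu^{(2)}$ behaves like $\nu^2/2$) pairs with which tensor slot, keeping the $x$-free versus $x$-coefficient pieces separate, and nailing down the sign from the normalizations $\omega_{U_0}=y\w z\w w$, $\omega_{U_0^*}=w^*\w z^*\w y^*$, $\omega_{U_0}(\omega_{U_0^*})=1$. I would organize the computation so that the ``contract the covector $\nu_{1,0}$ against $\omega_{U_0}$'' step is isolated as a small lemma-style identity producing the common scalar factor $(\nu_{1,0}\w\nu_{1,0}')(\omega_{U_0})$, after which both lines of the claimed formula drop out by inspection. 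An alternative, if the direct evaluation gets unwieldy, would be to invoke the alternate description of $\mB$ in Remark~\ref{5.4} together with the commuting triangle (\ref{B=C(Gamma)sub1!}) to recognize the $U_0^*$-component and handle the $\Sym_2U_0$-component by the duality, but the direct route above should be cleanest.
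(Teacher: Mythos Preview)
Your approach is correct and matches the paper's proof: both compute $\C_2(\nu_{1,0}^{(2)})=\Gamma(\nu_{1,0})\t\nu_{1,0}$, feed this into formula (\ref{mD}), evaluate against ${\nu_{1,0}'}^{(2)}$, drop $\proj$ using $\nu_{1,0}'\in U_0^*$, and collapse the three-term sums into $(\nu_{1,0}\w\nu_{1,0}')(\omega_{U_0})$. One small correction to your write-up: the quantity $(\nu_{1,0}\w\nu_{1,0}')(\omega_{U_0})$ is an element of $U_0\subset P$, not a scalar---it is the linear form playing the role of the $P$-coefficient in the first line of the formula, and likewise the element of $U_0$ that gets multiplied by $\Gamma(\nu_{1,0})$ inside $p^{-1}([\cdot](\Phi_3))$ in the second line; once you track this correctly the computation goes through exactly as you outline.
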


\begin{proof} We compute 
\begingroup\allowdisplaybreaks
\begin{align*}
&[\mL(\nu_{1,0}^{(2)})]({\nu_{1,0}'}^{(2)})\\
{}={}&[(\mD\circ \C_2)(\nu_{1,0}^{(2)})](\nu'_{1,0}{}^{(2)})\\
{}={}&[\mD(\Gamma(\nu_{1,0})\t \nu_{1,0})](\nu'_{1,0}{}^{(2)})\\
{}={}&\left(\begin{cases}
-
 x\t w\cdot (\proj\circ p^{-1})\Big([\nu_{1,0}(y\w z)\Gamma(\nu_{1,0})](\Phi_3)\Big)\\
+x\t z\cdot (\proj\circ p^{-1})\Big([\nu_{1,0}(y\w w)\Gamma(\nu_{1,0})](\Phi_3)\Big)\\
-x\t y\cdot (\proj\circ p^{-1})\Big([\nu_{1,0}(z\w w)\Gamma(\nu_{1,0})](\Phi_3)\Big)\\
-w\t \nu_{1,0}(y\w z)\Gamma(\nu_{1,0})\\
+z\t \nu_{1,0}(y\w w)\Gamma(\nu_{1,0})\\
-y\t \nu_{1,0}(z\w w)\Gamma(\nu_{1,0})\\
\end{cases}\right)(\nu'_{1,0}{}^{(2)}),\\\intertext{by (\ref{mD}),}
{}={}&\begin{cases}
-x\t w(\nu'_{1,0})\cdot (\nu'_{1,0})(\proj\circ p^{-1})\Big([\nu_{1,0}(y\w z)\Gamma(\nu_{1,0})](\Phi_3)\Big)\\
+x\t z(\nu'_{1,0})\cdot (\nu'_{1,0})(\proj\circ p^{-1})\Big([\nu_{1,0}(y\w w)\Gamma(\nu_{1,0})](\Phi_3)\Big)\\
-x\t y(\nu'_{1,0})\cdot (\nu'_{1,0})(\proj\circ p^{-1})\Big([\nu_{1,0}(z\w w)\Gamma(\nu_{1,0})](\Phi_3)\Big)\\
-w\t (\nu'_{1,0}\w\nu_{1,0})(y\w z)[\Gamma(\nu_{1,0})](\nu'_{1,0})\\
+z\t (\nu'_{1,0}\w\nu_{1,0})(y\w w)[\Gamma(\nu_{1,0})](\nu'_{1,0})\\
-y\t (\nu'_{1,0}\w\nu_{1,0})(z\w w)[\Gamma(\nu_{1,0})](\nu'_{1,0})\\
\end{cases}\\
{}={}&\begin{cases}
-x\t w(\nu'_{1,0})\cdot p^{-1}\Big([\nu_{1,0}(y\w z)\Gamma(\nu_{1,0})](\Phi_3)\Big)(\nu'_{1,0})\\
+x\t z(\nu'_{1,0})\cdot p^{-1}\Big([\nu_{1,0}(y\w w)\Gamma(\nu_{1,0})](\Phi_3)\Big)(\nu'_{1,0})\\
-x\t y(\nu'_{1,0})\cdot  p^{-1}\Big([\nu_{1,0}(z\w w)\Gamma(\nu_{1,0})](\Phi_3)\Big)(\nu'_{1,0})\\
-w\t (\nu'_{1,0}\w\nu_{1,0})(y\w z)[\Gamma(\nu_{1,0})](\nu'_{1,0})\\
+z\t (\nu'_{1,0}\w\nu_{1,0})(y\w w)[\Gamma(\nu_{1,0})](\nu'_{1,0})\\
-y\t (\nu'_{1,0}\w\nu_{1,0})(z\w w)[\Gamma(\nu_{1,0})](\nu'_{1,0})\\
\end{cases}\\
{}={}&\begin{cases}
-
x\t   p^{-1}\Big([(\nu_{1,0}\w \nu'_{1,0})(y\w z\w w)\Gamma(\nu_{1,0})](\Phi_3)\Big)(\nu'_{1,0})\\
-(\nu_{1,0}'\w \nu_{1,0})(y\w z\w w)\t [\Gamma(\nu_{1,0})](\nu'_{1,0})
\end{cases}\\
{}={}&\begin{cases}
-
x\t   p^{-1}\Big([(\nu_{1,0}\w \nu'_{1,0})(\omega_{U_0})\Gamma(\nu_{1,0})](\Phi_3)\Big)(\nu'_{1,0})\\
-(\nu_{1,0}'\w \nu_{1,0})(\omega_{U_0})\t [\Gamma(\nu_{1,0})](\nu'_{1,0})
\end{cases}\\
{}={}&\begin{cases}
-x\t   \Big[p^{-1}\Big([(\nu_{1,0}\w \nu'_{1,0})(\omega_{U_0})\cdot \Gamma(\nu_{1,0})](\Phi_3)\Big)\Big](\nu'_{1,0})\\
+(\nu_{1,0}\w \nu_{1,0}')(\omega_{U_0})\t [\Gamma(\nu_{1,0})](\nu'_{1,0}).
\end{cases} 
\end{align*}
\endgroup
\vskip-24pt\end{proof}

\begin{lemma}
\label{26.8} Adopt the hypotheses of Theorem~{\rm\ref{6.1}}. The following statements hold. 
\begin{enumerate}[\rm(a)]
\item\label{26.8.b} 
The maps
$$ 0\to P\xrightarrow{\ \ \C_3\ \ } P\t_{\kk}D_2U_0^*\xrightarrow{\ \ \mL\ \ }P\t_{\kk}\Sym_2U_0\xrightarrow{\ \ \mf_{1,2}\ \ } P\to \AA\to 0$$ 
form a  complex.
\item\label{26.8.c} 
The homomorphism $\Gamma$, which  is a linear transformation of vector spaces, has rank at least two.
\item\label{26.8.c.5} 
The complex $\C$ of {\rm(\ref{13.0.1})} 
 is split exact.

\item\label{26.8.d} 
The image of $\mL$ contains all of the linear relations on $\mf_{1,2}$.
\item\label{26.8.e} 
The element $\delta $ of  
 $D_2U^*_0$ of {\rm(\ref{13.0.3/2})}
is nonzero and is in the kernel of $\mL$.
If $\xi$ is any element of $\Sym_2U_0$ with $\delta (\xi)=1$, then 
the 
 induced map
$$P\t_{\kk}
\ker \xi
\xrightarrow{\ \ \mL\ \ } P\t_{\kk}\Sym_2U_0$$
maps in a minimal manner onto the linear relations on $ \mf_{1,2}$.
\end{enumerate}
\end{lemma}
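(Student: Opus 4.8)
The plan is to prove the five assertions roughly in the order they are stated, since each later part leans on the earlier ones. For part~(\ref{26.8.b}), the composition $\mf_{1,2}\circ\mL=\mf_{1,2}\circ\mD\circ\C_2$ vanishes because $\mf_{1,2}\circ \mD$ is already a composition of consecutive differentials in the resolution $(\frak F,\widetilde{\frak f})$ of Theorem~\ref{a res} (it is the relevant block of $\widetilde{\frak f_1}\circ\widetilde{\frak f_2}=0$), so no new computation is needed beyond quoting that $\widetilde{\frak f}$ is a complex; and the composition $\mL\circ\C_3=\mD\circ\C_2\circ\C_3$ vanishes because $\C_2\circ\C_3=0$ by Lemma~\ref{24.4}.(\ref{24.4.a}) (the maps of $\C$ form a complex). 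For part~(\ref{26.8.c}): by Observation~\ref{8.2}.(\ref{8.2.b}) the matrix $\C_1$ and the matrix $B$ have the same rank, and $\C_1$ is built from $\Gamma$ exactly as in Definition~\ref{24.2} (namely $\C_1(u_1\t\nu_1)=u_1\w\Gamma(\nu_1)$); an elementary linear-algebra observation then shows $\rank\C_1\ge \rank\Gamma$ when $\Gamma$ has rank $r$ (choose $r$ basis covectors on which $\Gamma$ is injective and note their images are independent, producing $r$ independent values $u\w\Gamma(\nu)$). Since Theorem~\ref{6.3} tells us $\rank B\in\{0,2,3\}$ and the hypothesis $2\le\rank\SM=\rank\Gamma$ rules out $0$, we get $\rank\Gamma\ge 2$. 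Part~(\ref{26.8.c.5}) is immediate: it is exactly Observation~\ref{8.2}.(\ref{8.2.c}), or alternatively Lemma~\ref{24.4}.(\ref{24.4.b}) together with $I_2(\Gamma)=I_2(\SM)=\kk$ from (\ref{kk}).

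For part~(\ref{26.8.d}) I would argue as follows. By Observation~\ref{13.2}, the linear syzygies of $\mf_{1,2}$ are precisely the image of the restriction $\mD|_{\ker\mB}$. By the commuting triangle (\ref{B=C(Gamma)sub1!}) of Observation~\ref{8.2}, $\mB$ agrees with $\psi_2\circ\C_1$ up to the isomorphism $\psi_2$, so $\ker\mB=\ker\C_1$. Since $\C$ is split exact (part~(\ref{26.8.c.5})), $\ker\C_1=\im\C_2$, hence $\mD|_{\ker\mB}=\mD|_{\im\C_2}$, and the image of this equals the image of $\mD\circ\C_2=\mL$. Thus $\im\mL$ is exactly the space of linear syzygies on $\mf_{1,2}$, which is a fortiori all of them.

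Finally, for part~(\ref{26.8.e}): the element $\delta$ is nonzero because $\Gamma^\vee$ is the classical adjoint of a rank-$\ge 2$ symmetric $3\times 3$ map, hence nonzero (Lemma~\ref{24.4}.(\ref{24.4.e}) gives $I_1(\C_3)=I_2(\SM)=\kk$, so $\delta$ generates the unit ideal and in particular $\delta\ne 0$), and $\C_3(1)=\delta$, so $\delta\in\im\C_3=\ker\C_2\subseteq\ker(\mD\circ\C_2)=\ker\mL$; thus $\mL$ factors through $P\t(D_2U_0^*/\kk\delta)$. Given $\xi\in\Sym_2U_0$ with $\delta(\xi)=1$, the subspace $\ker\xi=\{\,\nu_2\in D_2U_0^*:\nu_2(\xi)=0\,\}$ is a complement to $\kk\delta$ in $D_2U_0^*$ (since $\delta(\xi)=1\ne 0$), so $\mL|_{\ker\xi}$ has the same image as $\mL$, namely all the linear relations on $\mf_{1,2}$ by part~(\ref{26.8.d}), and it is injective: if $\mL(\nu_2)=0$ with $\nu_2\in\ker\xi$, then $\nu_2\in\ker\mL\cap\ker\xi$, but $\ker\mL$ is at most one-dimensional. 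To see that $\ker\mL$ is exactly $\kk\delta$ (so that $\mL|_{\ker\xi}$ is injective) I would use part~(\ref{26.8.d}): the source $D_2U_0^*$ has dimension $6$, the target linear-syzygy space has dimension $5$ (read off from the Betti numbers (\ref{12.2.1}): six quadratic generators with five linear first syzygies), and $\mL$ is onto it, forcing $\dim\ker\mL=1$. Hence $\ker\mL=\kk\delta$ and $\mL|_{\ker\xi}$ is an isomorphism onto the linear relations, i.e.\ a minimal map onto them. I expect the only genuinely delicate point to be the bookkeeping in part~(\ref{26.8.e}) — keeping straight that the linear-syzygy space is $5$-dimensional and matching it against the $6$-dimensional $D_2U_0^*$ so that the one-dimensional kernel is forced — everything else is a direct appeal to results already in hand.
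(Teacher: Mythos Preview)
Your overall approach matches the paper's, and parts (\ref{26.8.c.5}), (\ref{26.8.d}), and (\ref{26.8.e}) are handled correctly and in essentially the same way as the paper. Two points deserve comment.

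First, there is a genuine gap in your argument for $\mf_{1,2}\circ\mL=0$ in part~(\ref{26.8.b}). You claim that $\mf_{1,2}\circ\mD$ vanishes because it is ``the relevant block of $\widetilde{\frak f_1}\circ\widetilde{\frak f_2}=0$.'' But the block equation coming from $\widetilde{\frak f_1}\circ\widetilde{\frak f_2}=0$ is $x\mf_{1,1}\mB+\mf_{1,2}\mD=0$, not $\mf_{1,2}\mD=0$; since $\rank\SM\ge 2$ forces $\rank B=3$, the map $\mf_{1,2}\mD$ is certainly nonzero. The fix is easy: from $\mf_{1,2}\mD=-x\mf_{1,1}\mB$ and the commuting triangle (\ref{B=C(Gamma)sub1!}) one has $\mB\circ\C_2=\psi_2\circ\C_1\circ\C_2=0$, hence $\mf_{1,2}\circ\mL=\mf_{1,2}\mD\C_2=-x\mf_{1,1}\mB\C_2=0$. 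Alternatively (and this is what the paper does), prove (\ref{26.8.d}) first; then $\im\mL$ consists of syzygies on $\mf_{1,2}$, so $\mf_{1,2}\circ\mL=0$ is immediate.

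Second, your treatment of part~(\ref{26.8.c}) is unnecessarily circuitous. You pass through $\C_1$, $B$, and Theorem~\ref{6.3}, but at the end you write ``the hypothesis $2\le\rank\SM=\rank\Gamma$,'' which is already the whole argument: $\SM$ is by definition the matrix of $\Gamma$ (Definition~\ref{8.1}), so $\rank\Gamma=\rank\SM\ge 2$ directly from the standing hypothesis. The paper simply cites (\ref{kk}).
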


\begin{proof}
 Assertions (\ref{26.8.c}) and (\ref{26.8.c.5}) follow from (\ref{kk}) and Lemma~\ref{24.4}.(\ref{24.4.b}). (Keep in mind that $\SM$ is the matrix for $\Gamma$.)
 Recall from Observation~\ref{13.2} that the linear relations on $\mf_{1,2}$ are given by the image of 
\begin{equation}\label{3.9.1}\mD\Big|_{\ker(\mB)}.\end{equation}  Apply (\ref{B=C(Gamma)sub1!}), followed by (\ref{26.8.c.5}), to see that 
$$\ker \mB=\ker \C_1=\im \C_2.$$
Combine the most recent equation with (\ref{3.9.1}) and Lemma~\ref{Indeed} to conclude that the linear relations on $\mf_{1,2}$ are given by $$\im \big(\mD\circ \C_2\big)=\im \mL.$$
Assertion~(\ref{26.8.d}) is established. It is clear from the equation 
$\mL=\mD\circ   \C_2$ that $\mL\circ \C_3=0$; hence (\ref{26.8.b}) is established.

The complex $\C$ is  split exact; so it is clear that the image of $\C_3$ is a free $\kk$-module of rank $1$. This image is generated by the element $\delta \in D_2U_0^*$ of (\ref{13.0.3/2}). 
 If $\xi$ is any element of $\Sym_2U_0$ with $\delta (\xi)=1$, then  $$D_2U^*_0=\ker \xi \p \kk \delta $$and 
$\ker \xi$ is a free $\kk$-module of rank $5$. 
Combine (\ref{26.8.d})  
and  the fact that $\mL(\delta )=0$  
in order to conclude  that
the image of 
\begin{equation}\label{3.11.2}P\t_{\kk}\ker \xi \xrightarrow{\ \ \mL\ \ } P\t_{\kk}\Sym_2U_0\end{equation} 
contains all of the linear syzygies on $\mf_{1,2}$. The map (\ref{3.11.2}) is minimal because $\mf_{1,2}$ has a five-dimensional vector space of linear syzygies; see   
(\ref{12.2.1}).  Assertion (\ref{26.8.e}) is established and the proof is complete.
\end{proof}

As promised in Observation~\ref{Oct5}, the rest of the argument proceeds quickly once Lemma~\ref{alternating} is established.

\begin{lemma}
\label{alternating}
The $P$-module homomorphism $\mL:P\t_{\kk} D_2U_0^*\to P\t_{\kk} \Sym_2U_0$ 
of Definition~{\rm\ref{Gamma}} is an alternating map.
\end{lemma}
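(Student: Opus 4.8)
The plan is to show that $\mL:P\t_{\kk} D_2U_0^*\to P\t_{\kk}\Sym_2U_0$ is alternating by exploiting the explicit formula for $[\mL(\nu_{1,0}^{(2)})](\nu_{1,0}'{}^{(2)})$ given in Lemma~\ref{Indeed}, together with the symmetry of the bilinear form $F$ (equivalently, the symmetry of $\Gamma$) recorded in Definition~\ref{8.1}. The key reduction is that ``alternating'' is a statement about the diagonalized pairing: since $P\t D_2U_0^*$ and $P\t \Sym_2U_0$ are dual free modules of the same rank, $\mL$ is alternating precisely when, for the natural pairing, the associated form vanishes on the diagonal, i.e. $[\mL(\delta_2)](\delta_2')$ is skew in $\delta_2,\delta_2'$ and, crucially, $[\mL(\delta_2)](\delta_2)=0$. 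Because divided powers $\nu_{1,0}^{(2)}$ span $D_2U_0^*$ over $\kk$ (when $\operatorname{char}\kk\ne 2$) — and in general one polarizes — it suffices to verify the skew-symmetry on pairs of the form $\nu_{1,0}^{(2)}, \nu_{1,0}'{}^{(2)}$ and to check the diagonal vanishing, which is easiest to see directly from the formula: setting $\nu_{1,0}'=\nu_{1,0}$ makes $(\nu_{1,0}\w\nu_{1,0})(\omega_{U_0})=0$, so both terms on the right side of the Lemma~\ref{Indeed} formula vanish.

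First I would fix the precise meaning of ``alternating'' in this context: $\mL$ is alternating if, under the canonical perfect pairing $D_2U_0^*\t\Sym_2U_0\to\kk$ used throughout (the one making $\frak F$ self-dual), the bilinear form $(\delta_2,\delta_2')\mapsto \langle \mL(\delta_2),\delta_2'\rangle$ satisfies $\langle\mL(\delta_2),\delta_2'\rangle=-\langle\mL(\delta_2'),\delta_2\rangle$ and $\langle\mL(\delta_2),\delta_2\rangle=0$. I would then take $\delta_2=\nu_{1,0}^{(2)}$ and $\delta_2'=\nu_{1,0}'{}^{(2)}$ and evaluate both sides using Lemma~\ref{Indeed}. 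The first term, $(\nu_{1,0}\w\nu_{1,0}')(\omega_{U_0})\otimes[\Gamma(\nu_{1,0})](\nu_{1,0}')$, becomes, after applying the pairing against $\nu_{1,0}'{}^{(2)}$ (which selects the coefficient appropriately), a scalar of the shape $(\nu_{1,0}\w\nu_{1,0}')(\omega_{U_0})\cdot F'(\nu_{1,0}\t\nu_{1,0}')$ times a factor symmetric in the two arguments; swapping $\nu_{1,0}\leftrightarrow\nu_{1,0}'$ flips the sign of the wedge factor while $F'$ is symmetric (Definition~\ref{8.1}), giving the required skew-symmetry. The second ($x$-)term has the same structure: $\Gamma(\nu_{1,0})$ appears paired with $\nu_{1,0}'$ via $p^{-1}$, and the wedge factor $(\nu_{1,0}\w\nu_{1,0}')(\omega_{U_0})$ again provides the sign change, while the remaining expression $\bigl[p^{-1}\bigl([(\cdots)\Gamma(\nu_{1,0})](\Phi_3)\bigr)\bigr](\nu_{1,0}')$ must be shown symmetric in $\nu_{1,0},\nu_{1,0}'$ — this is where the symmetry of the form $\langle-,-\rangle$ of Data~\ref{2.1}.(\ref{2.1.d}) and the identity $[u p^{-1}(u'\Phi_3)](\Phi_3)=\langle u,u'\rangle$ will be invoked, after expanding $\Gamma(\nu_{1,0})$ through $F$.

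The main obstacle I expect is the $x$-term: unlike the first term, it is not visibly a product of an antisymmetric wedge scalar with a symmetric bilinear quantity, because the factor $p^{-1}\bigl([(\nu_{1,0}\w\nu_{1,0}')(\omega_{U_0})\cdot\Gamma(\nu_{1,0})](\Phi_3)\bigr)$ contains a $\Gamma(\nu_{1,0})$ that breaks the apparent symmetry between $\nu_{1,0}$ and $\nu_{1,0}'$. Pulling the scalar $(\nu_{1,0}\w\nu_{1,0}')(\omega_{U_0})$ out, what remains is $x\t\bigl[p^{-1}\bigl([\Gamma(\nu_{1,0})](\Phi_3)\bigr)\bigr](\nu_{1,0}')$, which I must show is symmetric under $\nu_{1,0}\leftrightarrow\nu_{1,0}'$. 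Using the definition $[\Gamma(\nu_{1,0})](\nu_{1,0}')=F'(\nu_{1,0}\t\nu_{1,0}')=F(\nu_{1,0}(\omega_{U_0})\t\nu_{1,0}'(\omega_{U_0}))$ and unwinding $F$ in terms of $\langle-,-\rangle$ — which is itself symmetric — together with how $p$ and $p^{-1}$ interact with $\Phi_3$ (the relation $p(u_1)=u_1(x\Phi_3)$), I expect the symmetry to fall out after a bounded computation, essentially the same style of manipulation as in the proof of Lemma~\ref{12.1}. I would organize the write-up so that the routine symbol-pushing for this $x$-term is isolated into one displayed computation, appealing at the key step to symmetry of $\langle-,-\rangle$ and to $\Gamma^\vee$ (and hence $\delta$) being symmetric, exactly as in Background~\ref{2.9.1}.(c)–(d); the diagonal vanishing is immediate from Lemma~\ref{Indeed} as noted above, and the two facts together give that $\mL$ is alternating.
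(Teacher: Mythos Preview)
Your setup is correct: it suffices to check
\[
[\mL(\nu_{1,0}^{(2)})](\nu_{1,0}'{}^{(2)})+[\mL(\nu_{1,0}'{}^{(2)})](\nu_{1,0}^{(2)})=0
\]
on divided squares (these span $D_2U_0^*$ over any field), and the first term of the Lemma~\ref{Indeed} formula is handled exactly as you say: $(\nu_{1,0}\w\nu_{1,0}')(\omega_{U_0})$ is antisymmetric in $\nu_{1,0},\nu_{1,0}'$ while $[\Gamma(\nu_{1,0})](\nu_{1,0}')$ is symmetric, so their product is antisymmetric.

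The gap is in your treatment of the $x$-term. The object $(\nu_{1,0}\w\nu_{1,0}')(\omega_{U_0})$ is \emph{not} a scalar; it lies in $U_0$ (a $2$-form in $U_0^*$ contracted against $\omega_{U_0}\in\bigwedge^3 U_0$ yields an element of $U_0$). Inside the $x$-term it appears multiplicatively in $\Sym_2U_0$ as part of $(\nu_{1,0}\w\nu_{1,0}')(\omega_{U_0})\cdot\Gamma(\nu_{1,0})$, and this product is then fed through $(-)(\Phi_3)$ and $p^{-1}$. You therefore cannot ``pull it out'' and reduce the question to the symmetry of $[p^{-1}([\Gamma(\nu_{1,0})](\Phi_3))](\nu_{1,0}')$; indeed that last expression does not even type-check, since $\Gamma(\nu_{1,0})\in U_0$ makes $[\Gamma(\nu_{1,0})](\Phi_3)\in D_2U^*$, which is not in the domain of $p^{-1}$.

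What actually has to be shown is
\[
0=\Big[p^{-1}\big([(\nu_{1,0}\w\nu_{1,0}')(\omega_{U_0})\cdot\Gamma(\nu_{1,0})](\Phi_3)\big)\Big](\nu_{1,0}')
+\Big[p^{-1}\big([(\nu_{1,0}'\w\nu_{1,0})(\omega_{U_0})\cdot\Gamma(\nu_{1,0}')](\Phi_3)\big)\Big](\nu_{1,0}),
\]
and this does not follow from the symmetry of $\la-,-\ra$ or of $\Gamma$ by a one-line argument. The paper's proof handles it by reducing to the case $\nu_{1,0}=y^*$, $\nu_{1,0}'=z^*$ (so that $(\nu_{1,0}\w\nu_{1,0}')(\omega_{U_0})$ becomes the concrete variable $w$), expanding $\Gamma(y^*)$ and $\Gamma(z^*)$ via the entries of $\SM$ written in terms of $\la-,-\ra$, and then performing a nontrivial regrouping that uses the identity $p^{-1}(u_1x\Phi_3)=u_1$ to make three pairs of terms cancel. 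Your outline does not contain this mechanism, and the ``bounded computation'' you anticipate is precisely the substance of the proof.
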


\begin{proof} 
Fix arbitrary elements $\nu_{1,0}$ and $\nu_{1,0}'$ in $U_0^*$. It suffices to show that $$[\mL(\nu_{1,0}^{(2)})]({\nu_{1,0}'}^{(2)})+[\mL({\nu_{1,0}'}^{(2)})](\nu_{1,0}^{(2)})=0.$$ Apply Lemma~\ref{Indeed}. One quickly sees that it suffices to show that 
\begin{equation}
\label{STS1}
0=\begin{cases}\phantom{+}\Big[p^{-1} \big(
\big[(\nu_{1,0}\w \nu_{1,0}')(\omega_{U_0})\cdot 
\Gamma(\nu_{1,0})\big](\Phi_3)\big)\Big](\nu_{1,0}')\vspace{5pt}\\+ \Big[p^{-1} \big(
\big[(\nu_{1,0}'\w \nu_{1,0})(\omega_{U_0})\cdot 
\Gamma(\nu_{1,0}')\big](\Phi_3)\big)\Big](\nu_{1,0}).\end{cases}\end{equation}
It is clear that (\ref{STS1}) holds if $\nu_{1,0}$ and $\nu_{1,0}'$ are linearly dependent. So, it suffices to establish (\ref{STS1}) when $\nu_{1,0}$ and $\nu_{1,0}'$ are linearly independent elements of $U_0^*$. We may as well 
take $y,z,w$ and $y^*,z^*,w^*$ to be a pair of dual bases for $U_0$ and $U_0^*$ with  $y\w z\w w=\omega_{U_0}$, $w^*\w z^*\w y^*=\omega_{U_0^*}$, $\nu_{1,0}=y^*$, and $\nu_{1,0}'=z^*$. The right side of (\ref{STS1}) is now equal to

\begingroup\allowdisplaybreaks
\begin{align*}
{}&\begin{cases}\phantom{+}\big[p^{-1} \big(
\big[(y^*\w z^*)(y\w z\w w) \cdot
\Gamma(y^*)\big](\Phi_3)\big)\big](z^*)\\+ \big[p^{-1} \big(
\big[(z^*\w y^*)(y\w z\w w) \cdot
\Gamma(z^*)\big](\Phi_3)\big)\big](y^*)\end{cases}\\
{}={}&-\big[p^{-1} \big(
\big[ w
\Gamma(y^*)\big](\Phi_3)\big)\big](z^*)+ \big[p^{-1} \big(
\big[w 
\Gamma(z^*)\big](\Phi_3)\big)\big](y^*)\\
{}={}&-\big[ 
\big[ w p^{-1}(z^*)
\big](\Phi_3)\big][\Gamma(y^*)]+ 
\big[ 
\big[ w p^{-1}(y^*)
\big](\Phi_3)\big][\Gamma(z^*)]\\\end{align*}
\endgroup
$$=\begin{cases} -\big[ 
\big[ ywp^{-1}(z^*)
\big](\Phi_3)\big]\cdot y^*[\Gamma(y^*)]+ 
\big[ 
\big[y wp^{-1}(y^*)
\big](\Phi_3)\big]\cdot y^*[\Gamma(z^*)] \\
-\big[ 
\big[ zwp^{-1}(z^*)
\big](\Phi_3)\big]\cdot z^*[\Gamma(y^*)]+ 
\big[ 
\big[z wp^{-1}(y^*)
\big](\Phi_3)\big]\cdot z^*[\Gamma(z^*)]\\
-\big[ 
\big[ w^2p^{-1}(z^*)
\big](\Phi_3)\big]\cdot w^*[\Gamma(y^*)]+ 
\big[ 
\big[w^2p^{-1}(y^*)
\big](\Phi_3)\big]\cdot w^*[\Gamma(z^*)].
\end{cases}$$ 
Apply Definition~\ref{8.1}, especially (\ref{SM}), to see that 
\begin{align*}y^*\Gamma(y^*)={}&
[p^{-1}(z^2\Phi_3)](w^2\Phi_3)-[p^{-1}(zw\Phi_3)](zw\Phi_3)\\
y^*\Gamma(z^*)={}&
-[p^{-1}(yz\Phi_3)](w^2\Phi_3)+[p^{-1}(zw\Phi_3)](yw\Phi_3)\\
z^*\Gamma(z^*)={}&
[p^{-1}(y^2\Phi_3)](w^2\Phi_3)-[p^{-1}(yw\Phi_3)](yw\Phi_3)\\
w^*\Gamma(z^*)={}&
-[p^{-1}(y^2\Phi_3)](zw\Phi_3)+[p^{-1}(yz\Phi_3)](yw\Phi_3)\\
w^*\Gamma(y^*)={}&
[p^{-1}(yz\Phi_3)](zw\Phi_3)-[p^{-1}(z^2\Phi_3)](yw\Phi_3).
\end{align*}
It follows that the right side of  (\ref{STS1}) is equal to $Y_1+Y_2+Y_3$, with 
\begingroup \allowdisplaybreaks
\begin{align*}
Y_1={}&\begin{cases}
\phantom{+}
[p^{-1}(yw(\Phi_3))](z^*)
\cdot 
\begin{cases}-[p^{-1}(z^2(\Phi_3))](w^2(\Phi_3))\\+[p^{-1}(zw(\Phi_3))](zw(\Phi_3))\end{cases}
 \\
+
[p^{-1}(yw(\Phi_3))](y^*)
\cdot 
\begin{cases}-[p^{-1}(yz(\Phi_3))](w^2(\Phi_3))\\+[p^{-1}(zw(\Phi_3))](yw(\Phi_3)),\end{cases}
\end{cases}
\\
Y_2={}&\begin{cases}
\phantom{+}[p^{-1}(zw(\Phi_3))](z^*)\cdot 
\begin{cases}\phantom{+}[p^{-1}(yz(\Phi_3))](w^2(\Phi_3))\\-[p^{-1}(yw(\Phi_3))](zw(\Phi_3))\end{cases}
\\
+[p^{-1}(zw(\Phi_3))](y^*)\cdot 
\begin{cases}\phantom{+}[p^{-1}(y^2(\Phi_3))](w^2(\Phi_3))\\
-[p^{-1}(yw(\Phi_3))](yw(\Phi_3)),\end{cases} 
\end{cases}\\
\intertext{and}
Y_3={}&\begin{cases}
\phantom{+}[p^{-1}(w^2(\Phi_3))](z^*)\cdot
\begin{cases}-[p^{-1}(yz(\Phi_3))](zw(\Phi_3))\\
+[p^{-1}(z^2(\Phi_3))](yw(\Phi_3))\end{cases} 
\\
+[p^{-1}(w^2(\Phi_3))](y^*)
\cdot
\begin{cases}-[p^{-1}(y^2(\Phi_3))](zw(\Phi_3))\\
+[p^{-1}(yz(\Phi_3))](yw(\Phi_3)).\end{cases} 
\end{cases}\end{align*}\endgroup
We reconfigure $Y_3$ 
to obtain
\begingroup\allowdisplaybreaks
\begin{align*}Y_3&{}=\begin{cases}
-[p^{-1}(w^2(\Phi_3))](z^*)\cdot[p^{-1}(yz(\Phi_3))](zw(\Phi_3))\\
-[p^{-1}(w^2(\Phi_3))](y^*)\cdot[p^{-1}(y^2(\Phi_3))](zw(\Phi_3))\\
+[p^{-1}(w^2(\Phi_3))](z^*)\cdot[p^{-1}(z^2(\Phi_3))](yw(\Phi_3)) \\
+[p^{-1}(w^2(\Phi_3))](y^*)\cdot[p^{-1}(yz(\Phi_3))](yw(\Phi_3))\end{cases} 
\\
{}&{}= 
-[p^{-1}(yu_1'(\Phi_3))](zw(\Phi_3))
+[p^{-1}(u_1'z(\Phi_3))](yw(\Phi_3)), 
\end{align*} \endgroup 
for $u_1'=[p^{-1}(w^2(\Phi_3))](z^*)\cdot z
+[p^{-1}(w^2(\Phi_3))](y^*)\cdot y$.
The elements $x,y,z,w$ and $x^*,y^*,z^*,w^*$ form a pair of dual bases for $U$ and $U^*$, respectively; consequently, 
if $u_{1}\in U$, then $$u_{1}=u_1(x^*)\cdot x+u_1(y^*)\cdot y+u_1(z^*)\cdot z+
u_1(w^*)\cdot w.$$
 In particular, when $u_1$ is taken to be $p^{-1}(w^2(\Phi_3))\in U$, then 
$$u_1'=
[p^{-1}(w^2(\Phi_3))] 
-[p^{-1}(w^2(\Phi_3))](x^*)\cdot x 
-[p^{-1}(w^2(\Phi_3))](w^*)\cdot w, 
$$ and
 $Y_3=Y_3'+Y_3''+Y_3'''$, with
\begin{align*}
Y_3'={}&\begin{cases}-[p^{-1}(y  [p^{-1}(w^2(\Phi_3))](\Phi_3))](zw(\Phi_3))\\
+ [p^{-1}(z  [p^{-1}(w^2(\Phi_3))](\Phi_3))](yw(\Phi_3))\end{cases}\\
{}={}&\begin{cases} -\big[y \cdot [p^{-1}(zw(\Phi_3))]\cdot [p^{-1}(w^2(\Phi_3))]\big](\Phi_3) \\
+\big[z \cdot [p^{-1}(yw(\Phi_3))] \cdot [p^{-1}(w^2(\Phi_3))]\big](\Phi_3),\end{cases}\\
Y_3''={}&\begin{cases} \phantom{+}
\big[p^{-1}\big(y  \big(\big[[p^{-1}(w^2(\Phi_3))](x^*)\big]\cdot x  \big)(\Phi_3)\big)\big](zw(\Phi_3))\\- \big[p^{-1}\big(z  \big(\big[[p^{-1}(w^2(\Phi_3))](x  ^*)\big]\cdot x  \big)(\Phi_3)\big)\big](yw(\Phi_3))\end{cases}\\
{}={}&\big[[p^{-1}(w^2(\Phi_3))](x  ^*)\big]\cdot\big(
[p^{-1}(y   x  (\Phi_3))](zw(\Phi_3))- [p^{-1}(z   x  (\Phi_3))](yw(\Phi_3))\big)\\
{}={}&\big[[p^{-1}(w^2(\Phi_3))](x^*)\big]\cdot\big(
(yzw(\Phi_3))- (zyw(\Phi_3))\big)=0,\\\intertext{because $p^{-1}(u_1x\Phi_3)=p^{-1}(p(u_1))=u_1$ for $u_1\in U$, and}
Y_3'''={}&\begin{cases}\phantom{+}\big[p^{-1}\big(y \big(\big[[p^{-1}(w^2(\Phi_3))](w^*)\big]\cdot w\big)(\Phi_3)\big)\big](zw(\Phi_3))\\-\big[p^{-1}\big(z
\big(\big[[p^{-1}(w^2(\Phi_3))](w^*)\big]\cdot w\big)(\Phi_3)\big)\big](yw(\Phi_3))\end{cases} \\
{}={}&\big[[p^{-1}(w^2(\Phi_3))](w^*)\big]\cdot\begin{cases} 
\phantom{+}[p^{-1}(y  w(\Phi_3))](zw(\Phi_3))\\-[p^{-1}(z
 w(\Phi_3))](yw(\Phi_3))\end{cases}\\
{}={}&0.
\end{align*}
Thus, $$Y_3=Y_3'=\begin{cases}-\big[y \cdot [p^{-1}(zw(\Phi_3))]\cdot [p^{-1}(w^2(\Phi_3))]\big](\Phi_3)\\ +\big[z \cdot [p^{-1}(yw(\Phi_3))] \cdot [p^{-1}(w^2(\Phi_3))]\big](\Phi_3).\end{cases}$$
In a similar manner 
\begingroup\allowdisplaybreaks\begin{align*}Y_1&= \begin{cases}
-\big[z\cdot p^{-1}[(w^2)(\Phi_3)]\cdot p^{-1}[(yw)(\Phi_3)]\big](\Phi_3)\\
+\big[w\cdot p^{-1}[(zw)(\Phi_3)]\cdot p^{-1}[(yw)(\Phi_3)]\big](\Phi_3)\end{cases}
 \\
Y_2&=\begin{cases}\phantom{+}\big[y\cdot p^{-1}[(w^2)(\Phi_3)]\cdot p^{-1}[(zw)(\Phi_3)]\big](\Phi_3)\\
-\big[w\cdot p^{-1}[(yw)(\Phi_3)]\cdot p^{-1}[(zw)(\Phi_3)]\big](\Phi_3).\end{cases}
\end{align*}\endgroup
It is now apparent that $Y_1+Y_2+Y_3=0$; thus, (\ref{STS1}) is established and the proof is complete.
\end{proof}

\begin{chunk}
\label{Proof-of-step-1}
{\it Proof of Theorem~{\rm\ref{6.1}}.} 
Start with the complex 
 \begin{equation}\label{(*)} P\t D_2U_0^*\xrightarrow{\mL}  P\t \Sym_2U_0\xrightarrow{\mf_{1,2}}P\to  \AA 
\to0\end{equation} of Lemma~\ref{26.8}.  
The image of $\mf_{1,2}$ is equal to the defining ideal of $\AA$  
 and 
$\mL$ maps onto the linear syzygies of $\mf_{1,2}$.
Use the basis \begin{equation}\label{basis}y^{*(2)},y^*z^*,y^*w^*,z^{*(2)},z^*w^*,w^{*(2)}\end{equation} to identify $D_2U_0^*$ with $P^6$ and the basis $y^2,yz,yw,z^2,zw,w^2$ to identify $\Sym_2U_0$ with $P^6$. Convert (\ref{(*)}) into the complex 
$$P(-3)^6\xrightarrow{L}P(-2)^6\xrightarrow{f_{1,2}}P\to \AA\to 0,$$where $L$ is the matrix of $\mL$ and $f_{1,2}$ is given in Example~\ref{record}.
It is shown in Lemma~\ref{alternating}   that $L$ is an alternating matrix.

The nonzero element $\delta \in D_2U_0^*$ 
with $\delta \in \ker \mL$ is identified in   
Lemma~\ref{26.8}. When $\delta$ is written in terms of the basis (\ref{basis}), it looks like
\begin{equation}\label{delta-mat}\bmatrix \CA_{(1,1)}\\\CA_{(1,2)}\\\CA_{(1,3)}\\\CA_{(2,2)}\\\CA_{(2,3)}\\\CA_{(3,3)}\endbmatrix,\end{equation} where $\CA$ is the classical adjoint of $\SM$. Let $\blip$ be a $6\times 6$ invertible matrix with entries from $\kk$ and last column equal to (\ref{delta-mat}). Observe that
$$P(-3)^6\xrightarrow{\blip\transpose L\blip}P(-2)^6\xrightarrow{f_{1,2}(\blip\transpose)^{-1}} P\to \AA\to 0$$ is a complex which is exact at the positions $P$ and $\AA$. Observe also that the alternating matrix $\blip\transpose L\blip$ maps onto the linear syzygies of $f_{1,2}(\blip\transpose)^{-1}$.
  Write $f_{1,2}(\theta\transpose)^{-1}$ as $\bmatrix d_1&g_6\endbmatrix$, where $d_1=\bmatrix g_1&g_2&g_3&g_4&g_5\endbmatrix$, and write
$$\theta\transpose L\theta=\left[\begin{array}{c|c} X&0_{5\times 1}\\\hline 0_{1\times 5}&0\end{array}\right],$$ where $X$ is a $5\times 5$ alternating matrix. None of the linear syzygies of $\bmatrix d_1&g_6\endbmatrix$ involve $g_6$.
The hypotheses of Observation~\ref{Oct5} are satisfied and  the proof is complete. Indeed, the minimal homogeneous resolution of $\AA$ is the mapping cone of
\begin{equation}\label{MC}\xymatrix{
0\ar[r]&P(-7)\ar[rr]^{d_1\transpose}
\ar[d]^{g_6}&&P(-5)^5\ar[rr]^{X}\ar[d]^{g_6}&&P(-4)^5\ar[rr]^{d_1}\ar[d]^{g_6}&&P(-2)\ar[d]^{g_6}\\
0\ar[r]&P(-5)\ar[rr]^{d_1\transpose}&&P(-3)^5\ar[rr]^{X}&&P(-2)^5\ar[rr]^{d_1}&&P.
}\end{equation}
The minimal homogeneous resolution of $\AA$ is also given by 
$$0\to P\xrightarrow{\mathfrak g_4}P^6\xrightarrow{\mathfrak g_3}P^{10}\xrightarrow{
\mathfrak g_2
}P^6\xrightarrow{\mathfrak g_1}P.$$
with \begin{align*}\mathfrak g_1={}&\bmatrix d_1&g_6\endbmatrix,
&&\mathfrak g_2=\bmatrix 
X&-g_6\\
0_{1\times 5}&d_1
\endbmatrix,\\
\mathfrak g_3={}&\bmatrix
-g_6&d_1\transpose\\
 -X&0_{5\times 1}
\endbmatrix,\text{ and}&&
\mathfrak g_4=\bmatrix d_1\transpose\\g_6\endbmatrix.\end{align*}

\hfil \qed
\end{chunk}

\begin{theorem}
\label{6.2} Adopt the data of {\rm \ref{2.1}} and the matrix $\SM$ of {\rm(\ref{SM})}. If $\SM$ has rank at least two, 
then the defining ideal  of $\AA$ is the generalized sum of two linked codimension three perfect ideals.
\end{theorem}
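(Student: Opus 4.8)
The plan is to deduce the statement directly from Theorem~\ref{6.1}, Observation~\ref{dec23}, Definition~\ref{gsli}, and Example~\ref{7.4}; the substantive work has already been carried out in \ref{Proof-of-step-1}, so what remains is essentially bookkeeping.

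First, I would recall the output of the proof of Theorem~\ref{6.1} given in \ref{Proof-of-step-1}. The defining ideal of $\AA$ is $K=(g_6)+J$, where $J=\im d_1=(g_1,\dots,g_5)$; the complex $G\colon 0\to P\xrightarrow{d_1\transpose}P^5\xrightarrow{X}P^5\xrightarrow{d_1}P$ is the minimal homogeneous resolution of $P/J$; the ideal $J$ is a grade three Gorenstein ideal of $P$ by Observation~\ref{Oct5}.(\ref{Oct5.c}); the element $g_6$ is regular on $P/J$ by Observation~\ref{Oct5}.(\ref{Oct5.b}); and the minimal homogeneous resolution of $\AA$ is the mapping cone~(\ref{MC}).

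Second, I would identify~(\ref{MC}) with the mapping cone of Observation~\ref{dec23} for $g=3$. Because $X$ is alternating and $J$ is Gorenstein, applying $\Hom_P(-,P)$ to $G$ and reindexing produces a minimal homogeneous resolution of $\Ext^3_P(P/J,P)$ that is isomorphic to $G$ up to an internal twist; indeed the top row of~(\ref{MC}) is such a twist of $G$. Under the induced isomorphism $\Ext^3_P(P/J,P)\cong P/J$, the map of complexes in~(\ref{MC}), which is multiplication by $g_6$, covers the homomorphism $\theta\colon\Ext^3_P(P/J,P)\to P/J$ given by multiplication by $g_6$ on $P/J$. Since $g_6$ is regular on $P/J$, the map $\theta$ is injective, with $\im\theta=g_6\cdot(P/J)=((g_6)+J)/J=K/J$. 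Moreover $\grade K=4$, because $(g_6)+J$ is the defining ideal of the Artinian algebra $\AA$ of embedding dimension four; in particular $\grade K\ge g+1$.

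Third, I would apply Observation~\ref{dec23} with these data: it gives that $K$ is a perfect Gorenstein ideal of $P$ of grade four whose minimal free resolution is the mapping cone~(\ref{MC}), and by Definition~\ref{gsli} the ideal $K$ is a generalized sum of linked ideals. Concretely this is the hypersurface-section instance of Example~\ref{7.4}, with grade three Gorenstein ideal $I=J$ and regular element $a=g_6$, so the two linked codimension three perfect ideals are both equal to $J$. This completes the argument. The only step requiring care is the identification in the second paragraph — that the self-duality of $G$ (from $X$ being alternating) together with the Gorenstein property of $J$ really does make multiplication by $g_6$ in~(\ref{MC}) cover the required injection $\Ext^3_P(P/J,P)\to P/J$ — but this is routine homological algebra, and no serious obstacle is expected.
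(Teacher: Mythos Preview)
Your proof is correct and takes essentially the same approach as the paper, which simply cites (\ref{MC}) and Example~\ref{7.4}; you have just filled in the routine details of why the hypersurface-section mapping cone fits the framework of Observation~\ref{dec23} and Definition~\ref{gsli}.
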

\begin{proof}
See (\ref{MC}) and Example~\ref{7.4}.
\end{proof}

\section{Macaulay2 scripts.}

The package ``SocDeg3.m2'' contains five Macaulay2 scripts: BasicSocDeg3,  minres7, SumOfLinkedIdeals7, SumOfLinkedIdeals9, and minres6. In this section, we describe each of these scripts, print the package  SocDeg3.m2, run a few examples, and give a list of Phi's which are in the ``correct form'' in the sense that $\ann \Phi\cap P_1=\{0\}$, the first listed variable is a weak Lefschetz element on P/ann Phi and if $\rank \SM=1$, then $\SM$ is given by (\ref{form}).  
Recall that $\SM$ is given by (\ref{form}) if and only if $B$ is given by
(\ref{specialB}).

\begin{remark}
\label{14.1}
 Recall that if $P$ is the polynomial ring $\pmb k[x_1,\dots,x_n]=\Sym_\bullet P_1$, then the graded dual of $P$ is the Divided Power Algebra $D_{\bullet}P_1^*$. The monomials $\binom{x_1,\dots, x_n}{i}$ form a basis for $\Sym_i P_1$ (see Convention~\ref{2.5}) and the elements
$$\{x_1^{*(e_1)}\cdots x_n^{*(e_n)}\mid e_1+\dots+e_n=i\}$$
form a basis for $D_iP_1^*$. The structure of $D_\bullet P_1^*$ as a $\Sym_\bullet P_1$- module is given by 
\begin{equation}\label{contract}
x_1^{f_1}\cdots x_n^{f_n}(x_1^{*(e_1)}\cdots x_n^{*(e_n)})=\begin{cases}
 x_1^{*(e_1-f_1)}\cdots x_n^{*(e_n-f_n)},&\text{if $f_j\le e_j$ for all $j$},\\
0,&\text{otherwise}.\end{cases}\end{equation}

The notation that Macaulay2 uses for the element $$x_1^{*(e_1)}\cdots x_n^{*(e_n)}$$ of $D_\bullet P_1^*$ is $x_1^{e_1}\cdots x_n^{e_n}$. 
Consequently, in this section, when we write ``Let Phi be the element $yx^2+xzw+xw^2+z^3+w^3$ of $D_3P_1^*$'', we are thinking, ``Let Phi be the element $y^*x^{*(2)}+x^*z^*w^*+x^*w^{*(2)}+z^{*(3)}+w^{*(3)}$ of $D_3P_1^*$''.

The Macaulay2 command contract($x_1^{f_1}\cdots x_n^{f_n},x_1^{e_1}\cdots x_n^{e_n})$ produces (\ref{contract}).\end{remark}

\subsection{The scripts in SocDeg3.m2.}

$ $

\subsubsection{The script ``BasicSocDeg3''.}\label{14.A.1}
Let $P$ be a polynomial ring in four variables  over a field k.
For the sake of this introduction $P=k[x,y,z,w]$ (although any field can be used  and the variables can be given any names). Let Phi be an element of $D_3 P_1^*$, where $P_1$ is the subspace of linear forms in $P$, $P_1^*$ is $\Hom_k(P_1,k)$, and $D_\bullet P_1^*$ is the Divided Power algebra which is the graded dual of the polynomial ring $P=\Sym_\bullet P_1$
The  Phi must be chosen  so that zero is the only linear element in ann(Phi) and the variable $x$ (that is, the first variable in the  list of variables) is a weak Lefschetz element on P/ann Phi.
{\bf Run this script by typing:} 
$$\text{(f11,f12,A,B,C,D,SM)=BasicSocDeg3(Phi)}.$$
The output is the seven matrices $f_{1,1}, f_{1,2},A, B, C, D, \SM$. The first six matrices are described in  Example~\ref{record}. The last matrix is described in (\ref{SM}). According to Theorem~\ref{resol}, the matrices
\begin{equation}\notag 
0\to P(-6)\xrightarrow{\ f_4\ }\begin{matrix}P(-4)^3\\\p\\ P(-4)^6\end{matrix}\xrightarrow{\ f_3\ }\begin{matrix}P(-3)^{8}\\\p\\ P(-3)^8\end{matrix}\xrightarrow{\ f_2\ }
\begin{matrix}P(-2)^3\\\p\\ P(-2)^6\end{matrix}\xrightarrow{\ f_{1}\ }P,\end{equation}
form a minimal homogeneous resolution of $P/\ann(x\Phi)$, where
\begin{align*}
 f_4={}&\bmatrix f_{1,1}\transpose\\f_{1,2}\transpose\endbmatrix,&
 f_3={}&\bmatrix xB\transpose&D \transpose\\A \transpose&xC \transpose\endbmatrix,\\
 f_2={}&\bmatrix A &xB \\xC &D \endbmatrix,\text{ and}&
 f_1={}&\bmatrix f_{1,1}&f_{1,2}\endbmatrix.\end{align*}
According to Theorem~\ref{a res}, 
the matrices
\begin{equation}
\label{14.0.1}0\to P(-7)\xrightarrow{\ F_4\ }\begin{matrix}P(-4)^3\\\p\\ P(-5)^6\end{matrix}\xrightarrow{\ F_3\ }\begin{matrix}P(-4)^{8}\\\p\\ P(-3)^8\end{matrix}\xrightarrow{\ F_2\ }
\begin{matrix}P(-3)^3\\\p\\ P(-2)^6\end{matrix}\xrightarrow{\ F_{1}\ }P,\end{equation}
form a  homogeneous  (not necessarily minimal) resolution of $P/\ann(x\Phi)$, where
\begin{align*}
 F_4={}&\bmatrix xf_{1,1}\transpose\\f_{1,2}\transpose\endbmatrix,&
F_3={}&\bmatrix B\transpose&D \transpose\\A \transpose&x^2C \transpose\endbmatrix,\\
 F_2={}&\bmatrix A &B \\x^2C &D \endbmatrix,\text{ and}&
 F_1={}&\bmatrix xf_{1,1}&f_{1,2}\endbmatrix.\end{align*} 
The rank of the $3\times 3$ symmetric matrix $\SM$ determines the number of minimal generators in the defining ideal of $\AA$. 
If $\SM$ has rank zero, then the defining ideal of $\AA$ has nine minimal generators and the minimal resolution of $\AA$ is given by (\ref{14.0.1}).
If $\SM$ has rank one, then the defining ideal of $\AA$ has seven minimal generators. One may change variables to put $\SM$ in the form
\begin{equation}
\notag
\bmatrix \a&0&0\\0&0&0\\0&0&0\endbmatrix,\end{equation}for some nonzero $\a\in\kk$. Once this is done, then
 the minimal resolution of $\AA$ is given in Theorem \ref{7.1}.   
This procedure is implemented by the script ``minres7''.  
If $\SM$ has rank at least two, then the defining ideal of $\AA$ has six minimal generators, $\AA$ is a hypersurface section of a codimension three Gorenstein ring, and the minimal resolution of $\AA$ is given in Theorem \ref{6.1}. This procedure is implemented in the script ``minres6''.

\subsubsection{The script ``minres7''.}\label{14.A.2}
Let $P$ be a polynomial ring in four variables over a field $k$ 
 and Phi be an element of $D_3P_1^*$. Choose Phi so that zero is the 
 only linear element in ann(Phi) and the first listed variable is a weak
 Lefschetz element on P/ann Phi.
 Assume further that the matrix $B$ of Example~\ref{record} has rank $2$, and has the form
 $$\bmatrix 0& 0&       0&     0& 0& 0& 0& 0\\
 0& 0&       \alpha& 0& 0& 0& 0& 0\\
 0& -\alpha & 0&     0& 0& 0& 0& 0\endbmatrix,$$
 where $\alpha$ is a unit.

  {\bf Run this script by typing}
$$\text{(E1,E2,E3,E4)=minres7(Phi)}.$$  
  The output is the four matrices $E_1$, $E_2$, $E_3$, and $E_4$ in the minimal resolution
 of P/ann (Phi), as described in Theorem~\ref{7.1}. 

\subsubsection{The script ``SumOfLinkedIdeals7''.}\label{14.A.3}
Let $P$ be a polynomial ring in four variables over a field $k$ 
 and Phi be an element of $D_3P_1^*$. Choose Phi so that zero is the 
 only linear element in ann(Phi) and the first listed variable is a weak
 Lefschetz element on P/ann Phi.
 Assume further that the matrix $B$ from Example~\ref{record} has rank $2$, and has the form
 $$B=\bmatrix 0& 0&       0&     0& 0& 0& 0& 0\\
 0& 0&       \alpha& 0& 0& 0& 0& 0\\
 0& -\alpha & 0&     0& 0& 0& 0& 0\endbmatrix,$$
 where $\alpha$ is a unit. {\bf Run this script by typing}
$$\text{(B1,B2,B3,L0,L1)=SumOfLinkedIdeals7(Phi)}.$$  
  The output is five matrices $B_1,B_2,B_3,\mathcal L_0,\mathcal L_1$ which 
sit in the following map of complexes:
\begin{equation}\label{14.0.2}\xymatrix{
0\ar[r]&P(-7)\ar[r]^{B_1\transpose}\ar[d]^{-\mathcal L_0\transpose}&P(-5)^5\ar[r]^{B_2\transpose}\ar[d]^{\mathcal L_1\transpose}&{\begin{matrix} P(-3)\\\p\\P(-4)^5\end{matrix}}\ar[r]^{B_3\transpose}\ar[d]^{-\mathcal L_1}&{\begin{matrix} P(-2)\\\p\\P(-3)\end{matrix}}\ar[d]^{\mathcal L_0}\\
0\ar[r]& {\begin{matrix}P(-5)\\\p\\P(-4)\end{matrix}}\ar[r]^{B_3}&{\begin{matrix}P(-4)\\\p\\P(-3)^5\end{matrix}}\ar[r]^{B_2}&P(-2)^5\ar[r]^{B_1}&P.}\end{equation}
The bottom row of (\ref{14.0.2}) is the Brown resolution  of $P/I_1(B_1)$, where $I_1(B_1)$ is a perfect grade 3 ideal; see Proposition~\ref{Bres}. The top row is the resolution of the canonical module, 
$\Ext^3_P(P/I_1(B_1),P)$, of $P/I_1(B_1)$. The mapping cone of (\ref{14.0.2}) is the minimal resolution of $P/\ann \Phi$. Thus, the defining ideal of $\AA$ is expressed as the sum of linked perfect ideals  as described in Theorem~\ref{7.2}.

\subsubsection{The script ``SumOfLinkedIdeals9''.}\label{14.A.4}
Let $P$ be a polynomial ring in four variables over a field $k$ 
 and Phi be an element of $D_3P_1^*$. Choose Phi so that zero is the 
 only linear element in ann(Phi) and the first listed variable is a weak
 Lefschetz element on P/ann Phi.
 Assume further that the matrix $B$ from Example~\ref{record} is identically zero. 
 {\bf Run this script by typing}
$$\text{(K1,K2,K3,L0,L1)=SumOfLinkedIdeals9(Phi)}.$$  
  The output is five matrices $K_1,K_2,K_3, L_0, L_1$ which 
sit in the following map of complexes:
\begin{equation}\label{14.0.3}\xymatrix{
0\ar[r]&P(-7)\ar[r]^{K_1\transpose}\ar[d]^{-L_0\transpose}&P(-5)^6\ar[r]^{K_2\transpose}\ar[d]^{L_1\transpose}&P(-4)^8
\ar[r]^{K_3\transpose}\ar[d]^{-L_1}&P(-3)^3
\ar[d]^{L_0}\\
0\ar[r]& P(-4)^3 
\ar[r]^{K_3}&P(-3)^8
\ar[r]^{K_2}&P(-2)^6\ar[r]^{K_1}&P.}\end{equation}
The bottom row of (\ref{14.0.3}) is a deformation of the Eagon-Northcott resolution of $P/(y^2,yz,yw,z^2,zw,w^2)$.
 The top row is the resolution of the canonical module, 
$\Ext^3_P(P/I_1(K_1),P)$, of $P/I_1(K_1)$. The mapping cone of (\ref{14.0.3}) is the minimal resolution of $P/\ann \Phi$. Thus, the defining ideal of $\AA$ is expressed as the sum of linked perfect ideals  as described in Theorem~\ref{9.1}.

\subsubsection{The script ``minres6''.}\label{14.A.5}
Let $P$ be a polynomial ring in four variables over a field $k$ 
 and Phi be an element of $D_3P_1^*$. Choose Phi so that zero is the 
 only linear element in ann(Phi) and the first listed variable is a weak
 Lefschetz element on P/ann Phi.
 Assume further that the matrix $\SM$ from (\ref{SM}) is invertible. {\bf Run this script by typing}
$$\text{(d1,f,X)=minres6(Phi)}.$$ 
The output is a $1\times 5$ matrix $d_1$ with entries from $P$, an element $f$ of $P$, and a $5\times 5$ alternating matrix $X$ with entries from $P$.
Let $J$ be the ideal generated by the entries of $d_1$. Then the ideal $(J,f)$ is equal to  the ideal ann (Phi) of $P$; the ring $P/J$ is a codimension three Gorenstein ring with minimal homogeneous resolution
$$0\to P(-5)\xrightarrow{d_1\transpose}P(-3)^5\xrightarrow{X}P(-2)^5\xrightarrow{d_1}P,$$
and $f$ is regular on $P/J$. Thus P/ann(Phi) is a hypersurface section of a codimension three Gorenstein ring as described in Theorems \ref{6.1} and \ref{6.2}.

\subsection{The scripts.}

$ $

{\tiny
\begin{verbatim}


-- **************************************************
-- ** The script BasicSocDeg3                      **
-- ** Type (f11,f12,A,B,C,D,SM)=BasicSocDeg3(Phi); **
-- **************************************************  
BasicSocDeg3=(Phi) -> (
Rl:=ring(Phi);
Phil:=Phi;

if not isHomogeneous(Phil) then
error "Phi must be homogeneous.";

if not (degree(Phil))_0==3 then
error "Phi must have degree 3.";

xl:=Rl_0;
yl:=Rl_1;
zl:=Rl_2;
wl:=Rl_3;
vl:=matrix{{xl,yl,zl,wl}};
vsql:=matrix{{yl^2,yl*zl,yl*wl,zl^2,zl*wl,wl^2}};

pl:=contract (xl*(transpose vl)*vl,Phil);

if det pl==0 then
error "One must choose Phi so that the only linear element in ann(Phi)
 is zero and the variable P_0 is a weak Lefschetz element on P/ann Phi.";

pinversel:=inverse (pl);
blop:=(u)-> (contract(matrix{{u}},matrix{{Phil}})//vl);
proj:=matrix{{0_Rl,1,0,0},{0,0,1,0},{0,0,0,1}};
ip:=(u,u')-> (transpose( blop(u))*pinversel*blop(u'));

-- If u is an element of Sym_2U, then blop(u) is  u(Phi_3), written
-- as a column vector.

f11l:=xl*vl*pinversel*matrix{{0,0,0},{1,0,0},{0,1,0},{0,0,1}};
f12l:=(vsql-xl*vl*pinversel*( matrix{{contract(vsql,Phil)}}//vl));


c1b212:=
matrix{{0},{ip(yl*wl,zl^2)-ip(yl*zl,zl*wl)},{ip(yl*wl,zl*wl)-ip(yl*zl,wl^2)}};

c2b212:=
matrix{{ip(yl*zl,zl*wl)-ip(yl*wl,zl^2)},{0},{ip(zl*wl,zl*wl)-ip(zl^2,wl^2)}};

c3b212:=
matrix{{ip(yl*zl,wl^2)-ip(yl*wl,zl*wl)},{ip(zl^2,wl^2)-ip(zl*wl,zl*wl)},{0}};

c4b212:=
matrix{{0},{ip(yl^2,zl*wl)-ip(yl*wl,yl*zl)},{ip(yl^2,wl^2)-ip(yl*wl,yl*wl)}}; 

c5b212:=
matrix{{ip(yl*wl,yl*zl)-ip(yl^2,zl*wl)},{0},{ip(zl*yl,wl^2)-ip(zl*wl,yl*wl)}};

c6b212:=
matrix{{ip(yl*wl,yl*wl)-ip(yl^2,wl^2)},{ip(zl*wl,yl*wl)-ip(wl^2,yl*zl)},{0}};

c7b212:=
matrix{{0},{ip(yl*zl,yl*zl)-ip(yl^2,zl^2)},{ip(yl*zl,yl*wl)-ip(yl^2,wl*zl)}};

c8b212:=
matrix{{ip(yl^2,zl^2)-ip(yl*zl,yl*zl)},{0},{ip(zl^2,yl*wl)-ip(zl*yl,zl*wl)}};

b212:=c1b212|c2b212|c3b212|c4b212|c5b212|c6b212|c7b212|c8b212;

kb222:=matrix{
{ 0,  0,  0,  wl,  0,  0,  -zl, 0  },
{ -wl, 0,  0,  0,  wl,  0,  yl,  -zl },
{ zl,  0,  0,  -yl, 0,  wl,  0,  0  },
{ 0,  -wl, 0,  0,  0,  0,  0,  yl  },
{ 0,  zl,  -wl, 0,  -yl, 0,  0,  0  },
{ 0,  0,  zl,  0,  0,  -yl , 0,  0  }};

-- If l and qu are homogeneous forms in Sym_2U_0 of degree 1 and 2
-- respectively, then j12(l,qu) is the quadratic form 
-- l times (proj circ p^{-1})(qu Phi) in Sym_2U_0. Of course the 
-- answer is expressed 
-- as a column vector. The entries of the column vector are the 
-- coefficients of y^2,yz,yw,z^2,zw,w^2 in that order.

prej12:= (l,qu) ->( 
t1:=pinversel*blop(qu);
return(l*matrix{{yl,zl,wl}}*proj*t1));

j12:= (l,qu) ->( matrix{{prej12(l,qu)}}//vsql);

 b222:=
(-j12(zl,yl*wl)+j12(wl,yl*zl)| 
j12(wl,zl^2)-j12(zl,wl*zl)|    
j12(wl,zl*wl)-j12(zl,wl^2)|     
-j12(wl,yl^2)+j12(yl,wl*yl)|   
-j12(wl,yl*zl)+j12(yl,wl*zl)|   
-j12(wl,yl*wl)+j12(yl,wl^2)|   
j12(zl,yl^2)-j12(yl,yl*zl)|    
j12(zl,yl*zl)-j12(yl,zl^2));  

kb211:=matrix{{yl,  0,  0,  zl, 0,   0,  wl, 0}, 
 { 0,   yl, 0,  0,  zl,  0,  0,  wl },
  {-wl, 0,  yl, 0,  -wl, zl, 0,  0}};  

j14:=(mu,nu) ->( above=
mu*matrix{{yl,zl,wl}}*proj*pinversel*(matrix{{nu}}//vl); 
return(matrix{{above}}//vsql)
);

b221:=
(j14(yl,yl)-j14(wl,wl))|
j14(yl,zl)|
j14(yl,wl)|
j14(zl,yl)|
(j14(zl,zl)-j14(wl,wl))|
j14(zl,wl)|
j14(wl,yl)|
j14(wl,zl);

-- j14'(mu,nu) is the element proj[ (mu (p^{inverse} nu))(Phi)] of U_0 
-- for mu in U and nu in U^*. The answer is written as a column vector.
-- The input is given in terms of elements of U and U^*.

j14':=(mu,nu) ->(
above:=contract(mu*vl*pinversel*(matrix{{nu}}//vl),Phil);
return(proj*(matrix{{above}}//vl)));

b211:=
(j14'(yl,yl)-j14'(wl,wl))| 
j14'(yl,zl)| 
j14'(yl,wl)| 
j14'(zl,yl)| 
(j14'(zl,zl)-j14'(wl,wl))| 
j14'(zl,wl)| 
j14'(wl,yl)| 
j14'(wl,zl);


Al:=(-xl*b211+kb211);
Bl:=b212;
Cl:=-b221;
Dl:=(kb222-xl*b222);
SMl:=   matrix{{ip(zl^2,wl^2)-ip(zl*wl,zl*wl),
ip(zl*wl,yl*wl)-ip(zl*yl,wl^2),
   ip(yl*zl,zl*wl)-ip(zl^2,yl*wl)},
    {ip(zl*wl,yl*wl)-ip(yl*zl,wl^2),
ip(yl*yl,wl^2)-ip(yl*wl,yl*wl),
ip(yl*zl,yl*wl)-ip(yl*yl,zl*wl)},
  {ip(yl*zl,zl*wl)-ip(yl*wl,zl^2),
ip(yl*wl,yl*zl)-ip(yl*yl,zl*wl),ip(yl*yl,zl*zl)-ip(yl*zl,yl*zl)}};

return(f11l,f12l,Al,Bl,Cl,Dl,SMl)
);

-- **************************************************
-- ** The script minres7                           **
-- ** Type (E1,E2,E3,E4)=minres7(Phi);             **
-- **************************************************



minres7=(Phi)->(
Run1:=BasicSocDeg3(Phi);
f11l1:=(Run1)_0;
f12l1:=(Run1)_1;
Al1:=(Run1)_2;
Bl1:=(Run1)_3;
Cl1:=(Run1)_4;
Dl1:=(Run1)_5;

Pl:=ring(Phi);
xl:=Pl_0;
alphal:=Bl1_(1,2);

if  alphal==0 then
error "The entry of B in row 2, column 3 must be a unit.";

realB:=matrix {{0,0,0,0,0,0,0,0},{0,0,alphal,0,0,0,0,0},
{0,-alphal,0,0,0,0,0,0}};

if not (Bl1-realB)==0 then error "The matrix B has the wrong form.";

zmat=(r,c)->(map(Pl^r,Pl^c,(i,j)->0));


theta1:=(matrix{{1_Pl,0,0},{0,0,alphal},{0,-alphal,0}}|zmat(3,6))||
(zmat(6,1)|submatrix(Dl1,,{1,2})|id_(Pl^6));

NewM:=(1//alphal)*matrix{
{0,-Al1_(2,0),Al1_(1,0),0,0,0,0,0},
{Al1_(2,0),0,Al1_(2,2),Al1_(2,3),Al1_(2,4),Al1_(2,5),Al1_(2,6),Al1_(2,7)},
{-Al1_(1,0),-Al1_(2,2),0,-Al1_(1,3),-Al1_(1,4),-Al1_(1,5),-Al1_(1,6),-Al1_(1,7)},
{0,-Al1_(2,3),Al1_(1,3),0,0,0,0,0},
{0,-Al1_(2,4),Al1_(1,4),0,0,0,0,0},
{0,-Al1_(2,5),Al1_(1,5),0,0,0,0,0},
{0,-Al1_(2,6),Al1_(1,6),0,0,0,0,0},
{0,-Al1_(2,7),Al1_(1,7),0,0,0,0,0}};
Newtheta2:=(id_(Pl^8)|zmat(8,8))||(NewM|id_(Pl^8));
F1l:=map(Pl^1, ,(xl*f11l1)|f12l1);
F2l:= map(source F1l,,(Al1|Bl1)||(xl^2*Cl1|Dl1));
E1l:=map(Pl^1,,submatrix'(F1l*theta1,,{1,2}));
TF2l:=(inverse theta1)*F2l*Newtheta2;
TE2l:=submatrix'(TF2l,{1,2},{1,2,9,10});
E2l:=map(source E1l,,TE2l);
Jl:=(zmat(6,6)|id_(Pl^6))||(id_(Pl^6)|zmat(6,6));
E3l:= map(source E2l,,Jl*transpose E2l);
E4l:=map(source E3l,,transpose E1l);
return(E1l,E2l,E3l,E4l)
);


-- ********************************************************
-- ** The script SumOfLinkedIdeals7                      **
-- ** Type (B1,B2,B3,L0,L1)=SumOfLinkedIdeals7(Phi); **
-- ********************************************************   


SumOfLinkedIdeals7=(Phi)->(
Run2:=minres7(Phi);
Run3:=BasicSocDeg3(Phi);
Bl1:=(Run3)_3;
Dl1:=(Run3)_5;

E1l1:=(Run2)_0;
E2l1:=(Run2)_1;
E3l1:=(Run2)_2;
E4l1:=(Run2)_3;
E1l':=submatrix(E1l1,,{4,0,6,5,1,2,3});
E2l':=submatrix(E2l1,{4,0,6,5,1,2,3},{11,2,3,0,1,4,5,8,9,6,7,10});
E3l':=submatrix(E3l1,{11,2,3,0,1,4,5,8,9,6,7,10},{6,5,1,2,3,4,0});
E4l':=submatrix(E4l1,{6,5,1,2,3,4,0},);     
L0l:=submatrix(E1l',,{0,1});
K1l:=submatrix(E1l',,{2..6});
K2l:=submatrix'(E2l',{0,1},{0..5});
L1l:=submatrix(E2l',{2..6},{0..5});
K3l:=transpose(submatrix(E2l',{0,1},{0..5}));
 alphal:=Bl1_(1,2); 
 thetal:=matrix{
 {1,0,0,0,0,0},
 {(1//alphal)*Dl1_(1,2),1,0,0,0,0},
 {(-1//alphal)*Dl1_(1,1),0,1,0,0,0},
 {0,0,0,1,0,0},
 {0,0,0,0,1,0},
 {0,0,0,0,0,1}};
 K2l':=K2l*thetal;
 K3l':=(inverse thetal)*K3l;
 beta3:=matrix{{-1//alphal,0},{0,-1}};  
 beta2:=matrix{
{1//alphal,0,0,0,0,0},
{0,1,0,0,0,0}, 
{0,0,-1,0,0,0},
{0,0,0,1,0,0},
{0,0,0,0,1,0},
{0,0,0,0,0,1}};
Bl1':=-K1l;
Bl2:=K2l'*(inverse (beta2));
Bl3:=beta2*K3l'*( inverse (beta3));    
newL0l:=L0l*(transpose beta3);
newL1l:=-L1l*(transpose(inverse(thetal)))*(transpose(beta2));
return(Bl1',Bl2,Bl3,newL0l,newL1l)
);

-- ********************************************************
-- ** The script SumOfLinkedIdeals9                      **
-- ** Type (K1,K2,K3,L0,L1)=SumOfLinkedIdeals9(Phi);     **
-- ********************************************************   



SumOfLinkedIdeals9=(Phi)->(
Run4:=BasicSocDeg3(Phi);
f11l1:=(Run4)_0;
f12l1:=(Run4)_1;
Al1:=(Run4)_2;
Bl1:=(Run4)_3;
Cl1:=(Run4)_4;
Dl1:=(Run4)_5;
Pl=ring(Phi);
xl:=Pl_0;
if not (Bl1==0) then error "The matrix B must be zero to use this script.";

K1l:=f12l1;
K2l:=Dl1;
K3l:=transpose Al1;
L1l:=xl^2*Cl1;
L0l:=xl*f11l1;
return(K1l,K2l,K3l,L0l,L1l)
);




-- ********************************************************
-- ** The script minres6                                 **
-- ** Type (d1,f,X)=minres6(Phi);                        **
-- ********************************************************  

minres6=(Phi)->(

ds':=(X,r,c)->(return(det submatrix'(X,{r},{c})));
CA:=(X)->(return (transpose(matrix{{ds'(X,0,0),-ds'(X,0,1),ds'(X,0,2)},
{-ds'(X,1,0),ds'(X,1,1),-ds'(X,1,2)},
{ds'(X,2,0),-ds'(X,2,1),ds'(X,2,2)}}))
);

Run5:=BasicSocDeg3(Phi);

f12l1:=(Run5)_1;
Dl1:=(Run5)_5;
SMl1:=(Run5)_6;
 Pl:=ring(Phi);
 if  (minors(2,SMl1)==0) then 
error "The matrix SM must have rank at least two to use this script.";

 SMi:=CA(SMl1);
 predelta:=matrix{{SMi_(0,0)},{SMi_(0,1)},{SMi_(0,2)},{SMi_(1,1)}};
 delta:=predelta||matrix{{SMi_(1,2)},{SMi_(2,2)}};
C2:=matrix{
{SMl1_(0,0),SMl1_(0,1),0,0,-SMl1_(2,1),-SMl1_(2,2)},
{SMl1_(1,0),SMl1_(1,1),SMl1_(1,2),0,0,0},
{SMl1_(2,0),SMl1_(2,1),SMl1_(2,2),0,0,0},
{0,SMl1_(0,0),0,SMl1_(0,1),SMl1_(0,2),0},
{0,SMl1_(1,0),-SMl1_(2,0),SMl1_(1,1),0,-SMl1_(2,2)},
{0,SMl1_(2,0),0,SMl1_(2,1),SMl1_(2,2),0},
{0,0,SMl1_(0,0),0,SMl1_(0,1),SMl1_(0,2)},
{0,0,SMl1_(1,0),0,SMl1_(1,1),SMl1_(1,2)}}; 
L:=Dl1*C2;
M:=id_(Pl^6)|delta;
i:=#(for i from 0 to 6 when (det submatrix'(M,,{i})==0) list i do i=i+1);
above:=submatrix'(id_(Pl^6),,{i})|delta;
X:=submatrix' (transpose(above) * L*above,{5},{5});
newf12l:=f12l1*inverse(transpose above);
d1:=submatrix'(newf12l,,{5});
f:=newf12l_(0,5);
return(d1,f,X)
   );







\end{verbatim}
}

\subsection{We run a few examples.}\label{14.C}

$ $

{\tiny
\begin{verbatim}
ii2 : load "SocDeg3.m2"

ii3 : P=QQ[x,y,z,w];

ii4 : Phi=x^2*y+z^3+x*z*w+x*w^2+w^3;

ii5 : (f11,f12,A,B,C,D,SM)=BasicSocDeg3(Phi);

ii6 : -- One can look at the output, for example:
      
      A

oo6 = | y  0 0 z  0   0  w  0  |
      | 0  y 0 0  x+z -x -x w  |
      | -w 0 y -x -w  z  -x -x |

              3       8
oo6 : Matrix P  <--- P

ii7 : -- We form the resolution of P/ann(x Phi):
      
      f1=map(P^1, ,f11|f12);

              1       9
oo7 : Matrix P  <--- P

ii8 : f2= map(source f1,,(A|x*B)||(x*C|D));

              9       16
oo8 : Matrix P  <--- P

ii9 : f3top=(x*transpose B)|(transpose D);

              8       9
oo9 : Matrix P  <--- P

ii10 : f3bot=(transpose A)|(x*transpose C);

               8       9
oo10 : Matrix P  <--- P

ii11 : f3= map(source f2,,(f3top||f3bot));

               16       9
oo11 : Matrix P   <--- P

ii12 : f4=map(source f3,,(transpose f11)||(transpose f12));

               9       1
oo12 : Matrix P  <--- P

ii13 : -- We verify that the f_i form a resolution of P/ann(x(Phi)).
       
       ideal(f1)==ideal(fromDual((contract(x,Phi))))

oo13 = true

ii14 : (image f2) == (ker f1)

oo14 = true

ii15 : (image f3)==(ker f2)

oo15 = true

ii16 : (image f4)==(ker f3)

oo16 = true

ii17 : -- We form the resolution of P/(ann Phi):
       
       F1=map(P^1, ,(x*f11)|f12);

               1       9
oo17 : Matrix P  <--- P

ii18 : F2= map(source F1,,(A|B)||(x^2*C|D));

               9       16
oo18 : Matrix P  <--- P

ii19 : F3top=((transpose B)|(transpose D));

               8       9
oo19 : Matrix P  <--- P

ii20 : F3bot=((transpose A)|(x^2*transpose C));

               8       9
oo20 : Matrix P  <--- P

ii21 : F3= map(source F2,,(F3top||F3bot) );

               16       9
oo21 : Matrix P   <--- P

ii22 : F4=map(source F3,,(transpose (x*f11))||(transpose f12));

               9       1
oo22 : Matrix P  <--- P

ii23 : -- We verify that the F_i form a resolution of P/ann(Phi).
       
       (ideal F1)==ideal(fromDual(Phi))

oo23 = true

ii24 : (image F2)==(ker F1)

oo24 = true

ii25 : (image F3)==(ker F2)

oo25 = true

ii26 : (image F4)==(ker F3)

oo26 = true

ii27 : SM

oo27 = {-1} | 1 0 0 |
       {-1} | 0 0 0 |
       {-1} | 0 0 0 |

               3       3
oo27 : Matrix P  <--- P

ii28 : -- The rank of SM is 1, so ann(Phi) is 7-generated.
       -- The form of SM is correct; so we may apply the script minres7
       -- to obtain the minimal resolution of P/ann(Phi)
       
       (E1,E2,E3,E4)=minres7(Phi);

ii29 : -- We verify that the E_i form a resolution of P/ann(Phi).
       
       ideal(E1)==ideal(fromDual(Phi))

oo29 = true

ii30 : (image E2)==(ker E1)

oo30 = true

ii31 : (image E3)==(ker E2)

oo31 = true

ii32 : (image E4)==(ker E3)

oo32 = true

ii33 : -- The Betti numbers of this resolution are:
       
       (numColumns(E4),numColumns(E3),numColumns(E2),numColumns(E1),numRows(E1)) 

oo33 = (1, 7, 12, 7, 1)

oo33 : Sequence

ii34 : -- These are the expected Betti numbers.
              
       -- We apply the script SumOfLinkedIdeals7 to write ann Phi as a sum of
       -- linked ideals.
              
       (B1,B2,B3,newL0,newL1)=SumOfLinkedIdeals7(Phi);

ii35 :        -- Notice that the B's form a Brown resolution of a perfect
              -- grade three ideal and the mapping cone of the map of
              -- complexes from Hom_P(B,P) to B, given by
              -- -newL0 transpose, newL1 transpose, -newL1, newL0,
              -- is the minimal homogeneous resolution of P/ann (Phi). 
              
              -- In the next example the ideal ann Phi has nine minimal
              -- generators. The matrices SM and B are both identically
              -- zero. One can read the minimal homogeneous resolution  
              -- of P/ann Phi by running
              -- (f11,f12,A,B,C,D,SM)=BasicSocDeg3(Phi). We apply the
              -- script SumOfLinkedIdeals9 in order to write ann(Phi)
              -- as a sum of linked perfect ideals.
              
              Phi=x*y^2+x*z^2+x^2*w;

ii36 : (K1,K2,K3,L0,L1)=SumOfLinkedIdeals9(Phi);

ii37 : -- In the next example the ideal ann Phi has six minimal generators.
       -- The matrix SM is invertible.
       -- The script minres6 exhibits P/ann Phi as a hypersurface section of
       -- a codimension three Gorenstein ring.
           
       Phi=-3*x^3-2*x^2*y+2*x^2*z+x*z^2-x^2*w-x*y*w+x*z*w+z^2*w;

ii38 : (d1,f,X)=minres6(Phi);

ii39 : ideal(d1):f==ideal(d1)

oo39 = true

ii40 : (ideal(d1)+ideal(f))==(ideal fromDual Phi)

oo40 = true

ii41 : d1=map(P^1,,d1);

               1       5
oo41 : Matrix P  <--- P

ii42 : d2=map(source d1,,X);

               5       5
oo42 : Matrix P  <--- P

ii43 : d3=map(source d1,,transpose d1);

               5       1
oo43 : Matrix P  <--- P

ii44 : image d2==ker d1

oo44 = true

ii45 : image d2==ker d1

oo45 = true

ii46 : (d1,f,X)=minres6(Phi);

ii47 : ideal(d1):f==ideal(d1)

oo47 = true

ii48 : -- Thus f is regular on P/J, where J is generated by the
       -- entries of d1.
       
       (ideal(d1)+ideal(f))==(ideal fromDual Phi)

oo48 = true

ii49 : -- Thus (J,f) = ann Phi
       
       d1=map(P^1,,d1);

               1       5
oo49 : Matrix P  <--- P

ii50 : d2=map(source d1,,X);

               5       5
oo50 : Matrix P  <--- P

ii51 : d3=map(source d1,,transpose d1);

               5       1
oo51 : Matrix P  <--- P

ii52 : image d2==ker d1

oo52 = true

ii53 : image d2==ker d1

oo53 = true

ii54 : -- Thus  (d1^T,  X,  d1) are the maps in a resolution of P/J. 
       -- In particular, P/J is a codimension three Gorenstein ring.
\end{verbatim}
}
\subsection{A few suitable $\Phi_3$'s.}

$ $

Let $P=\pmb k[x,y,z,w]$ be a polynomial ring in four variables over the field $\pmb k$. We list a few examples of $\Phi_3\in D_\bullet P_1^*$ with the following properties:
\begin{enumerate}[\rm(a)]
\item $\ann \Phi_3\cap P_1=(0)$,
\item $x$ is a weak Lefschetz element on $P/\ann \Phi_3$,
\item if $\SM$, from (\ref{SM}), has rank one, then
$$\SM=\bmatrix \a&0&0\\0&0&0\\0&0&0\endbmatrix$$ for some nonzero $\a$ in $\pmb k$. \end{enumerate}
Many of these $\Phi_3$ were obtained from examples in \cite{MVW} by changing the variables in order to make (a) -- (c) hold. 
The $\Phi$'s (\ref{9.X}), (\ref{9.P}), and (\ref{std}) were used in Section~\ref{14.C}. Of course, we had to convert these $\Phi$'s into the notation used by Macaulay2; see Remark~\ref{14.1}.

The base field is $\mathbb Q$ in all of the following examples.

\subsubsection{The corresponding Gorenstein ideal has 9 minimal generators.}

\begin{enumerate}[\rm(i)]
\item\label{9.X}$x^*y^{*(2)}+x^*z^{*(2)}+x^{*(2)}w^*$
\item\label{9.W}$3x^{*(3)}+2x^{*(2)}y^*+x^*y^{*(2)}+x^*z^*w^*+y^*z^*w^*+z^*w^{*(2)}$
\item\label{10.E}$\begin{cases}\phantom{+}5x^{*(3)}+2x^{*(2)}y^*+x^*y^{*(2)}+x^{*(2)}z^*+x^*y^*z^*+y^{*(2)}z^*+x^{*(2)}w^*\\+x^*w^{*(2)}+w^{*(3)}\end{cases}$
\item\label{10.A}$\begin{cases}\phantom{+}4x^{*(3)}+x^{*(2)}y^*+2x^{*(2)}z^*+x^*y^*z^*+x^*z^{*(2)}\\+y^*z^{*(2)}+x^{*(2)}w^*+x^*y^*w^*+y^{*(2)}w^*\end{cases}$
\item\label{10.B}$x^{*(2)}y^*+x^*z^*w^*+x^*w^{*(2)}+w^{*(3)}$
\item\label{10.C}$6x^{*(3)}+2x^{*(2)}y^*+x^*y^{*(2)}+x^{*(2)}z^*+2x^{*(2)}w^*+x^*z^*w^*+x^*w^{*(2)}+z^*w^{*(2)}$
\item\label{10.F}$x^{*(3)}+x^{*(2)}z^*+x^*y^*z^*+y^{*(2)}z^*+x^*w^{*(2)}+y^*w^{*(2)}$
\setcounter{nameOfYourChoice}{\value{enumi}}
\end{enumerate}

\subsubsection{The corresponding Gorenstein ideal has 7 minimal generators.}

\begin{enumerate}[\rm(i)]
\setcounter{enumi}{\value{nameOfYourChoice}}
\item\label{9.P}$x^{*(2)}y^*+z^{*(3)}+x^*z^*w^*+x^*w^{*(2)}+w^{*(3)}$
\item\label{9.V}$2x^{*(3)}+x^{*(2)}y^*+x^*y^{*(2)}+y^{*(3)}+x^*z^{*(2)}+y^*z^{*(2)}-x^*w^{*(2)}-y^*w^{*(2)}$
\item\label{9.Q}$x^{*(2)}y^*+x^*z^{*(2)}+x^*z^*w^*+z^*w^{*(2)}$
\item\label{9.S}$\begin{cases}\phantom{+}x^{*(3)}+x^*z^{*(2)}+y^*z^{*(2)}-2x^{*(2)}w^*-2x^*y^*w^*\\-2y^{*(2)}w^*+3x^*w^{*(2)}+3y^*w^{*(2)}-3w^{*(3)}\end{cases}$
\item\label{9.T}$-x^{*(2)}y^*-x^*y^{*(2)}-y^{*(3)}+x^*z^*w^*+y^*z^*w^*+w^{*(3)}$
\item\label{10.D}$\begin{cases}\phantom{+}x^{*(3)}+x^*z^{*(2)}+y^*z^{*(2)}-x^{*(2)}w^*-x^*y^*w^*-y^{*(2)}w^*\\+2x^*w^{*(2)}+2y^*w^{*(2)}-3w^{*(3)}\end{cases}$
\item\label{9.U}$\begin{cases}\phantom{+}7x^{*(3)}+6x^{*(2)}y^*+6x^*y^{*(2)}+6y^{*(3)}+2x^{*(2)}z^*+2x^*y^*z^*\\+2y^{*(2)}z^*+2x^{*(2)}w^*+2x^*y^*w^*+
2y^{*(2)}w^*+x^*z^*w^*+y^*z^*w^*\end{cases}$

\setcounter{nameOfYourChoice}{\value{enumi}}
\end{enumerate}

\subsubsection{The corresponding Gorenstein ideal has 6 minimal generators.}

\begin{enumerate}[\rm(i)]
\setcounter{enumi}{\value{nameOfYourChoice}}               
\item\label{std}      $-3x^{*(3)}-2x^{*(2)}y^*+2x^{*(2)}z^*+x^*z^{*(2)}-x^{*(2)}w^*-x^*y^*w^*+x^*z^*w^*+z^{*(2)}w^*$
\item\label{12.F}$\begin{cases}-2x^{*(3)}-2x^{*(2)}y^*+2x^{*(2)}z^*+x^*z^{*(2)}-x^{*(2)}w^*-x^*y^*w^*+x^*z^*w^*\\+z^{*(2)}w^*+x^*w^{*(2)}+w^{*(3)}\end{cases}$
 \end{enumerate}

\end{document}